\title[Phase concentration for the Kuramoto-Sakaguchi equation]{Emergence of phase concentration for the Kuramoto-Sakaguchi equation}
\author[Seung-Yeal Ha]{Seung-Yeal Ha}
\address[Seung-Yeal Ha]{\newline Department of Mathematical Sciences and Research Institute of Mathematics, \newline
Seoul National University, Seoul 08826, Korea \newline
Korea Institute for Advanced Study, Hoegiro 85, Seoul, 02455, Korea}
\email{syha@snu.ac.kr}
\author[Young-Heon Kim]{Young-Heon Kim}
\address[Young-Heon Kim]{\newline Department of Mathematics, \newline University of British Columbia, Vancouver, V6T 1Z2 Canada}
\email{yhkim@math.ubc.ca}
\author[Javier Morales]{Javier Morales}
\address[Javier Morales]{\newline Department of Mathematics, \newline University of Texas at Austin, Austin, United States}
\email{jmorales@math.utexas.edu}
\author[Jinyeong Park]{Jinyeong Park}
\address[Jinyeong Park]{\newline Department of Mathematical Sciences, \newline Seoul National University, Seoul 08826, Korea}
\email{pjy40@snu.ac.kr}
\newtheorem{theorem}{Theorem}[section]
\newtheorem{lemma}{Lemma}[section]
\newtheorem{corollary}{Corollary}[section]
\newtheorem{proposition}{Proposition}[section]
\newtheorem{remark}{Remark}[section]
\newtheorem{definition}{Definition}[section]
\date{\today}
\def\BE{\begin{equation}}
\def\EE{\end{equation}}
\renewcommand{\ell}{{l}}
\def\charf {\mbox{{\text 1}\kern-.30em {\text l}}}
\begin{document}

\date{\today}

\subjclass[2010]{70F99, 92B25} \keywords{Attractor, emergence, the Kuramoto model, the Kuramoto-Sakaguchi equation, gradient flow, order parameters, synchronization}

%

\begin{abstract}
We study the asymptotic phase concentration phenomena for the Kuramoto-Sakaguchi(K-S) equation in a large coupling strength regime. For this, we analyze the detailed dynamics of the order parameters such as the amplitude and the average phase. For the infinite ensemble of oscillators with the identical natural frequency, we show that the total mass distribution concentrates on the  average phase asymptotically, whereas the mass around the antipodal point of the average phase decays to zero exponentially fast in any positive coupling strength regime. Thus, generic initial kinetic densities evolve toward the Dirac measure concentrated on the average phase. In contrast, for the infinite ensemble with distributed natural frequencies, we find a certain time-dependent interval whose length can be explicitly quantified in terms of the coupling strength. Provided that the coupling strength is sufficiently large, the mass on such an interval is eventually non-decreasing over the time. We also show that the amplitude order parameter has a positive lower bound that depends on the size of support of the distribution function for the natural frequencies and the coupling strength. The proposed asymptotic lower bound on the order parameter tends to unity, as the coupling strength increases to infinity. This is reminiscent of practical synchronization for the Kuramoto model, in which the diameter for the phase configuration is inversely proportional to the coupling strength. Our results for the K-S equation generalize the results in \cite{H-K-R} on the emergence of phase-locked states for the Kuramoto model in a large coupling strength regime.
\end{abstract}

\maketitle \centerline{\date}


\section{Introduction} \label{sec:1}
Collective phenomena such as aggregation, flocking, and synchronization, etc., are ubiquitous in biological, chemical, and mechanical systems in nature, e.g., the flashing of fireflies, chorusing of crickets, synchronous firing
of cardiac pacemakers, and metabolic synchrony in yeast cell
suspensions (see for instance \cite{A-B, Bu}). After Huygens' observation on the anti-synchronized motion of two pendulum clocks hanging on the same bar, the synchronization of oscillators were reported in literature from time to time. However, the first rigorous and systematic studies on synchronization 
were pioneered by Winfree \cite{Wi2} and Kuramoto \cite{Ku2} in several decades ago. They introduced phase coupled models for the ensemble of weakly coupled oscillators, and showed that collective synchronization in the ensemble of oscillators can emerge from disordered ensemble via the competing mechanism between intrinsic randomness and sinusoidal nonlinear couplings (see \cite{A-B, D-B2, P-R-K}, for details). In this paper, we are interested in the large-time dynamics of a large ensemble of Kuramoto oscillators. In particular, we assume that the number of Kuramoto oscillators is sufficiently large so that a one-oscillator probability distribution function can describe effectively the dynamics of a large phase-coupled system, i.e., our main concern lies in the mesoscopic description of the ensemble of Kuramoto oscillators. In fact, this kinetic description has been used in physics literature \cite{A-B} to analyze the phase
transition from an incoherent state to a partially synchronized state, as the coupling strength is varied from zero to a large value. \newline

Let $f = f(\theta, \omega,t )$ be the one-oscillator probability density function of the Kuramoto ensemble in phase $\theta \in \mathbb T := \mathbb R/(2\pi \mathbb Z),$ with a natural frequency $\omega$ at time $t$, as in \cite{La}. Suppose that $g=g(\omega)$ is a nonnegative and compactly supported probability
density function for natural frequencies with zero first frequency moment $(\int_\mathbb R \omega g(\omega)d\omega = 0)$. Then,  the dynamics of the kinetic density $f$ is governed by the Kuramoto-Sakaguchi (K-S) equation:
\begin{equation} \label{K-Sa}
\begin{dcases}
\partial_t f + \partial_{\theta} (v[f] f) = 0, \qquad (\theta, \omega) \in \mathbb T \times \mathbb R,~~t > 0, \\
 v[f](\theta, \omega, t) = \omega - K \int_{\mathbb T} \sin(\theta-\theta_*) \rho(\theta_*, t)  ~ d\theta_*, \quad \rho(\theta, t) := \int_{\mathbb R} f(\theta, \omega, t) ~ d\omega,
\end{dcases}
\end{equation}
subject to the initial datum:
\begin{equation} \label{ini}
 f(\theta, \omega, 0) = f_0(\theta, \omega), \quad  \int_{\mathbb T} f_0 ~ d\theta = g(\omega), 
\end{equation}
where $K$ is the positive coupling strength measuring the degree of mean-field interactions between oscillators. The K-S equation \eqref{K-Sa} has been rigorously derived from the Kuramoto model in mean-field limit $(N \to \infty)$, using the method of  particle-in-cell employing empirical measures as an approximation \cite{La}. Several global existence theories have been proposed for \eqref{K-Sa}-\eqref{ini} in different frameworks, e.g., BV-entropic weak solutions \cite{A-H-P}, measure-valued solutions, and classical solutions \cite{C-C-H-K-K, Ch1, La}. Recently, motivated by the success of nonlinear Landau damping in plasma physics, there have been several interesting works \cite{B-C-M, Ch, F-G-G, phase} on the Kuramoto conjecture and Landau damping in relation to stability and instability of incoherent solution in sub-and super-critical regimes. We also refer to  \cite{Fokker, H-X1, H-X2} for the corresponding issues for the Kuramoto-Sakaguchi-Fokker-Planck equation which is a stochastic version of the K-S equation. \newline

The purpose of this paper is to investigate the emergence of phase concentration for the K-S equation via the time-asymptotic approach. The time-asymptotic approach is to show existence of the steady states with some desired properties, as well as their stability to a given time-dependent problem. This approach has been very successful in the realms of hyperbolic conservation laws and kinetic theory, to analyze the large-time behavior of viscous conservation laws and positivity of Boltzmann shocks \cite{L-Y, L-Z}. In spirit, this is close to the mean curvature flow in differential geometry, in which manifolds with constant mean curvature emerges as an asymptotic manifold from a rough manifold via the mean curvature flow. In this time-asymptotic approach, we are able to obtain quantitive estimates on the detailed relaxation dynamics from the initial states not in the resulting attractors. As byproduct, stability and structure of the resulting attractors follows; for a finite-dimensional analogue, we refer to \cite{C-C-H-K-K} where existence, stability and structure of the phase-locked states are presented via the time-asymptotic approach based on the Kuramoto model. For a survey on related issues arising from the classical and quantum synchronization, we refer to the recent review papers \cite{D-B2, H-K-P-Z}.  \newline

The main results of this paper are three-fold. First, we consider the infinite ensemble of identical oscillators in which the density function $g = g(\omega)$ for the natural frequency is given by the Dirac measure concentrated on the average natural frequency. In this case, for any positive coupling strength,  we show that generic ${\mathcal C}^1$- initial datum with a positive order parameter tends to the Dirac measure concentrated on the asymptotic average phase, whereas mass near the antipodal phase of the average phase decays to zero exponentially fast (see Theorem \ref{T3.1}). The latter assertion contrasts the difference between the infinite-dimensional case (the K-S equation) and finite-dimensional case (the Kuramoto model). For the Kuramoto model, bi-polar configurations (say, one oscillator lies on the south pole, and the rest of ensemble lies on the north pole) is possible, although it is unstable. The second and third results deal with mass concentration phenomenon for the distributed natural frequencies. In our second result (Theorem \ref{T3.2}), we construct a time dependent interval $L(t)$ centered at the time-dependent average phase and with constant width, such that the mass over $L(t)$ is nondecreasing and for each fixed natural frequency $w$ in the support of $g$, the integral $\int_{L(t)} |f(\theta, \omega, t)|^2 d\theta$ tends to infinity exponentially fast for large coupling strengths depending on the size of the support of $g$. This is obtained for a well arranged initial datum. Such condition is removed in our third result, where we present a nontrivial lower bound for the asymptotic amplitude order parameter depending only on the size of the support of $g$ and the coupling strength. We also show that there exists a time dependent interval that contains all the mass asymptotically.  The size of the interval is characterized by the coupling strength, maximum of natural frequencies, and the asymptotic amplitude order parameter (see Theorem \ref{T3.3}). Moreover, by choosing the coupling strength large enough, this size can be made arbitrary small, and the amplitude order parameter arbitrary close to $1$. \newline

The rest of this paper is organized as follows. In Section \ref{sec:2}, we
briefly review several concepts of synchronization for the Kuramoto model, the order parameters (amplitude and phase), and gradient flow formulations for the Kuramoto model and the K-S equation. We also recall some relevant previous results for the Kuramoto model. In Section \ref{sec:3}, we discuss our main results for the K-S equation on the emergence of attractors. 
In Section \ref{sec:4}, we present an emergent dynamics of the K-S equation for identical oscillators. In particular, we present dynamics of the amplitude order parameter and using it, we give the proof of Theorem \ref{T3.1}.  In Section \ref{sec:5}, we 
study the dynamics of local order parameters for the sub-ensemble of identical oscillators with the same natural frequency, and using the detailed dynamics of local order parameters, we provide the proof of Theorem \ref{T3.2}. In Section \ref{sec:6}, we provide a nontrivial lower bound for the asymptotic amplitude order parameter in terms of the maximum of the natural frequency, and the coupling strength. This lower bound estimate for the asymptotic order parameter yields a certain practical synchronization that has been introduced for the finite-dimensional Kuramoto model in \cite{H-N-P}. Finally, Section 7 is devoted to the summary of our main results and future directions. In Appendix \ref{App-A}, we provide a short presentation of Otto's calculus which inspired us for some of our proofs. For the readability of the paper, we postpone lengthy proofs of several lemmata and propositions to Appendix \ref{App-B} - Appendix \ref{App-E}. In Appendix \ref{App-F}, we discuss several estimates on the Kuramoto vector field. 

\bigskip

\noindent Notation: For vectors $p, q$ in $\mathbb R^2$, we denote an inner product of $p$ and $q$ by $p \cdot q$, whereas for two complex numbers $z_1, z_2 \in \mathbb C$, we set their inner product by $\langle z_1, z_2 \rangle = z_1 {\bar z}_2 $.

%
%
%
\section{Preliminaries} \label{sec:2}
\setcounter{equation}{0}
In this section, we briefly review two synchronization models, the Kuramoto model and its kinetic counterpart, the Kuramoto-Sakaguchi equation. For these two models, we introduce real-valued order parameters and gradient flow formulations.

\subsection{The Kuramoto model}
Consider a complete network consisting of $N$-nodes with edges connecting all  pair of nodes, and assume that at each node, a Landau-Stuart oscillator is located. We  set $z_i  \in \mathbb C^1$ to be the state of the $i$-th Landau-Stuart oscillator. Then, $z_i$ is governed by the following first-order system of ODEs:
\begin{equation} \label{C-L-S}
\frac{dz_i}{dt} = ( 1 - |z_i|^2 + {\mathrm i} \omega_i) z_i + \frac{K}{N} \sum_{j=1}^{N} (z_j - z_i), \quad j = 1, \cdots, N,
\end{equation}
where $K$ is the uniform coupling strength between oscillators, and $\omega_i$ is the quenched random natural frequency of the $i$-th Stuart-Landau oscillator extracted from a given distribution function $g = g(\omega)$, $\omega \in \mathbb R$:
\[ 
\int_{\mathbb R} g(\omega) ~ d\omega = 1, \quad \int_{\mathbb R} \omega g(\omega) ~ d\omega = 0, \quad \mbox{supp} ~ g(\cdot) 
\subset \! \subset \mathbb R, \quad g(\omega) \geq 0.
\]
The state $z_i = z_i(t)$ governed by the system \eqref{C-L-S} approaches a certain limit-cycle (a circle with radius determined by the coupling strength) asymptotically for a suitable range of $K$ (see \cite{Ku1}). Hence, in the sequel, we are mainly interested in the dynamics of the limit-cycle oscillators so that the amplitude variations can be ignored from the dynamics, and we focus our attention on the phase dynamics. This explains the meaning of ``{\it weakly coupled} oscillator". To see the dynamics of the  phase, we set 
\begin{equation} \label{Ans}
z_i(t) := e^{{\mathrm i} \theta_i(t)}, \quad t \geq 0, \quad 1 \leq i \leq N, 
\end{equation}
and substitute \eqref{Ans} into \eqref{C-L-S}, and compare the imaginary part of the resulting relation to derive the Kuramoto model \cite{Ku1, Ku2}:
\begin{equation} \label{KM}
{\dot \theta}_i = \omega_i + \frac{K}{N} \sum_{j=1}^{N} \sin (\theta_j - \theta_i), \quad i =1, \cdots, N.
\end{equation}
Note that the first term in the right-hand-side of \eqref{KM}, represents the intrinsic randomness of the system, whereas the second term describes the nonlinearity of the  attractive coupling. It is easy to see that the total phase $\sum_{i=1}^{N} \theta_i$ satisfies a balanced law:
\[ \frac{d}{dt} \sum_{i=1}^{N} \theta_i = \sum_{i=1}^{N} \omega_i, \quad t > 0. \]
Thus, when the total sum of natural frequencies is not zero, then system \eqref{KM} cannot have equilibria $\Theta_e = (\theta_{1e}, \cdots, \theta_{Ne})$:
\[ {\dot \theta}_{ie} = 0, \quad 1 \leq i \leq N. \]
However, we may still expect existence of relative equilibria, which are the equilibria of \eqref{KM} in a rotating coordinate frame with the angular velocity $\displaystyle \omega_c := \frac{1}{N} \sum_{i=1}^{N} \omega_i$. The relative equilibrium  for \eqref{KM} is called the phase-locked state. More precisely, we present its formal definition as follows.
\begin{definition} \label{D2.1} 
\emph{\cite{C-H-J-K, H-N-P}}
Let $\Theta = (\theta_1, \cdots, \theta_N)$ be a solution to \eqref{KM}. 
\begin{enumerate} 
\item
$\Theta = (\theta_1, \cdots, \theta_N)$ is a phase-locked state if the transversal phase differences are constant along the Kuramoto flow \eqref{KM}:
\[  |\theta_i(t) - \theta_j(t)| = |\theta_i(0) - \theta_j(0)|, \quad \forall~t \geq 0,~~1 \leq i, j \leq N .    \]
\item
The Kuramoto model \eqref{KM} exhibits  ``{\it complete (frequency) synchronization}" asymptotically if the transversal frequencies differences approach zero asymptotically:
\[  \lim_{t \to \infty} \max_{1 \leq i, j \leq N} |{\dot \theta}_i(t) - {\dot \theta}_j(t)| = 0.    \]
\item
The Kuramoto model \eqref{KM} exhibits ``{\it complete phase synchronization}" asymptotically if the transversal phase differences approach zero asymptotically:
\[  \lim_{t \to \infty} \max_{1 \leq i, j \leq N} |\theta_i(t) - \theta_j(t)| = 0.     \]
\item
The Kuramoto model \eqref{KM} exhibits ``{\it practical (phase) synchronization}" asymptotically if the transversal phase differences satisfy
\[  \lim_{K \to \infty} \limsup_{t \to \infty} \max_{1 \leq i, j \leq N}  |\theta_i(t) - \theta_j(t)| = 0.    \]
\end{enumerate}
\end{definition}
\begin{remark} \label{R2.1} 1. If complete synchronization occurs asymptotically, solutions tend to phase-locked states asymptotically. We also note that for non-identical oscillators, complete phase synchronization is not possible even asymptotically. For details on the phase-locked states, we refer to \cite{C-H-J-K, H-K-R0}.  \newline

\noindent 2. When the average natural frequency $\omega_c$ is zero, the equilibrium solution $\Theta$ to \eqref{KM} which is a solution to the following system of transcendental equations:
\[  \omega_c := \sum_{i=1}^{N} \omega_i = 0, \quad  \omega_i + \frac{K}{N} \sum_{j=1}^{N} \sin(\theta_j - \theta_i) = 0, \quad i = 1, \cdots, N, \]
is a phase-locked solution to \eqref{KM} as well. \newline

\noindent 3. For a brief review on the classical and quantum Kuramoto type models, we refer to a recent survey paper \cite{H-K-P-Z}.
\end{remark}

\subsubsection{Order parameters} \label{2.1.1} In this part, we briefly review real-valued order parameters for the phase configuration $\Theta = (\theta_1, \cdots, \theta_N)$ and their dynamics following the presentation given in \cite{H-K-P}. Consider the average position (centroid) of $N$ limit-cycle oscillators $z_i = e^{{\mathrm i} \theta_i}$: for $t \geq 0$,
\begin{equation} \label{order-p}
r(t)e^{{\mathrm i} \phi(t)} := \frac{1}{N} \sum_{j=1}^{N} e^{{\mathrm i} \theta_j(t)}. 
\end{equation}
Here we call $r$ and $\phi$ the amplitude and the average phase order parameters for the $N$ limit-cycle system, respectively.  Since the right hand side of \eqref{order-p} is a convex combination of $N$-points on the unit circle, the amplitude $r(t)$ lies on the interval $[0, 1]$ and the cases $r = 0$ and $1$ correspond to the splay state and the completely phase synchronized state, respectively. Hence, we can regard $r$ and $\phi$ as quantities measuring the degree of overall synchronization and the average of phases, respectively. Note that the average phase $\phi$ is well-defined when $r>0$.

We divide \eqref{order-p} by {{ $e^{\mathrm{i} \theta_k}$}} to obtain
\[ {{re^{\mathrm{i} (\phi-\theta_k)} = \frac{1}{N} \sum_{j=1}^{N} e^{\mathrm{i}(\theta_j
- \theta_k)},}} \] and compare the real and imaginary parts of the above
relation to find
\begin{equation} \label{B-1}
r \cos (\phi - \theta_k) = \frac{1}{N}
\sum_{j=1}^{N} \cos (\theta_j - \theta_k), \quad
r \sin (\phi - \theta_k) = \frac{1}{N} \sum_{j=1}^{N}  \sin
(\theta_j - \theta_k).
\end{equation}
Similarly, we divide the relation \eqref{order-p} by 
$e^{{\mathrm i} \phi(t)}$ and compare real and imaginary parts to see the following relations:
\begin{equation*} 
r = \frac{1}{N} \sum_{j=1}^{N} \cos(\theta_j - \phi), \quad 0 =  \frac{1}{N} \sum_{j=1}^{N} \sin(\theta_j - \phi).
\end{equation*}
By comparing the second relation in \eqref{B-1} and the coupling terms in \eqref{KM}, it is easy to see that the Kuramoto model \eqref{KM} can be rewritten in a mean-field form:
\begin{equation} \label{KM-mf}
{\dot \theta}_i = \omega_i - K r \sin(\theta_i - \phi), \quad t > 0.
\end{equation}
The equation \eqref{KM-mf} looks decoupled, but it is coupled, because the order parameters $r$ and $\phi$ are functions of other $\theta_j$'s. 

We next study the dynamics of the order parameters $r$ and $\phi$. For this, we differentiate the  equation \eqref{order-p} with respect to $t$ to see
\[ {\dot r} e^{{\mathrm i} \phi} + {\mathrm i} r  e^{{\mathrm i} \phi} {\dot \phi} = \frac{{\mathrm i}}{N}
\sum_{j=1}^{N} e^{{\mathrm i} \theta_j} {\dot \theta}_j. \] We divide the
resulting equation by $e^{{\mathrm i}\phi}$ to find
\begin{equation} \label{B-3}
{\dot r} + {\mathrm i} r {\dot \phi} =  -\frac{1}{N} \sum_{j=1}^{N} \sin
(\theta_j - \phi) {\dot \theta}_j + \frac{{\mathrm i}}{N}\sum_{j=1}^{N}
\cos (\theta_j- \phi) {\dot \theta}_j.
\end{equation}
We now compare the real and imaginary parts of (\ref{B-3}) to obtain
\begin{equation} \label{B-4}
{\dot r} = -\frac{1}{N} \sum_{j=1}^{N} \sin (\theta_j - \phi)
{\dot \theta}_j, \qquad
{\dot \phi} = \frac{1}{rN} \sum_{j=1}^{N} \cos (\theta_j -
\phi) {\dot \theta}_j.
\end{equation}
Thus, we can combine \eqref{KM-mf} and \eqref{B-4} to get the
evolutionary system:
\begin{align}
\begin{aligned} \label{B-5}
{\dot r} &= -\frac{1}{N} \sum_{j=1}^{N}  \sin (\theta_j
- \phi) \Big( \omega_j -  K r\sin (\theta_j - \phi) \Big), \\
{\dot \phi} &= \frac{1}{rN} \sum_{j=1}^{N} \cos (\theta_j - \phi)
\Big( \omega_j - K r\sin (\theta_j - \phi) \Big).
\end{aligned}
\end{align}
Before we close this part, we present the relationship between the phase diameter $D(\Theta) := \max_{1 \leq i, j \leq N} |\theta_i - \theta_j|$ and the order parameter $r$ in the following proposition.
\begin{proposition} \label{P2.1}
\emph{\cite{C-H-J-K}}
Suppose that the phase configuration $\Theta = (\theta_1, \cdots, \theta_N)$ is confined in a half circle such that 
\[ D(\Theta) < \pi. \]
Then, the following estimates hold.
\begin{enumerate}
\item
The order parameter $r$ is rotationally invariant and $r \geq \cos \frac{D(\Theta)}{2}$.
\item
The order parameter $r$ satisfies
\[ r  = 1 \quad \iff \quad D(\Theta) = 0, \quad \mbox{i.e.,} ~~\theta_1 = \theta_2 = \cdots = \theta_N.   \]
\end{enumerate}
\end{proposition}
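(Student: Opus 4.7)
The plan is to exploit rotational invariance to put the configuration into a convenient position, and then use the identity $r\cos(\phi-\theta_k)=\frac{1}{N}\sum_j\cos(\theta_j-\theta_k)$ derived in \eqref{B-1}. For rotational invariance, I would just note that replacing $\theta_j$ by $\theta_j+\alpha$ in the defining relation \eqref{order-p} gives
\[
\frac{1}{N}\sum_{j=1}^{N}e^{\mathrm{i}(\theta_j+\alpha)}=e^{\mathrm{i}\alpha}\cdot r\,e^{\mathrm{i}\phi}=r\,e^{\mathrm{i}(\phi+\alpha)},
\]
so the amplitude $r$ is unchanged while the average phase picks up the rotation $\alpha$.

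For the lower bound in (1), I would use this invariance to center the ensemble. Since $D(\Theta)<\pi$, pick $\theta_{\min}$ and $\theta_{\max}$ realizing the diameter and set the midpoint $\theta_m=(\theta_{\min}+\theta_{\max})/2$. Then every oscillator satisfies $|\theta_j-\theta_m|\le D(\Theta)/2<\pi/2$, so $\cos(\theta_j-\theta_m)\ge\cos(D(\Theta)/2)>0$. Substituting $k=m$ in the first identity of \eqref{B-1} (treating $\theta_m$ as an auxiliary rotation angle, which is legal by the invariance just proved),
\[
r\cos(\phi-\theta_m)=\frac{1}{N}\sum_{j=1}^{N}\cos(\theta_j-\theta_m)\ge\cos\!\Big(\tfrac{D(\Theta)}{2}\Big).
\]
Since $r\cos(\phi-\theta_m)\le r$, the desired inequality $r\ge\cos(D(\Theta)/2)$ follows immediately.

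For part (2), the backward direction is trivial: if $D(\Theta)=0$ then all $\theta_j$ coincide and \eqref{order-p} gives $r=1$. For the forward direction, I would invoke the identity $r=\frac{1}{N}\sum_j\cos(\theta_j-\phi)$ (already derived just after \eqref{B-1}). If $r=1$, then $\sum_j\bigl(1-\cos(\theta_j-\phi)\bigr)=0$; since every summand is nonnegative, each $\cos(\theta_j-\phi)=1$, hence $\theta_j\equiv\phi\pmod{2\pi}$ for all $j$. Combined with the hypothesis $D(\Theta)<\pi$, this forces $\theta_1=\cdots=\theta_N$, i.e. $D(\Theta)=0$.

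There is essentially no hard step; the main subtlety is just being careful that $\phi$ is only defined when $r>0$ (which is guaranteed once we know $r\ge\cos(D(\Theta)/2)>0$ under $D(\Theta)<\pi$), and that the identities in \eqref{B-1} may be applied with any reference angle after invoking rotational invariance.
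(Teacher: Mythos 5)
Your proof is correct. Part (1) is essentially the paper's argument in different clothing: the paper uses rotational invariance to translate the configuration so that all $\theta_j\in(-D(\Theta)/2,D(\Theta)/2)$ and then bounds $r\geq\frac{1}{N}\sum_j\cos\theta_j\geq\cos(D(\Theta)/2)$, while you keep the configuration fixed and evaluate the identity \eqref{B-1} at the midpoint $\theta_m$; the two are the same computation. One presentational wrinkle: as written, your step $r\cos(\phi-\theta_m)\leq r$ invokes $\phi$, whose definition you then justify by the very bound being proved. This is easily repaired without circularity by noting $\frac{1}{N}\sum_j\cos(\theta_j-\theta_m)=\mathrm{Re}\bigl(e^{-\mathrm{i}\theta_m}\cdot\frac{1}{N}\sum_j e^{\mathrm{i}\theta_j}\bigr)\leq\bigl|\frac{1}{N}\sum_j e^{\mathrm{i}\theta_j}\bigr|=r$, which never mentions $\phi$ (and also makes the appeal to rotational invariance unnecessary for this step). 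Where you genuinely diverge from the paper is the forward implication of part (2): the paper squares $|z_1+\cdots+z_N|=N$ to obtain $\sum_{i<j}\cos(\theta_i-\theta_j)=\frac{N(N-1)}{2}$ and concludes all pairwise differences vanish, whereas you use the scalar identity $r=\frac{1}{N}\sum_j\cos(\theta_j-\phi)$ (legitimate since $r=1>0$ makes $\phi$ well defined) and force each nonnegative term $1-\cos(\theta_j-\phi)$ to vanish. Your route is shorter and avoids the quadratic expansion, at the modest cost of needing $\phi$; both correctly use $D(\Theta)<\pi$ at the end to pass from congruence mod $2\pi$ to equality.
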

\begin{proof} (i) For the rotational invariance of $r$, it suffices to show that the configuration $\Theta + \alpha {\bf 1} :=(\theta_1 + \alpha, \cdots, \theta_N + \alpha)$ and $\Theta := (\theta_1, \cdots, \theta_N)$ have the same order parameter. Let ${\bar r}$ and $r$ be the order parameters for $\Theta +\alpha {\bf 1}$ and $\Theta$, respectively.  
\[ {\bar r} = \frac{1}{N} \Big| \sum_{j=1}^{N} e^{{\mathrm i} (\theta_j + \alpha)} \Big| =  \frac{1}{N} \Big|e^{{\mathrm i} \alpha} \sum_{j=1}^{N} e^{{\mathrm i} \theta_j} \Big| =  \frac{1}{N} \Big| \sum_{j=1}^{N} e^{{\mathrm i} \theta_j} \Big| = r.    \]
(ii)  Since the order parameter is rotationally invariant, without loss of generality, we may assume 
\[ \theta_i \in \Big( -\frac{D(\Theta)}{2},   \frac{D(\Theta)}{2} \Big), \quad 1 \leq i \leq N. \]

\noindent $\bullet$~Case A: We first prove that if $D(\Theta) = 0$, then $r = 1$. For this, we note that 
\[ r = \frac{1}{N} \Big |\sum_{j=1}^{N} e^{{\mathrm i} \theta_j} \Big| =  \frac{1}{N} \Big[ \Big( \sum_{j=1}^{N} \cos \theta_j    \Big)^2 + \Big(  \sum_{j=1}^{N} \sin \theta_j     \Big)^2         \Big]^{\frac{1}{2}} \geq \frac{1}{N} \sum_{j=1}^{N} \cos \theta_j  
\geq \cos \frac{D(\Theta)}{2}.   \]
Thus, if $D(\Theta) = 0$, then
\[ 1\geq r \geq \cos \frac{D(\Theta)}{2} = 1, \quad \mbox{i.e.,} \quad r = 1. \]

\noindent $\bullet$ Case B: Note that 
\[ z_i = e^{{\mathrm i} \theta_i} \in \mathbb S^1 \subset \mathbb C, \qquad  |z_1 + \cdots + z_N | = Nr, \]
Suppose that $r = 1$, then we have
\begin{equation} \label{B-6}
  |z_1 + \cdots + z_N | = N. 
\end{equation}   
We now claim:
\[ \theta_i = \theta_j, \quad 1 \leq i, j \leq N. \]
It follows from the relation \eqref{B-6} that we have
\[
0 = |z_1 + \cdots + z_N |^2 - N^2 = \sum_{i=1}^{N} |z_i|^2 - N^2 + 2\sum_{1 \leq i < j \leq N} \cos(\theta_i - \theta_j). 
\]
This yields 
\[ \sum_{1 \leq i < j \leq N} \cos(\theta_i - \theta_j) = \frac{N(N-1)}{2}. \]
This again implies
\[ \cos(\theta_i - \theta_j) = 1 \quad \mbox{i.e.,} \quad \theta_i - \theta_j = 0, \quad 1 \leq i, j \leq N. \]
\end{proof}

\subsubsection{A gradient flow formulation} \label{sec:2.1.2} Note that the right hand side of \eqref{KM} is $2\pi$-periodic, so the system \eqref{KM} is a dynamical system on $N$-tori $\mathbb T^N$. However, for the description of a gradient flow formulation, we lift  the system \eqref{KM} to a dynamical system on $\mathbb R^N$ by a straightforward lifting. So the trajectory of $\Theta = (\theta_1, \cdots, \theta_N)$ is not necessarily bounded as a subset of $\mathbb R^N$.  In  \cite{V-W}, from an analogy with the XY-model in statistical physics, Hemmen and Wreszinski observed that the Kuramoto model \eqref{KM} can be formulated as a gradient flow with an analytic potential. More precisely, they introduced the analytic potential $V_p$: 
\begin{equation} \label{p-pot}
 V_p(\Theta) :=-\sum_{i=1}^N\omega_i \theta_i+\frac{K}{2N}\sum_{i, j=1}^N \Big(1-\cos(\theta_j-\theta_i) \Big). 
\end{equation} 
By direct calculation, it is easy to see that the Kuramoto model \eqref{KM} can be rewritten as a gradient flow form:
\begin{equation} \label{Gr-KM}
  {\dot \Theta}(t)=-\nabla V_p(\Theta).
\end{equation}
Note that the potential function $V_p$ is neither convex nor bounded below a priori. This gradient formulation has been crucially used to prove the emergence of phase-locked states from generic initial configurations \cite{D-X, H-K-R}.

\subsubsection{Emergent dynamics} \label{sec:2.1.3} In this part, we recall the uniform boundedness of fluctuations of phases around the averaged motion, as well as mass concentration of the identical oscillators around the average phase. Since we regard the system \eqref{Gr-KM} as a dynamical system on $\mathbb R^N$, the uniform boundedness of fluctuations around the average phase motion is not clear a priori. In \cite{H-K-R}, authors showed that the relative phases are uniformly bounded, if more than half of oscillators are confined in a small arc and the coupling strength is sufficiently large.
\begin{proposition}\label{P2.2}
\emph{\cite{H-K-R}}
Suppose that the initial configuration $\Theta_0$ satisfies
\[ \theta_{j0} \in [-\pi, \pi), \quad 1 \leq j \leq N, \]
and let $n_0, \ell$, and $K$ satisfy
\begin{align*}
& n_0 \in \mathbb Z_+ \cap \Big(\frac{N}{2}, N \Big ],  \quad \ell  \in \Big(0,2\cos^{-1}\frac{N-n_0}{n_0} \Big), \\
& \max_{1 \leq j, k \leq n_0 } |\theta_{j0} - \theta_{k0}| < \ell, \quad K>\frac{ \displaystyle\max_{i,j} |\omega_i - \omega_j|}{\frac{n_0}{N} \sin \ell - \frac{2(N-n_0)}{N} \sin \frac{\ell}{2}}.
\end{align*}
Let $\Theta$ be a global solution to the system \eqref{KM}. Then, we have
\[ \sup_{0 \leq t < \infty} \max_{i,j} |\theta_i(t) - \theta_j(t)| \leq 4 \pi +  \ell. \]
\end{proposition}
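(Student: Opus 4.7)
The plan decomposes the proposition into two parts: (i) the majority sub-ensemble $\{\theta_1,\ldots,\theta_{n_0}\}$ stays trapped in an arc of length strictly less than $\ell$ for every $t\ge 0$; and (ii) each remaining ``outlier'' oscillator can drift away from the majority in the lifted dynamics on $\mathbb R$ by at most one full revolution $2\pi$. Together with the initial constraint $\theta_{j0}\in[-\pi,\pi)$, these two ingredients combine to yield the uniform-in-time bound $4\pi+\ell$.

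For part (i), I would set $D^M(t) := \max_{1\le j,k\le n_0}\bigl(\theta_j(t)-\theta_k(t)\bigr)$, which agrees with the arc-diameter as long as the majority is close together in the lift. Since $D^M(0)<\ell$, suppose for contradiction that there is a first crossing time $t_\star>0$ with $D^M(t_\star)=\ell$, and pick extremal majority indices $j_\star,k_\star$ realising this maximum at $t_\star$. Differentiating along \eqref{KM} and applying the sum-to-product identity $\sin a-\sin b=2\sin\tfrac{a-b}{2}\cos\tfrac{a+b}{2}$ gives
\begin{equation*}
\dot D^M(t_\star)=(\omega_{j_\star}-\omega_{k_\star})-\frac{2K}{N}\sin\!\tfrac{\ell}{2}\sum_{i=1}^{N}\cos\!\Big(\theta_i(t_\star)-\tfrac{\theta_{j_\star}(t_\star)+\theta_{k_\star}(t_\star)}{2}\Big).
\end{equation*}
For each of the $n_0$ majority indices the argument satisfies $|\theta_i-(\theta_{j_\star}+\theta_{k_\star})/2|\le\ell/2$, so its cosine is at least $\cos(\ell/2)$; each of the $N-n_0$ outlier terms contributes at worst $-1$. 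Using $2\sin(\ell/2)\cos(\ell/2)=\sin\ell$ together with the lower bound on $K$ forces $\dot D^M(t_\star)<0$, contradicting $\dot D^M(t_\star)\ge 0$ at a first crossing. Hence $D^M(t)<\ell$ for all $t\ge 0$.

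Part (ii) is the main obstacle. Once the majority is permanently trapped in an arc shorter than $\ell<2\cos^{-1}\bigl((N-n_0)/n_0\bigr)\le\pi$, projecting $re^{\mathrm i\phi}$ onto the unit vector at the midpoint of the majority arc produces the uniform positive lower bound
\begin{equation*}
r(t)\ge r_\star:=\tfrac{1}{N}\bigl[n_0\cos(\ell/2)-(N-n_0)\bigr]>0,
\end{equation*}
the positivity being precisely the assumption $\ell<2\cos^{-1}((N-n_0)/n_0)$. Inserting this into the mean-field formulation \eqref{KM-mf}, every outlier $\theta_p$ obeys a driven-pendulum equation whose restoring amplitude is at least $Kr_\star$. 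A short manipulation of the hypothesis on $K$ shows $Kr_\star>\max_i|\omega_i|/(2\sin(\ell/2))$, so the restoring torque strictly dominates every individual natural frequency. The technical heart of step (ii) is to convert this heuristic into a rigorous first-crossing/monotonicity argument on the lifted coordinate $\theta_p(t)-\phi(t)$ which prevents it from winding more than once. This is delicate because the mean phase $\phi$ itself evolves nontrivially and need not coincide with the centroid of the majority arc, so one must track both quantities simultaneously in the lift.

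Combining the confinement of the majority from (i) with the one-winding bound for each outlier from (ii), the initial phase gap of at most $2\pi$, the maximal relative winding of at most $2\pi$, and the cluster thickness $\ell$ sum to give $\sup_{t\ge 0}\max_{i,j}|\theta_i(t)-\theta_j(t)|\le 4\pi+\ell$.
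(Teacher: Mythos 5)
First, a remark on the comparison itself: the paper does not prove Proposition \ref{P2.2}; it is quoted from \cite{H-K-R} as a preliminary result, so your proposal has to stand on its own as a complete argument. Your part (i) does stand: the first-crossing estimate for the majority diameter, via the sum-to-product identity and the count $n_0\cos(\ell/2)-(N-n_0)$, is the standard trapping argument, and the hypothesis $\ell<2\cos^{-1}\frac{N-n_0}{n_0}$ is exactly what makes the coefficient of $K$ positive (a minor technicality: $D^M$ is a maximum of smooth functions, so one should argue with a Dini derivative or with the extremal pair at the first crossing time, which is routine). Likewise the bound $r(t)\ge r_*:=\frac{1}{N}\bigl[n_0\cos(\ell/2)-(N-n_0)\bigr]>0$ obtained by projecting onto the midpoint direction is correct.

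The genuine gap is part (ii), and you acknowledge it yourself: the claim that each outlier winds at most once relative to the cluster is described as ``the technical heart'' but is not proved, and the heuristic you offer does not close under the stated hypothesis. The assumption on $K$ is exactly $Kr_*>\max_{i,j}|\omega_i-\omega_j|/(2\sin(\ell/2))$; since $2\sin(\ell/2)$ can exceed $1$ (it approaches $2$ as $\ell\to\pi$), this does not yield $Kr_*>\max_i|\omega_i|$, let alone $Kr_*>\sup_t|\omega_p-\dot\phi(t)|$, which is what a barrier argument for the driven-pendulum variable $\theta_p-\phi$ actually needs; moreover $\dot\phi$ is not a priori bounded by $\max_i|\omega_i|$, since its formula contains $K$-dependent terms, so an additional estimate on $\dot\phi$ (or a formulation relative to the cluster rather than to $\phi$) is required. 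There is also a bookkeeping problem in the final step: ``initial gap $2\pi$ plus one winding $2\pi$ plus $\ell$'' only controls the distance of a single outlier to the cluster; two outliers winding by $2\pi$ in opposite directions would give roughly $6\pi$ unless the one-winding claim is established as a two-sided invariant of the form $m(t)-2\pi\le\theta_p(t)\le M(t)+2\pi$, where $[m(t),M(t)]$ is the lifted cluster arc. Proving such an invariant region is nontrivial precisely because near a translated copy of the cluster arc the restoring force is weak and changes sign, so a simple sign argument at the edge of the copy does not work. As written, the proposal reproduces the easier half of the argument in \cite{H-K-R} and leaves the harder half unproved.
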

\begin{remark} For identical oscillators with $\omega_i = \omega$, Dong and Xue \cite{D-X} used the gradient flow formulation \eqref{Gr-KM} to show that for all initial configurations and positive coupling strength, the Kuramoto flow \eqref{KM} tends to phase-locked states. Moreover, the uniform boundedness of Proposition \ref{P2.1} and gradient flow formulation yields the formation of phase-locked states. 
\end{remark}
Next, we consider the Kuramoto model for identical oscillators:
\begin{equation} \label{KM-id}
{\dot \theta}_i = \omega + \frac{K}{N} \sum_{j=1}^{N} \sin (\theta_j - \theta_i), \quad i =1, \cdots, N.
\end{equation}
For a dynamical solution $\Theta(t)$ to \eqref{KM-id},  we divide the oscillator set ${\mathcal N} :=\{ 1, \cdots, N \}$ into synchronous and anti-synchronous oscillators, with respect to the overall phase $\phi(t) :=\omega t $ :
\[ {\mathcal I}_s :=\{j: \lim_{t\rightarrow \infty} |\theta_j(t)- \phi(t)| = 0 \}, \qquad
{\mathcal I}_{b}:=\{j: \lim_{t\rightarrow \infty} |\theta_j(t)- \phi(t)|
= \pi\}. \]

\begin{proposition}\label{P2.3}
\emph{\cite{H-K-R}}
Let $\Theta= (\theta_1, \cdots, \theta_N)$ be a solution to \eqref{KM-id} with initial configuration $\Theta_0$ satisfying the following conditions:
\[ \sum_{i=1}^{N} \Omega_i = 0, \quad r_0  > 0, \qquad  \theta_{k0} \not = \theta_{j0}, \qquad 1 \leq k, j \leq N. \]
Then, $\{ {\mathcal I}_s, {\mathcal I}_b \}$ is a partition of ${\mathcal N}$ and we have
\[ |{\mathcal I}_{b}|  \leq 1, \]
where $|A|$ denotes the cardinality of the set $A$.
\end{proposition}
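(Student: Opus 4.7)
The plan has three parts: show that the amplitude order parameter $r(t)$ is non-decreasing and stays bounded below by $r_0$; deduce via Barbalat's lemma and a cited convergence result that each $\theta_j(t)-\phi(t)$ converges to $0$ or $\pi$ modulo $2\pi$; and finally rule out having two or more antisynchronous oscillators by a local instability estimate at the candidate bipolar limit.

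I will work in the rotating frame with $\omega=0$ (justified by $\sum_i \omega_i = N\omega = 0$ and identicality), so that \eqref{KM-id} reads $\dot{\theta}_i=-Kr\sin(\theta_i-\phi)$ where $\phi$ is the average phase of the order parameter. Substituting this into \eqref{B-4} yields
\[
\dot{r}=\frac{Kr}{N}\sum_{j=1}^N\sin^2(\theta_j-\phi)\ge 0.
\]
Thus $r(t)$ is non-decreasing and, together with $r_0>0$, satisfies $r(t)\ge r_0>0$ for all $t\ge 0$; hence $\phi$ remains well defined and $r(t)\to r_\infty\in[r_0,1]$. Integrating the identity above in time and using $r\ge r_0$ gives $\int_0^\infty\sum_j\sin^2(\theta_j(t)-\phi(t))\,dt<\infty$. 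Since $\dot{\theta}_j$ and $\dot{\phi}$ are uniformly bounded by \eqref{KM-mf} and \eqref{B-5} together with $r\ge r_0$, the integrand is uniformly continuous in $t$, and Barbalat's lemma forces $\sin(\theta_j(t)-\phi(t))\to 0$ for every $j$. Combining this with the Dong--Xue convergence-to-phase-locked-state result quoted in the remark following Proposition \ref{P2.2} --- which, since $\omega=0$, means $\Theta(t)$ converges in $\mathbb{T}^N$ to a fixed configuration --- each $\theta_j(t)-\phi(t)$ tends to a specific value in $\{0,\pi\}$ modulo $2\pi$, giving the partition $\mathcal{N}=\mathcal{I}_s\sqcup\mathcal{I}_b$.

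For the bound $|\mathcal{I}_b|\le 1$, suppose toward contradiction that two distinct indices $i,k$ both lie in $\mathcal{I}_b$. Since the initial phases are distinct and \eqref{KM-id} admits unique solutions, $\theta_i(t)\ne\theta_k(t)$ for every $t\ge 0$. Setting $\delta(t):=\theta_i(t)-\theta_k(t)$, a sum-to-product identity gives the exact relation
\[
\dot{\delta}(t)=-2Kr(t)\cos\!\left(\tfrac{\theta_i(t)+\theta_k(t)}{2}-\phi(t)\right)\sin\!\left(\tfrac{\delta(t)}{2}\right).
\]
Both $\theta_i-\phi$ and $\theta_k-\phi$ converge to $\pi$ modulo $2\pi$, so the cosine factor converges to $-1$. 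Choosing $T$ so large that this factor is $\le-\tfrac{1}{2}$ and $|\delta(t)|\le\pi/2$ for $t\ge T$, a direct estimate yields $\tfrac{d}{dt}|\delta(t)|\ge c\,|\delta(t)|$ for some $c=c(K,r_0)>0$ and all $t\ge T$. This forces $|\delta(t)|$ to grow exponentially, contradicting $\delta(t)\to 0$. The main technical point is the calibration of this local instability estimate; it rests essentially on the strict positivity $r(t)\ge r_0>0$ proved in the first step, which allows the cosine-factor-times-$r$ in $\dot\delta$ to be bounded away from zero.
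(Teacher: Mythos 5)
The paper does not supply a proof of Proposition \ref{P2.3}; it simply cites \cite{H-K-R}, so there is no in-paper argument to compare yours against. Evaluated on its own merits, your strategy is sound: the monotonicity of $r$ yields $r(t)\ge r_0>0$; the Dong--Xue gradient-flow result (quoted after Proposition \ref{P2.2}) gives convergence of $\Theta(t)$ to a single equilibrium, which in the $\omega=0$ frame with $r_\infty>0$ forces each $\theta_j-\phi$ to a limit with vanishing sine; and a linearization at the bipolar configuration then rules out $|\mathcal{I}_b|\ge 2$.

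Two remarks. First, the Barbalat step is superfluous once Dong--Xue is invoked: if every $\theta_j$ converges and $r\ge r_0>0$, then $\phi$ also converges, and the equilibrium equation $0=\dot\theta_j=-Kr\sin(\theta_j-\phi)$ with $K,r>0$ already gives $\sin(\theta_j^\infty-\phi^\infty)=0$. Second, the assertions ``$\delta\to 0$'' and ``$\cos\bigl(\tfrac{\theta_i+\theta_k}{2}-\phi\bigr)\to -1$'' are lift-dependent: if in $\mathbb{R}$ one has $\theta_i-\phi\to\pi+2\pi m_i$ and $\theta_k-\phi\to\pi+2\pi m_k$, then $\delta\to 2\pi(m_i-m_k)$ and the cosine tends to $(-1)^{1+m_i+m_k}$, so neither claim holds for an arbitrary lift. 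The argument still goes through because the sign picked up by $\sin(\delta/2)$ under $\delta\mapsto\delta-2\pi(m_i-m_k)$ exactly cancels the sign of the cosine, but as written the step is not rigorous. The clean fix is to normalize the lifts, or equivalently to set $\varepsilon_j:=\theta_j-\phi-\pi$ for $j\in\mathcal{I}_b$, lifted into a neighborhood of $0$, and compute directly $\frac{d}{dt}(\varepsilon_i-\varepsilon_k)=Kr[\sin\varepsilon_i-\sin\varepsilon_k]=2Kr\cos\bigl(\tfrac{\varepsilon_i+\varepsilon_k}{2}\bigr)\sin\bigl(\tfrac{\varepsilon_i-\varepsilon_k}{2}\bigr)$, where the cosine factor is eventually $\ge\tfrac12$ since $\varepsilon_i,\varepsilon_k\to 0$. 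Your implicit appeal to the $i\leftrightarrow k$ permutation symmetry together with backward uniqueness to propagate $\theta_i(t)\neq\theta_k(t)$ is correct and standard.
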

\begin{remark}
1. The state with $|{\mathcal I}_{b}| = 1$ is called the bi-polar state, which is well known to be unstable \cite{C-H-J-K}. 

2. For a detailed survey on the Kuramoto model \eqref{KM}, we refer to  \cite{A-B, B-S, C-H-J-K,  C-S, D-X, D-B2, D-B, H-K-R, H-K-P-Z,  H-L-X, J-M-B}). 
\end{remark}

\subsection{The Kuramoto-Sakaguchi (K-S) equation} In this subsection, we discuss the kinetic counterpart of \eqref{KM}. Consider a situation where the number of oscillators, which we denote by $N$ in \eqref{KM} goes to infinity. In this mean-field limit, it is more convenient to rewrite the system \eqref{KM}, as a dynamical system on the phase space 
$\mathbb T \times \mathbb R,~~\mathbb T :=\mathbb R/2\pi \mathbb Z = [0, 2\pi]$ for $(\theta, \omega)$:
\begin{equation} \label{re-ku}
\begin{dcases}
\frac{d\theta_i}{dt} = \omega_i - \frac{K}{N}
\sum_{j=1}^{N} \sin(\theta_i - \theta_j), \\
\frac{d\omega_i}{dt} = 0, \quad t > 0.
\end{dcases}
\end{equation}
Since we are dealing with a large oscillator system $N \gg 1$, we can introduce a probability density function $f = f(\theta, \omega, t)$ to approximate the $N$-oscillator system \eqref{re-ku}. Based on the standard BBGKY Hierarchy argument \cite{Ke}, we can derive the K-S equation:
\begin{align}
\begin{aligned} \label{K-S}
\displaystyle & \partial_t f + \partial_{\theta} (v[f] f) = 0, \qquad (\theta, \omega) \in \mathbb T \times \mathbb R,~~t > 0, \\
& v[f](\theta, \omega, t) = \omega - K F [\rho], \quad F [\rho] := \int_{\mathbb T} \sin(\theta-\theta_*) \rho(\theta_*,t) ~ d\theta_*.
\end{aligned}
\end{align}
where $\rho = \rho(\theta, t)$ is the local mass density function, which corresponds
to the $\theta$-marginal density function of $f$:
\[ \label{rho}
  \rho(\theta, t) := \int_{\mathbb R} f(\theta, \omega, t) ~ d\omega, \quad t \geq 0. 
\] 

\begin{remark}
The rigorous derivation from \eqref{KM} to \eqref{K-S} was done by Lancellotti \cite{La} using Neunzert's method \cite{Ne}.
In \cite[Theorem 2 and Remark 3]{La}, Lancellotti showed that there exists a unique classical solution to 
\eqref{K-S}, whenever the initial datum is $C^{1}.$  
\end{remark}

We next recall some conservation laws for the K-S equation.
\begin{lemma}
\label{L2.1}
\emph{\cite{A-H-P}}
Let $f =f(\theta, \omega, t)$ be a ${\mathcal C}^1$-solution to \eqref{K-S}, with initial datum
$f_0$ satisfying the following conditions:
\[
  \int_{\mathbb T} f_0 ~ d\theta = g(\omega), \quad \iint_{\mathbb T \times \mathbb R} f_0 ~ d\theta d\omega = 1.
\]
Then, we have
\[ \int_{\mathbb T} f (\theta, \omega) ~ d\theta = g(\omega), \quad \int_{\mathbb T \times \mathbb R } f ~ d\theta d\omega  = 1, \quad t \geq 0.  \]
\end{lemma}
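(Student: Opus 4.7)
The key structural feature to exploit is that the K-S equation \eqref{K-S} is a transport equation in the $\theta$ variable alone, with $\omega$ entering only as a parameter through the velocity field $v[f](\theta,\omega,t)=\omega - KF[\rho]$. In particular, no derivative in $\omega$ appears. This suggests splitting the proof into two steps: first establish the $\theta$-marginal identity $\int_{\mathbb{T}} f\,d\theta = g(\omega)$ fiberwise in $\omega$, and then obtain the total mass conservation by integrating in $\omega$.

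For the fiberwise step, I would fix $\omega \in \mathbb{R}$, integrate the conservation law
\[
\partial_t f(\theta,\omega,t) + \partial_\theta\bigl(v[f](\theta,\omega,t)\,f(\theta,\omega,t)\bigr) = 0
\]
over $\theta \in \mathbb{T}$, and exchange the order of $\partial_t$ and $\int_{\mathbb{T}} d\theta$, which is justified by the assumed $\mathcal{C}^1$-regularity of $f$ (hence uniform continuity of $\partial_t f$ on the compact torus). The crucial observation is that $f$ and $v[f]$ are both $2\pi$-periodic in $\theta$, so the flux $v[f]f$ is periodic as well, and the $\theta$-integral of its derivative vanishes. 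This gives $\frac{d}{dt}\int_{\mathbb{T}} f(\theta,\omega,t)\,d\theta = 0$, and integrating in $t$ together with the initial condition $\int_{\mathbb{T}} f_0(\theta,\omega)\,d\theta = g(\omega)$ yields the first identity.

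For the second identity, I would integrate the result in $\omega$ over $\mathbb{R}$. Since $g$ is a probability density with $\int_{\mathbb{R}} g(\omega)\,d\omega = 1$, Fubini's theorem (applicable because $f_0 \in L^1$ and the fiberwise identity already shows that $\int_{\mathbb{T}} f\,d\theta = g(\omega) \in L^1(\mathbb{R})$) immediately gives $\iint_{\mathbb{T}\times\mathbb{R}} f\,d\theta\,d\omega = 1$ for every $t \geq 0$.

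The only genuine technical point is justifying the interchange of $\partial_t$ with the $\theta$-integral and then the Fubini step; both are routine under the $\mathcal{C}^1$ hypothesis, provided one knows that the $\omega$-support of $f(\cdot,\cdot,t)$ is contained in $\mathrm{supp}\,g$ for all $t$. The latter follows because the characteristic ODE system \eqref{re-ku} fixes $\omega$ along trajectories, so $f(\theta,\omega,t) = 0$ whenever $\omega \notin \mathrm{supp}\,g$; compactness of $\mathrm{supp}\,g$ then makes all integrals absolutely convergent. No step here looks genuinely obstructive, so I would not expect to meet a significant difficulty.
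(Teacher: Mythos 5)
Your proposal is correct and follows essentially the same route as the paper: integrate the equation over $\theta$, use periodicity of the flux $v[f]f$ to kill the $\partial_\theta$ term, and then integrate in $\omega$ (the paper differentiates under the $\omega$-integral, you instead integrate the already-obtained identity $\int_{\mathbb T} f\,d\theta = g(\omega)$ against $d\omega$, which is an immaterial variation). The extra remarks on interchanging $\partial_t$ with the $\theta$-integral and on the $\omega$-support are fine but not needed beyond the $\mathcal{C}^1$ hypothesis already assumed.
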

\begin{proof} The identities follow from a direct computation as follows.

\[ \frac{d}{dt} \int_{\mathbb T} f ~ d\theta = \int_{\mathbb T} \partial_t f ~ d\theta = -\int_{\mathbb T} \partial_{\theta} (v[f] f) ~ d\theta =0, 
 \]

and
\[ \frac{d}{dt} \iint_{\mathbb T \times \mathbb R} f ~  d\theta d\omega= \int_{\mathbb R} \frac{d}{dt} \Big( \int_{\mathbb T} f  ~ d\theta \Big)
 d\omega = 0. \]
\end{proof}
\begin{remark} Lemma \ref{L2.1} yields that for any test function $h=h(\omega)$, we have
\[ \iint_{\mathbb T \times \mathbb R} h(\omega) f ~ d\theta d\omega = \int_{\mathbb R} h(\omega) \Big( \int_{\mathbb T} f  ~ d\theta \Big) d\omega =  \int_{\mathbb R} h g ~ d\omega =  \int_{\mathbb T \times \mathbb R} h(\omega) f_0 ~ d\theta d\omega. 
\]
\end{remark}

\subsubsection{Order parameters} \label{sec:2.2.1} In this part, we introduce real-valued order parameters $R = R(t)$ and $\phi = \phi(t)$ which measure the overall degree of synchronization for the K-S equation \eqref{K-S}. 
Such order parameters will be used to simplify the expression of the system \eqref{K-S}.  As a straightforward generalization of the order parameters \eqref{order-p} for the Kuramoto model, we define real order parameters $R$ and $\phi$ for the K-S equation \cite{A-B, H-K-P}:
\begin{equation}\label{O-K}
R(t) e^{{\mathrm i} \phi(t)} := \iint_{\mathbb T \times \mathbb R} e^{{\mathrm i}\theta} f ~  d\theta d\omega  = \int_{\mathbb T} e^{{\mathrm i}\theta} \rho ~ d\theta, \quad t \geq 0.
\end{equation}
To avoid the confusion with the amplitude order parameter $r$ for \eqref{order-p}, we use a capital letter $R$ instead of $r$ for the K-S equation. As for the Kuramoto model, the average phase $\phi$ is well-defined when $R>0$. We divide \eqref{O-K} by $e^{i\phi}$ on both sides to get
\begin{equation}\label{O-K-R}
R(t) = \int_{\mathbb T} e^{{\mathrm i} (\theta - \phi(t))} \rho ~ d\theta =  \int_{\mathbb T} \langle e^{{\mathrm i} \theta}, e^{{\mathrm i} \phi(t)} \rangle \rho ~ d\theta.
\end{equation}
By comparing real and imaginary part on both sides of \eqref{O-K-R}, we obtain 
\begin{equation}\label{O-K-1}
R(t) = \int_{\mathbb T} \cos(\theta - \phi(t)) \rho (\theta, t) ~ d\theta
\qquad  {\rm{and}} \qquad 
0= \int_{\mathbb T} \sin(\theta - \phi(t)) \rho (\theta, t)  ~ d\theta.
\end{equation}
On the other hand, we use \eqref{O-K-1} to rewrite the linear operator $F[\rho]$ in terms of order parameters:
\begin{align}
\begin{aligned}\label{A-1}
& F[\rho] = \int_{\mathbb T }\sin(\theta - \theta_*) \rho (\theta_*, t)  ~ d\theta_*\\
& \hspace{0.5cm} = \int_{\mathbb T} \sin \big( (\theta - \phi) - (\theta_* -\phi) \big) \rho (\theta_*, t)  ~ d\theta_* \\
& \hspace{0.5cm} = \int_{\mathbb T} \big( \sin (\theta - \phi) \cos (\theta_* -\phi)  - \cos(\theta - \phi) \sin (\theta_* -\phi) \big) \rho (\theta_*, t)  ~ d\theta_* \\
&\hspace{0.5cm} = \sin(\theta - \phi) \int_{\mathbb T} \cos(\theta_* - \phi) \rho (\theta_*, t)  ~ d\theta_* - \cos(\theta - \phi) \int_{\mathbb T} \sin(\theta_* - \phi) \rho (\theta_*, t)  ~ d\theta_*\\
&\hspace{0.5cm} = R \sin(\theta - \phi).
\end{aligned}
\end{align}
Thus, we can  rewrite the Kuramoto-Sakaguchi equation \eqref{K-S} as an equivalent form:
\begin{equation}\label{KM-K-I-1}
\partial_t f + \partial_\theta \Big( \omega f - K R \sin(\theta- \phi) f \Big) = 0.  
\end{equation}
For the identical oscillator case with $g(\Omega)=\delta(\omega),$ by integrating \eqref{KM-K-I-1} with respect to $\omega$ we obtain
\begin{equation} \label{D-0-0}
\partial_t \rho+\partial_\theta  \Big(\omega\rho  - K R \sin(\theta- \phi)\rho  \Big) = 0.  
\end{equation}

\subsubsection{A gradient flow formulation} 
In this part,  we discuss the gradient flow formulation of the K-S equation with $g(\omega) = \delta(\omega)$, i.e., we will write the system \eqref{KM} as a gradient flow in the probability space $\mathbb{P}(\mathbb{T})$ equipped with the Wasserstein metric $W_2$. For this, we adapt the differential calculus introduced by Felix Otto \cite{Otto}, which has proven to be a powerful tool for the study of the
dynamics and stability of evolutionary equations (we refer the reader to  \cite{J-K-O,Ottom,Otto} for the pioneering works on this topic) (see Appendix \ref{App-A}). We first set up a potential function for our gradient flow. 
In analogy with \eqref{p-pot} for the Kuramoto model, it is natural to come up with the following potential function for the K-S equation \eqref{KM-K-I}.
\[ \label{PO-K-I}
V_k(\rho(t)) := \frac{K}{2} \iint_{\mathbb T^2}(1-\cos(\theta_* - \theta)) \rho(\theta_*, t) \rho(\theta, t) ~ d\theta_* d\theta.
\]
Here the subscript $k$ stands for ``{\it kinetic}". We next present another handy expression for $V_k(\rho)$ in terms of the order parameter $R$ given in \eqref{O-K}. First, we let $\sigma:\mathbb{T}\rightarrow\mathbb{R}^{2}$ be defined by $\sigma(\theta) := (\cos \theta, \sin \theta)$. Then, given $\rho$ in $\mathbb{P}(\mathbb{T}),$ the above potential function $V_k$ can
be written as 
\begin{align}\label{eq:V}
V_k(\rho(t))=\frac{K}{2}-\frac{K}{2} \iint_{\mathbb T^2} \sigma(\theta)\cdot\sigma(\theta_{*})\rho(\theta,t)\rho(\theta_{*},t) ~ d\theta_* d\theta,
\end{align}
where $\sigma(\theta) \cdot \sigma (\theta_*)$ denotes the inner product between the two vectors in $\mathbb R^2$. 
We define the total momentum of $\rho$ by 
\begin{align}\label{eq:J f}
J:=\int_{\mathbb T} \sigma (\theta) \rho (\theta) ~ d\theta \in \mathbb R^2.
\end{align}
When we regard $Re^{i\phi}$ in \eqref{O-K} as a vector in $\mathbb{R}^{2}$, we have
\[
J=Re^{{\mathrm i}\phi}.
\]

From this, we can write
\begin{equation} \label{eq: V simple}
V_k(\rho)=\frac{K}{2} (1- |J|^{2}), \quad \mbox{equivalently} \quad  V_k (\rho)=\frac{K}{2} (1-R^{2}).
\end{equation}
By straightforward application of Otto calculus (see Appendix \ref{App-A}), we have
\[
{\rm grad}_\rho V_k = -K  \nabla (\sigma(\theta)\cdot J)=K R\sin(\theta-\phi),
\]
where ${\rm grad}_\rho$ denotes the gradient with respect to the Wasserstein metric. We now substitute this in \eqref{eq: grad flow} to get the gradient flow of $V_k$ as the 
 one-parameter family $t \in [0, \varepsilon) \mapsto \rho_t \in \mathbb P(\mathbb T)$
satisfying 
\[
\partial_{t}\rho= \partial_\theta \left(\rho {\rm grad}_\rho V_k   \right) = \partial_\theta \left(\rho K R \sin (\theta-\phi) \right).
\]
This is the same as \eqref{D-0-0} when $\omega = 0$, which verifies the gradient flow structure of the K-S equation. 

\begin{figure}
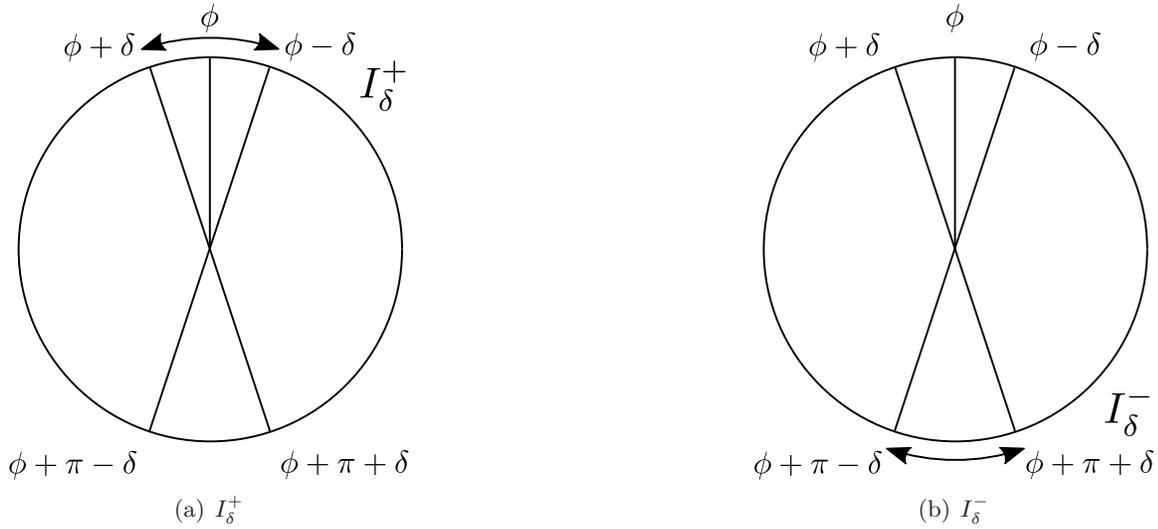

\centering
\subfigure[$I_\delta^+$]{\includegraphics[width=0.35\textwidth]{ipdelta.eps}\label{fig:iplus}}
\hspace{0.1\textwidth}
\hfill
\subfigure[$I_\delta^-$]{\includegraphics[width=0.35\textwidth]{imdelta.eps}\label{fig:iminus}}
\caption{Geometric descriptions of $I_\delta^+$ and $I_\delta^-$}
\label{fig:1}
\end{figure}

\section{Discussion of the main results} \label{sec:3}
\setcounter{equation}{0}
In this section, we briefly present our three main results on the emergent dynamics of the K-S equation. The proofs will be  given in the following three sections.
\subsection{Emergence of point attractors} In this subsequent, we briefly discuss how point attractors can emerge from 
generic smooth initial data, asymptotically along the Kuramoto-Sakaguchi flow with $g = \delta$. For the finite-dimensional Kuramoto model for $N$-identical oscillators, if we start from generic initial datum 
\[ r_0 = \frac{1}{N} \Big | \sum_{j=1}^{N} e^{{\mathrm i} \theta_{j0}} \Big|  > 0, \qquad \theta_{i0} \not = \theta_{j0}, \quad 1 \leq i, j \leq N, \]
then, any positive coupling strength $K$ will push the initial configuration to two possible scenarios. First scenario to happen is that the phases will concentrate near the time-dependent average phase which is also dynamic along the Kuramoto flow, and the second scenario will be a bi-polar configuration where $(N-1)$ oscillators will aggregate toward the average phase, and the remaining one oscillator will approach the antipodal phase of the average phase (see Proposition \ref{P2.2}). By an easy perturbation argument, we see that the bi-polar configuration is unstable (see \cite{C-H-J-K}). Thus, the completely phase synchronized state which we call one-point attractor in this paper, is the only stable asymptotic state for the Kuramoto model in any positive coupling regime. Thus, it is interesting to figure out what will happen for the K-S equation which can be obtained from the Kuramoto model as $N \to \infty$:
\begin{quote}
Does the same asymptotic patterns as in the Kuramoto flow emerge for the K-S flow from generic initial configurations?
\end{quote}

In fact, we will argue that the unstable bi-polar configurations can not be reachable from a generic smooth initial datum along the K-S flow. To justify and quantify our claim, we will first define a set consisting of two disjoint intervals containing the average phase and its antipodal point, respectively, and then we will show that the mass outside this set will approach to zero asymptotically. Finally, we will argue that the mass around the antipodal point of the average phase will tend to zero exponentially, i.e., all mass will aggregate in any small neighborhood of the average phase. Hence, asymptotically, one-point attractor will emerge. In processing the above procedures, analysis on the dynamics of $R$ and $\phi$ will play a key role.  To quantify the above sketched arguments, we define for a positive constant $\delta >0$, a set $I_{\delta} :=  I_\delta^+ \cup I_\delta^-$ (see Figure~\ref{fig:1}): 
\begin{align}
\begin{aligned} \label{NE-0}
I_\delta^+(t) &:= \{\theta\in \mathbb T: |\theta - \phi(t)|  < \delta  \}, \\
I_\delta^-(t) &:= \{\theta \in\mathbb T: |\theta - (\phi(t) + \pi)| < \delta\}.
\end{aligned}
\end{align}
Consequently, we will see
that eventually all the mass of
$\rho$
  will be concentrated in the region
  $I_{\delta}^{+}\cup I_{\delta}^{-}$
  for every $\delta>0.$ Since $R$
  is monotonic, if $R_0>0$, we necessarily have
\[  \int_{I_\delta^+} \rho ~ d\theta >  \int_{I_\delta^-} \rho ~ d\theta. \]
Then, it is reasonable to expect that  the mass will exit $I_{\delta}^{-}$
  and enter the interval $I_{\delta}^{+}$, and eventually will yield a point attractor,
  provided that we can control the
  rotation of $\phi(t)$. This is  proved
  in \eqref{NN-4} and \eqref{NN-5}, whereas in Proposition \ref{P4.1}
  we can control the rotation of $\phi(t),$ using the fact that
  $\dot{R}(t)\rightarrow 0.$ \newline
  
We are now ready to state our first result on the emergence of point attractors. 
\begin{theorem} \label{T3.1}
Suppose that the coupling strength, the density function $g$ and the initial datum satisfy
\[ K > 0,\quad  g(\omega) =\delta(\omega), \quad \rho_0 \in C^1, \quad \int_{\mathbb T} \rho_0 ~ d\theta = 1 \quad   {\rm{and}} \quad R_0 := R(0) > 0.      \]
Then, for a classical solution $\rho  ~ : ~  \mathbb{T}\times\mathbb{R_{+}}\rightarrow \mathbb{R}$
to \eqref{D-0-0}, the mass concentrates around the average phase $\phi(t)$ asymptotically. More precisely, for any $\delta > 0$, there exists $T_1 = T_{1}(\delta) \geq0,$ such that
\begin{align*}
& \lim_{ t \to \infty} \int_{I_\delta^+(t) } \rho(\theta, t) ~ d\theta = 1 \quad {\rm{and}} \cr
& \int_{I_{\delta}^{-}(t)}|\rho(\theta,t)|^{2} ~ d\theta\leq e^{-\frac{R(0)(\cos\delta)}{2}K(t-T_{1})}\int_{I_{\delta}^{-}(T_{1})}|\rho(\theta,T_{1})|^{2} ~ d\theta \qquad\forall~t\geq T_{1}. 
\end{align*}
\end{theorem}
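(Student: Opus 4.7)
The plan is to exploit the gradient flow structure to get monotonicity and compactness of the order parameter $R$, then use this to drive mass out of the neighborhood of the unstable antipodal point. I would organize the argument in four stages.

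First, I would establish that $R$ is nondecreasing and that its derivative is integrable. From the expression of the K-S equation in the form \eqref{D-0-0} (with $\omega=0$) and the identities \eqref{O-K-1}, a direct differentiation of $Re^{\mathrm i\phi} = \int_{\mathbb T}e^{\mathrm i\theta}\rho\,d\theta$ yields $\dot R = KR\int_{\mathbb T}\sin^{2}(\theta-\phi)\rho\,d\theta$ and $\dot\phi = -\tfrac{K}{2}\int_{\mathbb T}\sin\bigl(2(\theta-\phi)\bigr)\rho\,d\theta$; the first is consistent with the gradient flow formulation $V_k(\rho)=\tfrac{K}{2}(1-R^{2})$. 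Because $R\in[0,1]$ and $R$ is nondecreasing, it converges to some $R_\infty\geq R_0>0$, and $\int_{0}^{\infty}\int_{\mathbb T}\sin^{2}(\theta-\phi)\rho\,d\theta\,dt<\infty$.

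Second, I would show mass concentration on $I_\delta^{+}\cup I_\delta^{-}$. On the complement of this set, $\sin^{2}(\theta-\phi)\geq\sin^{2}\delta>0$, so the mass there is controlled by $\sin^{-2}\delta\cdot\int_{\mathbb T}\sin^{2}(\theta-\phi)\rho\,d\theta$; Step~1 forces this quantity to $0$ along a subsequence, and a short continuity/monotonicity argument (already summarized as Proposition~\ref{P4.1} in the excerpt) upgrades this to a true limit and shows $\dot\phi(t)\to 0$.

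Third, and this is the main analytical step, I would prove exponential decay of $\int_{I_\delta^{-}(t)}\rho^{2}\,d\theta$. Using Leibniz's rule with the moving interval, transport of $\rho^{2}$, and one integration by parts,
\begin{align*}
\frac{d}{dt}\int_{I_\delta^{-}(t)}\rho^{2}\,d\theta
=\int_{I_\delta^{-}(t)}KR\cos(\theta-\phi)\,\rho^{2}\,d\theta+\text{(boundary terms)}.
\end{align*}
On $I_\delta^{-}$ one has $\cos(\theta-\phi)\leq-\cos\delta$, giving an interior bound of $-KR\cos\delta\int_{I_\delta^{-}}\rho^{2}\,d\theta$. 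The boundary terms decompose into an outflow contribution (negative, since characteristics move outward at both endpoints of $I_\delta^{-}$) and a $\dot\phi$-dependent correction, so once $|\dot\phi(t)|<KR\sin\delta$, which holds for $t\geq T_{1}$ by Step~2, the full boundary contribution is nonpositive. Since $R(t)\geq R(0)$ by monotonicity, Gronwall's inequality yields the stated bound (the extra factor of $1/2$ absorbs any slack from the boundary correction and from the time needed for $R$ and $\dot\phi$ to stabilize).

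Finally, the convergence $\int_{I_\delta^{+}(t)}\rho\,d\theta\to 1$ follows by combining Step~2 (total mass in $I_\delta^{+}\cup I_\delta^{-}$ tends to $1$) with Cauchy--Schwarz on the $L^{2}$-bound of Step~3 to control $\int_{I_\delta^{-}}\rho\,d\theta$. The delicate point, and what I expect to require the most care, is the boundary-term bookkeeping in Step~3: one must track the contribution of the moving frame simultaneously with the outflow, and show that the rotational rate $\dot\phi$ is dominated by the expelling velocity $KR\sin\delta$ at the endpoints for all sufficiently large time, which is why $T_{1}$ enters the statement.
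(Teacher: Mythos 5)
Your proposal is correct and follows essentially the same route as the paper: monotonicity of $R$ with $\dot R\to 0$ and $\dot\phi\to 0$ (Proposition \ref{P4.1}), the resulting concentration on $I_\delta^+\cup I_\delta^-$, the Lyapunov functional $\int_{I_\delta^-(t)}\rho^2\,d\theta$ with the moving-boundary terms controlled by $|\dot\phi|<KR\sin\delta$ for $t\geq T_1$, Gronwall, and Cauchy--Schwarz to rule out the antipodal mass. Your boundary-term bookkeeping in Step 3 matches the paper's computation \eqref{D-5}--\eqref{D-6} exactly, so no gap remains.
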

\begin{proof}
The proof will be given in Section \ref{sec:4.2}.
\end{proof}
\begin{remark} 
Concentration of mass around  $\phi$  has been proved in \cite{B-C-M} by a different argument without the exponential decay estimate of mass in the interval $I_{\delta}^{-}$. 
\end{remark}

\vspace{0.2cm}

\subsection{Emergence of phase concentration} \label{sec:3.2} In this subsection and the next, we will show that there exists an interval centered at $\phi$ where the mass will concentrate asymptotically, when the coupling strength is sufficiently large. Moreover, we will also present a lower bound for the asymptotic amplitude order parameter $R = R(t)$ which tends to unity as $K \to \infty$. Before we discuss our second and third main results, we first recall corresponding results for the Kuramoto model in a large coupling strength regime (see Proposition \ref{P2.2}). Consider an ensemble consisting of $N$ Kuramoto oscillators and assume that more than $\frac{N}{2}$ oscillators are confined in a small interval ${\mathcal I}$ at some instant. We divide the ensemble into two sub-ensembles consisting of confining oscillators in the interval ${\mathcal I}$ and the rest of it. In this situation, if we choose a sufficiently large coupling strength, then the confining oscillators will stay in the interval and drifting oscillators might enter neighboring copies of the interval $({\mathcal I} + 2\pi) \cup ({\mathcal I}-2\pi)$. Thus, trajectories of oscillators will be bounded eventually. We now return to the K-S equation. As we have seen in the previous subsection, for identical oscillators, the total mass will concentrate asymptotically on the average phase. Thus, for the density function $g = g(\omega)$ with a compact support, we can imagine that a similar scenario for the Kuramoto model will happen, i.e., once we choose a large coupling strength compared to the size of the support of the natural frequency density function $g$, we can guess that mass will be confined inside some small interval around the average phase. In fact, this is the case. To be more precise, assume that $g = g(\omega)$ is compactly supported inside the interval $[-M, M]$ and some significant portion of local mass of $f_0$ is concentrated on some time-dependent interval $L(t)$ centered the average phase $\phi(t)$: for some positive constant $C > 0$
\[   \iint_{L(0) \times \mathbb R} f_0(\theta, \omega) ~ d\theta d\omega >   C.  \]
In this setting, for  a large coupling strength $K \gg M$, we will derive 
\begin{align*}
& \frac{d}{dt} \iint_{L(t) \times \mathbb R} f(\theta, \omega, t) ~ d\theta d\omega  \geq 0, \cr
& \frac{d}{dt} \int_{L(t)} |f(\theta, \omega, t)|^2 ~ d\theta  \geq K |{\mathcal O}(1)|  \int_{L(t)} |f(\theta,\omega, t)|^2 ~ d\theta, \quad \omega \in \mbox{supp}~g(\omega). 
\end{align*}
Thus, we have
\[ \frac{d}{dt} \iint_{L(t) \times \mathbb R} f(\theta, \omega, t) ~ d\theta d\omega \geq 0, \qquad  \int_{L(t)} |f(\theta, \omega, t)|^2  ~ d\theta \geq C e^{Ct}, \quad \omega \in \mbox{supp}~g(\omega).    \]
Note that the first estimate says that the mass on the set $L(t) \times \mathbb R$ in the phase space is nondecreasing, i.e., the mass does not leak to complement of this set, and the second estimate tells us that for each fixed $\omega \in \mbox{supp}~g(\omega)$, there should be some mass concentration. In this sense, we may say that the time-dependent set $L(t) \times \mathbb R$ converges to an invariant manifold for the K-S flow. For a precise statement, we set 
\begin{equation} \label{MM}
{\mathcal M}_*(\varepsilon_0, \gamma_0) := \frac{2+\varepsilon_0+\cos\gamma_0}{(1+\sin\gamma_0)(1+\cos\gamma_0)}. \end{equation}
We now ready to state our second main result summarizing the above arguments. 
\begin{theorem} \label{T3.2}
Suppose that the following conditions hold.
\begin{enumerate}
\item
The frequency density function $g=g(\omega)$ and coupling strength $K$ satisfies
\[ \mbox{\rm supp}\,  g(\omega) \subset (-M, M), \quad  K>\frac{M}{\varepsilon_0}\bigg(1+\frac{1}{\varepsilon_0}\bigg), \quad \varepsilon_0 \in  \Big(0, \frac{3\sqrt{3}}{4}-1 \Big). \]
\item
Suppose that initial datum $f_0$ satisfies
\begin{align*}
& (i)~f_0(\theta,\omega) = 0  \hspace{1em}{\rm{in}}\hspace{1em}
\mathbb{T}\times (\mathbb{R}\backslash[-M,M]), \quad ||f_0||_{L^{\infty}} < \infty, \cr
\vspace{0.5cm}
& (ii)~\iint_{L^+_{\gamma_0}(0) \times \mathbb R} f(\theta, \omega, 0) ~ d\theta d\omega  \geq {\mathcal M}_*(\varepsilon_0, \gamma_0), 
\end{align*}
where $\gamma_0$ satisfies
\[ \frac{\pi}{3} \leq \gamma_0 < \arcsin\Big(1 - \frac{2\varepsilon_0}{2\sqrt{3} + 1} \Big). \]
\end{enumerate}
Then, for any $C^1$-solution to \eqref{K-S}, there exists a time-dependent interval $L(t) \subset \mathbb T$ centered around $\phi(t),$ with fixed width  such that 
\[ \frac{d}{dt} \iint_{L(t)\times \mathbb R} f(\theta, \omega, t) ~ d\theta d\omega \geq 0, \qquad  \int_{L(t)} |f(\theta, \omega, t)|^2  ~ d\theta \geq C e^{Ct}, \quad \omega \in \mbox{supp}~g(\omega).    \]
\end{theorem}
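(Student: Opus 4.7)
The plan is to pass to the moving frame $\psi=\theta-\phi(t)$, in which \eqref{KM-K-I-1} is a transport equation with velocity $\omega-\dot\phi-KR\sin\psi$, and to take $L(t)=[\phi(t)-\ell,\phi(t)+\ell]$ with a fixed half-width $\ell$ chosen in $(\gamma_0,\pi/2)$ so that the restoring term $KR\sin\ell$ dominates $|\omega-\dot\phi|$ uniformly for $\omega\in\operatorname{supp} g \subset [-M,M]$. All conclusions will follow from the single bootstrap inequality
\[
KR(t)\sin\ell > M+|\dot\phi(t)|,
\]
which is exactly the statement that characteristics cross both endpoints of $L(t)$ inward.

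First I would derive the two one-sided controls that close the bootstrap. Since $R(t)=\int_{\mathbb T}\cos(\theta-\phi)\rho\,d\theta$ and $\int_{\mathbb T}\rho\,d\theta=1$, splitting at $L(t)$ gives
\[
R(t)\geq (1+\cos\ell)\,m_L(t)-1,\qquad m_L(t):=\iint_{L(t)\times\mathbb R} f\,d\theta d\omega,
\]
while differentiating the order-parameter relation $Re^{i\phi}=\iint e^{i\theta}f\,d\theta d\omega$ along \eqref{K-S} gives $R\dot\phi=\iint\omega\cos(\theta-\phi)f\,d\theta d\omega$, hence $|\dot\phi|\leq M/R$. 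Combining these, the bootstrap reduces to $KR\sin\ell>M(1+R^{-1})$, a condition that becomes easier to maintain as $R$ grows. With $\ell$ slightly larger than $\gamma_0$ we have $L^+_{\gamma_0}(0)\subset L(0)$, so hypothesis (ii) yields $m_L(0)\geq \mathcal M_*$; the specific form of $\mathcal M_*$ in \eqref{MM} together with $K>(M/\varepsilon_0)(1+\varepsilon_0^{-1})$ is calibrated so that the bootstrap is strict at $t=0$.

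Next, a Reynolds transport calculation on the moving interval produces
\[
\frac{d}{dt}\iint_{L(t)\times\mathbb R} f\,d\theta d\omega = \int_{\mathbb R}\Bigl[(KR\sin\ell-\omega+\dot\phi)f(\phi+\ell,\omega,t)+(KR\sin\ell+\omega-\dot\phi)f(\phi-\ell,\omega,t)\Bigr]d\omega,
\]
which is nonnegative under the bootstrap since $f\geq 0$; this is the first conclusion, and it moreover keeps $m_L(t)\geq\mathcal M_*$, so the lower bound on $R(t)$ is preserved and the bootstrap self-propagates on $[0,\infty)$. For the $L^2$ estimate, fix $\omega\in\operatorname{supp} g$ and use $\partial_t(f^2)=-\partial_\theta(vf^2)-f^2\partial_\theta v$ with $\partial_\theta v=-KR\cos(\theta-\phi)\leq -KR\cos\ell$ on $L(t)$: the interior term contributes at least $KR\cos\ell\int_{L(t)} f^2\,d\theta$, while the boundary plus moving-frame fluxes assemble into the same inward-flow expression displayed above and are again nonnegative. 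Grönwall then yields the second conclusion with rate proportional to $KR\cos\ell$.

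The main obstacle, and the reason for the precise form of the hypotheses, is the simultaneous calibration of three quantities: the half-width $\ell\in(\gamma_0,\pi/2)$, the threshold $\mathcal M_*(\varepsilon_0,\gamma_0)$, and the lower bound on $K$. They must be compatible so that (a) $L^+_{\gamma_0}(0)\subset L(0)$ holds, (b) $(1+\cos\ell)\mathcal M_*-1$ is strictly positive and large enough to make $KR\sin\ell>M(1+R^{-1})$ strict at $t=0$, and (c) $\cos\ell>0$ so the $L^2$ expansion is genuine. The constraints $\varepsilon_0<3\sqrt{3}/4-1$ and $\pi/3\leq\gamma_0<\arcsin(1-2\varepsilon_0/(2\sqrt 3+1))$ in the hypotheses are precisely what makes this three-parameter system solvable; verifying this algebra, rather than the transport calculations, is the main bookkeeping step.
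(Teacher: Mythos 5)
Your flux and $L^2$ computations are fine in outline, but the estimate on which the whole bootstrap rests is false. Differentiating $Re^{\mathrm{i}\phi}=\iint e^{\mathrm{i}\theta}f\,d\theta d\omega$ along \eqref{K-S} does \emph{not} give $R\dot\phi=\iint\omega\cos(\theta-\phi)f\,d\theta d\omega$: integrating by parts against the full velocity $v=\omega-KR\sin(\theta-\phi)$ produces the additional coupling term $-\tfrac{KR}{2}\int_{\mathbb T}\sin\bigl(2(\theta-\phi)\bigr)\rho\,d\theta$, which does not vanish (only $\int_{\mathbb T}\sin(\theta-\phi)\rho\,d\theta=0$ does). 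The correct estimate, as in Lemma \ref{L5.2} and Lemma \ref{L5.3}, is $|\dot\phi|\leq \frac{M}{R}+K(1-R)$, and the extra $K(1-R)$ term is the dominant obstruction because it scales with $K$ exactly like your restoring term $KR\sin\ell$. With it, your bootstrap $KR(t)\sin\ell>M+|\dot\phi(t)|$ becomes $K\bigl(R(1+\sin\ell)-1\bigr)>M\bigl(1+\tfrac1R\bigr)$, so you need $R>\frac{1}{1+\sin\ell}$ with a definite margin; no largeness of $K$ compensates once $R(1+\sin\ell)-1\le 0$, and your claim that ``the bootstrap reduces to $KR\sin\ell>M(1+R^{-1})$'' is therefore unjustified.

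This is precisely where your choice of a \emph{wide} interval of half-width $\ell\in(\gamma_0,\pi/2)$ breaks down, and why the hypotheses are calibrated for the narrow interval $L^+_{\gamma_0}(t)$ of half-width $\tfrac{\pi}{2}-\gamma_0\le\tfrac{\pi}{6}$ used in the paper. On your $L(t)$ the mass-to-$R$ conversion gives only $R\ge(1+\cos\ell)m_L-1$ with $\cos\ell<\cos\gamma_0\le\tfrac12$, so closing the corrected bootstrap with margin $\varepsilon_0$ would require
\[
m_L(0)\;\ge\;\frac{2+\varepsilon_0+\sin\ell}{(1+\cos\ell)(1+\sin\ell)},
\]
i.e.\ the threshold \eqref{MM} with the roles of $\sin$ and $\cos$ swapped; for $\ell\ge\pi/3$ this quantity already exceeds $1$ (at $\ell=\pi/3$ it equals $\tfrac{2(4+\sqrt3)}{3(2+\sqrt3)}\approx1.02$ even with $\varepsilon_0=0$), so it can never be met, while your hypothesis only provides $m_L(0)\ge\mathcal{M}_*(\varepsilon_0,\gamma_0)<1$. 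The paper's proof works on $L^+_{\gamma_0}$, where mass converts with the strong factor $\sin\gamma_0\ge\tfrac{\sqrt3}{2}$, giving $R\ge(1+\sin\gamma_0)\mathcal{M}_*-1=\tfrac{1+\varepsilon_0}{1+\cos\gamma_0}$, hence $R(1+\cos\gamma_0)-1\ge\varepsilon_0$ and $R\ge\varepsilon_0$; the flux inequality then reads $K\bigl(R(1+\cos\gamma_0)-1\bigr)-M\bigl(1+\tfrac1R\bigr)\ge K\varepsilon_0-M\bigl(1+\tfrac1{\varepsilon_0}\bigr)>0$, exactly matching the assumed lower bound on $K$, and the same quantity yields the Gronwall rate $K\varepsilon_0\sin\gamma_0$ in the $L^2$ estimate. (Your ``the bootstrap self-propagates'' step would also need the continuity argument the paper runs through the sets $\mathcal{T}_\eta$, but that is minor compared with the two issues above.)
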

\begin{proof} We present its proof in Section \ref{sec:5}.
\end{proof}

\subsection{Asymptotic dynamics of the order parameter} Notice that in Theorem \ref{T3.2}, we assumed a certain lower bound on the mass in a certain interval. We remove such assumption for our third main result which we now describe. For the Kuramoto model \eqref{KM}, the dynamics of the order parameter $r$ does play a key role in the recent resolution of the complete synchronization problem in \cite{H-K-R}. As noticed in Proposition \ref{P2.1}, for the Kuramoto model, if the order parameter $r$ is close to 1, we can say that the configuration is close to complete phase synchronization where all phases are concentrated at some common phase. Our third result is concerned about the estimation of  the order parameter in a large coupling strength regime. More precisely, we will obtain a positive lower bound as
\[  \liminf_{t\rightarrow\infty}R(t) \geq 1 - \frac{|{\mathcal O}(1)|}{\sqrt{K}}, \quad \mbox{for $K \gg 1$}. \]
This certainly implies that as $K \to \infty$, 
\[ \lim_{K \to \infty}  \liminf_{t\rightarrow\infty}R(t) = 1. \]
By Proposition \ref{P2.1}, this means the formation of complete phase synchronization in $K \to \infty$ limit, which can be understood as an emergence of practical synchronization. Below, we state our third result.
\begin{theorem}\label{T3.3} Let $ f  ~ : ~  \mathbb{T}\times{\mathbb{R}}\times\mathbb{R^{+}}\rightarrow \mathbb{R}$
 be a classical solution to \eqref{K-S}. Suppose $g$ is supported on the 
interval $[-M,M]$, $R(0):=R_0 > 0$, and $K$ is sufficiently large
(depending on the support of $g$ and $1/R_0$). Then, 
\[
 \liminf_{t\rightarrow\infty}R(t)\geq R_\infty := 1+\frac{M}{K}-\sqrt{\frac{M^{2}}{K^{2}}+4\frac{M}{K}} 
\]
and
\[
  \lim_{t \rightarrow \infty} || f_{t} {\mathbbm 1}_{\mathbb{T}\backslash L_{\infty}(t)}||_{L^{\infty}(\mathbb{T}\times \mathbb{R})}=0. 
\]
Here, $L_{\infty}(t)\subset \mathbb{T}$ is a time 
dependent interval, centered at $\phi(t)$ with the constant width
\[
{\arccos}\bigg(\sqrt{1-\bigg[\frac{M}{K}\frac{(1+R_\infty)}{R_\infty^{2}}+\frac{1-R_\infty}{R_\infty}\bigg]^{2}}\bigg) + \varepsilon, 
\]
where $\varepsilon$ is an arbitrary constant in $(0,1)$. Notice that as  $K\rightarrow \infty$ the width of $L_{\infty}$ can be made arbitrarily small and $R_\infty$ tends to $1.$  
\begin{proof}
The proof will be given in Section \ref{sec:6.3}.
\end{proof}

\end{theorem}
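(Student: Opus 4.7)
The plan is to combine a differential analysis of the amplitude order parameter $R(t)$ with a characteristic invariance argument, and then close the loop self--consistently by a bootstrap continuity argument. As a preliminary step I would differentiate the definition $Re^{\mathrm{i}\phi}=\iint e^{\mathrm{i}\theta}f\,d\theta d\omega$ in time, insert the mean--field form \eqref{KM-K-I-1}, integrate by parts in $\theta$, and separate real and imaginary parts to obtain
\begin{align*}
\dot R &= -\iint \omega\sin(\theta-\phi)f\,d\theta d\omega + KR\int\sin^{2}(\theta-\phi)\rho\,d\theta,\\
R\dot\phi &= \iint\omega\cos(\theta-\phi)f\,d\theta d\omega - \tfrac{KR}{2}\int\sin(2(\theta-\phi))\rho\,d\theta.
\end{align*}
Bounding the $\omega$--terms by $M$ and applying Cauchy--Schwarz to the first line yields the quadratic inequality $\dot R\ge -M X+KR\,X^{2}$ in $X:=\bigl(\int\sin^{2}(\theta-\phi)\rho\,d\theta\bigr)^{1/2}$, which is nonnegative whenever $KRX\ge M$.

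The central difficulty is that this inequality gives no bound on $R$ unless one has a lower bound on $X$, which in turn requires that the mass be kept away from the antipodal point $\phi+\pi$ where $\sin$ vanishes. To obtain this I would analyse the characteristic ODE $\dot\theta=\omega-KR\sin(\theta-\phi)$ in the rotating frame $\psi=\theta-\phi$, where it reads $\dot\psi=(\omega-\dot\phi)-KR\sin\psi$. When $KR>M+|\dot\phi|$ (guaranteed for $K$ large and $R$ bounded below) this field has an attracting equilibrium $\psi_{*}(\omega)=\arcsin((\omega-\dot\phi)/(KR))$ inside $(-\pi/2,\pi/2)$ and a repelling one near $\pi$; the unstable linearisation near $\psi=\pi$ expands the Jacobian of the characteristic flow at a uniform exponential rate, so $\|f_t\|_{L^{\infty}}$ outside any fixed neighbourhood of the attracting branch decays exponentially. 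This both confines the mass inside the half--circle around $\phi$ and, via the moment identities $\int(1\pm\cos(\theta-\phi))\rho=1\pm R$, produces the missing lower bound on $X$. Reinserting this into the $\dot R$--inequality, the sign of the right--hand side changes precisely at the root of $(1-R_\infty)^{2}=2(M/K)(1+R_\infty)$, the quadratic equation that defines the displayed $R_\infty=1+M/K-\sqrt{M^{2}/K^{2}+4M/K}$.

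The bootstrap is then the following continuity argument. Fix $\eta>0$, set $t^{*}:=\inf\{t:R(t)\le R_\infty-\eta\}$, and show that on $[0,t^{*}]$ the required characteristic confinement holds (since $KR(t)>M$ uniformly), so by the previous step $\dot R(t^{*})>0$; this contradicts the definition of $t^{*}$ and forces $t^{*}=\infty$, giving $\liminf_{t\to\infty}R(t)\ge R_\infty$. For the support statement I would then feed the asymptotic lower bound back into the characteristic analysis: a particle with natural frequency $\omega\in[-M,M]$ is eventually confined to $|\psi|\le\arcsin((M+|R\dot\phi|)/(KR_\infty))$, and bounding $|R\dot\phi|$ through the imaginary--part equation (whose right--hand side contains only terms controlled by $M$ and by $|\int\sin(2(\theta-\phi))\rho|$, itself bounded in terms of $1-R$) and simplifying with the relation $(1-R_\infty)^{2}=2(M/K)(1+R_\infty)$ produces the exact expression $\frac{M(1+R_\infty)}{KR_\infty^{2}}+\frac{1-R_\infty}{R_\infty}$ as the argument of $\arcsin$. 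The identity $\arccos\sqrt{1-x^{2}}=\arcsin x$ then recasts this as the displayed arccos formula for the width of $L_\infty(t)$, while the Jacobian expansion argument upgrades $L^{1}$ mass decay to the $L^{\infty}$ convergence $\|f_{t}\mathbbm{1}_{\mathbb T\setminus L_{\infty}(t)}\|_{L^{\infty}}\to 0$; the additional $\varepsilon$ in the width accommodates the transient time needed before the characteristic equilibrium is approximated.

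The principal obstacle is the interlocking of three unknowns: the bound on $R$ needs a lower bound on $\int\sin^{2}(\theta-\phi)\rho$, which needs mass concentration, which needs both a lower bound on $KR$ and an upper bound on $|\dot\phi|$, the latter depending in turn on the concentration itself. I expect this to be handled by taking $K$ sufficiently large depending only on $M$ and $1/R_{0}$ so that all three conditions hold with strict slack at $t=0$, then running the continuity/barrier argument jointly on the three quantities; monotonicity of $R_\infty$ in $M/K$ then upgrades every improvement of $K$ into a strictly stronger $\liminf$ bound, in the same spirit as the large--coupling phase--locking estimates for the finite Kuramoto model recalled in Proposition \ref{P2.2} and \cite{H-K-R}.
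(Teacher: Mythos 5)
Your ingredients for the confinement part are essentially the paper's: the rotating-frame characteristic equation $\dot\psi=(\omega-\dot\phi)-KR\sin\psi$, the attracting equilibrium at $\arcsin$-scale and the repelling one near $\pi$, and the observation that $f$ decays along characteristics in the antipodal region (equivalently, positive divergence of the velocity field there) are exactly what drive Lemma \ref{L6.7}, Proposition \ref{P6.3} and the barrier construction in Appendix \ref{App-F}, and your width formula for $L_\infty(t)$ matches the paper's. However, there is a genuine gap in how you obtain $\liminf_{t\to\infty}R(t)\geq R_\infty$. Your continuity argument, ``set $t^{*}=\inf\{t:R(t)\le R_\infty-\eta\}$ and show $\dot R(t^{*})>0$,'' only prevents a downward crossing of a level that $R$ already exceeds; the theorem assumes only $R_0>0$, and for large $K$ one has $R_\infty$ close to $1$, so generically $R_0<R_\infty-\eta$ and your $t^{*}$ is $0$ (or the argument never starts). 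The actual content of the theorem is the upward dynamics: one must show $R$ does not stagnate or decay from a small $R_0$, that $\dot R$ cannot hover near zero while mass sits antipodally, and that concentration has had time to set in before any quantitative lower bound on $\int\sin^2(\theta-\phi)\rho$ is available. The paper spends most of Section \ref{sec:6} on precisely this: the quarter-circle mass estimates at non-increasing instants of $R$ (Lemma \ref{L6.3}, Proposition \ref{P6.1}), the $K\mu$-threshold and recovery estimate (Lemmas \ref{L6.4}, \ref{L6.5}), the resulting uniform bound $R\ge R_0-2E_1-E_2>R_0/2$ (Corollary \ref{C6.1}), a Riccati comparison pushing $R$ above $2/3$ (Proposition \ref{P6.2}), and only then a staged self-consistent iteration using Proposition \ref{P6.3}. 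None of this is present, nor replaced by an equivalent mechanism, in your sketch; acknowledging the circularity and saying the three conditions ``hold with strict slack at $t=0$'' does not resolve it, because with arbitrary $\mathcal C^1$ data and small $R_0$ the concentration hypotheses simply do not hold initially.

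A second, more technical problem is your assertion that, after reinserting the lower bound on $X=\bigl(\int\sin^2(\theta-\phi)\rho\,d\theta\bigr)^{1/2}$, the sign of $\dot R\ge -MX+KRX^{2}$ changes exactly at the root of $(1-R)^{2}=2(M/K)(1+R)$. This is not derived, and the natural estimate available after half-circle confinement, $X^{2}\ge\int(1-\cos(\theta-\phi))\rho\,d\theta=1-R$, produces the cubic relation $K^2R^2(1-R)=M^2$, not the quadratic $Q(x)=x^{2}-2(1+\tfrac MK)x+1-2\tfrac MK$ whose smaller root is $R_\infty$. In the paper $R_\infty$ does not arise from a sign change of $\dot R$ at all: it is the fixed point of the self-consistency between the asymptotic confinement width $\cos(\theta-\phi)>\sqrt{1-\varepsilon_\kappa^{2}}-\varepsilon$ (with $\varepsilon_\kappa=\frac{\kappa+1}{\kappa^{2}}\frac MK+\frac{1-\kappa}{\kappa}$) and the lower bound on $R$ that this confinement induces, run as a contradiction argument against $R_*<R_\infty$. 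To repair your proof you would either have to justify the claimed differential-inequality threshold (which, as written, does not follow) or switch to this fixed-point scheme, and in both cases you still need the preliminary uniform lower bound and growth mechanism for $R$ described above.
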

In the following three sections, we will present proofs of the main theorems and of many lemmata. 
\section{Emergence of point attractors} \label{sec:4}
\setcounter{equation}{0}
In this section, we present existence of point attractors for the K-S equation with $g = \delta$  from a generic initial datum using the time-asymptotic approach.  Without loss of generality, we may assume that the common natural frequency $\omega_c$ is zero and that the corresponding density function $g =g(\omega)$ satisfies 
\[ g(\omega) = \delta_0. \] 

\noindent We first note that the local mass density $\rho$ satisfies the following two equivalent equations:
\begin{equation} \label{KM-K-I}
\begin{dcases}
\partial_t \rho  + \partial_\theta (v[\rho] \rho) = 0, \quad \theta \in \mathbb T,~~t > 0, \\
v[\rho](\theta, t) = - K \int_{\mathbb T} \sin (\theta - \theta_*) \rho(\theta_*, t)  ~ d\theta_*.
\end{dcases}
\quad \mbox{or} \quad \partial_t \rho - \partial_\theta  \Big(K R \rho \sin(\theta- \phi) \Big) = 0.
\end{equation}
In the following two subsections, we will study the dynamics of the amplitude order parameter $R$ and present the proof of Theorem \ref{T3.1}. 

\subsection{Dynamics of order parameters}\label{sec:4.1} 
In this subsection, we derive a coupled dynamical system for the order parameters $R$ and $\phi$, and using this dynamical system, we analyze their asymptotics. Note that the complete phase synchronization occurs if and only if $R \to 1$ as $t\to \infty$. \newline

For the derivation of dynamics of $R$ and $\phi$, we differentiate the defining relation \eqref{O-K} with respect to $t,$ and we obtain
\begin{equation}\label{D-O}
{\dot R} e^{{\mathrm i}\phi} + {\mathrm i} R \dot\phi e^{{\mathrm i}\phi} = \int_{\mathbb T} \partial_t \rho(\theta, t) e^{{\mathrm i}\theta}  ~ d\theta.
\end{equation}
We divide \eqref{D-O} by $e^{i\phi}$ on both sides to get
\begin{equation}\label{D-O-1}
{\dot R} + {\mathrm i} R \dot\phi  = \int_{\mathbb T} \partial_t \rho(\theta, t) e^{{\mathrm i}(\theta-\phi)}  ~ d\theta.
\end{equation}
We compare real and imaginary parts of \eqref{D-O-1} and employ \eqref{KM-K-I} to derive relations for $R$ and $\phi$:
\begin{align}
\begin{aligned} \label{R}
{\dot R} &= \int_{\mathbb T} \partial_t \rho(\theta, t) \cos (\theta-\phi)  ~ d\theta, \quad t > 0, \\
{\dot \phi} &=  \frac{1}{R} \int_{\mathbb T} \partial_t \rho(\theta, t) \sin (\theta-\phi)  ~ d\theta.
\end{aligned}
\end{align}
\begin{lemma}\label{L4.1}
Let $\rho$ be a solution to \eqref{KM-K-I} and let $R$ and $\phi$ be the order parameters defined by the relation \eqref{O-K-R}. Then, $R$ and $\phi$ satisfy 
\begin{align*}
& (i)~{\dot R} = KR \int_{\mathbb T}  \sin^2(\theta - \phi) \rho(\theta, t)  ~ d\theta, \quad {\dot \phi} =  -\frac{K}{2} \int_{\mathbb T} \sin\left(2(\theta - \phi)\right)  \rho(\theta, t)  ~ d\theta. 
\cr
& (ii)~{\ddot R} =  \frac{({\dot R})^2}{R} + 2R(\dot\phi)^2 - 2(KR)^2 \int_{\mathbb T} \sin^2 (\theta - \phi) \cos(\theta - \phi) \rho(\theta, t)   ~ d\theta.
\end{align*}
\end{lemma}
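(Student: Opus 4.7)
The plan is to take the identities \eqref{R} for $\dot R$ and $\dot\phi$ and substitute the transport equation \eqref{KM-K-I} in its mean-field form $\partial_t\rho = \partial_\theta\!\bigl(KR\,\rho\sin(\theta-\phi)\bigr)$, then integrate by parts on the torus $\mathbb{T}$ (no boundary contributions). For (i), the identity for $\dot R$ follows because $\partial_\theta\cos(\theta-\phi)=-\sin(\theta-\phi)$, so the integration-by-parts sign combines with this derivative sign to yield the positive integrand $KR\sin^2(\theta-\phi)\rho$. The identity for $\dot\phi$ is analogous: $\partial_\theta\sin(\theta-\phi)=\cos(\theta-\phi)$ produces an integrand $\sin(\theta-\phi)\cos(\theta-\phi)\rho$, and a single $R$ cancels against the prefactor $1/R$; the double-angle identity $2\sin x\cos x=\sin(2x)$ then gives the stated form.

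For (ii), I would differentiate the $\dot R$ formula obtained in (i) directly in $t$, treating $K$ as a constant and applying the product rule to the three $t$-dependent factors $R$, $\sin^2(\theta-\phi)$, and $\rho$. The piece where $\dot R$ hits the prefactor $KR$ is immediately $\dot R\cdot(\dot R/R)=(\dot R)^2/R$, where I used (i) to rewrite $K\!\int\!\sin^2(\theta-\phi)\rho\,d\theta$ as $\dot R/R$. The piece where $\partial_t$ hits $\sin^2(\theta-\phi)$ produces $-\dot\phi\int\sin(2(\theta-\phi))\rho\,d\theta$, and substituting the $\dot\phi$ formula from (i) turns this into $2R(\dot\phi)^2$. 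The piece where $\partial_t$ hits $\rho$, after substituting the transport equation and integrating by parts against $\partial_\theta\sin^2(\theta-\phi)=\sin(2(\theta-\phi))=2\sin(\theta-\phi)\cos(\theta-\phi)$, produces the final $-2(KR)^2\int\sin^2(\theta-\phi)\cos(\theta-\phi)\rho\,d\theta$ term.

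This is essentially a bookkeeping computation rather than an argument with a genuine obstacle. The only small care points are (a) keeping the signs straight under integration by parts (leveraging the fact that $\mathbb{T}$ is boundary-free, and that $\cos$ and $\sin$ swap with opposite signs under $\partial_\theta$), and (b) recognizing that every stray occurrence of $\int\sin(2(\theta-\phi))\rho\,d\theta$ should be re-expressed through the $\dot\phi$-identity of (i) in order to arrive at the clean quadratic form $2R(\dot\phi)^2$ in the final expression for $\ddot R$. No further analytical input (regularity beyond $\mathcal{C}^1$, no compactness, no monotonicity) is required for this lemma.
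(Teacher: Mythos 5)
Your proposal is correct and follows essentially the same route as the paper: substitute $\partial_t\rho=\partial_\theta\bigl(KR\,\rho\sin(\theta-\phi)\bigr)$ into the identities \eqref{R}, integrate by parts on $\mathbb{T}$, and for (ii) differentiate the $\dot R$-formula and re-express the stray integrals via (i) to obtain $(\dot R)^2/R+2R(\dot\phi)^2$ plus the cubic term. The only cosmetic slip is that in the middle term you drop the prefactor $KR$ when writing $-\dot\phi\int_{\mathbb T}\sin\bigl(2(\theta-\phi)\bigr)\rho\,d\theta$, but once it is restored the identification with $2R(\dot\phi)^2$ goes through exactly as you claim.
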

\begin{proof} 
(i) We use  \eqref{R} and \eqref{KM-K-I} to obtain
\begin{align}
\begin{aligned}\label{D-O-R}
{\dot R} &= \int_{\mathbb T} \cos(\theta - \phi) \partial_t \rho  ~ d\theta \\
&= KR \int_{\mathbb T} \cos(\theta - \phi) \partial_\theta \Big[ \rho(\theta, t) \sin(\theta - \phi) \Big]  ~ d\theta  \\
&= KR \int_{\mathbb T}  \sin^2(\theta - \phi) \rho(\theta, t)  ~ d\theta.
\end{aligned}
\end{align}
Similarly, we have
\begin{align*}
\dot \phi &= \frac{1}{R} \int_{\mathbb T} \sin(\theta - \phi) \partial_t \rho(\theta, t)  ~ d\theta \\
&= K \int_{\mathbb T} \sin(\theta - \phi) \partial_\theta \Big[ \rho(\theta, t) \sin(\theta - \phi) \Big]  ~ d\theta\\
&= - K \int_{\mathbb T} \sin(\theta - \phi) \cos(\theta - \phi) \rho(\theta, t)  ~ d\theta\\
& = -\frac{K}{2} \int_{\mathbb T} \sin\left(2(\theta - \phi)\right)  \rho(\theta, t)  ~ d\theta. 
\end{align*}

\noindent (ii) We again differentiate \eqref{D-O-R} with respect to $t$ to get
\begin{align*}
{\ddot R} &= K{\dot R} \int_{\mathbb T} \sin^2 (\theta - \phi) \rho(\theta, t)  ~ d\theta  - 2 K R {\dot \phi} \int_{\mathbb T} \sin(\theta - \phi)\cos(\theta - \phi) \rho(\theta, t)  ~ d\theta \\
&+ KR \int_{\mathbb T } \sin^2(\theta - \phi) \partial_t \rho(\theta, t)  ~ d\theta \\
&= \frac{({\dot R})^2}{R} + 2R({\dot \phi})^2 + (KR)^2 \int_{\mathbb T} \sin^2 (\theta - \phi) \partial_\theta \Big[ \rho(\theta, t) \sin(\theta - \phi) \Big]  ~ d\theta \\
&= \frac{({\dot R})^2}{R} + 2R({\dot \phi})^2 - (KR)^2 \int_{\mathbb T} 2\sin^2 (\theta - \phi) \cos(\theta - \phi) \rho(\theta, t)   ~ d\theta. 
\end{align*}
This yields the desired result. 
\end{proof}
Based on the dynamics given in Lemma \ref{L4.1},  we study asymptotics of $R$ and $\phi$.
\begin{proposition} \label{P4.1}
Let $\rho = \rho(\theta,t)$ be a solution to \eqref{KM-K-I} with the initial datum $\rho_0$ satisfying 
\[ R_0 > 0 \quad \mbox{and} \quad \rho_0 \in  \mathbb P(\mathbb T) \cap C^1(\mathbb T). \]
Then, there exists a positive constant $R_{\infty} \leq 1$ such that
\begin{align*}
&(i)~~\inf_{0 \leq t < \infty} R(t) \geq R_0 > 0, \quad \lim_{t \to \infty} (R(t), {\dot R}(t)) = (R_{\infty}, 0), \cr
&(ii)~|{\dot \phi}(t)| \leq K(1-R(t)), \quad t \geq 0, \quad \lim_{t \to \infty} |{\dot \phi}(t)|  = 0.
\end{align*}
\end{proposition}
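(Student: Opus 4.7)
The plan is to read off monotonicity and a differential inequality from the identities in Lemma \ref{L4.1}, then pass to the limit by a Barbalat-type argument.

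First, the identity $\dot R = KR\int_{\mathbb T}\sin^{2}(\theta-\phi)\rho\,d\theta$ in Lemma \ref{L4.1}(i) has a nonnegative integrand, so $R$ is nondecreasing. Together with the elementary bound $R\leq 1$ and the hypothesis $R_0>0$, this yields $R(t)\in[R_0,1]$ and $R(t)\to R_\infty\in[R_0,1]$ by monotone convergence, and in particular $\int_0^{\infty}\dot R\,dt = R_\infty-R_0<\infty$. To upgrade this integrability to $\dot R(t)\to 0$, I would invoke Barbalat's lemma, which requires uniform continuity of $\dot R$; this in turn reduces to a uniform bound on $\ddot R$. Inspecting the three terms in Lemma \ref{L4.1}(ii) and using the pointwise estimates $|\dot R|\leq K$ and $|\dot\phi|\leq K/2$ (both immediate from Lemma \ref{L4.1}(i) since the trigonometric integrands are $\leq 1$ and $\int\rho\,d\theta=1$) together with $R\geq R_0$, each term is controlled by a constant depending only on $K$ and $R_0$. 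Barbalat then gives $\dot R(t)\to 0$, completing (i).

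For (ii), the key is that the orthogonality identity $\int_{\mathbb T}\sin(\theta-\phi)\rho\,d\theta=0$ from \eqref{O-K-1} lets me subtract $1$ from the cosine factor in the formula for $\dot\phi$ in Lemma \ref{L4.1}(i):
\[
\dot\phi = -K\int_{\mathbb T}\sin(\theta-\phi)\cos(\theta-\phi)\rho\,d\theta = -K\int_{\mathbb T}\sin(\theta-\phi)\bigl(\cos(\theta-\phi)-1\bigr)\rho\,d\theta.
\]
Since $|\sin(\theta-\phi)|\leq 1$ and $1-\cos(\theta-\phi)\geq 0$, the absolute value is bounded by $K\int_{\mathbb T}(1-\cos(\theta-\phi))\rho\,d\theta = K(1-R)$ via \eqref{O-K-1} once more.

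This bound alone does not force $|\dot\phi(t)|\to 0$ when $R_\infty<1$, so for the limit I would apply Cauchy-Schwarz to the same formula:
\[
|\dot\phi|^{2} \leq K^{2}\int_{\mathbb T}\sin^{2}(\theta-\phi)\rho\,d\theta\cdot\int_{\mathbb T}\cos^{2}(\theta-\phi)\rho\,d\theta \leq \frac{K\dot R}{R}\leq \frac{K\dot R}{R_0},
\]
where the middle step uses $\cos^{2}\leq 1$ together with the first identity of Lemma \ref{L4.1}(i). Since $\dot R(t)\to 0$ from the first paragraph, $|\dot\phi(t)|\to 0$. The only delicate point in the whole argument is the uniform bound on $\ddot R$ required by Barbalat, and this is routine bookkeeping from the explicit formula in Lemma \ref{L4.1}(ii) and the mass normalization $\int\rho\,d\theta=1$.
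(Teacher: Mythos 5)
Your proposal is correct and follows essentially the same route as the paper: monotonicity of $R$ from Lemma \ref{L4.1}(i), a uniform bound on $\ddot R$ to force $\dot R\to 0$ (the paper reproves Barbalat's lemma by hand rather than citing it), the orthogonality relation \eqref{O-K-1} to obtain $|\dot\phi|\leq K(1-R)$, and Cauchy--Schwarz to get $|\dot\phi|\leq\sqrt{K\dot R/R_0}\to 0$. The only differences are cosmetic, e.g.\ your one-line derivation of the $K(1-R)$ bound via $\dot\phi=-K\int\sin(\theta-\phi)\bigl(\cos(\theta-\phi)-1\bigr)\rho\,d\theta$ versus the paper's decomposition using $(\sin-1)(\cos-1)\geq 0$.
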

\begin{proof}
(i)~Note that estimates in Lemma \ref{L4.1} yield the uniform boundedness of $\dot R, \ddot R$ and ${\dot \phi}$. So $R, \dot R$ and $\phi$ are Lipschitz continuous. Moreover, we have
\begin{equation} \label{D-1}
  {\dot R} \geq 0, \quad \mbox{thus},  \quad R(t) \geq R_0
  \hspace{1em} \forall t \in [0,\infty).   
\end{equation}  
On the other hand, since $R\leq1$, $R$ must converge to $R_{\infty} \leq 1$. 

Suppose $\dot R$ does not converge to zero. Since $\dot R \geq 0$, we can find a sequence of time $\{ t_n \}$ such that $t_n \uparrow \infty$ as $n \to \infty$ and $\dot R(t_n) > \alpha$ for some positive constant $\alpha$. From Lemma \ref{L4.1}, we attain the Lipschitz continuity of $\dot R$ with $|\ddot R| \leq \frac{K^2}{R_0} + K + 2K^2=:C_0$, which yieds
\[
\dot R(s) \geq \dot R(t_n) - C_0|t_n - s| \geq \alpha - \frac{\alpha}{2} = \frac{\alpha}{2}, \quad \forall s\in(t_n - \frac{\alpha}{2C_0}, t_n + \frac{\alpha}{2C_0}).
\]
Thus, we have
\[
\int_{t_n-\frac{\alpha}{2C_0}}^\infty \dot R(s) ~ds \geq  \int_{t_n-\frac{\alpha}{2C_0}}^{t_n + \frac{\alpha}{2C_0}} \dot R(s) ~ds \geq  \frac{\alpha^2}{2C_0}.
\]
This contradicts the convergence of $R$, i.e.,
\[
\lim_{a\to\infty}\int_a^\infty \dot R(s) ~ ds = 0.
\]
Hence, we attain $\dot R \to 0$ as $t \to \infty$.

\noindent (ii)~We next derive the estimate: 
\begin{equation} \label{D-1-1}
  -K(1-R) \leq   {\dot \phi} \leq K (1-R). 
\end{equation}  
For the second inequality, we use the second result in Lemma \ref{L4.1} to obtain
\begin{align*}
\dot\phi &= -K \int_{\mathbb T} \sin(\theta - \phi) \cos(\theta - \phi) \rho(\theta, t)  ~ d\theta\\
&= -K \int_{\mathbb T} \underbrace{\big(\sin(\theta - \phi) - 1 \big)\big(\cos(\theta - \phi) - 1\big)}_{\geq 0} \rho(\theta, t) ~ d\theta \\
&- K \int_\mathbb T \big(\cos(\theta - \phi) + \sin(\theta-\phi)  - 1\big)\rho(\theta, t)  ~ d\theta\\
&\leq -K \int_{\mathbb T} \cos(\theta - \phi) \rho(\theta,t)  ~ d\theta -K \int_{\mathbb T} \sin(\theta - \phi) \rho(\theta,t)  ~ d\theta + K\\
&= -KR + K = K(1-R).
\end{align*}
In the last line we used \eqref{O-K-1}. Similarly, we get the first inequality in \eqref{D-1-1}. For the remaining estimate, we use the formulas for ${\dot \phi}$ and ${\dot R}$ in Lemma \ref{L4.1}, the monotonicity of 
$R,$ and the Cauchy-Schwarz inequality to get
\begin{align*}
\mid\dot{\phi}\mid &\leq K \int_{\mathbb T} \mid\sin(\theta-\phi)\mid \rho  ~ d\theta\leq K \bigg(\int_{\mathbb T} \mid\sin(\theta-\phi)\mid^{2} \rho  ~ d\theta\bigg)^{\frac{1}{2}} \\
& \leq \sqrt{\frac{K}{R}} \sqrt{|{\dot R}|}  \leq \sqrt{\frac{K}{R(0)}} \sqrt{|{\dot R}|}, \hbox{ (since $R \geq R_0$ from (i))}. 
\end{align*}
Since ${\dot R} \to 0$ (see item (i)), we conclude \[ \lim_{t \to \infty} |{\dot \phi}(t)| = 0. \]
\end{proof}
We are now ready to provide the proof of Theorem \ref{T3.1} in the following subsection. 

\subsection{Proof of Theorem \ref{T3.1}} \label{sec:4.2} In this subsection, we present the proof of Theorem~\ref{T3.1}. Before we present a rigorous argument, we first discuss heuristics for the emergence of point attractor. Suppose that the coupling strength $K$ is positive,
the initial datum $\rho_{0}$ is ${\mathcal C}^{1},$ and $R_0 >0$. 
Then, since $R$ is bounded and monotonically increasing, 
$\dot{R}\rightarrow0$ as $t\rightarrow\infty$ (see Proposition \ref{P4.1}). It follows from Lemma \ref{L4.1}(i) that 
\[  \lim_{t \to \infty} \int_{\mathbb T}  \sin^2(\theta - \phi(t)) \rho(\theta, t)  ~ d\theta =0.         \]
Thus, the limiting behavior of $\rho$ will be one of the following states: for positive constant $\varepsilon \in (0, \frac{1}{2})$,
\[ \delta_{\phi_{\infty}}, \quad (1-\varepsilon)\delta_{\phi_{\infty}} +\varepsilon \delta_{(\phi_{\infty} + \pi)}. \]
 We then show that the latter case, i.e., bipolar state is not possible. The proof can be split into two steps:
\begin{itemize}
\item
Step A: Mass will concentrate asymptotically near at $\phi_\infty$ and/or $\phi_\infty +\pi$:
\[ \lim_{t \to \infty} \int_{\mathbb T \setminus I_\delta} \rho(\theta, t)  ~ d\theta = 0. \]
\item
Step B: Mass in  the interval $I_\delta^-$ decays to zero exponentially fast:
\[  \lim_{ t \to \infty} \int_{I_\delta^-(t) } \rho(\theta, t)  ~ d\theta = 0, \]
where the intervals $I_\delta$ and $I_{\delta}^{\pm}$ are defined in \eqref{NE-0}. 
\end{itemize}
\subsubsection{Step A (concentration of mass in the interval $I_\delta$):} In this part, we show that mass will concentrate on the interval $I_\delta$ asymptotically.  For any $\delta \in (0, \frac{\pi}{2})$, we claim: 
\begin{equation} \label{Claim-1}
 \lim_{t \to \infty} \int_{\mathbb T \setminus I_\delta} \rho(\theta, t)  ~ d\theta = 0. 
\end{equation} 
{\it The proof of claim \eqref{Claim-1}}: It suffices to show that for any $\varepsilon > 0$, there exists a finite time $t_*(\varepsilon) >0$ such that 
\[
\int_{\mathbb T \setminus I_\delta} \rho(\theta, t)  ~ d\theta < \varepsilon, \quad t > t_*(\varepsilon).
\]
Due to Proposition \ref{P4.1}, we have ${\dot R} \to 0$ as $t \to \infty$, i.e., there exists a positive time $t_* = t_*(\varepsilon, \delta)$  such that
\begin{equation} \label{NN-1}
{\dot R}(t) = KR(t) \int_{\mathbb T} \sin^2 (\theta(t) - \phi(t)) \rho(\theta, t)  ~ d\theta <  K R_0 (\sin \delta)^2 \varepsilon, \quad t > t_*.
\end{equation}
By Lemma \ref{L4.1}, we have
\begin{equation} \label{NN-2}
 R(t) \geq R_0 \quad t \geq 0. 
\end{equation} 
Then, it follows from \eqref{NN-1} and \eqref{NN-2} that
\begin{equation} \label{NN-3}
\int_{\mathbb T} \sin^2 (\theta - \phi(t)) \rho(\theta, t)  ~ d\theta \leq (\sin \delta)^2\varepsilon, \quad t > t_*.
\end{equation}
On the other hand, since  
\[ |\sin(\theta -\phi(t))| > \sin \delta \quad \forall \theta \in \mathbb T \setminus  I_\delta, \]
 the estimate \eqref{NN-3} yield
\begin{align*}
 (\sin \delta)^2  \int_{\mathbb T \setminus I_\delta} \rho(\theta, t)  ~ d\theta &< \int_{\mathbb T \setminus I_\delta} \sin^2(\theta - \phi(t)) \rho(\theta, t)  ~ d\theta \cr
 &\leq \int_\mathbb T  \sin^2(\theta - \phi(t)) \rho(\theta, t)  ~ d\theta  \leq (\sin \delta)^2 \varepsilon, \quad t \geq t_*.
\end{align*}
Thus, we obtain the desired estimate \eqref{Claim-1}. 

\subsubsection{Step B (concentration of mass in the interval $I^+_\delta$):}In this part, we exclude the possibility of  bi-polar configuration as an asymptotic profile by showing that no mass concentration occurs in $I_\delta^-$ asymptotically, i.e., we claim:
\begin{equation} \label{Claim-2}
 \lim_{t \to \infty} \int_{I_\delta^-(t)} \rho(\theta, t)  ~ d\theta = 0. 
\end{equation}
For the proof of \eqref{Claim-2}, we use Cauchy-Schwarz inequality to see
\begin{equation} \label{NN-4}
 \int_{I_\delta^-(t)} \rho(\theta, t)  ~ d\theta \leq
 \Big( \int_{I_\delta^-(t)} |\rho(\theta, t)|^2  ~ d\theta \Big)^{\frac{1}{2}}  
 \Big( \int_{I_\delta^-(t)}   ~ d\theta \Big)^{\frac{1}{2}} \leq \sqrt{2\delta}  \Big( \int_{I_\delta^-(t)} |\rho(\theta, t)|^2  ~ d\theta 
 \Big)^{\frac{1}{2}}.   
\end{equation} 
Due to the relation \eqref{NN-4}, it suffices to show that there exist a positive number $T_1$ such that 
\begin{equation} \label{NN-5}
 \int_{I_\delta^-(t) } |\rho(\theta, t)|^{2} ~ d\theta\leq e^{-R(0) (\cos \delta) K(t-T_1)}
  \int_{I_\delta^-(T_1)} |\rho(\theta, T_1)|^{2}  ~ d\theta, \qquad \forall~t>T_1.
\end{equation}
Note that for $\delta \in (0, \frac{\pi}{2})$ and $t > 0$, 
\begin{equation} \label{D-2}
  \theta \in I_\delta^-
  \quad \Longrightarrow \quad \cos(\theta- \phi) < -\cos \delta.  
\end{equation}  
On the other hand, since $\displaystyle \lim_{t \to \infty} \dot{\phi} = 0$, for any $\varepsilon \in(0,K)$, there exist $T_1 = T_1(\varepsilon, \delta) >0$ such that
\begin{equation} \label{D-3}
\mid\dot{\phi}\mid< \varepsilon R_0 \sin \delta, \qquad \forall~t>T_1.
\end{equation}
We next introduce the Lyapunov functional:
\begin{equation} \label{D-4-0}
 \Lambda(t) :=
\int_{I_\delta^-(t)}
 |\rho(\theta, t)|^2  ~ d\theta = \int_{\phi + \pi - \delta}^{\phi + \pi + \delta}  |\rho(\theta, t)|^2  ~ d\theta, 
\end{equation} 
  and we show that it satisfies a Gronwall's inequality:
\begin{equation} \label{D-4}
 \frac{d \Lambda(t)}{dt} \leq -R(0)\cos \delta K \Lambda(t), \quad t \geq T_1.
\end{equation}
For the estimate \eqref{D-4}, we use \eqref{D-0-0} and \eqref{D-4-0} to see 
\begin{align}
\begin{aligned} \label{D-5}
\frac{d \Lambda}{dt} &= \dot{\phi}\left(\rho^{2}(\phi+\pi+\delta) 
- \rho^{2}(\phi+\pi-\delta) \right)+ 2 
\int_{I_\delta^-} \rho \partial_{t} \rho  ~ d\theta \\
 &= \dot{\phi} \left(\rho^{2}(\phi+\pi+\delta) - \rho^{2}(\phi+\pi-\delta)\right) + 2R K  \int_{I_\delta^-}\rho \partial_{\theta}\bigg[\sin(\theta-\phi) \rho \bigg] ~ d\theta \\
 &= \dot{\phi} \left(\rho^{2}(\phi+\pi+\delta) - \rho^{2}(\phi+\pi-\delta) \right) - 2R K \sin \delta \rho^{2}(\phi+\pi+\delta) \\
 &- 2R K \sin \delta \rho^{2}(\phi+\pi-\delta)- R K  \int_{I_\delta^-}\sin(\theta-\phi)\partial_{\theta}(\rho^{2}) ~ d\theta \\
 &= \dot{\phi} \rho^{2}(\phi+\pi+\delta) -\dot{\phi}\rho^{2}(\phi+\pi-\delta) -R K\sin \delta \rho^{2}(\phi+\pi+\delta) \\
 &- RK \sin \delta \rho^{2}(\phi+\pi-\delta)+  R K  \int_{I_\delta^-} \cos(\theta-\phi)\rho^{2} ~ d\theta \\
 &=  \Big(\dot{\phi} -  R K \sin \delta \Big) \rho^{2}(\phi+\pi+\delta) - \Big(\dot{\phi} + R K \sin \delta \Big)\rho^{2}(\phi+\pi-\delta) \\
 &+ R K \int_{I_\delta^-}  \cos(\theta-\phi)\rho^{2} ~ d\theta.
\end{aligned}
\end{align}
On the other hand, it follows from \eqref{D-2} and \eqref{D-3} that we have
\begin{align}
\begin{aligned} \label{D-6}
& \dot{\phi} -  R K \sin \delta \leq (\varepsilon R(0) - R K) \sin \delta \leq  (\varepsilon - K) R(0)  \sin \delta < 0, \\
& \dot{\phi} + R K \sin \delta > (-\varepsilon R(0) + R K ) \sin \delta > (K- \varepsilon) R(0) \sin \delta > 0, \\
& \int_{I_\delta^-} \cos(\theta-\phi)\rho^{2} ~ d\theta \leq -(\cos \delta) \Lambda(t).  
\end{aligned}
\end{align}
We combine \eqref{D-5}, \eqref{D-6} and the fact $R(t) \geq R_0$ in Proposition \ref{P4.1} to obtain \eqref{D-4}. Thus, we have \eqref{NN-5}.  Finally, we combine \eqref{NN-4} and \eqref{NN-5} to get
\begin{align*}
  \int_{I_\delta^-(t)} \rho(\theta, t)  ~ d\theta &\leq  \sqrt{2\delta}  \Big( \int_{I_\delta^-(t)} \rho^2(\theta, t)  ~ d\theta 
 \Big)^{\frac{1}{2}} \cr
 &\leq  \sqrt{2\delta} e^{-\frac{R(0) (\cos \delta)}{2} K(t-T_1)}
 \Big( \int_{I_\delta^-(T_1)}\rho(\theta, T_1)^{2}  ~ d\theta \Big)^{\frac{1}{2}}, \qquad \forall~t>T_1.  
\end{align*} 
This yields \eqref{Claim-2} and completes the proof of Theorem \ref{T3.1}.
\begin{remark}\label{rmk: r concentration}
Note that the result in Theorem \ref{T3.1} also implies
 \[  \lim_{t \to \infty} R(t) = 1 \quad \mbox{and} \quad  \lim_{t \to \infty}V_k(\rho) = 0. \]  
\end{remark}

In the next section, we study existence of positively invariant set for the K-S equation with distributed natural frequencies from well-prepared initial data, in which some significant fraction of mass is confined and it attracts a neighboring mass.

\section{Emergence of phase concentration} \label{sec:5}
\setcounter{equation}{0}
In this section, we study emergent phenomenon of phase concentration for the K-S equation with distributed natural frequencies, i.e., the non-identical case, from well-prepared initial configurations whose significant portion of mass is already concentrated on the average phase.  As we have seen in the previous section, the analysis on the dynamics of global order parameters $R$ and $\phi$ does play a key role in the proof of the first result in Theorem \ref{T3.1}. Likewise, we will introduce local order parameters for the sub-ensemble with the same natural frequencies and study the dynamics of these local order parameters. We also discuss a possible asymptotic behavior for the K-S equation with distributed natural frequencies. Finally, we present the proof of our second result Theorem \ref{T3.2} on the emergence of arc type attractors from a well aggregated initial datum in a large coupling strength regime. Note that in the next section, such a condition on the initial datum will be removed, but we still are able to show an asymptotic pattern of the mass and the amplitude order parameter which shows asymptotic emergence of complete synchronization, namely, a point cluster, as the coupling strength tends to infinity.
\subsection{Local order parameters} \label{sec:5.1} For a finite-dimensional Kuramoto model, all oscillators with the same natural frequency will aggregate to the same phase asymptotically. Thus, it is reasonable to consider order parameters for the sub-ensemble of oscillators with the same frequency, which we call {\em local} order parameters in the sequel. For a fixed $\omega \in \mbox{supp}~g(\cdot)$, let $\varrho(\theta, \omega, t)$ be the conditional probability density function corresponding to the natural frequency $\omega$:
\begin{equation}\label{L-1}
f(\theta, \omega, t) = g(\omega) \varrho(\theta, \omega, t) \quad \mbox{and} \quad \int_{\mathbb T} \varrho(\theta, \omega, t)  ~ d\theta = 1.
\end{equation}
Then, the local order parameters are defined as follows.
\begin{definition}
Let $\varrho$ be a conditional distribution function introduced in \eqref{L-1}. Then, for a given $\omega \in \mbox{supp}~g(\cdot)$ and $t \geq 0$,  the {\em local} order parameters $R_\omega$ and $\phi_{\omega}$ are defined by the following relation: 
\begin{equation}\label{L-2}
R_{\omega}(t) e^{i\phi_{\omega}(t)} := \int_{\mathbb T}  e^{i\theta} \varrho(\theta,\omega, t) ~ d\theta.
\end{equation}
\end{definition}
Then, the local order parameters satisfy the following estimates.
\begin{lemma}\label{L5.1}
Let $(R, \phi)$ and $(R_\omega, \phi_\omega)$ be global and local order parameters defined in \eqref{O-K} and \eqref{L-2}, respectively. Then, we have 
\begin{align*}
& (i)~R_{\omega} = \int_{\mathbb T} \varrho(\theta, \omega, t) \cos(\theta - \phi_{\omega})  ~ d\theta,
\qquad  
0 = \int_{\mathbb T} \varrho(\theta, \omega, t) \sin(\theta - \phi_{\omega})  ~ d\theta. \cr
& (ii)~R = \int_{\mathbb R} g(\omega) R_\omega \cos(\phi_\omega - \phi) ~ d\omega, \qquad 0 = \int_{\mathbb R} g(\omega) R_\omega \sin(\phi_\omega - \phi) ~ d\omega.
\end{align*}
\end{lemma}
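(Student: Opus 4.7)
The proof is a direct generalization of the computation carried out for the global order parameters in Section~\ref{sec:2.2.1} (the derivation of \eqref{O-K-1} and \eqref{A-1}), so the plan is to mirror those manipulations twice: once at the level of the conditional density $\varrho(\cdot,\omega,t)$ on $\mathbb T$, and once at the level of the weighted superposition against $g(\omega)$.

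For part (i), I would start from the defining relation \eqref{L-2},
\[
R_{\omega}(t)\, e^{\mathrm{i}\phi_{\omega}(t)} \;=\; \int_{\mathbb T} e^{\mathrm{i}\theta}\, \varrho(\theta,\omega,t)\,d\theta,
\]
and divide both sides by $e^{\mathrm{i}\phi_{\omega}(t)}$ (which is legitimate whenever $R_\omega>0$; the case $R_\omega=0$ makes both asserted identities trivial since $\cos(\theta-\phi_\omega)$ and $\sin(\theta-\phi_\omega)$ can be tested against $\varrho$ after one verifies that the vanishing of the complex integral forces each projection to vanish). This gives
\[
R_\omega \;=\; \int_{\mathbb T}\bigl(\cos(\theta-\phi_\omega) + \mathrm{i}\sin(\theta-\phi_\omega)\bigr)\varrho(\theta,\omega,t)\,d\theta,
\]
and matching real and imaginary parts yields the two identities of (i). This is the exact same step that produced \eqref{O-K-1}.

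For part (ii), the plan is to combine the decomposition \eqref{L-1}, namely $f(\theta,\omega,t)=g(\omega)\varrho(\theta,\omega,t)$, with the defining relation \eqref{O-K} for the global order parameter. By Fubini,
\[
R(t)\,e^{\mathrm{i}\phi(t)} \;=\; \iint_{\mathbb T\times \mathbb R} e^{\mathrm{i}\theta} f(\theta,\omega,t)\,d\theta\,d\omega
\;=\; \int_{\mathbb R} g(\omega)\Bigl(\int_{\mathbb T} e^{\mathrm{i}\theta}\varrho(\theta,\omega,t)\,d\theta\Bigr) d\omega,
\]
and the inner integral is $R_\omega(t)\,e^{\mathrm{i}\phi_\omega(t)}$ by \eqref{L-2}. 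Dividing by $e^{\mathrm{i}\phi(t)}$ produces
\[
R(t) \;=\; \int_{\mathbb R} g(\omega)\, R_\omega(t)\, e^{\mathrm{i}(\phi_\omega(t)-\phi(t))}\,d\omega,
\]
and separating real and imaginary parts gives the two identities of (ii).

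There is really no hard step here: the proof is a bookkeeping exercise in polar decomposition plus one application of Fubini, which is justified because $g$ has compact support and $\varrho\in L^1(\mathbb T)$ for each $\omega$. The only mild subtlety worth flagging is that $\phi$ and $\phi_\omega$ are only well defined when $R>0$ and $R_\omega>0$ respectively; outside that case the stated identities still hold trivially (both sides vanish for any choice of $\phi_\omega$), so the conclusions are unambiguous once we fix a convention.
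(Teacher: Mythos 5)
Your proposal is correct and follows essentially the same route as the paper: part (i) by dividing \eqref{L-2} by $e^{\mathrm{i}\phi_\omega}$ and comparing real and imaginary parts, and part (ii) by inserting $f=g\varrho$ into \eqref{O-K}, recognizing the inner integral as $R_\omega e^{\mathrm{i}\phi_\omega}$, and dividing by $e^{\mathrm{i}\phi}$. The added remarks on Fubini and the degenerate case $R_\omega=0$ are fine but not needed beyond what the paper records.
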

\begin{proof} (i) We divide \eqref{L-2} by $e^{i\phi_{\omega}(t)}$ to get 
\begin{equation} \label{L-3}
 R_{\omega}(t) = \int_{\mathbb T}  e^{{\mathrm i}(\theta - \phi_{\omega}(t))} \varrho(\theta,\omega, t) ~ d\theta.
\end{equation} 
We now compare the real and imaginary parts of \eqref{L-3} to get the desired estimates. \newline

\noindent (ii)~We use the defining relation \eqref{O-K} for $R$ and $\phi$ to obtain
\begin{align}
\begin{aligned}\label{G-1}
R e^{i\phi} &:= \iint_{\mathbb T \times \mathbb R} f(\theta, \omega, t) e^{i\theta} ~ d\theta d\omega = \iint_{\mathbb T \times \mathbb R} g(\omega) \varrho(\theta, \omega, t) e^{i\theta} ~ d\theta d\omega\\
&= \int_{\mathbb R} g(\omega) \int_{\mathbb T} \varrho(\theta, \omega, t) e^{i\theta} ~ d\theta d\omega = \int_{\mathbb R} g(\omega) R_\omega e^{i\phi_\omega} ~ d\omega.
\end{aligned}
\end{align}\
This again yields
\[   R =  \int_{\mathbb R} g(\omega) R_\omega e^{{\mathrm i} (\phi_\omega - \phi)} ~ d\omega. \]
We compare real and imaginary parts of the above relation to get the desired estimates.
\end{proof}
We next derive an equation for the conditional probability density function $\varrho$ from the K-S equation \eqref{K-S}. Recall that $f$ satisfies 
\begin{align}
\begin{aligned} \label{G-2}
\displaystyle & \partial_t f + \partial_{\theta} (v[f] f) = 0, \qquad (\theta, \omega) \in \mathbb T \times \mathbb R,~~t > 0, \\
& v[f](\theta, \omega, t) = \omega - K \iint_{\mathbb T \times \mathbb R} \sin(\theta-\theta_*) f(\theta_*, \omega_*, t) ~ d\theta_*d\omega_* .
\end{aligned}
\end{align}
We now substitute the ansatz \eqref{L-1} into the above equation \eqref{G-2} to derive the equation for the conditional distribution $\varrho$:
\begin{align}
\begin{aligned} \label{KM-L}
& \partial_t \varrho (\theta, \omega, t)+ \omega  \partial_\theta \varrho(\theta, \omega, t) \\
& \hspace{2cm} -K \partial_\theta \Big[ \varrho (\theta, \omega, t) \iint_{
\mathbb T \times \mathbb R } \sin(\theta - \theta_*) g(\omega_*) \varrho(\theta_*, \omega_*, t)  ~ d\theta_* d\omega_*   \Big]  = 0.
\end{aligned}
\end{align}
As noticed  in \eqref{A-1}, we have
\[
 \iint_{\mathbb T \times \mathbb R} \sin(\theta - \theta_*) g(\omega_*) \varrho (\theta_*, \omega_*, t) ~ d\theta_* d\omega_*  = R\sin (\theta - \phi). 
\]

Thus \eqref{KM-L} can be  written as 
\begin{equation}\label{E-1}
\partial_t \varrho + \partial_\theta \Big[   \big( \omega - KR \sin(\theta - \phi)\big) \varrho \Big] = 0.
\end{equation}
This equation can also be obtained directly from the equation \eqref{KM-K-I-1}.

\begin{lemma} \label{L5.2} Let $f = f(\theta, \omega, t)$ be a solution to \eqref{G-2}, and $(R_\omega, \phi_\omega)$ and $(R, \phi)$ be local and global order parameters defined by \eqref{O-K} and \eqref{L-2}, respectively. Then, we have
\begin{align*}
(i)~{\dot R}_\omega &= K R  \int_{\mathbb T}   \varrho(\theta, \omega, t)   \sin(\theta - \phi_\omega) \sin(\theta - \phi) ~ d\theta, \cr
{\dot \phi}_\omega &=  \omega -  K \frac{R}{R_\omega} \int_{\mathbb T}  \varrho(\theta, \omega, t)  \cos(\theta - \phi_\omega)\sin(\theta - \phi) ~ d\theta.    \cr
(ii)~{\dot R} &= -\iint_{\mathbb T \times \mathbb R} \sin(\theta - \phi) \omega  f(\theta, \omega, t)   ~ d\theta d\omega + K R  \int_\mathbb T \sin^2 (\theta - \phi)   \rho(\theta, t) ~ d\theta. \cr
\dot \phi &=  \frac{1}{R} \iint_{\mathbb T \times \mathbb R} \cos (\theta - \phi) \omega  f(\theta, \omega, t) ~ d\theta d\omega - \frac{K}{2} \int_\mathbb T \sin\left(2(\theta - \phi)\right)  \rho(\theta, t) ~ d\theta . 
\end{align*}
\end{lemma}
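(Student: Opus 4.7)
The plan is to mimic the derivation of Lemma~\ref{L4.1} for the global order parameters, now applied to the conditional density $\varrho(\theta,\omega,t)$ and the joint density $f(\theta,\omega,t)$, with the only new ingredient being that the transport velocity carries an extra term $\omega$ besides the mean-field term $-KR\sin(\theta-\phi)$.

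For part (i), I would start from the defining relation \eqref{L-2}, differentiate in $t$, and divide through by $e^{{\mathrm i}\phi_\omega}$ to obtain
\[
\dot R_\omega + {\mathrm i} R_\omega \dot\phi_\omega
= \int_{\mathbb T} e^{{\mathrm i}(\theta-\phi_\omega)}\,\partial_t\varrho\, d\theta.
\]
Since $\varrho$ satisfies equation \eqref{E-1}, I substitute $\partial_t\varrho = -\partial_\theta\big[(\omega-KR\sin(\theta-\phi))\varrho\big]$, integrate by parts (no boundary terms on $\mathbb T$), and use $\partial_\theta e^{{\mathrm i}(\theta-\phi_\omega)} = {\mathrm i} e^{{\mathrm i}(\theta-\phi_\omega)}$ to get
\[
\dot R_\omega + {\mathrm i} R_\omega \dot\phi_\omega
= \int_{\mathbb T} {\mathrm i} e^{{\mathrm i}(\theta-\phi_\omega)}\bigl(\omega - KR\sin(\theta-\phi)\bigr)\varrho\, d\theta.
\]
Separating real and imaginary parts and invoking the two identities of Lemma~\ref{L5.1}(i) (namely $\int\sin(\theta-\phi_\omega)\varrho\, d\theta = 0$ and $\int\cos(\theta-\phi_\omega)\varrho\, d\theta = R_\omega$) to eliminate the $\omega$-terms on the right, one reads off precisely the two formulas for $\dot R_\omega$ and $\dot\phi_\omega$.

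For part (ii), I would repeat the same procedure starting from the definition \eqref{O-K} of the global order parameters, differentiating and dividing by $e^{{\mathrm i}\phi}$:
\[
\dot R + {\mathrm i} R \dot\phi
= \iint_{\mathbb T\times\mathbb R} e^{{\mathrm i}(\theta-\phi)}\,\partial_t f\, d\theta\, d\omega.
\]
Using the equivalent form \eqref{KM-K-I-1} of the K-S equation, integrating by parts in $\theta$, and splitting into real and imaginary parts, the contribution $\iint\omega\, e^{{\mathrm i}(\theta-\phi)} f\, d\theta\, d\omega$ survives unchanged, while the mean-field contribution is collapsed to a $\rho$-integral using $\rho=\int_{\mathbb R} f\, d\omega$. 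The only trigonometric simplification needed is $2\sin(\theta-\phi)\cos(\theta-\phi)=\sin(2(\theta-\phi))$, which produces the $\frac{K}{2}\sin(2(\theta-\phi))$ term in $\dot\phi$.

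There is no real obstacle here; the computation is routine and parallel to Lemma~\ref{L4.1}. The only mild subtlety is making sure to apply Lemma~\ref{L5.1}(i) at the right moment in part (i) to eliminate the naively-appearing term $\omega\int \sin(\theta-\phi_\omega)\varrho\, d\theta$ in $\dot R_\omega$ and to simplify $\omega\int\cos(\theta-\phi_\omega)\varrho\, d\theta$ to $\omega R_\omega$ in the $\dot\phi_\omega$ equation, which is exactly what produces the clean expression $\dot\phi_\omega = \omega - \tfrac{KR}{R_\omega}\int\cos(\theta-\phi_\omega)\sin(\theta-\phi)\varrho\, d\theta$.
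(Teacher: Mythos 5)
Your proposal is correct and follows essentially the same route as the paper: differentiate the defining relations, substitute the transport equations \eqref{E-1} and \eqref{KM-K-I-1}, integrate by parts on $\mathbb T$, and invoke Lemma \ref{L5.1}(i) to reduce the $\omega$-terms (the paper merely splits into real and imaginary parts before, rather than after, the integration by parts, which is a purely cosmetic difference).
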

\begin{proof} The estimates follow from the differentiation of the defining relations for order parameters and using the K-S equations \eqref{E-1} and \eqref{G-2} for $\rho$ and $f$. \newline

\noindent (i) We differentiate \eqref{L-2} with respect to $t$ and use Lemma \ref{L5.1} to obtain
\begin{align*} 
{\dot R}_\omega &= \int_{\mathbb T} \cos(\theta - \phi_\omega) \partial_t \varrho (\theta, \omega, t)  ~ d\theta\cr
&= - \int_{\mathbb T} \cos(\theta - \phi_\omega) \partial_\theta \Big[  \varrho(\theta, \omega, t) \big( \omega - KR \sin(\theta - \phi)\big) \Big]  ~ d\theta \cr
&=  - \int_{\mathbb T} \sin(\theta - \phi_\omega)   \varrho(\theta, \omega, t) \big( \omega - KR \sin(\theta - \phi)\big)  ~ d\theta \cr
&= KR  \int_{\mathbb T}   \varrho(\theta, \omega, t)   \sin(\theta - \phi_\omega) \sin(\theta - \phi)  ~ 
d\theta
\end{align*}
and
\begin{align*}
\dot \phi_\omega &= \frac{1}{R_\omega} \int_{\mathbb T} \sin(\theta - \phi_\omega) \partial_t \varrho(\theta, \omega, t)  ~ d\theta \\
&= - \frac{1}{R_\omega} \int_{\mathbb T} \sin(\theta - \phi_\omega) \partial_\theta \Big[  \varrho(\theta, \omega, t) \big( \omega - KR \sin(\theta - \phi) \big) \Big]  ~ d\theta \\
&=   \frac{1}{R_\omega} \int_{\mathbb T} \cos(\theta - \phi_\omega)    \varrho(\theta, \omega, t) \big( \omega - KR \sin(\theta - \phi) \big)   ~ d\theta \\
&= \omega -  K \frac{R}{R_\omega} \int_{\mathbb T}    \varrho(\theta, \omega, t)  \cos(\theta - \phi_\omega)\sin(\theta - \phi)  ~ d\theta.
\end{align*}

\vspace{0.2cm}

\noindent (ii) For the estimates on the global order parameters \eqref{G-1}, we perform similar computation as (i) to obtain desired estimates. 
\end{proof}
\begin{remark}
The dynamics of order parameters $(R_\omega, \phi_\omega)$ and $(R, \phi)$ coincide with the dynamics \eqref{B-5} of corresponding order parameters for the Kuramoto model \eqref{KM}.
\end{remark}

\subsection{Nonexistence of point attractors} In Section \ref{sec:4}, we have shown that point attractors can emerge from generic smooth initial data in a positive coupling strength regime for the identical natural frequency case. In this subsection, we will show that emergence of point attractors will not be possible in a general setting. Without loss of generality, we assume that average natural frequencies $\omega_c = \int_{\mathbb R} \omega g(\omega) d\omega$ is zero, otherwise, we can consider the rotating frame moving with $\omega_c$.  \newline

Suppose that $f^{\infty}$ is an equilibrium for the K-S equation \eqref{E-1}, whose  conditional probability density function $\varrho^{\infty}(\theta, \omega, t)$ is in the form $\varrho^{\infty}(\theta, \omega, t) \equiv  \delta_{\phi_\omega}$ for each $\omega \in \mbox{supp} \,g$, i.e., 
\begin{equation} \label{LSS}
 f^{\infty}(\theta, \omega, t) = g(\omega)\varrho^{\infty}(\theta, \omega, t) = g(\omega)  \delta_{\phi_\omega}.     
\end{equation} 
We call such  $f^{\infty}$ as a {\em locally synchronized state}, i.e., a locally synchronized state $f^{\infty}$ is a complete phase synchronization for a sub-ensemble with the same given frequency $\omega$. A complete phase synchronization is obviously of such a form. To distinguish these locally synchronized states, we use the notation $(R^{\infty}, \phi^{\infty})$ and $(R^{\infty}_\omega, \phi^{\infty}_\omega)$ for their global and local order parameters, respectively. 

Note that  for a locally synchronized state $f^{\infty}$ in \eqref{LSS},  it follows from Lemma \ref{L5.1} that for each $\omega \in \mbox{supp} \, g(\omega)$, 
\begin{equation}\label{r Omega 1}
 R^{\infty}_\omega = \int_\mathbb T \delta_{\phi^{\infty}_\omega} \cos (\theta- \phi^{\infty}_\omega) d \theta = \cos (
  \phi^{\infty}_\omega- \phi^{\infty}_\omega) = 1. 
\end{equation}

This and Lemma \ref{L5.2} imply  that for all $\omega \in \mbox{supp} ~ g(\omega)$,
\begin{align}
\begin{aligned}\label{E-7}
\dot{\phi}^{\infty}_\omega & =  \omega -  K \frac{R^{\infty}}{R^{\infty}_\omega} \int_{\mathbb T}  \varrho^{\infty} (\theta, \omega, t)  \cos(\theta - \phi^{\infty}_\omega)\sin(\theta - \phi^{\infty})  ~ d\theta \\
&=  \omega -  K R^{\infty} \int_{\mathbb T}  \delta_{\phi^{\infty}_\omega}  \cos(\theta - \phi^{\infty}_\omega)\sin(\theta - \phi^{\infty})  ~ d\theta \\
&= \omega - K R^{\infty} \sin(\phi^{\infty}_\omega - \phi^{\infty})\\
&=0,
\end{aligned}
\end{align}
where the last line is due to equilibrium state ${\dot \phi}_\omega^{\infty} = 0$. Thus, for all $\omega \in \mbox{supp} ~ g(\omega)$, we have
\begin{equation}\label{phi arcsin} 
\sin(\phi^{\infty}_\omega - \phi^{\infty}) = \frac{\omega}{K R^{\infty}}, \quad \mbox{i.e.,} \quad \phi^{\infty}_\omega - \phi^{\infty} = \arcsin \frac{\omega}{K R^{\infty}}.
\end{equation}
On the other hand, we use Lemma \ref{L5.1} and \eqref{r Omega 1} to see
\begin{equation}\label{condition r}
R^{\infty} = \int_{\mathbb R} g(\omega) R^{\infty}_\omega \cos(\phi^{\infty}_\omega - \phi^{\infty}) ~ d\omega = \int_{\mathbb R} g(\omega) \sqrt{1- \left(\frac{\omega}{K R^{\infty}}\right)^2} ~ d\omega. 
\end{equation}
Note that the condition in (ii) Lemma \ref{L5.1} is automatically satisfied from \eqref{r Omega 1} and \eqref{phi arcsin}: 
\[
 \int_{\mathbb R} g(\omega) R^{\infty}_\omega \sin(\phi^{\infty}_\omega - \phi^{\infty}) ~ d\omega =\frac{1}{K R^{\infty}} \int_\mathbb R \omega g(\omega) ~ d\omega = 0,
\]
where we used our assumption $\int_\mathbb R \omega g(\omega) ~ d\omega =0$. \newline

In summary, we have
\begin{align*}
&  f^{\infty}  = g(\omega) \delta_{\phi^{\infty}_\omega}~~\mbox{is an equilibrium} \cr
& \hspace{1cm} \iff \quad R^{\infty}  = \int_{\mathbb R} g(\omega) \sqrt{1- \left(\frac{\omega}{K R^{\infty}}\right)^2} ~ d\omega \quad \mbox{and} \quad \sin(\phi^{\infty}_\omega - \phi^{\infty}) = \frac{\omega}{K R^{\infty}}.
\end{align*}

\begin{proposition}\label{P5.1}
Suppose that the coupling strength and $g$ satisfy
\[ K > 0, \quad  \int_{\mathbb R} g(\omega) ~ d\omega  =1, \quad \int_{\mathbb R} \omega g(\omega) ~ d\omega  = 0, \quad g \not = \delta. \]
Then the K-S equation \eqref{G-2} may not have a complete phase synchronization.
\end{proposition}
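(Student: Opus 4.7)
The plan is a direct contradiction argument that leverages the algebraic relations derived for locally synchronized states immediately before the proposition. A completely phase synchronized state for the K-S equation must mean that, for every natural frequency $\omega\in\mbox{supp}\,g$, the conditional density $\varrho(\cdot,\omega,t)$ collapses onto a single, $\omega$-independent phase $\phi(t)$. This is precisely the specialization of the locally synchronized ansatz \eqref{LSS} in which $\phi^{\infty}_\omega=\phi^{\infty}$ for every $\omega\in\mbox{supp}\,g$, and in which automatically $R^{\infty}_\omega=R^{\infty}=1$ by \eqref{r Omega 1}.

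I would then substitute $\phi^{\infty}_\omega=\phi^{\infty}$ into the necessary relation \eqref{phi arcsin}, namely
\[
\sin(\phi^{\infty}_\omega-\phi^{\infty})=\frac{\omega}{KR^{\infty}}.
\]
The left-hand side vanishes identically on $\mbox{supp}\,g$, while $K>0$ and $R^{\infty}=1>0$, so $\omega=0$ for every $\omega\in\mbox{supp}\,g$. Hence $g=\delta$, contradicting the standing assumption $g\neq\delta$.

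If one prefers to allow a genuinely time-dependent common phase $\phi(t)$ without invoking the equilibrium framework, the same conclusion follows directly from Lemma \ref{L5.2}(i). Under the complete synchronization ansatz the integrand $\varrho\,\cos(\theta-\phi_\omega)\sin(\theta-\phi)$ evaluates against a Dirac mass at $\phi$ to give $\cos 0\cdot\sin 0=0$, so $\dot\phi_\omega(t)=\omega$ for each $\omega\in\mbox{supp}\,g$. But $\phi_\omega(t)=\phi(t)$ for every $\omega$ implies that $\dot\phi(t)$ is independent of $\omega$; combined with the normalization $\int_{\mathbb{R}}\omega g(\omega)\,d\omega=0$, this forces $\omega\equiv 0$ on $\mbox{supp}\,g$, i.e., $g=\delta$. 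The only non-routine point is pinning down a rigorous interpretation of complete phase synchronization in the mean-field setting; once this is fixed to mean that all conditional distributions collapse to a single, $\omega$-independent Dirac mass, the result is essentially a one-line corollary of the computations already displayed in the excerpt, and I do not anticipate a serious technical obstacle.
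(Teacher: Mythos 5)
Your proof is correct, but it follows a genuinely different route from the paper's. The paper fixes the same notion of complete phase synchronization (an equilibrium with $R^{\infty}=1$), plugs $R^{\infty}=1$ into the amplitude consistency relation \eqref{condition r} to get $1=\int_{\mathbb R} g(\omega)\sqrt{1-(\omega/K)^{2}}\,d\omega$, and then simply exhibits one choice of data ($g=\tfrac12\mathbbm{1}_{[-1,1]}$, $K=1$, for which the integral equals $\pi/4\neq 1$) violating it; this establishes the literal ``may not'' statement by a concrete counterexample, and keeps the relation \eqref{condition r} in the foreground, where it is reused in Proposition \ref{P5.2}. You instead use the phase consistency relation \eqref{phi arcsin} (together with \eqref{r Omega 1} and \eqref{LSS}): complete synchronization forces $\phi^{\infty}_{\omega}=\phi^{\infty}$, hence $\omega/(KR^{\infty})=0$ on $\mbox{supp}\,g$, hence $g=\delta$ — a universal impossibility valid for \emph{every} $g\neq\delta$ and every $K>0$, which is strictly stronger than what the paper's counterexample argument yields (though the same strengthening could be extracted from \eqref{condition r} by noting $\sqrt{1-(\omega/K)^2}<1$ off $\omega=0$). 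Your second, ``time-dependent'' variant via Lemma \ref{L5.2}(i) is also fine in spirit, but note the small gloss: $\phi_{\omega}(t)=\phi(t)$ at a single instant does not let you equate $\dot\phi_{\omega}$ with $\dot\phi$; you need the fully concentrated state to persist on a time interval (or to be an equilibrium, as in your first argument, which sits squarely in the paper's framework), after which $\dot\phi_{\omega}=\omega$ being $\omega$-independent pins $\mbox{supp}\,g$ to a single point, and the zero-mean normalization makes that point $0$.
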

\begin{proof} Suppose that the complete phase synchronization occurs, i.e., there exists an equilibrium $f^{\infty}$ which corresponds to $R^{\infty} = 1$. Then, the relation \eqref{condition r} yields
\begin{equation} \label{E-8}
 1= \int_{\mathbb R} g(\omega) \sqrt{1- \left(\frac{\omega}{K}\right)^2} ~ d\omega.     
\end{equation} 
However, there exist $g$ and $K$ such that the above relation does not hold. For example, we set
\[ g(\omega) = \frac{1}{2} {\mathbbm 1}_{[-1,1]}, \qquad K = 1. \]
then, the L.H.S. of \eqref{E-8} satisfies
\[ \label{E-8-1}
 \int_{\mathbb R} g(\omega) \sqrt{1- \left(\frac{\omega}{K}\right)^2} ~ d\omega = \int_{0}^1 \sqrt{1-\omega^2} ~ d\omega = \arctan \omega \Big|_{\omega = 0}^{\omega = 1} = \frac{\pi}{4} \not = 1.
\]
This is contradictory to the relation \eqref{E-8}. This shows that the complete phase synchronization may not occur.
\end{proof}

It follows from \eqref{r Omega 1}, \eqref{E-7} and \eqref{condition r} that for such a locally synchronized state we have 
\[ \dot{R}^{\infty} =0, \qquad \dot{\phi}^{\infty} = 0 \qquad \mbox{and} \qquad  R^{\infty} \to 1 \quad \mbox{as $K \to \infty$}.  \]
Thus, we can expect that for a large $K$,  equilibrium states are close to a complete phase synchronization. In the following proposition, we give a more quantified version of this. 

\begin{proposition} \label{P5.2}
Suppose that the probability density function $g = g(\omega)$ satisfies
\begin{equation} \label{E-9}
 [-m, m]\subset \mbox{\rm supp} ~  g(\omega) \subset [-M, M], \qquad \int_{\mathbb R} \omega g(\omega) ~ d\omega = 0, 
\end{equation} 
and let $f^{\infty}$ be an equilibrium to \eqref{K-S}. Then, we have 
\[  R^{\infty}  \geq  \sqrt{1- \left(\frac{M}{K R^{\infty}}\right)^2}, \qquad  R^{\infty} \geq     m \Big( \min_{\omega \in [-m, m]} g(\omega) \Big). \]
\end{proposition}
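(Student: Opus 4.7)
The plan is to derive both inequalities directly from the self-consistency identity
$$R^\infty = \int_{\mathbb R} g(\omega)\sqrt{1 - \Bigl(\frac{\omega}{KR^\infty}\Bigr)^{2}}\, d\omega$$
already established just before the statement, which itself came from combining $R^\infty_\omega = 1$ on $\mathrm{supp}\, g$ with the equilibrium relation $\sin(\phi^\infty_\omega - \phi^\infty) = \omega/(KR^\infty)$ via Lemma~\ref{L5.1}(ii). Both bounds then reduce to pointwise estimates of the integrand over the two prescribed subsets of the support, with the only delicate point being a mild implicit lower bound on $KR^\infty$ required for the square roots to be real-valued. Such a bound is anyway forced by the requirement that $\sin(\phi^\infty_\omega - \phi^\infty) = \omega/(KR^\infty)$ admits a real solution for every $\omega \in \mathrm{supp}\, g$, which gives $KR^\infty \geq M$.

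For the first inequality I would use $\mathrm{supp}\, g \subset [-M, M]$ to replace $\omega^{2}$ by its supremum $M^{2}$ under the square root: since $\sqrt{1-x^{2}}$ is decreasing in $|x|$, the integrand is uniformly bounded below by $\sqrt{1 - (M/(KR^\infty))^{2}}$ on the support. Pulling this constant out of the integral and using $\int g\, d\omega = 1$ yields the first claimed bound immediately.

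For the second inequality I would discard the mass outside $[-m, m] \subset \mathrm{supp}\, g$, bound $g$ from below by $\min_{[-m,m]} g$ on this subinterval, and then estimate $\int_{-m}^{m} \sqrt{1 - (\omega/(KR^\infty))^{2}}\, d\omega$ from below. The cleanest route is the elementary inequality $\sqrt{1-x^{2}} \geq 1 - x^{2}$, valid on $|x|\leq 1$, which reduces the remaining integral to an explicit polynomial in $1/(KR^\infty)$ and produces the lower bound $2m\bigl(\min_{[-m,m]} g\bigr)\bigl(1 - m^{2}/(3(KR^\infty)^{2})\bigr)$. This quantity exceeds $m\min_{[-m,m]} g$ once $(KR^\infty)^{2} \geq 2m^{2}/3$, so the main obstacle is certifying this auxiliary lower bound on $KR^\infty$. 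This is where one brings in the first inequality (which, after squaring, reads $K R^\infty\sqrt{1-(R^\infty)^{2}} \leq M$) together with the large-coupling framework of the section to rule out the small-$R^\infty$ branch and force $KR^\infty$ to be comfortably larger than $m$.
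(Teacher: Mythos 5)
Your first inequality coincides with the paper's proof verbatim: bound the integrand in \eqref{condition r} from below by $\sqrt{1-(M/(KR^\infty))^2}$ on $\mathrm{supp}\,g\subset[-M,M]$ and use $\int g\,d\omega=1$. For the second inequality you and the paper both restrict to $[-m,m]$ and pull out $\min_{[-m,m]}g$, but then diverge: the paper evaluates $\int_{-m}^m\sqrt{1-(\omega/(KR^\infty))^2}\,d\omega$ exactly as $m\sqrt{1-(m/(KR^\infty))^2}+KR^\infty\arcsin\frac{m}{KR^\infty}$, discards the first (nonnegative) term and uses $\arcsin x\ge x$, so the bound $m\min_{[-m,m]}g$ falls out with no quantitative input beyond $m\le KR^\infty$; you instead use $\sqrt{1-x^2}\ge 1-x^2$, which is fine but costs you the auxiliary condition $(KR^\infty)^2\ge 2m^2/3$.

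The soft spot is how you propose to certify that condition. Appealing to ``the large-coupling framework of the section'' is not legitimate here: Proposition \ref{P5.2} carries no largeness assumption on $K$, and the squared first inequality $KR^\infty\sqrt{1-(R^\infty)^2}\le M$ does not by itself bound $KR^\infty$ from below. But the repair is already in your own opening paragraph: the equilibrium relation $\sin(\phi^\infty_\omega-\phi^\infty)=\omega/(KR^\infty)$ must hold for every $\omega\in\mathrm{supp}\,g\supset[-m,m]$, which forces $KR^\infty\ge\sup_{\omega\in\mathrm{supp}\,g}|\omega|\ge m$ (note this gives only $KR^\infty\ge m$, not $KR^\infty\ge M$ as you asserted, since the support need not fill $[-M,M]$). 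Then $m^2/(3(KR^\infty)^2)\le 1/3$, and your estimate yields $R^\infty\ge \tfrac{4}{3}\,m\min_{[-m,m]}g\ge m\min_{[-m,m]}g$. With that one-line fix your argument is complete; the paper's $\arcsin$ route is simply tidier in that it needs no auxiliary lower bound on $KR^\infty$ beyond well-definedness.
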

\begin{proof} (i)~It follows from \eqref{condition r} that we have
\begin{align*}
R^{\infty} &= \int_{\text{supp}\, g(\omega)} g(\omega) \sqrt{1- \left(\frac{\omega}{K R^{\infty}}\right)^2} ~ d\omega \\
&\geq \sqrt{1- \left(\frac{M}{K R^{\infty}}\right)^2} \int_{\text{supp}\, g(\omega)} g(\omega) ~ d\omega = \sqrt{1- \left(\frac{M}{K R^{\infty}}\right)^2}.
\end{align*}
(ii)~For $K \geq m$, we use \eqref{condition r}  and \eqref{E-9} to obtain
\begin{align}
\begin{aligned}\label{E-10}
R^{\infty} &\geq \int_{-m}^m g(\omega)\sqrt{1 - \left( \frac{\omega}{KR^{\infty}} \right)^2} ~ d\omega\\
&\geq \Big( \min_{\omega \in [-m, m]} g(\omega) \Big) \int_{-m}^m \sqrt{1 - \left( \frac{\omega}{K R^{\infty}} \right)^2} ~ d\omega\\
& = \Big( \min_{\omega \in [-m, m]} g(\omega) \Big) \Big( m \sqrt{1 - \left(\frac{m}{K R^{\infty}} \right)^2} + K R^{\infty} \arcsin \frac{m}{K R^{\infty}} \Big) \\
&\geq \Big( \min_{\omega \in [-m, m]} g(\omega) \Big) K R^{\infty} \arcsin \frac{m}{K R^{\infty}} \\
&\geq  m \Big( \min_{\omega \in [-m, m]} g(\omega) \Big). 
\end{aligned}
\end{align}
\end{proof}
\begin{remark} For $g(\omega) = \frac{1}{2 \ell} {\mathbbm 1}_{[-\ell, \ell]}$, if we choose
\[ m = M = \ell, \]
then we have $R^{\infty} \geq \frac{1}{2}.$
\end{remark}

\bigskip

\subsection{Proof of Theorem \ref{T3.2}}
As noted in Proposition \ref{P5.1}, a complete phase synchronization may not occur for the distributed natural frequencies and a complete phase synchronization can be regraded as a concentration phenomenon where full mass concentrates at a single point. Thus, it is still interesting to see  \newline
\begin{quote}
Under what conditions on parameters and initial data, when does a concentration around the average phase emerge? 
\end{quote}
This question will be addressed in the sequel. \newline

For $t \geq 0$, we consider the following time-dependent interval $L^+_\gamma$ (see Figure \ref{fig:Lgamma}) and mass on $S \subset \mathbb T$: 
\[
L^+_\gamma(t):= \big( \phi(t) - \frac{\pi}{2} + \gamma,  \phi(t) + \frac{\pi}{2} - \gamma\big), \qquad {\mathcal M}(S) : = \int_{\mathbb R} \int_S f(\theta, \omega, t)  ~ d\theta d\omega,
\]
where the constant $\gamma$ is to be determined later, and we assume that $g =g(\omega)$ is compactedly supported and 
\[ \mbox{\rm supp} ~  g(\omega) \subset (-M, M). \]
Note that the length of the time-dependent interval $L^+_\gamma(t)$ equals to $\pi - 2\gamma$ and $L_{\gamma}^+ = I^+_{\frac{\pi}{2} -\gamma}$.

Then, under an appropriate assumption on the initial configuration, we will show the following  two properties: for any solution $f = f(\theta, \omega, t)$ to \eqref{K-S},
\begin{equation} \label{E-10-0}
\frac{d}{dt}  {\mathcal M}(L^+_{\gamma(t)}) \geq 0 \quad \mbox{and} \quad  \lim_{t \to \infty} \int_{L^+_\gamma (t)} |f(\cdot, \omega, t)|^2 ~ d\theta = \infty, \quad \mbox{for each $\omega \in \mathbb R$},
\end{equation}

\begin{figure}
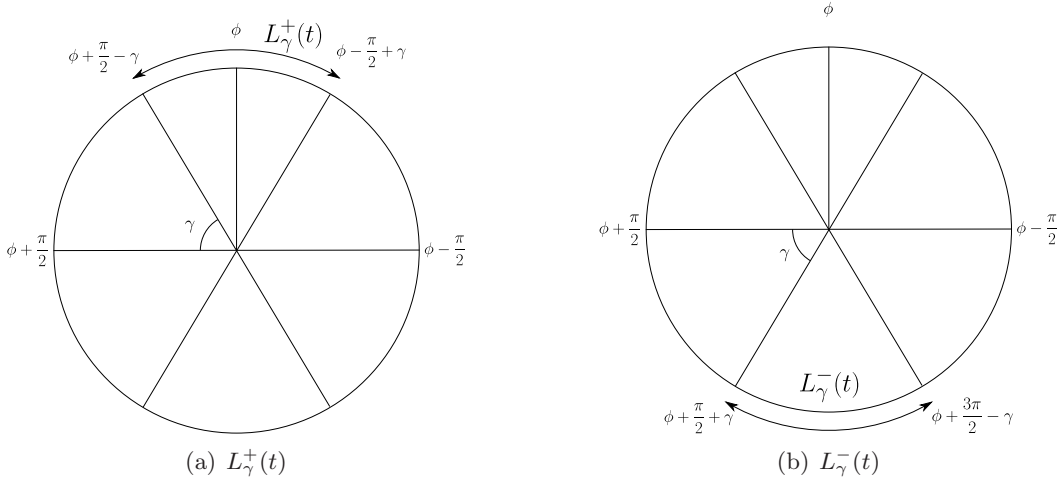

\centering
\subfigure[$L_\gamma^+(t)$]{\includegraphics[width=0.4\textwidth]{Lplus.eps}\label{fig:Lplus}}
\hspace{0.1\textwidth}
\subfigure[$L_\gamma^-(t)$]{\includegraphics[width=0.4\textwidth]{Lminus.eps}\label{fig:Lminus}}
\caption{Geometric descriptions of $L_\gamma^+(t)$ and $L_\gamma^-(t)$}
\label{fig:Lgamma}
\end{figure}

\subsubsection{Verification of the first estimate in \eqref{E-10-0}} Before we present the proof of Theorem \ref{T3.2}, we first establish several lemmata in the sequel.

We first study the bounds of ${\dot \phi}$ and $R$. 
\begin{lemma} \label{L5.3}
Let $f$ be a solution to \eqref{K-S}. Then, the order parameters $R$ and $\phi$ satisfy 
\begin{align}
\begin{aligned} \label{dot phi r}
& (i)~|\dot\phi | \leq \frac{M}{R} + K(1-R). \\
& (ii)~\max \{ 0, (1 + \sin \gamma ) {\mathcal M}(L^+_{\gamma}) - 1 \} \leq R \leq \min\{1, 
(1-\sin\gamma) {\mathcal M} \big(L^+_\gamma \big) + \sin \gamma\}. 
\end{aligned}
\end{align}
\end{lemma}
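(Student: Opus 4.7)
The plan is to prove the two bounds separately by directly exploiting the integral representations for $\dot\phi$ and $R$ already derived, splitting the relevant integrals so that elementary trigonometric inequalities produce the right constants. I will not need any new identity beyond those established in Lemma \ref{L5.2} and the defining relation \eqref{O-K-1}.

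For (i), I would start from the expression in Lemma \ref{L5.2}(ii),
\[
\dot\phi = \frac{1}{R}\iint_{\mathbb T\times\mathbb R}\cos(\theta-\phi)\,\omega\,f\,d\theta d\omega - \frac{K}{2}\int_{\mathbb T}\sin\bigl(2(\theta-\phi)\bigr)\rho\,d\theta,
\]
and estimate the two pieces separately. The first integrand is bounded in absolute value by $|\omega|\le M$ since $\operatorname{supp}g\subset(-M,M)$, so using $\iint f\,d\theta d\omega = 1$ gives an $M/R$ contribution. For the second, I would rerun exactly the trick from the proof of Proposition \ref{P4.1}(ii): the upper bound follows from the elementary inequality $(\sin\alpha-1)(\cos\alpha-1)\ge 0$ and the lower bound from $(1+\sin\alpha)(1-\cos\alpha)\ge 0$, in each case integrating against $\rho$ and invoking \eqref{O-K-1} to cancel the $\sin(\theta-\phi)$ integral and recognize $R$ in the $\cos(\theta-\phi)$ integral. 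This yields $-K(1-R)\le -K\int\sin(\theta-\phi)\cos(\theta-\phi)\rho\,d\theta\le K(1-R)$, and combining with the first-term bound gives $|\dot\phi|\le M/R+K(1-R)$.

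For (ii), I would split the representation $R=\iint\cos(\theta-\phi)f\,d\theta d\omega$ across $L^+_\gamma(t)$ and its complement. By construction of $L^+_\gamma$, on that interval $|\theta-\phi|<\pi/2-\gamma$, so $\cos(\theta-\phi)\in(\sin\gamma,1]$, while on the complement $\cos(\theta-\phi)\in[-1,\sin\gamma]$. Pairing the extreme values against the masses ${\mathcal M}(L^+_\gamma)$ and $1-{\mathcal M}(L^+_\gamma)$ then yields
\[
\sin\gamma\cdot{\mathcal M}(L^+_\gamma) - \bigl(1-{\mathcal M}(L^+_\gamma)\bigr) \;\le\; R \;\le\; {\mathcal M}(L^+_\gamma) + \sin\gamma\bigl(1-{\mathcal M}(L^+_\gamma)\bigr),
\]
which rearranges to $(1+\sin\gamma){\mathcal M}(L^+_\gamma)-1\le R\le (1-\sin\gamma){\mathcal M}(L^+_\gamma)+\sin\gamma$. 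The remaining bounds $0\le R\le 1$ are immediate from the fact that $f$ is a probability density and $|\cos(\theta-\phi)|\le 1$, so taking the maximum and minimum against these crude bounds yields the stated two-sided inequality.

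No serious obstacle is anticipated; both parts reduce to elementary bookkeeping once Lemma \ref{L5.2}(ii) and the orthogonality relation \eqref{O-K-1} are in hand. The only point requiring mild care is the lower bound in (i), where one must pick the correct trigonometric factorization, $(1+\sin\alpha)(1-\cos\alpha)\ge 0$, to reverse the sign and still land on the $-K(1-R)$ threshold; any other naive expansion would lose a factor of $R$ or an additive constant.
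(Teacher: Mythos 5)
Your proposal is correct and follows essentially the same route as the paper: part (i) bounds the two terms of $\dot\phi$ from Lemma \ref{L5.2}(ii) by $M/R$ (using $\operatorname{supp}g\subset(-M,M)$) and by $K(1-R)$ (reproducing the trigonometric-factorization argument of Proposition \ref{P4.1}(ii)), and part (ii) splits $R=\iint\cos(\theta-\phi)f\,d\theta\,d\omega$ across $L^+_\gamma$ and its complement, using $\cos(\theta-\phi)\in(\sin\gamma,1]$ inside and $[-1,\sin\gamma]$ outside, just as the paper does. The only (immaterial) difference is that the paper obtains $0\le R\le 1$ from the defining relation \eqref{O-K} rather than from $|\cos|\le 1$.
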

\begin{proof} (i) We use \eqref{L-1} and Lemma \ref{L5.2} to obtain
\begin{align}\label{E-10-1}
\begin{aligned}
{\dot \phi} &= \frac{1}{R} \int_{\mathbb R} \omega g(\omega) \int_{\mathbb T} \varrho(\theta, \omega, t) \cos(\theta -\phi)  ~ d\theta d\omega -\frac{1}{2} \int_{\mathbb R} Kg(\omega) \int_{\mathbb T} \varrho(\theta, \omega, t) \sin 2(\theta - \phi)  ~ d\theta d\omega \cr
& =: {\mathcal I}_{21} + {\mathcal I}_{22}.
\end{aligned}
\end{align}

$\bullet$ (Estimate of $ {\mathcal I}_{21}$): We use the fact $\text{supp}~g(\omega) \subset [-M , M]$ to see
\begin{equation} \label{E-10-2}
|{\mathcal I}_{21}|  \leq  \frac{1}{R} \int_{\mathbb R}  |\omega| g(\omega) \int_{\mathbb T} \varrho(\theta, \omega, t)   ~ d\theta d\omega \leq  \frac{1}{R} \int_{-M}^M  |\omega| g(\omega) ~ d\omega \leq \frac{M}{R}.
\end{equation}
$\bullet$ (Estimate of $ {\mathcal I}_{22}$): We use the same argument as in the proof 
of Proposition \ref{P4.1} to get
\begin{equation} \label{E-10-3}
|{\mathcal I}_{22}| \leq K(1-R). 
\end{equation}

Finally, in \eqref{E-10-1}, we combine \eqref{E-10-2} and \eqref{E-10-3} to obtain the desired estimate:
\[
|\dot\phi | \leq \frac{M}{R} + K(1-R).
\]
(ii)~For the lower bound estimate, we use a defining relation \eqref{O-K} for $R$ to obtain
\begin{align*}
R(t)&= \iint_{\mathbb T \times \mathbb R} f(\theta, \omega, t) \cos(\theta - \phi) ~ d\theta d\omega\\
&=\int_{\mathbb R}\int_{L^+_\gamma(t)} f(\theta, \omega, t) \cos(\theta - \phi) ~ d\theta d\omega  + \int_{\mathbb R} \int_{\mathbb T \setminus L^+_\gamma(t)} f(\theta, \omega, t) \cos(\theta - \phi) ~ d\theta d\omega\\
&\geq \sin\gamma \int_{\mathbb R} \int_{L^+_\gamma(t)} f(\theta, \omega, t) ~ d\theta d\omega - \int_{\mathbb R}\int_{\mathbb T \setminus L^+_\gamma(t)} f(\theta, \omega, t) ~ d\theta d\omega\\
&= \sin\gamma {\mathcal M}(L^+_\gamma(t)) - \Big(1 - {\mathcal M}(L^+_\gamma(t))\Big)\\
& = (1+\sin\gamma){\mathcal M}(L^+_\gamma(t)) - 1. 
\end{align*}
On the other hand, for the upper bound estimate, we use 
\[ R \leq \iint_{L^+_\gamma \times \mathbb R} f ~ d\theta d\omega + \sin \gamma \iint_{ (\mathbb T \backslash L^+_{\gamma})\times \mathbb R} f ~ d\theta d\omega \leq (1-\sin \gamma ) {\mathcal M}(L^+_\gamma) + \sin \gamma \]
to obtain the desired upper bound for $R$. 
\end{proof}

We now find appropriate constants $\varepsilon_0$ and $\gamma_0$ for the initial condition.

\begin{lemma}\label{L5.4} Suppose $\varepsilon_0$ and $\gamma_0$ are positive constants satisfying
\begin{equation} \label{gamma}
0<\varepsilon_0<\frac{3\sqrt{3}}{4}-1, \qquad 
\frac{\pi}{3} \leq \gamma_0 < \arcsin\Big(1 - \frac{2\varepsilon_0}{2\sqrt{3} + 1} \Big).
\end{equation}
Then, we have
\begin{equation} \label{E-12}
\frac{1+\varepsilon_0}{1+\sin\gamma_0} < {\mathcal M}_*(\varepsilon_0, \gamma_0) < 1,
\end{equation}
where ${\mathcal M}_*(\varepsilon_0, \gamma_0)$ is a positive constant defined by \eqref{MM}.
\end{lemma}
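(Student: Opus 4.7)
The plan is to clear denominators in both inequalities of \eqref{E-12} and reduce each to an elementary trigonometric condition. Multiplying through by the positive quantity $(1+\sin\gamma_0)(1+\cos\gamma_0)$, the left inequality $\frac{1+\varepsilon_0}{1+\sin\gamma_0} < \mathcal{M}_*(\varepsilon_0,\gamma_0)$ becomes $(1+\varepsilon_0)(1+\cos\gamma_0) < 2+\varepsilon_0+\cos\gamma_0$, which collapses after expansion to $\varepsilon_0 \cos\gamma_0 < 1$. This is immediate from the hypothesis $\varepsilon_0 < \tfrac{3\sqrt{3}}{4}-1 < 1$ together with $\cos\gamma_0 \le 1$, so the left inequality requires no further work.

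The same clearing transforms the right inequality $\mathcal{M}_*(\varepsilon_0,\gamma_0) < 1$ into $\sin\gamma_0(1+\cos\gamma_0) > 1+\varepsilon_0$, or equivalently
\[
\sin\gamma_0 \cos\gamma_0 > (1-\sin\gamma_0) + \varepsilon_0.
\]
The hypothesis $\gamma_0 < \arcsin\!\bigl(1-\tfrac{2\varepsilon_0}{2\sqrt{3}+1}\bigr)$ is exactly the condition $1-\sin\gamma_0 > \tfrac{2\varepsilon_0}{2\sqrt{3}+1}$, which suggests establishing an auxiliary linear bound of the form $\sin\gamma\cos\gamma \ge C(1-\sin\gamma)$ with $C$ chosen so that $(C-1)\cdot \tfrac{2}{2\sqrt{3}+1} = 1$, i.e., $C = \tfrac{2\sqrt{3}+3}{2}$. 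Granting the bound
\[
\sin\gamma \cos\gamma \ge \tfrac{2\sqrt{3}+3}{2}(1-\sin\gamma), \qquad \gamma \in [\pi/3,\pi/2],
\]
the chain $\sin\gamma_0\cos\gamma_0 \ge (1-\sin\gamma_0) + \tfrac{2\sqrt{3}+1}{2}(1-\sin\gamma_0) > (1-\sin\gamma_0)+\varepsilon_0$ delivers the desired inequality.

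The principal obstacle is verifying this auxiliary bound, which I plan to attack by setting $F(\gamma) := \sin\gamma\cos\gamma - \tfrac{2\sqrt{3}+3}{2}(1-\sin\gamma)$ and showing $F \ge 0$ on $[\pi/3,\pi/2]$. A direct evaluation yields $F(\pi/3) = 0$ and $F(\pi/2) = 0$, so it suffices to control the sign of $F'$. Computing $F'(\gamma) = \cos 2\gamma + \tfrac{2\sqrt{3}+3}{2}\cos\gamma = 2u^2 + \tfrac{2\sqrt{3}+3}{2}u - 1$ with $u = \cos\gamma$, I observe this quadratic is monotone increasing in $u$ for $u \ge 0$. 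Since $u = \cos\gamma$ decreases from $1/2$ to $0$ as $\gamma$ ranges over $[\pi/3,\pi/2]$, $F'$ is monotone decreasing in $\gamma$ on this interval. Evaluation at the endpoints gives $F'(\pi/3) = \tfrac{1+2\sqrt{3}}{4} > 0$ and $F'(\pi/2) = -1 < 0$, so $F'$ has exactly one zero in $(\pi/3,\pi/2)$. Consequently $F$ rises from $0$, attains a positive maximum, and returns to $0$, whence $F \ge 0$ throughout, completing the proposed argument.
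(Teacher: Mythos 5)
Your proof is correct, and at its core it follows the same route as the paper: clear the denominator $(1+\sin\gamma_0)(1+\cos\gamma_0)$, dispose of the left inequality via $\varepsilon_0\cos\gamma_0<1$ (the paper phrases this as $1+\varepsilon_0>\varepsilon_0(1+\cos\gamma_0)$), and reduce the right inequality to $\sin\gamma_0(1+\cos\gamma_0)>1+\varepsilon_0$. Your auxiliary bound
\[
\sin\gamma\cos\gamma \;\geq\; \tfrac{2\sqrt{3}+3}{2}\,(1-\sin\gamma), \qquad \gamma\in[\tfrac{\pi}{3},\tfrac{\pi}{2}],
\]
is exactly the paper's inequality $x(1+\sqrt{1-x^2})-1>-\tfrac{1+2\sqrt{3}}{2}(x-1)$ rewritten with $x=\sin\gamma$; the difference is that the paper only justifies it by pointing to Figure 3, whereas you prove it analytically (both endpoint values vanish, $F'(\gamma)=2\cos^2\gamma+\tfrac{2\sqrt3+3}{2}\cos\gamma-1$ is monotone in $\gamma$ on the interval and changes sign exactly once, so $F\ge0$). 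This buys you two things: a rigorous verification of the key inequality, and a uniform treatment of the endpoint $\gamma_0=\pi/3$ — since your bound is non-strict and the strictness is drawn from the hypothesis $1-\sin\gamma_0>\tfrac{2\varepsilon_0}{2\sqrt3+1}$ (which holds at $\gamma_0=\pi/3$ because $\varepsilon_0<\tfrac{3\sqrt3}{4}-1$ strictly), you avoid the separate case the paper appends for $\gamma_0=\pi/3$.
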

\begin{proof} 
\begin{figure}
\centering
\includegraphics[width=0.5\textwidth]{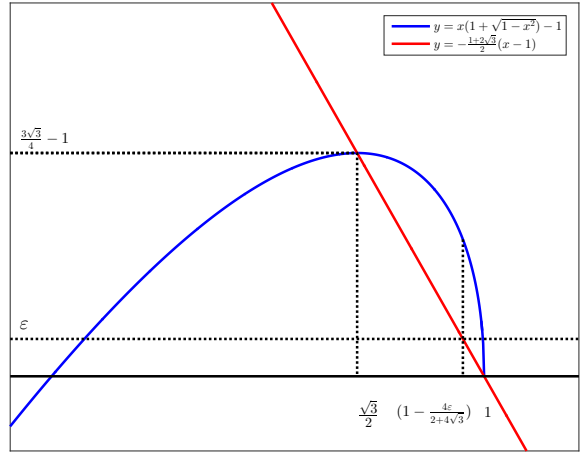}
\caption{$x(1 + \sqrt{1-x^2}) -1 > -\frac{1+2\sqrt{3}}{2}(x-1)$}
\label{fig:compare}
\end{figure}
(i) (First inequality): Since $1+\varepsilon_0 > \varepsilon_0(1+\cos\gamma_0)$, we have
\[ 2+\varepsilon_0 + \cos\gamma_0 = 1 + \cos \gamma_0 + 1 + \varepsilon_0  > (1 + \varepsilon_0)(1+\cos\gamma_0) \]
This yields the first inequality.
\vspace{0.2cm}

\noindent (ii) (Second inequality): For $\frac{\sqrt{3}}{2} < x < 1$, we have the following inequality (see Figure \ref{fig:compare}):
\[  \label{ex}
x(1 + \sqrt{1-x^2}) -1 > -\frac{1+2\sqrt{3}}{2}(x-1).
\]
Thus, we have
\begin{equation}\label{ex2}
x(1+\sqrt{1-x^2}) - 1 > \varepsilon_0, \quad \mbox{for} \quad \frac{\sqrt{3}}{2}< x < 1- \frac{2\varepsilon_0}{1+ 2\sqrt{3}}. 
\end{equation}
On the other hand, by the assumption \eqref{gamma}, 
\begin{equation*}
\frac{\sqrt{3}}{2} < \sin\gamma_0 < 1- \frac{2\varepsilon_0}{2\sqrt{3} + 1}
\end{equation*}
We use \eqref{ex2} to obtain
\[
\sin\gamma_0 ( 1 + \cos\gamma_0) > 1+ \varepsilon_0, \quad \mbox{equivalently} \quad 
(1+\sin\gamma_0)(1+\cos\gamma_0)> 2+\varepsilon_0 + \cos\gamma_0,
\]
which implies \eqref{E-12} for $\gamma_0> \frac{\pi}{3} $. The case $\gamma_0= \frac{\pi}{3}$
follows from the fact that 
\[
{\mathcal M}_*(\varepsilon_0, \gamma_0) =1,
\]
when $\gamma_0= \frac{\pi}{3} $ and  $\varepsilon_0=\frac{3\sqrt{3}}{4}-1.$
\end{proof}

We now see how the previoiusly chosen $\varepsilon_0$ and $\gamma_0$ are used to give an appropriate initial configuration.

\begin{lemma} \label{L5.5}
Suppose that the initial datum $f_0$ satisfies
\begin{align*}
& (i)~f_0(\theta,\omega) = 0  \hspace{1em}{\rm{in}}\hspace{1em}
\mathbb{T}\times (\mathbb{R}\backslash[-M,M]). \cr
\vspace{0.5cm}
& (ii)~\inf_{\omega \in  \mbox{supp}~g} \int_{L^+_{\gamma_0} (0)} f_0(\theta, \omega) ~ d\theta \geq {\mathcal M}_*(\varepsilon_0, \gamma_0),
\end{align*}
where $\varepsilon_0$ and $\gamma_0$ are positive constants as in Lemma \ref{L5.4}. Then we have 
\[
\sup_{\omega \in \mbox{supp}~g}|\phi(0) - \phi_{\omega}(0) |<\frac{\pi}{2} - \frac{1+\varepsilon_0}{1+\cos\gamma_0}.
\]
\end{lemma}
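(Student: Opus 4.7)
The plan is to convert the hypothesized mass concentration inside $L_{\gamma_0}^+(0)$ into a lower bound on $\cos\bigl(\phi(0)-\phi_\omega(0)\bigr)$ via the definition of the local order parameter, and then upgrade that cosine bound into the claimed angular bound using the elementary inequality $\sin c<c$ for $c>0$.

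Fix $\omega\in\mbox{supp}~g$ and regard $\varrho(\cdot,\omega,0)$ as a probability density on $\mathbb{T}$ via \eqref{L-1}. Dividing the defining identity \eqref{L-2} by $e^{i\phi(0)}$ and taking the real part yields
\begin{equation*}
 R_\omega(0)\cos\bigl(\phi_\omega(0)-\phi(0)\bigr)=\int_{\mathbb{T}}\cos(\theta-\phi(0))\,\varrho(\theta,\omega,0)\,d\theta.
\end{equation*}
Since $\cos(\theta-\phi(0))>\sin\gamma_0$ on $L_{\gamma_0}^+(0)$ and $\cos(\theta-\phi(0))\geq -1$ on its complement, setting $m_\omega(0):=\int_{L_{\gamma_0}^+(0)}\varrho(\theta,\omega,0)\,d\theta$ and using $\int_{\mathbb{T}}\varrho\,d\theta=1$ gives
\begin{equation*}
 R_\omega(0)\cos\bigl(\phi_\omega(0)-\phi(0)\bigr)\geq (1+\sin\gamma_0)\,m_\omega(0)-1.
\end{equation*}

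Next I would invoke hypothesis (ii), read as a uniform bound $m_\omega(0)\geq \mathcal{M}_*(\varepsilon_0,\gamma_0)$ on the conditional density (the natural per-frequency analogue of the aggregate condition of Theorem \ref{T3.2}). The explicit form \eqref{MM} of $\mathcal{M}_*$ then simplifies the right-hand side:
\begin{equation*}
 (1+\sin\gamma_0)\mathcal{M}_*(\varepsilon_0,\gamma_0)-1=\frac{2+\varepsilon_0+\cos\gamma_0}{1+\cos\gamma_0}-1=\frac{1+\varepsilon_0}{1+\cos\gamma_0}=:c.
\end{equation*}
Since $R_\omega(0)\leq 1$, this upgrades to $\cos\bigl(\phi_\omega(0)-\phi(0)\bigr)\geq c$. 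The ranges of $\varepsilon_0,\gamma_0$ in \eqref{gamma} force $c\in(0,\pi/2)$, so $\sin c<c$ gives $\cos(\tfrac{\pi}{2}-c)=\sin c<c$, and the strict monotonicity of $\cos$ on $[0,\pi]$ then forces $|\phi_\omega(0)-\phi(0)|<\tfrac{\pi}{2}-c$, which is exactly the asserted bound after taking the supremum over $\omega$.

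The main delicate point I foresee is justifying the per-$\omega$ reading of hypothesis (ii): literally one has $\int_{L_{\gamma_0}^+(0)}f_0(\theta,\omega)\,d\theta=g(\omega)\,m_\omega(0)$, so strictly speaking either the bound should be phrased in terms of $\varrho$ in place of $f_0$ (as I use it above), or one would additionally need $g(\omega)\geq \mathcal{M}_*$ on $\mbox{supp}~g$ since $m_\omega\leq 1$. Any reading consistent with the per-$\omega$ conclusion of the lemma reduces to the conditional one, and under that reading the three-step argument above is routine.
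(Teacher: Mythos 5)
Your proof is correct and follows essentially the same route as the paper: decompose the integral defining $R_\omega(0)\cos(\phi_\omega(0)-\phi(0))$ over $L^+_{\gamma_0}(0)$ and its complement, bound it below by $(1+\sin\gamma_0)m_\omega(0)-1$, use the mass hypothesis to reduce this to $\tfrac{1+\varepsilon_0}{1+\cos\gamma_0}$, and finish with $R_\omega(0)\le 1$ together with $\sin c < c$. The caveat you raise about $f_0$ versus $\varrho$ is a real one: the paper's own proof writes $R_\omega(0)\cos(\phi_\omega(0)-\phi(0))=\int_{\mathbb{T}}\cos(\theta-\phi(0))\,f_0(\theta,\omega)\,d\theta$, but by \eqref{L-1}--\eqref{L-2} the correct integrand is $\varrho(\theta,\omega,0)=f_0(\theta,\omega)/g(\omega)$, and the bound $\int_{\mathbb{T}\setminus L^+_{\gamma_0}}f_0\,d\theta\le 1-\mathcal{M}_*$ implicitly uses $g(\omega)\le 1$. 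Your reading, replacing $f_0$ by $\varrho$ in both hypothesis and proof, is the interpretation that makes the argument airtight and matches how the lemma is actually invoked; you were right to flag it.
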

\begin{proof}
By definition of the local order parameter \eqref{L-2} and \eqref{MM}, we have
\begin{align*}
& R_{\omega}(0) \cos(\phi_{\omega}(0) - \phi(0)) \\
& \hspace{1cm} = \int_{L^+_{\gamma_0}(0)} \cos(\theta-\phi(0)) f_0(\theta, \omega)  ~ d\theta + \int_{\mathbb T \setminus L^+_{\gamma_0}(0)} \cos(\theta-\phi(0)) f_0(\theta, \omega)  ~ d\theta \\
& \hspace{1cm} \geq \cos (\frac{\pi}{2}-\gamma_0) \int_{L^+_{\gamma_0}(0)} f_0(\theta, \omega)  ~ d\theta - \int_{\mathbb T \setminus L^+_{\gamma_0}(0)} f_0(\theta, \omega)  ~ d\theta \\
& \hspace{1cm} \geq \sin\gamma_0 {\mathcal M_*}(\varepsilon_0, \gamma_0) -\Big(1- {\mathcal M_*}(\varepsilon_0, \gamma_0) \Big) \\
&\hspace{1cm} =\frac{2+\varepsilon_0 + \cos\gamma_0}{1+\cos\gamma_0} - 1 = \frac{1+\varepsilon_0}{1+\cos\gamma_0} >0.
\end{align*}
Since $R_{\omega}(0) \leq1$,
\[
\cos(\phi_{\omega} - \phi) > \frac{1+\varepsilon_0}{1+\cos\gamma_0} > \sin \Big(\frac{1+\varepsilon_0}{1+\cos\gamma_0} \Big) = \cos \Big(\frac{\pi}{2} - \frac{1+\varepsilon_0}{1+\cos\gamma_0} \Big).
\]
Therefore, we have
\[ |\phi(0) - \phi_{\omega}(0)| < \frac{\pi}{2} - \frac{1+\varepsilon_0}{1+\cos\gamma_0}. \]
\end{proof}

We are now ready to prove the first part of Theorem \ref{T3.2}.  Let $\gamma_0$ and $\varepsilon_0$ be positive constants satisfying the relations \eqref{gamma}, and suppose that $K, g =g(\omega)$ and the initial datum satisfy
\begin{align}
\begin{aligned} \label{ASP-1}
& (i)~ \mbox{supp}~g(\omega) \subset [-M, M], \qquad K>\frac{M}{\varepsilon_0}\bigg(1+\frac{1}{\varepsilon_0}\bigg), \\
& (ii)~ ||f_0||_{L^{\infty}} < \infty, \qquad  {\mathcal M}(L^+_{\gamma_0}(0)) \geq {\mathcal M}_*(\varepsilon_0, \gamma_0).
\end{aligned}
\end{align}
Then, for any classical solution $f$ to \eqref{K-S} we will show that
\[ \frac{d}{dt} \mathcal{M}(L^+_{\gamma_0} (t)) \geq 0.  \]
Since the proof is rather lengthy, we split it into several steps. We first note that 
\[ f(\theta, \omega,t) = 0, \quad \omega \not \in [-M, M]. \]

\vspace{0.5cm}

\noindent $\bullet$ Step A: When $R(t)>0$, we first establish 
\begin{align}
\begin{aligned} \label{H-1}
& \frac{d}{dt}\mathcal{M}(L^+_{\gamma_0}(t)) \\
& \hspace{1cm} \geq \Big [ K\big(R(1+\cos\gamma_0)-1\big)-M\left(1+\frac{1}{R}
 \right) \Big]  \int_{-M}^{M} \big|B_{-,\omega}(t)+B_{+,\omega}(t)
 \big| ~ d\omega, 
\end{aligned}
\end{align} 
where $B_{-,\omega}$ and $B_{+,\omega}$ denote the boundary values:
\begin{align}\label{H-1-1}
B_{-,\omega}(t):=f(\phi(t)-\frac{\pi}{2}+\gamma_0,\omega,t)\quad\mbox{and}\quad B_{+,\omega}(t):=f(\phi(t)+\frac{\pi}{2}-\gamma_0,\omega,t).	
\end{align}
For the estimate \eqref{H-1}, we use straightforward calculation to see
\begin{align}\label{dL}
\begin{aligned}
& \frac{d}{dt}\mathcal{M}(L^+_{\gamma_0}(t)) \\
& \hspace{0.5cm} = \frac{d}{dt} \int_{-M}^{M} \int_{\phi(t) - \frac{\pi}{2} + \gamma_0}^{\phi(t) + \frac{\pi}{2} - \gamma_0} f(\theta, \omega, t)  ~ d\theta d\omega \\
&  \hspace{0.5cm} =\dot{\phi}(t)\int_{-M}^{M}\big[f(\phi+\frac{\pi}{2}-\gamma_0,t)-
f(\phi-\frac{\pi}{2}+\gamma_0,t)\big] ~ d\omega+
\int_{-M}^{M} \int_{L_{\gamma_0}(t)}\partial_{t}f ~ d\theta d\omega\\
&  \hspace{0.5cm} =\dot{\phi}(t)\int\big[B_{+,\omega}(t)-B_{-,\omega}(t)
\big] ~ d\omega \\
& \hspace{0.5cm} -\int_{-M}^{M}\int_{L^+_{\gamma_0}(t)}\partial_{\theta}
\Big[f(\theta,\omega,t)\big(\omega-KR\sin(\theta-\phi)
\big)\Big] ~ d\theta d\omega\\
& \hspace{0.5cm} =\dot{\phi}(t)
\int_{-M}^{M}\big[B_{+,\omega}(t)-B_{-,\omega}(t)\big] ~ d\omega\\
& \hspace{0.5cm}+\int_{-M}^{M}\big[-B_{+,\omega}(t)\big(\omega-KR\sin(\frac{\pi}{2}-
\gamma_0)\big)+B_{-,\omega}(t)\big(\omega-KR\sin(-\frac{\pi}{2}+
\gamma_0)\big)\big] ~ d\omega\\
\end{aligned}
\end{align}
By rearranging the terms, we have
\begin{align*}
&\frac{d}{dt}\mathcal{M}(L^+_{\gamma_0}(t)) \\
& \hspace{0.5cm}=\dot{\phi}(t)\int_{-M}^{M} \big[B_{+,\omega}(t)-
B_{-,\omega}(t)\big] ~d\omega\\
& \hspace{0.5cm}+\int_{-M}^{M}\big[-B_{+,\omega}(t)
\big(\omega-KR\cos\gamma_0\big)+B_{-,\omega}(t)\big(\omega+
KR\cos\gamma_0\big)\big] ~ d\omega\\
&  \hspace{0.5cm} =KR\cos\gamma_0\int_{-M}^{M}\big[B_{-,\omega}(t)+B_{+,\omega}(t)\big] ~ d\omega+\int_{-M}^{M}\big(\dot{\phi}(t)-\omega\big)\big[B_{+,\omega}(t)-B_{-,\omega}(t)\big] ~ d\omega.
\end{align*}
In \eqref{dL}, we use \eqref{dot phi r} in Lemma \ref{L5.3} to obtain \eqref{H-1}:
\begin{align*}
&\frac{d}{dt}\mathcal{M}(L^+_{\gamma_0}(t)) \\
& \hspace{0.5cm} \geq\int_{-M}^{M} \big(KR\cos\gamma_0-|\dot{\phi}(t)-\omega|\big)\big|B_{-,\omega}(t)+B_{+,\omega}(t)\big| ~ d\omega\\
& \hspace{0.5cm} \geq\left(KR\cos\gamma_0-M\left(1+\frac{1}{R}\right)-K(1-R)\right)\int_{-M}^{M}\big|B_{-,\omega}(t)+B_{+,\omega}(t)\big| ~ d\omega\\
&\hspace{0.5cm} =\left(K\big(R(1+\cos\gamma_0)-1\big)-M\left(1+\frac{1}{R}
 \right)\right)\int_{-M}^{M}\big|B_{-,\omega}(t)+B_{+,\omega}(t)
 \big| ~ d\omega.
\end{align*}
Now observe that 
\begin{align*}
& K\big(R(1+\cos\gamma_0)-1\big)-M\left(1+\frac{1}{R} \right) \\
 &\geq K\Big\{\Big((1+\sin\gamma_0)\mathcal{M}
\big(L^+_{\gamma_0}(t)\big)-1\Big)(1+\cos\gamma_0)-1\Big\} \\
& -M\Big(1+\frac{1}{(1+\sin\gamma_0)\mathcal{M}\big(L^+_{\gamma_0}(t)
\big)-1}\Big)\\
&=: \Delta(t).
\end{align*}
In the next steps, we will show that $\Delta \geq 0$. 

\vspace{0.5cm}

\noindent $\bullet$ Step B: Due to the assumption \eqref{ASP-1} (i), we can choose sufficiently small $\eta >0$ satisfying 
\begin{equation}\label{econ2}
 K\Big(\varepsilon_0 -\eta(1+\sin\gamma_0)(1+\cos\gamma_0)\Big)-M\Big(1+\frac{1}{\varepsilon_0-\eta(1+\sin\gamma_0)}\Big)>0. 
\end{equation}
Note that for $\eta = 0$, the relation \eqref{econ2} reduces to 
\[ K \varepsilon_0 -M\Big(1+\frac{1}{\varepsilon_0}\Big) = \varepsilon_0 \Big[ K - \frac{M}{\varepsilon_0}  \Big(1+\frac{1}{\varepsilon_0}\Big)     \Big] > 0 \quad \mbox{by assumption (i) in \eqref{ASP-1}}.   \]
Thus, such $\eta$ satisfying \eqref{econ2} exists. 

\vspace{0.5cm}

\noindent $\bullet$ Step C: We claim that for $t \in [0, \infty)$, 
\begin{align}
\begin{aligned} \label{claim-1}
& \Delta(t) \geq K\Big(\varepsilon_0-\eta(1+\sin\gamma_0)(1+\cos\gamma_0)\Big)-M\Big(1+\frac{1}{\varepsilon_0-\eta(1+\sin\gamma_0)}\Big) \\
& \quad \mbox{and} \quad R(t) \geq \varepsilon_0.
\end{aligned}
\end{align}
{\it The proof of claim \eqref{claim-1}}: we now define a set  ${\mathcal T}_{\eta}$ and its supremum $T^*_\eta := \sup \mathcal{T_{\eta}}$:
\[
{\mathcal T}_{\eta} :=\bigg\{T \in [0, \infty): 
\mathcal{M}\big(L^+_{\gamma_0}(t)\big) > {\mathcal M}_*(\varepsilon_0, \gamma_0) -\eta 
\quad \mbox{for all} \quad  t \in [0, T) \bigg\}.
\]
It follows from \eqref{dot phi r}, \eqref{econ2}, and definition of $\mathcal{T}_{\eta}$ that for $t \in [0, T_\eta^*),$
\begin{equation}\label{rloe}
R(t) \geq (1+\sin\gamma_0){\mathcal M}\big(L^+_{\gamma_0}(t)\big) - 1 \geq \frac{1+\varepsilon_0}{1+\cos\gamma_0}- \eta(1+\sin\gamma_0) >0.
\end{equation}
where we used $\eta \ll 1$. By the assumption (ii) in \eqref{ASP-1} and Lipschitz continuity of $\mathcal{M}\big(L^+_{\gamma_0}(t)\big)$ in $t$ (see Appendix \ref{App-B}), we can see that the set ${\mathcal T}_{\eta}$ is nonempty, hence $T^*_\eta \in (0, \infty]$.
Suppose that $T^*_{\eta}<\infty.$ Then, we have
\begin{equation} \label{H-2}
\lim_{ t \to T^*_{\eta}-} \mathcal{M}\big(L^+_{\gamma_0}(t)\big) = {\mathcal M}_*(\varepsilon_0, \gamma_0) -\eta.
\end{equation}
 Again by \eqref{dot phi r}, \eqref{econ2} and \eqref{rloe}, for $t \in [0, T_\eta^*)$ we have
\begin{equation} \label{H-3}
\Delta \geq K\Big(\varepsilon_0-\eta(1+\sin\gamma_0)(1+\cos\gamma_0)\Big) -M\Big(1+\frac{1}{\varepsilon_0-\eta(1+\sin\gamma_0)}\Big) \geq 0,
\end{equation}
where we used an inequality $1 + \varepsilon_0 > \varepsilon_0 (1 + \cos \gamma_0)$. Thus, the relation \eqref{H-3} yields
\[ \mathcal{M}(L^+_{\gamma_0}(t)) \geq \mathcal{M}(L^+_{\gamma_0}(0)), \quad t \in [0, T_\eta^*). \]
We let $t \to T_\eta^*$ and use \eqref{H-2}, assumption (ii) in \eqref{ASP-1} to obtain
\[ {\mathcal M}_*(\varepsilon_0, \gamma_0)-\eta = \lim_{ t \to T_\eta^*} \mathcal{M}(L^+_{\gamma_0}(t)) \geq \mathcal{M}(L^+_{\gamma_0}(0)) > {\mathcal M}_*(\varepsilon_0, \gamma_0) \]
which is contradictory.  Therefore, we have $T_\eta^* = \infty$ and 
\[ \mathcal{M}\big(L^+_{\gamma_0}(t)\big) > {\mathcal M}_*(\varepsilon_0, \gamma_0)-\eta,  \qquad R(t) \geq \varepsilon_0, \quad \forall~t \in [0, \infty).    \]
In fact, the above inequality holds for any $\eta \ll 1$, thus, we have
\[ \mathcal{M}\big(L^+_{\gamma_0}(t)\big) \geq {\mathcal M}_*(\varepsilon_0, \gamma_0), \quad  \forall~t \in [0, \infty). \]
We substitute the above relation again into \eqref{H-3} to obtain the desired estimate:
\[ \Delta(t) \geq K\varepsilon_0 -
M\left(1+\frac{1}{\varepsilon_0}\right), \quad \forall~~t \in [0, \infty), \]
which then shows 
\[
\frac{d}{dt}\mathcal{M}\big(L^+_{\gamma_0}(t)\big) \geq 0,  \quad \forall~t \in [0, \infty).
\]
This completes the proof.

\subsubsection{The second part of the proof of Theorem \ref{T3.2}} \label{sec:5.3.2}
 In this part, we control the $L^2$-integral of $f_{\omega}$
on the arc $L_{\gamma_0}$ and show that concentration of mass occurs
on $L_{\gamma_0}(t)$ as time goes to infinity, when the coupling strength is large enough, i.e.,
\[  \lim_{t \to \infty} \int_{L^+_{\gamma_0} (t)} |f(\theta, \omega, t)|^2 ~ d\theta = \infty, \quad \mbox{for each $\omega \in \text{supp} ~ g(\omega)$}. \]
More precisely, under the same assumptions as in the previous part, we have
\[  \label{concen}
\int_{L^+_{\gamma_0}(t)} |f(\theta,\omega, t)|^2  ~ d\theta \geq \int_{L^+_{\gamma_0}(t)} |f_0(\theta, \omega, 0)|^2  ~ d\theta  ~ e^{(K\varepsilon_0\sin\gamma_0)  t}, \quad \forall~\omega \in \mbox{supp}~g(\omega).
\]
First, for each $t\geq0$ and each $\omega$ in $[-M,M],$ we define
\[
\Gamma_{\gamma_0, \omega}^+(t):=\int_{L^+_{\gamma_0}(t)} |f(\theta, \omega, t)|^2  ~ d\theta = \int_{\phi(t) - \frac{\pi}{2} + \gamma_0}^{\phi(t) + \frac{\pi}{2} - \gamma_0} |f(\theta,\omega, t)|^2  ~ d\theta.
\]
Then, by direct computation,  
\begin{align} 
\begin{aligned} \label{H-4}
\frac{d}{dt}\Gamma_{\gamma_0, \omega}^+(t) &= \dot\phi(t) \big(B_{+, \omega}(t)\big)^2 -  \big(B_{-, \omega}(t)\big)^2 + 2\int_{L^+_{\gamma_0}(t)} f \partial_t f  ~ d\theta\\
 &=: {\dot \phi}(t) \Big[ \big(B_{+, \omega}(t)\big)^2 - \big(B_{-, \omega}(t)\big)^2 \Big] + {\mathcal I}_3, 
\end{aligned}
\end{align}
where $B_{\pm, \omega}$ is the boundary value defined in \eqref{H-1-1}.  \newline

For the estimate of ${\mathcal I}_3$, we use  \eqref{E-1} to obtain
\begin{align}
\begin{aligned} \label{H-5}
{\mathcal I}_3(t) &= - 2 \int_{L^+_{\gamma_0} (t)} f\partial_\theta \Big[  f \big(\omega - KR\sin(\theta - \phi)\big) \Big]  ~ d\theta \\
&= -2 \int_{L^+_{\gamma_0} (t)} ( f \partial_\theta f) \big( \omega - KR\sin(\theta - \phi) \big) - f^2 KR \cos(\theta - \phi)  ~ d\theta \\
& = - \int_{L^+_{\gamma_0} (t)} ( \partial_\theta f^2\big) \big( \omega - KR\sin(\theta - \phi) \big) ~ d\theta 
+2 KR \int_{L^+_{\gamma_0}(t)}  f^2  \cos(\theta - \phi)  ~ d\theta \\ 
&=: {\mathcal I}_{31}(t) + {\mathcal I}_{32}(t).
\end{aligned}
\end{align}
Below, we estimate the terms ${\mathcal I}_{3i},~i=1,2$ separately. \newline 

\noindent $\bullet$ (Estimate on ${\mathcal I}_{31}$): Integration by parts yields
\begin{align}
\begin{aligned} \label{H-6}
{\mathcal I}_{31}(t) &= - \Big[  \big(B_{+, \omega}(t)\big)^2 \big(\omega - KR\sin(\frac{\pi}{2} - \gamma_0)\big) -  \big(B_{-, \omega}(t)\big)^2 \big(\omega - KR\sin(-\frac{\pi}{2} + \gamma_0)\big) \Big]\\
&- KR \int_{L^+_{\gamma_0}(t)}  \big( f(\theta,\omega, t)\big)^2 \cos(\theta - \phi)    ~ d\theta \\
&=: {\mathcal I}_{311}(t) + {\mathcal I}_{312}(t).
\end{aligned}
\end{align}
\vspace{0.2cm}

\noindent $\diamond$ (Estimate on ${\mathcal I}_{311}$): By rearranging terms, we have
\begin{align}
\begin{aligned} \label{H-7}
& {\mathcal I}_{311}(t) =-  \omega \Big[ \big(B_{+, \omega}(t)\big)^2  - \big(B_{-, \omega}(t)\big)^2 \Big]  + KR\cos\gamma_0 \Big[ \big(B_{+, \omega}(t)\big)^2  + \big(B_{-, \omega}(t)\big)^2 \Big], \\
& {\mathcal I}_{32}(t) + {\mathcal I}_{312}(t) \geq KR \sin\gamma_0 \Gamma_{\gamma_0, \omega}^+(t).
\end{aligned}
\end{align}
In \eqref{H-4}, we combine all estimates \eqref{H-5}, \eqref{H-6}, \eqref{H-7} and use $R \geq \varepsilon_0$ to obtain
\begin{align}
\begin{aligned} \label{H-8}
& \frac{d}{dt} \Gamma_{\gamma_0, \omega}^+(t) \\
& \hspace{0.5cm} \geq (\dot\phi(t) - \omega) \Big[  \big(B_{+, \omega}(t)\big)^2 - \big(B_{-, \omega}(t)\big)^2 \Big] \\
& \hspace{0.5cm} + KR\cos\gamma_0 \Big[  \big(B_{+, \omega}(t)\big)^2 + \big(B_{-, \omega}(t)\big)^2 \Big] +  KR \sin\gamma_0  \Gamma_{\gamma_0, \omega}^+(t)  \\
&\hspace{0.5cm} \geq \big(KR\cos\gamma_0  - |\dot\phi(t) - \omega| \big) \Big[  \big(B_{+, \omega}(t)\big)^2 + \big(B_{-, \omega}(t)\big)^2 \Big] +  KR \sin\gamma_0  \Gamma_{\gamma_0,\omega}^+(t) \\
&\hspace{0.5cm} \geq  \underbrace{\big(KR\cos\gamma_0  - |\dot\phi(t) - \omega| \big)}_{ =:{\tilde \Delta}(t)} \Big[  \big(B_{+, \omega}(t)\big)^2 + \big(B_{-, \omega}(t)\big)^2 \Big] +  K \varepsilon_0 \sin\gamma_0  \Gamma_{\gamma_0, \omega}^+(t).
\end{aligned}
\end{align}
We next estimate the sign of ${\tilde \Delta}$. It follows from \eqref{dot phi r} and $|\omega| \leq M$ that we have
\begin{equation} \label{H-9}
|{\dot \phi} -\omega| \leq |{\dot \phi}| + M \leq M \Big(1 + \frac{1}{R} \Big) + K (1-R).
\end{equation}
Then, \eqref{H-9} and \eqref{dot phi r}(ii) imply
\begin{align} 
\begin{aligned} \label{H-10}
{\tilde \Delta} &\geq  K\Big[ R(1+\cos\gamma_0)-1 \Big]  - M \Big(1+\frac{1}{R} \Big) \\
 &\geq K\Big(\big( (1+\sin\gamma_0){\mathcal M}\big(L^+_{\gamma_0}(t)\big) - 1\big)(1+\cos\gamma_0)-1\Big)  \\
 &- M\bigg(1+\frac{1}{ (1+\sin\gamma_0) {\mathcal M} \big(L^+_{\gamma_0}(t)\big) - 1}\bigg) \\
 &\geq K\Big(\big( (1+\sin\gamma_0){\mathcal M}\big(L^+_{\gamma_0} (0)\big) - 1\big)(1+\cos\gamma_0)-1\Big)  \\
 &- M\bigg(1+\frac{1}{ (1+\sin\gamma_0) {\mathcal M} \big(L^+_{\gamma_0}(0)\big) - 1}\bigg) \\
 & \geq K\varepsilon_0  - M\Big(1+\frac{1}{ \varepsilon_0}\Big) > 0.
\end{aligned}
\end{align}
In the last line, we used the same argument as in the proof of Step B and Step C in Theorem \ref{T3.2}.  Finally, we use \eqref{H-8} and \eqref{H-10} to obtain a Gronwall's inequality:
\[
\frac{d}{dt} \Gamma_{\gamma_0, \omega}^+(t) \geq K \sin\gamma_0 \varepsilon_0  \Gamma_{\gamma_0, \omega}^+(t). \]
This yields the desired estimate. This completes the proof of Theorem \ref{T3.2}.

\section{Lower bounds for the amplitude order parameter} \label{sec:6}
\setcounter{equation}{0}

In this section, we prove Theorem \ref{T3.3}, by establishing the promised asymptotic lower bound on the order parameter $R$ (assuming $R_0 >0$). 
A first key step is the existence of a positive lower bound $\underline{R}$ of the order parameter $R$ for the system \eqref{K-S} with $R(0)=R_0$. 
For such a lower bound, we need a large coupling strength $K$, depending on $\frac{1}{R_0}$, and under this assumption, we will first show that if $R_0>0$,
then we can guarantee that the mass in the sector $L^+_{\pi/3}(t)$  will remain above a universal value, whenever $\dot R\le0$. This will enable us to establish the lower bound $\underline{R}$ of $R$. This is the result of Corollary \ref{C6.1}, where we also prove that $\dot R$ will remain below a small positive constant after some time (see Figure \ref{fig:R}). This lower bound will induce in Section \ref{sec:6.3}.  a rough asymptotic lower bound 2/3 for $R$; see Proposition \ref{P6.2}. And we finally improve this rough lower bound to our desired one, namely, $R_\infty$ in Theorem \ref{T3.3}, which tends to $1$ as $K\to \infty$. 

Throughout this section, we will assume that the natural frequency density function $g = g(\omega)$ is compactly supported on the interval $[−M,M]$. It is important to note that for the results in this section, we do not require the previous assumption \eqref{ASP-1}$(ii)$ given in Section \ref{sec:5} on the initial configuration.

\begin{figure}
\centering
\includegraphics[width=0.5\textwidth]{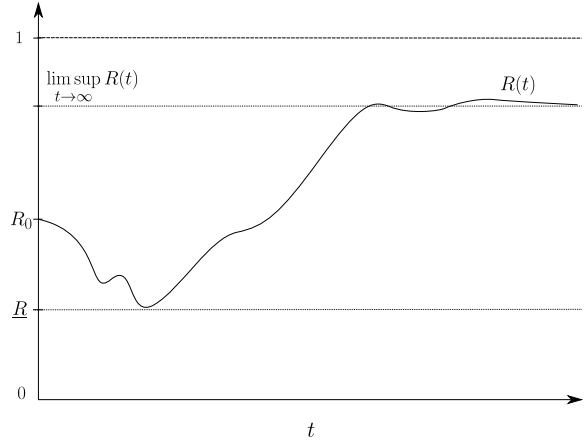}
\caption{Schematic diagram on the dynamics of $R(t)$}
\label{fig:R}
\end{figure}  
  
\subsection{Several lemmata} In this subsection, for a uniform lower bound of $R$, we will first present several lemmata. The constants appearing in all computations are not necessarily optimal. Our strategy to find a uniform lower bound is as follows. We will first assume that such a uniform lower bound $\underline{R}$ exists a priori and then later, by the choice of large $K$, we will remove this a priori assumption and obtain the uniform positive lower bound. We first begin a series of lemmata with the growth estimate for $f$. We assume that there exists a uniform positive lower bound for $R$ in the following lemmata.
\begin{lemma}\label{L6.1}
Let $f = f(\theta, \omega, t)$ be the solution to \eqref{K-S}. Then, we have 
\[  \label{ll-cond}
\|f(t)\|_{L^\infty(\mathbb{T}\times[-M,M])} \leq  ||f_0||_{L^{\infty}} e^{Kt},  \quad t \in [0, \infty).
\]
\end{lemma}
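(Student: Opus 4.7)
The plan is to treat the K-S equation as a transport equation and solve it along characteristics. Using the equivalent form \eqref{KM-K-I-1}, the equation reads
\[
\partial_t f + (\omega - KR\sin(\theta-\phi))\,\partial_\theta f = -f\,\partial_\theta(\omega - KR\sin(\theta-\phi)) = KR\cos(\theta-\phi)\,f.
\]
The coefficients $R(t)$ and $\phi(t)$ are functions of $t$ only, so for each fixed $\omega \in [-M,M]$ this is a linear transport equation in $(\theta,t)$ with smooth coefficients, and the characteristic system is well-posed.

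First I would introduce, for each initial point $(\theta_0,\omega) \in \mathbb T \times [-M,M]$, the characteristic $t \mapsto \Theta(t;\theta_0,\omega)$ solving
\[
\frac{d\Theta}{dt} = \omega - KR(t)\sin(\Theta-\phi(t)), \qquad \Theta(0) = \theta_0.
\]
Along such a curve, writing $F(t) := f(\Theta(t),\omega,t)$, the PDE reduces to the scalar ODE
\[
\dot F(t) = KR(t)\cos(\Theta(t)-\phi(t))\,F(t).
\]
Integrating this ODE yields
\[
F(t) = F(0)\exp\!\left(\int_0^t KR(s)\cos(\Theta(s)-\phi(s))\,ds\right).
\]
Since $0\le R(s)\le 1$ (as $R$ is the amplitude of a convex combination of unit vectors, cf.\ Section~\ref{sec:2.2.1}) and $|\cos(\cdot)|\le 1$, the integrand is bounded in absolute value by $K$, so $F(t) \le |F(0)|\,e^{Kt} \le \|f_0\|_{L^\infty}\,e^{Kt}$.

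Taking the supremum over $(\theta_0,\omega) \in \mathbb T \times [-M,M]$ and using that $\Theta(t;\cdot,\omega)$ is a diffeomorphism of $\mathbb T$ (so that every point in $\mathbb T$ is reached) gives the stated estimate. The only subtlety is ensuring that the characteristics are globally defined and that the classical solution can indeed be represented along them; this follows because $R(t)$ and $\phi(t)$ are Lipschitz in $t$ (by Lemma \ref{L4.1} style computations or directly from the $C^1$ assumption on $f_0$ via Lancellotti's existence result quoted after \eqref{rho}), so standard ODE theory applies. No step here is genuinely hard; the key observation is simply that $\partial_\theta v[f] = -KR\cos(\theta-\phi)$ is bounded by $K$ uniformly, which immediately produces the exponential factor $e^{Kt}$.
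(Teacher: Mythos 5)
Your proposal is correct and follows essentially the same route as the paper: the paper's proof of Lemma \ref{L6.1} also defines the forward characteristics of the Kuramoto vector field, computes $\tfrac{d}{dt}f(\theta(t),\omega_0,t)=KR(t)\cos(\theta(t)-\phi(t))f(\theta(t),\omega_0,t)\le Kf$ using $R\le 1$, and concludes by Gronwall. Your added remarks on global well-posedness of the characteristics and the flow being a diffeomorphism are fine but not needed beyond what the paper states.
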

\begin{proof} For $(\theta_0, \omega_0) \in \mathbb T \times \mathbb R$, we define a forward characteristics $(\theta(t), \omega_0)$  issued from $(\theta_0, \omega_0)$ at time $t= 0$ as a solution to the following Cauchy problem:
\[
\begin{dcases}
{\dot \theta}(t) = \omega_0-KR(t)\sin(\theta(t)-\phi(t)), \quad {\dot \omega}(t)=0, \quad t >0, \cr
(\theta(0), \omega(0)) = (\theta_0, \omega_0). 
\end{dcases}
\]
Since the right-hand side of the ODE is Lipschitz continuous and sub-linear in $(\theta, \omega)$, we have a global solution and $\omega(t) = \omega_0$. On the other hand, we use 
\begin{align*}
(\partial_{t}f)(\theta,\omega,t)&=-\partial_{\theta}\big([\omega-KR\sin(\theta-\phi)]f\big)\\
&=-(\omega-KR\sin(\theta-\phi))\partial_{\theta}f+KR\cos(\theta-\phi)f(\theta,\omega,t).
\end{align*}
to see the time-rate of change of $f$ along the characteristics $(\theta(t), \omega_0, t)$:
\begin{align}
\begin{aligned}\label{f}
& \frac{d}{dt}f(\theta(t),\omega_0,t) \\
& \hspace{1cm} = -[\omega_0 -KR(t)\sin(\theta(t)-
\phi(t))]\partial_{\theta}f(\theta(t), \omega_0,t)\\
&\hspace{1cm}+KR(t)\cos(\theta(t)-\phi(t))f(\theta(t),\omega_0,t) +{\dot \theta}(t)\partial_{\theta}f(\theta(t),\omega_0,t)\\
&\hspace{1cm}=KR(t)\cos(\theta(t)-\phi(t))f(\theta(t), \omega_0,t)\\
&\hspace{1cm}\leq K f(\theta(t), \omega_0,t).
\end{aligned}
\end{align}
This yields
\[
f(\theta(t),\omega_0,t)\leq e^{Kt}f_0(\theta_0, \omega_0) \leq \|f_0\|_{L^{\infty}(\mathbb T \times \mathbb R)} e^{Kt}, \quad \mbox{in}~[0, T].
\]
This yields the desired $L^{\infty}$-estimate of $f$. 
\end{proof}

\begin{lemma}\label{L6.2}
The following assertions hold.
\begin{enumerate}
\item
$R =R(t)$ is Lipschitz continuous in $[0, \infty)$. 
\item
Suppose that the initial density $f_0$ and the order parameter $R$ satisfy the following conditions:
\[ \|f_0 \|_{L^{\infty}} < \infty, \qquad \inf_{0 \leq t \leq T} R(t) \geq \underline{R}, \quad \mbox{for some $T \in (0, \infty]$ and $\underline{R} > 0$},
\]
Then, there exists $\eta^{\prime}>0$ such that the functions $R(\cdot), {\dot R}(\cdot)$ and  $\mathcal{M}(L^+_{\gamma}(\cdot))$ are Lipschitz continuous in $[0,T+\eta^{\prime}),$ for any $\gamma$ in $(-\frac{\pi}{2},\frac{\pi}{2}).$ The Lipschitz constants for $R(\cdot), \dot R(\cdot)$ and $\mathcal (L^+_\gamma(\cdot))$ are given by
\begin{align*}
&|\dot R| < M + K,\\
&|\ddot R| \leq \frac{1}{\underline R}\bigg[4(K^2 + KM) + 2KM + 2M^2 + 2(M+K)^2 \bigg],\\
&\bigg|\frac{d}{dt}\mathcal M(L^+_\gamma(t))\bigg| \leq 2M\bigg[K + \frac{2M}{\underline R} + K\Big(1-\frac{\underline R}{2}\Big) + M \bigg]\|f_0\|_{L^\infty(\mathbb T \times \mathbb R)} e^{K(T+\eta)}.
\end{align*}
\end{enumerate} 
\end{lemma}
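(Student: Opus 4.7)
The plan is to prove part (1) and the first Lipschitz constant claimed in part (2) by directly inspecting the formula for $\dot R$ from Lemma \ref{L5.2}(ii),
\[ \dot R = -\iint_{\mathbb{T}\times\mathbb{R}}\sin(\theta-\phi)\,\omega\, f\,d\theta d\omega + KR\int_{\mathbb{T}}\sin^2(\theta-\phi)\,\rho\,d\theta. \]
Since $\iint f\,d\theta d\omega = 1$, $R\le 1$, $\mathrm{supp}\,g\subset[-M,M]$, and $|\sin|\le 1$, the two pieces are bounded in absolute value by $M$ and $K$ respectively, yielding $|\dot R|\le M+K$ on $[0,\infty)$. For the $\ddot R$ and $d\mathcal{M}(L^+_\gamma)/dt$ bounds in part (2), I first use continuity of $R$ at $t=T$ to pick $\eta'>0$ so small that $R(t)\ge\underline R/2$ on $[T,T+\eta')$; combined with the hypothesis, $R$ then stays above a positive multiple of $\underline R$ on all of $[0,T+\eta')$. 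Throughout, I invoke Lemma \ref{L5.3}(i), $|\dot\phi|\le\tfrac{M}{R}+K(1-R)$, to control $\dot\phi$, which is the only source of the $1/\underline R$ factor in the final bounds.

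To estimate $\ddot R$ on $[0,T+\eta')$, I differentiate the $\dot R$ formula once more and obtain five summands: (a) $\dot\phi$ differentiating $\sin(\theta-\phi)$ in the first integral, (b) $\partial_t f$ appearing in the first integral, (c) $K\dot R$ multiplying the second integral, (d) $\dot\phi$ differentiating $\sin^2(\theta-\phi)$ in the second integral, and (e) $\partial_t\rho$ in the second integral. For (b) I substitute $\partial_t f=-\partial_\theta[(\omega-KR\sin(\theta-\phi))f]$ from \eqref{KM-K-I-1}, and for (e) I use the marginalized equation $\partial_t\rho=-\partial_\theta\bigl(\int_{\mathbb R}\omega f\,d\omega-KR\sin(\theta-\phi)\rho\bigr)$; integrating by parts in $\theta$ kills the boundary terms by periodicity and leaves integrals of bounded trigonometric polynomials against $f$ or $\rho$, each multiplied by at most one extra factor of $\omega$, $K$, or $KR$. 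Plugging in $|\dot\phi|\le\tfrac{M}{\underline R}+K$, $|\dot R|\le M+K$, $R\le 1$ and $|\omega|\le M$ and summing the five absolute-value contributions then yields the stated bound
\[ |\ddot R|\le\frac{1}{\underline R}\bigl[4(K^2+KM)+2KM+2M^2+2(M+K)^2\bigr]. \]

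For $d\mathcal{M}(L^+_\gamma)/dt$, I reuse the Reynolds-type identity derived inside the proof of Theorem \ref{T3.2} (see \eqref{dL}), which was written there for $\gamma=\gamma_0$ but is the same computation for any $\gamma\in(-\pi/2,\pi/2)$:
\[ \frac{d}{dt}\mathcal M(L^+_\gamma(t)) = KR\cos\gamma\int_{-M}^M[B_{-,\omega}+B_{+,\omega}]\,d\omega + \int_{-M}^M(\dot\phi-\omega)[B_{+,\omega}-B_{-,\omega}]\,d\omega, \]
where $B_{\pm,\omega}(t)$ are the traces of $f$ at the two moving endpoints of $L^+_\gamma(t)$. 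Each trace is pointwise dominated by $\|f(t)\|_{L^\infty(\mathbb T\times[-M,M])}$, which by Lemma \ref{L6.1} is at most $\|f_0\|_{L^\infty}e^{Kt}\le\|f_0\|_{L^\infty}e^{K(T+\eta')}$ on our interval. Integration in $\omega$ over $[-M,M]$ contributes the factor $2M$, while the coefficients $KR\cos\gamma$, $|\omega|$, and $|\dot\phi|$ are bounded respectively by $K$, $M$, and $\tfrac{2M}{\underline R}+K(1-\underline R/2)$; collecting these produces the claimed bound for $|d\mathcal{M}(L^+_\gamma)/dt|$.

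The main obstacle is not conceptual but careful bookkeeping: the five summands in the $\ddot R$ computation must all be expanded, integrated by parts, and grouped so that the numerical constants and the shared $1/\underline R$ factor match what is claimed. Once that algebra is carried out, both Lipschitz estimates follow from routine applications of Lemmas \ref{L5.3}(i) and \ref{L6.1}, and the extension from $[0,T]$ to $[0,T+\eta')$ is a pure continuity argument on $R$.
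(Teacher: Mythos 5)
Your parts (1) and (3) follow the paper's own proof essentially verbatim: the $|\dot R|\le M+K$ bound from Lemma \ref{L5.2}, and the identity \eqref{dL} combined with Lemma \ref{L6.1} and the $\dot\phi$ bound for $\frac{d}{dt}\mathcal{M}(L^+_\gamma)$. For the $\ddot R$ estimate, however, you take a genuinely different route. The paper (Appendix \ref{App-B}) never differentiates the $\dot R$ formula directly; instead it computes $\frac{d^2}{dt^2}(R^2)$ using the momentum vector $J$ and the divergence form of the K-S equation — a computation in which $\dot\phi$ (and hence any $1/R$ singularity) never appears — obtains $\bigl|\frac{d^2}{dt^2}R^2\bigr|\le 4(K^2+KM)+2KM+2M^2$, and only then recovers $\ddot R$ from $2R\ddot R=\frac{d^2}{dt^2}R^2-2(\dot R)^2$ together with $R>\underline R/2$; that identity is precisely where the grouping $\frac{1}{\underline R}[4(K^2+KM)+2KM+2M^2+2(M+K)^2]$ comes from. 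Your direct differentiation, with $\dot\phi$ controlled by Lemma \ref{L5.3}, is more elementary and avoids the lengthy $J$-based algebra, and a rough tally of your five summands shows it does produce a bound dominated by the stated constant (use $\underline R\le 1$ to absorb the terms without a $1/\underline R$ factor), so the lemma's inequality still follows — but you will not literally reproduce the paper's constant, and you should not claim to "match" it. Two small points of care: on the extension interval $[T,T+\eta')$ you only have $R\ge\underline R/2$, so the correct bound there is $|\dot\phi|\le\frac{2M}{\underline R}+K(1-\frac{\underline R}{2})$, not $\frac{M}{\underline R}+K$ (the claimed $\frac{d}{dt}\mathcal M$ bound already reflects this, and your $\ddot R$ tally survives the extra factor of $2$); and the positivity $R\ge\underline R/2>0$ is also what legitimizes differentiating $\phi$ in your computation, so state it before expanding the five summands.
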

\begin{proof}
Since the proof is very lengthy, we postpone it to Appendix \ref{App-B}.
\end{proof} 

\vspace{0.2cm}

In the next Lemma we will show how the values $R$ and ${\dot R}$ can be used to control the mass in $L^+_{\frac{\pi}{3}}$. For $t\geq0$ and $\gamma \in (\frac{\pi}{3},\frac{\pi}{2})$,  we set
\begin{align*}
& L^-_{\gamma}(t) :=\big(\phi(t)+\frac{\pi}{2}+\gamma,\phi(t)+\frac{3\pi}{2}-\gamma\big) = I^-_{\frac{\pi}{2}-\gamma},  \cr
& \tilde{B}_{-,\omega}(t) :=f \Big(\phi(t)+\frac{\pi}{2}+\gamma, \omega, t \Big), \quad \tilde{B}_{+,\omega}(t) :=f \Big(\phi(t)+\frac{3\pi}{2}-\gamma, \omega, t \Big), \cr
& E_{1}(K, M, \mu, {\underline R}, \gamma) := \frac{\sin\gamma}{\cos^{2}\gamma}\frac{M}{K\underline{R}}+\frac{1-\sin\gamma}{2} \quad  \mbox{and} \cr
& E_{2}(K, M, \mu, {\underline R},\gamma) := 1-\sin\gamma+(1+\sin\gamma)\frac{M}{K\underline{R}\cos^{2}\gamma}+\frac{\sin\gamma}{\cos^{2}\gamma}\frac{M}{K\underline{R}}. 
\end{align*}
In the subsequent three lemmata, we will study the relationships between $(R(t_0), {\dot R}(t_0))$ and ${\mathcal M}(L^+_{\frac{\pi}{3}}(t_0))$ under the following three situations:
\begin{align*}
& \mbox{Case 1}:~ R(t_0) \geq \underline{R}, \qquad \dot{R}(t_0) \leq 0, \cr
& \mbox{Case 2}:~ R(t_0)\geq\underline{R}, \qquad \dot{R}(t_0)<K \mu, \quad K \gg 1,~~ 0< \mu \ll 1,   \cr
& \mbox{Case 3}:~\inf_{0 \leq t \leq T} R(t) \geq\underline{R}, \qquad \dot{R}(T)=K \mu.
\end{align*}

In the sequel, to simplify presentation appearing in the messy computations, we will consider the sector $L_{\frac{\pi}{3}}^+$ and to emphasize $\gamma$ dependence in $E_1$ and $E_2$, we suppress other dependence, i.e. $E_i(\gamma) := E_i(K, M, \mu, {\underline R}, \gamma)$. 
\begin{lemma}\label{L6.3} 
Let $\gamma \in (\frac{\pi}{3} , \frac{\pi}{2} )$ and suppose that there exists $t_0 \geq0$ and~$\underline{R}>0$ such that 
\begin{equation} \label{Apr-2}
 R(t_0) \geq \underline{R}, \qquad \dot{R}(t_0) \leq 0. 
\end{equation} 
Then, $R(t_0)$ is controlled by the mass ${\mathcal M}(L^+_{\frac{\pi}{3}}(t_0))$, and vice verse:
\begin{equation} \label{Extra-A}
2{\mathcal M}(L^+_{\frac{\pi}{3}}(t_0)) -E_{2}(\gamma) -1 \leq R(t_0) \leq 2 {\mathcal M}(L^+_{\frac{\pi}{3}}(t_0)) + 2 E_1(\gamma)-1.
\end{equation}
\end{lemma}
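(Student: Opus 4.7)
The plan is to convert $\dot R(t_0)\le 0$ into a quantitative smallness estimate for the mass of $\rho$ on the ``equator'' $\mathbb{T}\setminus(L^+_\gamma\cup L^-_\gamma)$, and then to compute $R(t_0)$ via a four--region decomposition of $\mathbb{T}$ adapted simultaneously to $L^+_{\pi/3}$ and to $L^{\pm}_\gamma$. All quantities below are evaluated at $t=t_0$, which we suppress.

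Inserting $\dot R\le 0$ into Lemma~\ref{L5.2}(ii) and bounding $\bigl|\iint_{\mathbb{T}\times\mathbb{R}}\sin(\theta-\phi)\,\omega\,f\,d\theta d\omega\bigr|\le M$ (using $|\omega|\le M$ on $\operatorname{supp} g$ and the mass conservation $\iint f=1$ from Lemma~\ref{L2.1}) gives
\[
\int_{\mathbb{T}}\sin^{2}(\theta-\phi)\,\rho\,d\theta \;\le\; \frac{M}{KR(t_0)}\;\le\; \frac{M}{K\underline R}.
\]
Since $|\sin(\theta-\phi)|\ge\cos\gamma$ on $\mathbb{T}\setminus(L^+_\gamma\cup L^-_\gamma)$, this yields $\mathcal{M}\bigl(\mathbb{T}\setminus(L^+_\gamma\cup L^-_\gamma)\bigr)\le M/(K\underline R\cos^2\gamma)$. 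The assumption $\gamma>\pi/3$ forces $L^+_\gamma\subsetneq L^+_{\pi/3}$, so we may partition $\mathbb{T}=A\sqcup C\sqcup B\sqcup D$ with $A:=L^+_\gamma$, $C:=L^+_{\pi/3}\setminus L^+_\gamma$, $B:=\mathbb{T}\setminus(L^+_{\pi/3}\cup L^-_\gamma)$, and $D:=L^-_\gamma$; both $B$ and $C$ lie in $\mathbb{T}\setminus(L^+_\gamma\cup L^-_\gamma)$, so $\mathcal{M}(B),\,\mathcal{M}(C)\le M/(K\underline R\cos^2\gamma)$. Elementary trigonometry gives the pointwise envelopes of $\cos(\theta-\phi)$ on these four arcs as $[\sin\gamma,1]$, $[\sqrt{3}/2,\sin\gamma]$, $[-\sin\gamma,\sqrt{3}/2]$, and $[-1,-\sin\gamma]$ respectively.

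Applying these envelopes to $R=\int_{\mathbb{T}}\cos(\theta-\phi)\rho\,d\theta$ and using $\mathcal{M}(L^+_{\pi/3})=\mathcal{M}(A)+\mathcal{M}(C)$ together with the mass identity $\mathcal{M}(A)+\mathcal{M}(C)+\mathcal{M}(B)+\mathcal{M}(D)=1$ to eliminate $\mathcal{M}(A)$ and $\mathcal{M}(D)$, a short rearrangement yields
\begin{align*}
R &\le (1+\sin\gamma)\mathcal{M}(L^+_{\pi/3})-\sin\gamma+\bigl(\tfrac{\sqrt{3}}{2}+\sin\gamma\bigr)\tfrac{M}{K\underline R\cos^2\gamma},\\
R &\ge (1+\sin\gamma)\mathcal{M}(L^+_{\pi/3})-1-\bigl(\sin\gamma-\tfrac{\sqrt{3}}{2}\bigr)\tfrac{M}{K\underline R\cos^2\gamma}.
\end{align*}
Comparing each with \eqref{Extra-A}, the two differences reduce to $(1-\sin\gamma)\mathcal{M}(L^+_{\pi/3})+(\sin\gamma-\sqrt{3}/2)\tfrac{M}{K\underline R\cos^2\gamma}$ (upper bound) and $(1-\sin\gamma)(1-\mathcal{M}(L^+_{\pi/3}))+(1+\tfrac{\sqrt{3}}{2}+\sin\gamma)\tfrac{M}{K\underline R\cos^2\gamma}$ (lower bound), both manifestly nonnegative since $\gamma\in[\pi/3,\pi/2)$ forces $\sin\gamma\ge\sqrt{3}/2$ and $\mathcal{M}(L^+_{\pi/3})\le 1$. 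The main obstacle is the sign bookkeeping in the four--region accounting; the conceptual input is precisely the equator mass bound that the hypothesis $\dot R\le 0$ buys us.
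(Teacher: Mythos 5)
Your proposal is correct and follows essentially the same route as the paper: the hypothesis $\dot{R}(t_0)\leq 0$ is fed into the formula for $\dot R$ from Lemma \ref{L5.2} to bound the mass of $\mathbb{T}\setminus(L_\gamma^+\cup L_\gamma^-)$ by $M/(K\underline{R}\cos^2\gamma)$, and $R(t_0)$ is then squeezed using region-wise envelopes of $\cos(\theta-\phi)$. The only difference is bookkeeping: the paper uses a three-arc split of $\mathbb{T}$ together with the separate containment inequalities $\mathcal{M}(L_\gamma^+)\leq\mathcal{M}(L_{\frac{\pi}{3}}^+)$ and $\mathcal{M}(L_{\frac{\pi}{3}}^+)-\tfrac{M}{K\underline{R}\cos^2\gamma}\leq\mathcal{M}(L_\gamma^+)$, whereas your four-arc partition absorbs these steps (and even gives slightly sharper intermediate constants, the $\sqrt{3}/2$ terms) before relaxing to $E_1$, $E_2$; your final sign checks are valid since $\sin\gamma\geq\sqrt{3}/2$ and $0\leq\mathcal{M}(L_{\frac{\pi}{3}}^+)\leq 1$.
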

\begin{proof} (i) (Proof of the upper bound): For derivation of the second inequality, we first estimate how the mass ${\mathcal M}(L_{\gamma}^-)$ can be controlled by the mass ${\mathcal M}(L_{\gamma}^+), K, M, \underline{R}$ and $\gamma$. In the sequel, all quantities will be evaluated at $t = t_0$.  \newline

\noindent $\bullet$ Step A (Controlling the mass ${\mathcal M}(L_{\gamma}^-)$): By a priori condition \eqref{Apr-2} and Lemma \ref{L5.2}, 
\begin{align}
\begin{aligned} \label{H-11}
0 \geq \dot{R} &= -\iint_{\mathbb T \times \mathbb R}\sin(\theta-\phi)\omega f(\theta,\omega) ~ d\theta d\omega \\
&+KR \iint_{\mathbb T \times \mathbb R} \sin^{2}(\theta-\phi)f(\theta,\omega) ~ d\theta d\omega\\
&\geq-M+K\underline{R}\cos^{2}\gamma ( 1 - {\mathcal M}(L_{\gamma}^+) - {\mathcal M}(L_{\gamma}^-)),
\end{aligned}
\end{align}
where we used 
\begin{align*}
& \iint_{\mathbb T \times \mathbb R} \sin^{2}(\theta-\phi)f(\theta,\omega) ~ d\theta d\omega \cr
& \hspace{1cm} \geq  \iint_{(\mathbb T \backslash (L_{\gamma}^+ \cup L_{\gamma}^-)) \times \mathbb R} \sin^{2}(\theta-\phi)f(\theta,\omega) ~ d\theta d\omega \cr
& \hspace{1cm} \geq \cos^2 \gamma \iint_{(\mathbb T \backslash (L_{\gamma}^+ \cup L_{\gamma}^-)) \times \mathbb R} f(\theta,\omega) ~ d\theta d\omega \cr
& \hspace{1cm}  = \cos^2 \gamma \Big( 1 - {\mathcal M}(L_{\gamma}^+) - {\mathcal M}(L_{\gamma}^-) \Big).
\end{align*}
Then the relation \eqref{H-11} yields
\begin{align}
\begin{aligned} \label{midc}
&1-{\mathcal M}(L^+_{\gamma}(t_0)) - {\mathcal M}(L^-_{\gamma}(t_0)) \leq \frac{M}{K\underline{R}\cos^{2}\gamma}, \quad \mbox{or} \\
& {\mathcal M}(L^-_{\gamma}(t_0)) \geq 1-{\mathcal M}(L^+_{\gamma}(t_0))-\frac{M}{K\underline{R}\cos^{2}\gamma}.
 \end{aligned}
 \end{align}

\vspace{0.5cm} 
 
\noindent $\bullet$ Step B (Bounding $R$ by ${\mathcal M}(L_{\gamma}^+)$):  Since $\frac{\pi}{3} < \gamma$, we have
\[ L^+_{\gamma} \subset L^+_{\frac{\pi}{3}}. \]
Then, we use the above relation, \eqref{O-K-R} and \eqref{midc} to obtain
\begin{align}
\begin{aligned} \label{H-12}
R & =\iint_{L^+_{\gamma}\times \mathbb R}\langle e^{{\mathrm i}\theta},e^{{\mathrm i}\phi}\rangle f ~ d\theta d\omega+ \iint_{(\mathbb{T}\backslash (L^+_{\gamma} \cup L^-_{\gamma})) \times \mathbb R}\langle e^{{\mathrm i}\theta},e^{{\mathrm i}\phi}\rangle f ~ d\theta d\omega \\
&+\iint_{L^-_{\gamma} \times \mathbb R}\langle e^{{\mathrm i}\theta},e^{{\mathrm i}\phi}\rangle f ~ d\theta d\omega\\
 & \leq {\mathcal M}(L^+_{\gamma})+\sin\gamma \Big(1- {\mathcal M}(L^-_{\gamma})- {\mathcal M}(L^+_{\gamma}) \Big)-\sin\gamma {\mathcal M}(L^-_{\gamma}) \\
 & \leq {\mathcal M}(L^+_{\gamma}) + \frac{\sin\gamma}{\cos^{2}\gamma}\frac{M}{K\underline{R}}-\sin\gamma 
 {\mathcal M}(L^-_{\gamma}) \\
 & \leq {\mathcal M}(L^+_{\gamma}) +2\frac{\sin\gamma}{\cos^{2}\gamma}\frac{M}{K\underline{R}}+\sin\gamma {\mathcal M}(L^+_{\gamma}) - \sin\gamma \\
 & \leq2 {\mathcal M}(L^+_{\gamma})-1+2\frac{\sin\gamma}{\cos^{2}\gamma}\frac{M}{K\underline{R}}+(\sin\gamma-1) {\mathcal M}(L^+_{\gamma}) +1-\sin\gamma \\
 & \leq 2{\mathcal M}(L^+_{\frac{\pi}{3}})-1+2\frac{\sin\gamma}{\cos^{2}\gamma}\frac{M}{K\underline{R}}+1-\sin\gamma \\
 & = 2 \Big({\mathcal M}(L^+_{\frac{\pi}{3}}) - \frac{1}{2} + \frac{\sin\gamma}{\cos^{2}\gamma}\frac{M}{K\underline{R}} + \frac{1-\sin\gamma}{2}                     \Big) \\
 & = 2 \Big( {\mathcal M}(L^+_{\frac{\pi}{3}}) - \frac{1}{2} + E_1 \Big).
 \end{aligned}
 \end{align}
 This verifies the upper bound. \newline

\noindent (ii) (Proof of the lower bound):  We use the similar argument to the first part of \eqref{H-12} to find 
\begin{align}
\begin{aligned} \label{H-13}
R &=\iint_{L^+_{\gamma}\times \mathbb R}\langle e^{{\mathrm i}\theta},e^{{\mathrm i}\phi}\rangle f~ d\theta d\omega+ \iint_{(\mathbb{T}\backslash (L^-_{\gamma}\cup L^+_{\gamma}))\times \mathbb R}\langle e^{{\mathrm i}\theta},e^{{\mathrm i}\phi}\rangle f ~ d\theta d\omega \\
  &+\iint_{L^-_{\gamma}\times \mathbb R}\langle e^{{\mathrm i}\theta},e^{{\mathrm i}\phi}\rangle f ~ d\theta d\omega \\
  &\geq \sin\gamma {\mathcal M}(L^+_{\gamma}) -\sin\gamma \Big(1-{\mathcal M}(L^+_{\gamma}) - {\mathcal M}(L^-_{\gamma}) \Big)- {\mathcal M}(L^-_{\gamma}).
\end{aligned}
\end{align}
On the other hand, we again use the same arguments as in \eqref{H-11} to find 
\begin{align*}
0 &\geq {\dot R} = -\iint_{\mathbb T \times \mathbb R}\sin(\theta-\phi)\omega f(\theta,\omega,t) ~ d\theta d\omega+KR \iint_{\mathbb T \times \mathbb R} \sin^{2}(\theta-\phi)\big)\rho(\theta,t) ~ d\theta d\omega\\ 
&\geq-M+K\underline{R}\cos^{2}\gamma(1-{\mathcal M}(L^+_{\gamma}) - {\mathcal M}(L^-_{\gamma})).\\
\end{align*}
This yields
\[
1- {\mathcal M}(L^+_{\gamma}) - {\mathcal M}(L^-_{\gamma}) \leq\frac{M}{K\underline{R}\cos^{2}\gamma}. 
\]
We use \eqref{H-13} and the fact that ${\mathcal M}(L^+_{\gamma}) + {\mathcal M}(L^-_{\gamma}) \leq 1$ to obtain
\begin{align*}
R &\geq \sin\gamma {\mathcal M}(L^+_{\gamma})-\frac{\sin\gamma}{\cos^{2}\gamma}\frac{M}{K\underline{R}}-
{\mathcal M}(L^-_{\gamma}) \cr
&\geq \sin\gamma {\mathcal M}(L^+_{\gamma})-\frac{\sin\gamma}{\cos^{2}\gamma}\frac{M}{K\underline{R}}-1+ {\mathcal M}(L^+_{\gamma}) \cr
&= (1+\sin\gamma) {\mathcal M}(L^+_{\gamma})-1-\frac{\sin\gamma}{\cos^{2}\gamma}\frac{M}{K\underline{R}}.
\end{align*}
Similarly, Lemma \ref{L5.2} and the fact that $\dot{R}\leq0$ imply
\[
{\mathcal M}(L^+_{\frac{\pi}{3}})-\frac{M}{K\underline{R}\cos^{2}\gamma}\leq {\mathcal M}(L^+_{\gamma}). 
\]
Hence,
\[
R\geq(1+\sin\gamma) {\mathcal M}(L^+_{\frac{\pi}{3}}) -1-(1+\sin\gamma)\frac{M}{K\underline{R}\cos^{2}\gamma}-\frac{\sin\gamma}{\cos^{2}\gamma}\frac{M}{K\underline{R}}. 
\]
Thus, we again use the fact that $\mathcal{M}(L^+_{\frac{\pi}{3}})\leq 1$ to get
\[
R\geq 2 {\mathcal M}(L^+_{\frac{\pi}{3}}) -1-(1-\sin\gamma)-(1+\sin\gamma)\frac{M}{K\underline{R}\cos^{2}\gamma}-\frac{\sin\gamma}{\cos^{2}\gamma}\frac{M}{K\underline{R}}.
\] 
This yields the desired result.
\end{proof}
\begin{remark}
Note that the estimate \eqref{Extra-A} can be rewritten as follows.
\[ \frac{R(t_0) + 1}{2} -E_1(\gamma) \leq  {\mathcal M}(L^+_{\frac{\pi}{3}}(t_0)) \leq \frac{R(t_0) + 1}{2} + \frac{E_2(\gamma)}{2}. \]
\end{remark}

\vspace{0.5cm}

We next show that there exists a positive constant $\mu,$ such that when $\dot{R}$ is below $K \mu,$  the mass in 
$L^+_{\frac{\pi}{3}}$ is nondecreasing. 
\begin{lemma}\label{L6.4}
Let $K, \underline{R}$ and $\mu$ satisfy the relation:
\begin{equation} \label{H-13-1}
\frac{1}{2}>\frac{M}{K\underline{R}}+\frac{M}{K\underline{R}^{2}}+\frac{1}{\underline{R}\sqrt{\underline{R}}}\sqrt{\frac{M}{K}+\mu},
\end{equation}
that is, $K$ is sufficiently large and $\mu$ is sufficiently small relative to $\underline{R}$. Suppose that at $t = t_0$, the oder parameter $R$ satisfies 
\[  R(t_0)\geq\underline{R}, \qquad \dot{R}(t_0)<K \mu.   \]
Then, we have
\[
\frac{d}{dt} \Big|_{t = t_0}\mathcal{M}(L^+_{\frac{\pi}{3}}(t)) \geq0. 
\]
\end{lemma}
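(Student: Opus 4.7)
\textbf{Proof proposal for Lemma \ref{L6.4}.} The plan is to revisit the identity for $\frac{d}{dt}\mathcal{M}(L^+_{\gamma}(t))$ derived in (\ref{dL}) with $\gamma=\pi/3$, and then exploit the hypothesis $\dot R(t_0)<K\mu$ to sharpen the bound on $|\dot\phi(t_0)|$ used in Lemma \ref{L5.3}(i). The reason Lemma \ref{L5.3}(i) alone is insufficient is that it yields $|\dot\phi|\le M/R+K(1-R)$, so to make the boundary flux term dominate one would need $R$ close to $1$; however when $\dot R$ is small, the quantity $\int_{\mathbb T}\sin^2(\theta-\phi)\rho\,d\theta$ must itself be small, and this will give a much better Cauchy--Schwarz bound on $|\dot\phi|$.

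First, exactly as in the derivation leading to (\ref{H-1}) but with $\gamma_0=\pi/3$, one gets at $t=t_0$
\[
\frac{d}{dt}\mathcal{M}(L^+_{\pi/3}(t)) \;\ge\; \int_{-M}^{M}\!\Big(\tfrac{K R}{2}-|\dot\phi-\omega|\Big)\bigl(B_{-,\omega}+B_{+,\omega}\bigr)\,d\omega,
\]
using $\cos(\pi/3)=1/2$ and positivity of $B_{\pm,\omega}$. Since $|\omega|\le M$ on $\mathrm{supp}\,g$, it suffices to show
\[
\frac{K\underline R}{2}\;\ge\;|\dot\phi(t_0)|+M.
\]

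For this, I would use Lemma \ref{L5.2}(ii):
\[
\dot\phi=\frac{1}{R}\!\iint_{\mathbb T\times\mathbb R}\!\cos(\theta-\phi)\,\omega f\,d\theta d\omega-K\!\int_{\mathbb T}\!\sin(\theta-\phi)\cos(\theta-\phi)\,\rho\,d\theta,
\]
which by the Cauchy--Schwarz inequality gives
\[
|\dot\phi(t_0)|\;\le\;\frac{M}{R(t_0)}+K\Bigl(\int_{\mathbb T}\sin^2(\theta-\phi)\,\rho\,d\theta\Bigr)^{1/2}.
\]
Now the crucial identity is the formula for $\dot R$ in Lemma \ref{L5.2}(ii), which rearranges as
\[
K R(t_0)\!\int_{\mathbb T}\!\sin^2(\theta-\phi)\,\rho\,d\theta\;=\;\dot R(t_0)+\iint\sin(\theta-\phi)\,\omega f\,d\theta d\omega\;\le\;K\mu+M,
\]
using the hypothesis $\dot R(t_0)<K\mu$ and $|\omega|\le M$. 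Hence $\int\sin^2(\theta-\phi)\rho\,d\theta\le (\mu+M/K)/\underline R$, which plugged back in yields
\[
|\dot\phi(t_0)|\;\le\;\frac{M}{\underline R}+\frac{K}{\sqrt{\underline R}}\sqrt{\mu+\tfrac{M}{K}}.
\]

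Finally, I would divide the target inequality $\tfrac{K\underline R}{2}\ge|\dot\phi|+M$ through by $K\underline R$ and substitute this bound to obtain exactly the hypothesis
\[
\frac{1}{2}\;\ge\;\frac{M}{K\underline R}+\frac{M}{K\underline R^{2}}+\frac{1}{\underline R\sqrt{\underline R}}\sqrt{\tfrac{M}{K}+\mu},
\]
which closes the argument. The proof is essentially algebraic once the two ingredients are in place; the main subtlety is recognizing that the smallness of $\dot R$ feeds directly, through the $\sin^2$ integral, into a smallness estimate for $\dot\phi$ that is strong enough to beat the bulk term $KR\cos(\pi/3)$ at the level of $R=\underline R$, rather than requiring $R$ close to $1$.
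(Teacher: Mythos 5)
Your argument is correct and is essentially the paper's own proof: it starts from the boundary-flux identity \eqref{dL} with $\gamma=\pi/3$, reduces to showing $\tfrac{K\underline R}{2}-M-|\dot\phi(t_0)|\ge 0$, and bounds $|\dot\phi|$ via Cauchy--Schwarz together with the identity $KR\int_{\mathbb T}\sin^2(\theta-\phi)\rho\,d\theta=\dot R+\iint\sin(\theta-\phi)\,\omega f\le K\mu+M$, exactly as in Section 6 (the paper merely keeps $\dot R$ in the bound $\tfrac{M}{R}+\sqrt{K/R}\sqrt{M+\dot R}$ and inserts $\dot R<K\mu$ at the level of \eqref{H-13-1}, whereas you substitute it earlier). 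No gaps.
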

\begin{proof} In the sequel, for notational simplicity, we will assume  that all the time dependent expression are evaluated at $t=t_0$. By \eqref{dL}, we have
\begin{align}\label{LdL}
\begin{aligned}
\frac{d}{dt}\mathcal{M}(L^+_{\frac{\pi}{3}}(t))&=KR \cos\frac{\pi}{3}\int_{-M}^{M} \big[B_{-,\omega}(t)+B_{+,\omega}(t)\big] ~ d\omega \\
&+\int_{-M}^{M}\big(\dot{\phi}(t)-\omega\big)\big[B_{+,\omega}(t)-B_{-,\omega}(t)\big] ~ d\omega\\
& \geq\bigg(\frac{K\underline{R}}{2}-M-|\dot{\phi}|\bigg) \int_{-M}^{M} \big[B_{+,\omega}(t)+B_{-,\omega}(t)\big] ~ d\omega.
\end{aligned}
\end{align}
Thus, we need to show
\begin{equation} \label{H-14}
 \frac{K\underline{R}}{2}-M-|\dot{\phi}| \geq 0. 
\end{equation} 
To check \eqref{H-14}, we use Lemma \ref{L5.2} to find
\begin{align}
\begin{aligned} \label{H-15}
\dot{\phi} &= \frac{1}{R}\iint_{\mathbb T \times \mathbb R}\cos(\theta-\phi)\omega f(\theta,\omega,t) ~ d\theta d\omega-\frac{K}{2}\iint_{\mathbb T \times \mathbb R}\sin\left(2(\theta-\phi)\right)f(\theta,\omega,t) ~ d\theta d\omega, \\
{\dot{R}} &= -\iint_{\mathbb T \times \mathbb R}\sin(\theta-\phi)\omega f(\theta,\omega,t) ~ d\theta d\omega+KR\iint_{\mathbb T \times \mathbb R}\sin^{2}(\theta-\phi)\big)f(\theta,\omega,t) ~ d\theta d\omega . 
\end{aligned}
\end{align}
Then, we use \eqref{H-15}, $f = 0$ for $|\omega| > M$ and Cauchy-Schwarz inequality to get
\begin{align}
\begin{aligned} \label{H-16}
|\dot{\phi}|&<\frac{M}{R}+K\iint_{\mathbb T \times \mathbb R}|\sin(\theta-\phi)|f(\theta,\omega,t) ~ d\theta d\omega.\\ 
&<\frac{M}{R}+\sqrt{\frac{K}{R}}\sqrt{KR\iint_{\mathbb T \times \mathbb R}\sin^{2}(\theta-\phi)f(\theta,\omega,t) ~ d\theta d\omega}\\
&<\frac{M}{R}+\sqrt{\frac{K}{R}}\sqrt{M+\dot{R}}.
\end{aligned}
\end{align}
Note that the conditions \eqref{H-13-1} yield
\[\frac{1}{2}> \frac{M}{K\underline{R}}+\frac{M}{K\underline{R}^{2}}+\frac{1}{\underline{R}\sqrt{\underline{R}}}\sqrt{\frac{M}{K}+
\mu} > \frac{M}{K\underline{R}}+\frac{M}{K\underline{R}^{2}}+\frac{1}{\underline{R}\sqrt{\underline{R}}}\sqrt{\frac{M}{K}+\frac{\dot{R}}{K}}.\]
By multiplying $K \underline{R}$, we have
\begin{equation} \label{H-17}
\frac{K \underline{R}}{2} - M > \frac{M}{\underline{R}}+\sqrt{\frac{K}{\underline{R}}} \sqrt{M+ \dot{R}}.
\end{equation}
We now combine \eqref{H-16} and \eqref{H-17} to get
\[  |\dot\phi(t_0)| < \frac{K \underline{R}}{2} - M.   \]
This and \eqref{LdL} implies the desired estimate.
\end{proof}
In the following Lemma, under the assumption that $\dot R(t)\geq K\mu $ for some $t,$  we quantify
the increase of $R,$ at some later time.
\begin{lemma}\label{L6.5} 
Suppose that $f_0, R$ and $K$ satisfy
\[ \|f_0 \|_{L^{\infty}(\mathbb T \times \mathbb R)} < \infty, \qquad \inf_{0 \leq t \leq T} R(t) \geq\underline{R}, \qquad \dot{R}(T)=K \mu,\qquad K^2 \mu > \frac{M^2}{2\underline R^2} - \frac{3M^2}{4\underline R}
\]
for some $T\geq0$ and some positive constants $\underline{R},$ and $\mu$. Then there exist positive constants $d:=d(K,M,\underline{R},\mu)$ and $E_{3}:=E_{3}(K,M,\underline{R},\mu)$ satisfying 
\[
\dot{R}>0\hspace{1em} {\rm{in}} \hspace{1em} [T,T+d), \qquad 
R(T+d)-R(T)\geq\frac{1}{12}\underline{R}\mu-E_{3}.\] 
where
\begin{align*}
E_{3} &:=\bigg|\frac{\underline{R}}{12}\mu\frac{\frac{M^{2}}{4K}+\frac{M^{2}}{2\underline{R}K}}{\frac{1}{3}\underline{R}K\mu-\bigg(\frac{M^{2}}{6\underline{R}K}-\frac{M^{2}}{4K}\bigg)} +\frac{1}{4K}\bigg(\frac{M^{2}}{6\underline{R}K}-\frac{M^{2}}{4K}\bigg)\bigg[1-\frac{\frac{M^{2}}{4K}+\frac{M^{2}}{2\underline{R}K}}{\frac{1}{3}\underline{R}K\mu-\bigg(\frac{M^{2}}{6\underline{R}K}-\frac{M^{2}}{4K}\bigg)}\bigg]\\
 &+\bigg(\frac{M^{2}}{4K}+\frac{M^{2}}{2\underline{R}K}\bigg)\bigg[\frac{1}{4K}\log\frac{\frac{1}{3}\underline{R}K\mu-\bigg(\frac{M^{2}}{6\underline{R}K}-\frac{M^{2}}{4K}\bigg)}{\frac{M^{2}}{4K}+\frac{M^{2}}{2\underline{R}K}}\bigg]\bigg|.
\end{align*}
\end{lemma}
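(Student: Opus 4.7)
My plan is to establish a differential inequality of the form $\ddot R(t) \ge -c_1 - c_2 \dot R(t)$ valid while $R \ge \underline R$, with explicit constants $c_1$ of order $M^2/K + M^2/(\underline R K)$ and $c_2$ of order $K$ that match the combinations appearing inside $E_3$. Starting from Lemma \ref{L5.2}(ii), $\dot R = -A + KRB$ with $A := \iint \sin(\theta-\phi)\omega f\,d\theta d\omega$ and $B := \iint \sin^2(\theta-\phi) f\,d\theta d\omega$, I would differentiate once more in $t$, substitute $\partial_t f$ via the K-S equation \eqref{KM-K-I-1}, and integrate by parts in $\theta$. The resulting expression for $\ddot R$ splits into (i) terms of the form $K\dot R\, B$ and $K^2R^2$-weighted integrals of $f$ against $\sin^2\!\cos$, which after using $R\ge\underline R$ furnish the linear-in-$\dot R$ part of the inequality, and (ii) $\omega$-weighted integrals, which by Cauchy--Schwarz together with $|\omega|\le M$ and $B\le 1$ produce the constant term of order $M^2/K$. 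Along the way the estimate $|\dot\phi|\le M/R+K(1-R)$ from Lemma \ref{L5.3} controls the $\dot\phi$-terms that appear after differentiation.

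Granted the ODI, a comparison argument gives $\dot R(t)\ge y(t)$ for $t\ge T$, where $y$ solves $\dot y=-c_1-c_2 y$ with $y(T)=K\mu$, explicitly
\[ y(t) = -\frac{c_1}{c_2} + \Big(K\mu + \frac{c_1}{c_2}\Big)e^{-c_2(t-T)}. \]
Defining $d>0$ by $y(T+d)=0$ gives $d = c_2^{-1}\log\big(1+c_2 K\mu/c_1\big)$, whose form matches the logarithm in $E_3$ once $c_2=4K$. The hypothesis $K^{2}\mu > \frac{M^{2}}{2\underline{R}^{2}} - \frac{3M^{2}}{4\underline R}$ is precisely what makes the argument of this logarithm strictly greater than $1$, and hence $d>0$; on $[T,T+d)$ we have $\dot R\ge y>0$.

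For the quantitative bound, integrating the comparison inequality yields
\[ R(T+d)-R(T)\ge \int_T^{T+d}y(t)\,dt = \frac{K\mu}{c_2} - \frac{c_1 d}{c_2} + \frac{c_1}{c_2^2}\big(1-e^{-c_2 d}\big), \]
which can be further simplified using the defining relation $y(T+d)=0$. After identifying the precise constants $c_1,c_2$ with the combinations of $\tfrac{M^{2}}{4K}$, $\tfrac{M^{2}}{2\underline RK}$, and $\tfrac{M^{2}}{6\underline R K}$ appearing in the statement of the lemma, the main positive contribution reduces to $\tfrac{1}{12}\underline R\mu$, while the remaining correction terms (including the logarithmic one and the bracketed factor with the ratio $\tfrac{a}{p-b}$) collect exactly into the absolute value defining $E_3$.

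The main obstacle will be the first step, namely deriving the ODI with constants sharp enough that the integral of the comparison solution collapses to $\tfrac{1}{12}\underline R\mu-E_3$. Several cross-terms of varying signs appear in $\ddot R$ (notably those involving $\dot\phi$ and the mixed $\sin\!\cos$-integrals), and careful accounting is required in order to avoid the crude Lipschitz bound $|\ddot R|\le L$ of Lemma \ref{L6.2}, which would only yield an $O(\mu^{2})$ lower bound rather than the $O(\underline R\mu)$ target. Re-using $\dot R = -A+KRB$ to re-express $B$ in terms of $\dot R$ whenever it appears, and isolating one factor of $K\dot R$ into the linear piece of the inequality, should make the computation tractable and produce the precise coefficients needed.
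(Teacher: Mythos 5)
There is a genuine gap at the very step you flag as the main obstacle, and it is not merely technical: the ODI $\ddot R \ge -c_1 - c_2\dot R$ with $c_2=\mathcal{O}(K)$ and $c_1=\mathcal{O}(M^2/K)$ cannot be produced by the computation you describe. When you differentiate $\dot R=-A+KRB$ (with $A=\iint\sin(\theta-\phi)\,\omega f$, $B=\iint\sin^2(\theta-\phi)f$) and substitute the transport equation, you generate cross terms such as $KR\iint\sin(\theta-\phi)\cos(\theta-\phi)\,\omega f\,d\theta d\omega$ and $\dot\phi\iint\cos(\theta-\phi)\,\omega f\,d\theta d\omega$; after Cauchy--Schwarz/Young and $|\dot\phi|\le M/R+K(1-R)$ these are only $\mathcal{O}(KM)$ or $\mathcal{O}(M^2)$, not $\mathcal{O}(M^2/K)$. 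Your proposed remedy --- re-expressing $B$ through $\dot R$ via $KRB=\dot R+A$ --- is exactly what injects the fatal error: the $\mathcal{O}(M)$ term $A$ gets multiplied by $K$, so any absorption of the $K^2B$-type terms into $c_2\dot R$ leaves a constant of size $\mathcal{O}(KM)$. Since the relevant time window has length $d\sim\frac{1}{4K}\log(\cdot)$, such a constant contributes $\mathcal{O}(M\log(\cdot))$ to $R(T+d)-R(T)$, which neither reproduces the stated $E_3$ nor is small enough for the way the lemma is used in $(\mathcal H3)$ (where $E_3$ must be negligible against $\tfrac{1}{24}R_0\mu$ for large $K$). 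Relatedly, even if your ODI held with $c_2=4K$ and initial datum $\dot R(T)=K\mu$, integration would give a main term $\sim\mu/4$, so the specific structure $\tfrac{1}{12}\underline R\mu-E_3$, with $\tfrac13\underline RK\mu$ appearing inside $E_3$, could not ``collect exactly'' as you assert.

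The paper's proof (Appendix C) avoids this by never writing a differential inequality for $\dot R$ itself. It introduces the nonnegative quantity $S(t)=KR\iint\sin^2(\theta-\phi)f\,d\theta d\omega$ and shows, using the identity $RS=K\iint|\partial_\theta(\sigma\cdot J)|^2f$ and the computation (dz1), that $\frac{d}{dt}(RS)\ge -4K(RS)-2M^2$: here every dangerous $\omega$-cross term is Young-ed against the quadratic quantity $RS$ itself, leaving only an $\mathcal{O}(M^2)$ constant which Gronwall converts into $\mathcal{O}(M^2/K)$. The passage to $\dot R$ is made only through the two-sided sandwich $\tfrac12 S-\tfrac{M^2}{2K\underline R}\le \dot R\le \tfrac32 S+\tfrac{M^2}{2\underline RK}$ (valid while $R\ge\underline R$), applied once at $t=T$ and once at later times; this is the source of the factors $\tfrac23$, $\tfrac12$, $\underline R$ (hence $\tfrac{1}{12}\underline R\mu$) and of the precise constants in $E_3$. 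Finally, a continuity/bootstrap argument (the set $\mathcal T$ and its supremum $\tau$ in Step B) is needed to guarantee $R\ge R(T)\ge\underline R$ on $[T,T+d)$ so that the sandwich remains applicable --- a point your sketch only gestures at. If you want to salvage a direct second-order argument, you would have to build the Gronwall inequality around a nonnegative quadratic functional comparable to $RS$, which is in effect the paper's proof.
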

\begin{proof} Since the proof is rather lengthy, we leave it to Appendix C.
\end{proof}

\vspace{0.5cm}

\subsection{A framework for the asymptotic lower bound of $R$} \label{sec:6.2} In this subsection, we study sufficient framework $({\mathcal H})$ for the lower bounds of order parameter, and then present a rough estimate for the lower bound for $R$ in Proposition \ref{P6.2}:
\[ \liminf_{t\rightarrow\infty}R(t) > \frac{2}{3}. \]
and then in the proof of Theorem \ref{T3.3} presented in next subsection, we will improve the above uniform lower bound by showing 
\[ \liminf_{t\rightarrow\infty}R(t) \geq 1 - \frac{|{\mathcal O}(1)|}{\sqrt{K}}.    \]
\newline
We first list our main framework $({\mathcal H})$ for the lower bound estimate of $R$ as follows. \newline

\begin{itemize}
\item
$({\mathcal H}1)$: The ${\mathcal C}^1$ initial data $f_0$ satisfies $ R_0>0$, and $g$ is compactly supported on the interval $[-M,M]$.

\vspace{0.5cm}

\item
$({\mathcal H}2)$: The constant $\mu$ is sufficiently small and $K$ is sufficiently large so that
\begin{align*}
& \frac{1}{2}>2\frac{M}{KR_0}+4\frac{M}{KR_0^{2}}+\frac{2\sqrt{2}}{R_0 \sqrt{R_0}}\sqrt{\frac{M}{K}+\mu}, \qquad K^2\mu - \Big(\frac{2M^2}{R_0^2}-\frac{3M^2}{2R_0}\Big)>0, \cr
& \mbox{and} \quad K> \max \Big\{\frac{64M}{\sqrt{3}}, \frac{64\sqrt{3}M}{3-(\sqrt{3} - 2R_0)^2} \Big \}.
\end{align*}

\vspace{0.5cm}

\item
$({\mathcal H}3)$: The constant $\gamma$ is contained in $(\frac{\pi}{3},\frac{\pi}{2})$, and $E_i = E_i(K, M, \frac{R_0}{2}, \gamma),~i=1,2$ and $E_3 = E_3(K, M, \frac{R_0}{2}, \mu)$ satisfy
\[ R_0 -2 E_{1}-E_{2}> \frac{R_0}{2}, \qquad \mu\frac{R_0}{24}> 2E_{1}+E_{2}+E_{3},
\]
where the positive constants $E_{1},$ $E_{2}$ and $E_{3}$ can be made sufficiently small by taking $K$ sufficiently large and $\gamma$ sufficiently close to $\frac{\pi}{2}.$

\vspace{0.5cm}

\item
$({\mathcal H}4)$: For a positive constant  $\kappa \in (\frac{2}{3},\frac{\sqrt{3}}{2}),$ the coupling strength strength $K$ is sufficiently large such that 
\[
\kappa < \frac{\sqrt{3}}{4} + \frac{1}{4}\sqrt{3 - \frac{64\sqrt{3}M}{K}}, \qquad 
\varepsilon_{\kappa} :=\frac{\kappa +1}{\kappa^{2}}\frac{M}{K}+\frac{(1-\kappa)}{\kappa} <1.
\]
\end{itemize}

\vspace{0.5cm}

Under the above assumptions, we will derive
\begin{equation} \label{Order-KM}
 \liminf_{t\rightarrow\infty}R(t)\geq R_\infty := 1+\frac{M}{K}-\sqrt{\frac{M^{2}}{K^{2}}+4\frac{M}{K}}. 
\end{equation} 
Thus, letting $K \to \infty$, the above estimate and $R \leq 1$ yield
\[  \lim_{K \to \infty} \liminf_{t \to \infty} R(t) = 1. \]
Hence we obtain a complete phase synchronization in this asymptotic limit. Thus, the estimate \eqref{Order-KM} indicates the kinetic analogue of the practical synchronization estimates. 

\vspace{0.5cm}

Now, we are ready to state the first proposition of this subsection. In this proposition, we show that the mass in $L^+_{\frac{\pi}{3}}$ remains above a constant, depending only on $R_0$ and $E_1$ in the set of times $t$ where $R$ is in non-increasing mode. 
\begin{proposition}\label{P6.1}
Suppose that the assumptions $({\mathcal H}1)$ - $({\mathcal H}3)$ hold, and assume that there exist $t_{0}\geq0$ and $\underbar{R}>0$ such that 
\[
 R(t_{0})\geq R_0, \quad  \inf_{0 \leq t \leq t_0} R(t) \geq \underbar{R}.
\]
Then, for any $t\geq t_{0}$ satisfying $\dot{R}(t)\leq0$, we have
\begin{equation} \label{N-M}
\mathcal{M}(L^+_{\frac{\pi}{3}}(t))\geq\frac{1}{2}(R(t_{0})+1)-E_{1}.
\end{equation}
\end{proposition}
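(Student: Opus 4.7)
The strategy has two steps: apply Lemma \ref{L6.3} at the time $t$ itself to relate $\mathcal{M}(L^+_{\pi/3}(t))$ to $R(t)$, and then bootstrap to show $R(t)\geq R(t_0)$ at every $t\geq t_0$ for which $\dot R(t)\leq 0$.

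First I would invoke Lemma \ref{L6.3} at the time $t$ itself, with $\underline R:=R_0/2$, the value used in $({\mathcal H}3)$ to define $E_1$ and $E_2$. The sign hypothesis $\dot R(t)\leq 0$ is given for free, while the quantitative hypothesis $R(t)\geq R_0/2$ will be built into the bootstrap described next. Granted that, the upper half of Lemma \ref{L6.3} yields
\[
\mathcal{M}(L^+_{\pi/3}(t))\geq \tfrac{1}{2}\bigl(R(t)+1\bigr)-E_{1}.
\]
Therefore the whole proposition reduces to the key claim $R(t)\geq R(t_0)$ whenever $t\geq t_0$ and $\dot R(t)\leq 0$.

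To prove the key claim I would argue by contradiction. Suppose $R(t^\star)<R(t_0)$ at some $t^\star\geq t_0$ with $\dot R(t^\star)\leq 0$. Continuity of $R$ (Lemma \ref{L6.2}) implies that $\tau:=\sup\{s\in[t_0,t^\star]:R(s)\geq R(t_0)\}$ lies in $[t_0,t^\star)$ and satisfies $R(\tau)=R(t_0)$ with right-derivative $\dot R(\tau)\leq 0$. Applying Lemma \ref{L6.3} at $\tau$ gives
\[
\mathcal{M}(L^+_{\pi/3}(\tau))\geq \tfrac{1}{2}\bigl(R(t_0)+1\bigr)-E_{1}.
\]
On $[\tau,t^\star]$ the dynamics split into two regimes. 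On any subinterval where $\dot R<K\mu$ (and $R\geq R_0/2$), Lemma \ref{L6.4} yields $\tfrac{d}{dt}\mathcal{M}(L^+_{\pi/3})\geq 0$; on any subinterval where $\dot R\geq K\mu$, Lemma \ref{L6.5} forces $R$ to rise by at least $\tfrac{R_0}{24}\mu-E_3$ over a definite time. Combining these with the coupling $R\leq(1-\sin\tfrac{\pi}{3})\mathcal{M}(L^+_{\pi/3})+\sin\tfrac{\pi}{3}$ from Lemma \ref{L5.3}(ii), one turns the monotone mass bound into a lower bound on $R$ throughout $[\tau,t^\star]$ that contradicts $R(t^\star)<R(t_0)$. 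This same argument simultaneously secures the bootstrap $R(t)\geq R_0/2$ used in the previous paragraph.

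The main obstacle will be the joint bookkeeping of $R$ and $\mathcal{M}(L^+_{\pi/3})$ across the two regimes: one must verify that the error constants $E_1,E_2,E_3$, constrained by the smallness inequalities $R_0-2E_1-E_2>R_0/2$ and $\mu R_0/24>2E_1+E_2+E_3$ in $({\mathcal H}3)$, never accumulate enough to break the desired lower bound. The preceding lemmata in Section \ref{sec:6} are calibrated precisely so that this bookkeeping closes and the contradiction materializes.
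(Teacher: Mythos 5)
There is a genuine gap in the reduction. You reduce the proposition to the claim that $R(t)\geq R(t_0)$ at \emph{every} $t\geq t_0$ with $\dot R(t)\leq 0$, and then try to prove this by propagating a mass bound forward from $\tau$ and converting it back into a bound on $R$. But the available mass-to-$R$ couplings cannot recover $R\geq R(t_0)$: the inequality you quote from Lemma \ref{L5.3}(ii) is the \emph{upper} bound $R\leq(1-\sin\gamma)\mathcal{M}+\sin\gamma$, which goes the wrong way, and the correct lower bound $R\geq(1+\sin\tfrac{\pi}{3})\mathcal{M}(L^+_{\pi/3})-1$ applied to $\mathcal{M}\geq\tfrac12(R(t_0)+1)-E_1$ yields only $R\geq R(t_0)-\tfrac{(2-\sqrt3)(R(t_0)+1)}{4}-O(E_1)$, a deficit of order $0.07$ that the small constants $E_1,E_2,E_3$ of $(\mathcal H3)$ cannot absorb. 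Even the sharper bound of Lemma \ref{L6.3} (valid only at non-increasing instants, with $\gamma$ near $\pi/2$) gives $R\geq 2\mathcal{M}-1-E_2\geq R(t_0)-2E_1-E_2$, i.e.\ $R$ may dip below $R(t_0)$ by up to $2E_1+E_2$ while the mass constraint is maintained. This is exactly why Corollary \ref{C6.1} asserts only $R(t)\geq R(t_0)-2E_1-E_2$; your key claim is strictly stronger than what the machinery delivers, and your contradiction at $t^\star$ therefore does not close.

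The paper's proof avoids this inversion by running the continuity/bootstrap argument on the mass inequality itself rather than on $R$: it defines the set of times $s$ such that $\mathcal{M}(L^+_{\pi/3}(t))\geq\tfrac12(R(t_0)+1)-E_1$ holds at all non-increasing instants $t\in[t_0,s]$, shows this set is nonempty (Lemma \ref{L6.3} at $t_0$), and at its supremum $T^*$ shows first that $\dot R(T^*)=0$ (so Lemma \ref{L6.4} applies on a neighborhood, since the interim lower bound $R\geq R(t_0)-2E_1-E_2\geq R_0/2$ holds on $[t_0,T^*)$), and then that $R(T^*)\geq R(t_0)$. The latter is recovered only at this critical instant, by a case analysis: either $R$ never decreased, or mass was non-decreasing throughout the regime $\dot R<K\mu$, or $\dot R$ reached $K\mu$ at some $t_c$, in which case Lemma \ref{L6.5} supplies a gain $\tfrac{R_0}{24}\mu-E_3$ that, by $(\mathcal H3)$, exceeds the possible dip $2E_1+E_2$. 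Lemma \ref{L6.4} then extends the mass bound past $T^*$, giving the contradiction and $T^*=\infty$. If you want to salvage your structure, you must track the mass inequality (which is what propagates), not $R(t)\geq R(t_0)$ (which does not), and use the $\mu$-gain of Lemma \ref{L6.5} to offset the $2E_1+E_2$ loss exactly as in $(\mathcal H3)$.
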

\begin{proof} Since the proof is rather lengthy, we postpone it to Apppendix \ref{App-D}.
\end{proof} 
\begin{remark} The estimate \eqref{N-M} looks similar to that appearing in Lemma \ref{L6.3}, however in \eqref{N-M}, we are comparing $\mathcal{M}(L^+_{\frac{\pi}{3}}(t))$ at any non-increasing instant $t$ after $t_0$ with the fixed constant $\frac{1}{2}(R(t_{0})+1)-E_{1}$. Thus the estimate in Lemma \ref{L6.3} corresponds to a special situation of \eqref{N-M}.
\end{remark}

Below, we present three corollaries followed by Proposition \ref{P6.1}. 
\begin{corollary}\label{C6.1}
Suppose that assumptions $({\mathcal H}1)$ - $({\mathcal H}3)$ hold and there exists $t_0 \geq 0$ such that 
\[
R(t_{0})\geq R_0.
\]
Then, we have
\[ R(t)\geq R(t_{0})-2 E_{1}-E_{2}, \quad t \geq t_0 \quad \mbox{and} \quad  \limsup_{t\rightarrow\infty}\dot{R}(t)\leq K\mu.  \]
\end{corollary}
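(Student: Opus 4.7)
The strategy is to couple Proposition \ref{P6.1} and Lemma \ref{L6.3} to obtain the uniform lower bound on $R$, and then exploit Lemma \ref{L6.5} to rule out $\limsup_{t\to\infty} \dot R(t) > K\mu$.

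\emph{Lower bound on $R$.} First I would set $\underline R := R_0/2$ and run a bootstrap. Define
\[
T^* := \sup\{T \geq t_0 : R(t) \geq R_0/2 \text{ for all } t \in [t_0, T]\}.
\]
Since $R(t_0) \geq R_0 > R_0/2$ and $R$ is Lipschitz (Lemma \ref{L6.2}), $T^* > t_0$. Suppose $T^* < \infty$. Continuity forces $R(T^*) = R_0/2$ and $\dot R(T^*) \leq 0$, so the hypotheses of Proposition \ref{P6.1} (with $\underline R = R_0/2$) and of Lemma \ref{L6.3} are met at $T^*$. Chaining them yields
\[
R(T^*) \geq 2\mathcal{M}(L^+_{\frac{\pi}{3}}(T^*)) - E_2 - 1 \geq \bigl(R(t_0)+1\bigr) - 2E_1 - E_2 - 1 = R(t_0) - 2E_1 - E_2,
\]
which, by $(\mathcal H3)$, exceeds $R_0/2 = R(T^*)$, a contradiction. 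Hence $T^* = \infty$. The same chain at any $t \geq t_0$ with $\dot R(t) \leq 0$ gives $R(t) \geq R(t_0) - 2E_1 - E_2$; for $t$ with $\dot R(t) > 0$ set $\sigma := \sup\{\tau \in [t_0, t] : \dot R(\tau) \leq 0\}$ and conclude $R(t) \geq R(\sigma) \geq R(t_0) - 2E_1 - E_2$ (or $R(t) \geq R(t_0)$ if the set is empty).

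\emph{$\limsup$ assertion.} Argue by contradiction. Assume $\limsup_{t\to\infty} \dot R(t) > K\mu$ and pick $t_n \to \infty$ with $\dot R(t_n) > K\mu$. If $\dot R > K\mu$ throughout $[t_0, t_n]$ then $R(t_n) > R(t_0) + K\mu(t_n - t_0) \to \infty$, contradicting $R \leq 1$. Otherwise, the intermediate value theorem produces, for each large $n$, a largest time $T_n \in [t_0, t_n]$ with $\dot R(T_n) = K\mu$ and $\dot R > K\mu$ on $(T_n, t_n]$. Since $|\dot R| \leq M + K$ by Lemma \ref{L6.2}, the $d_n$ supplied by Lemma \ref{L6.5} satisfy $d_n \geq c/(M+K)$ uniformly, and one can pass to a subsequence with $T_n \to \infty$ and the intervals $[T_n, T_n + d_n]$ pairwise disjoint. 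Lemma \ref{L6.5}, whose auxiliary condition $K^2 \mu > 2M^2/R_0^2 - 3M^2/(2R_0)$ is supplied by $(\mathcal H2)$, then gives
\[
R(T_n + d_n) - R(T_n) \geq \frac{R_0 \mu}{24} - E_3 =: c > 0,
\]
the positivity coming from $(\mathcal H3)$. Thus $R$ undergoes infinitely many disjoint boosts of size $c$ while remaining in the compact interval $[R(t_0) - 2E_1 - E_2, 1]$. To close the argument I would invoke Lemma \ref{L6.4}, which says $\mathcal M(L^+_{\frac{\pi}{3}})$ is non-decreasing wherever $\dot R < K\mu$; combining this monotonicity with Lemma \ref{L6.3} applied at the non-increasing instants of $R$ forces $R$ at such instants to converge to a single asymptotic value, which is incompatible with the recurring boosts of size $c$, yielding the contradiction.

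\emph{Main obstacle.} The bootstrap lower bound is essentially a direct assembly of Proposition \ref{P6.1} and Lemma \ref{L6.3}. The genuinely delicate step is the $\limsup$ assertion: the boosts of Lemma \ref{L6.5} can, a priori, be cancelled by arbitrarily deep decreases of $R$, and what rules this out is the monotonicity of $\mathcal{M}(L^+_{\frac{\pi}{3}})$ provided by Lemma \ref{L6.4}, routed back through Lemma \ref{L6.3}, to prevent $R$ from slipping back to the low side of each boost before the next crossing of $K\mu$.
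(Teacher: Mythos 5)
Your part (i) is essentially the paper's argument: the chain ``Proposition \ref{P6.1} at a non-increasing instant, then Lemma \ref{L6.3}'' gives $R(t)\geq R(t_0)-2E_1-E_2$ there, and the passage to increasing instants via the last non-increasing time is the same. One caveat: Proposition \ref{P6.1} (and later Lemma \ref{L6.5}) requires $\inf_{0\leq t\leq t_0}R(t)\geq \underline R$, i.e.\ a lower bound on all of $[0,t_0]$, which the hypothesis $R(t_0)\geq R_0$ alone does not supply when $t_0>0$; your bootstrap only controls $R$ on $[t_0,T^*]$. The paper closes this by first running the argument with $t_0=0$ (where the hypothesis is automatic), obtaining $R>R_0-2E_1-E_2>R_0/2$ on all of $[0,\infty)$, and only then treating a general anchor $t_0>0$. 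As written, your application of Proposition \ref{P6.1} at an arbitrary $t_0>0$ is therefore unjustified, though the fix is exactly this two-step ordering.

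The genuine gap is in the $\limsup$ assertion. The paper needs only one boost: fix $\varepsilon'$ with $\frac{R_0}{24}\mu-\varepsilon'-2E_1-E_2-E_3>0$, pick $t_0$ so that $R\geq \liminf_{t\to\infty}R-\varepsilon'$ on $[t_0,\infty)$, find a single crossing $t_2\geq t_0$ with $\dot R(t_2)=K\mu$, apply Lemma \ref{L6.5} once to get $R(t_2+d)\geq \liminf_{t\to\infty}R+\frac{R_0}{24}\mu-\varepsilon'-E_3\geq R_0$, and then --- the key move, and the very reason the corollary is stated for a general anchor --- re-apply part (i) anchored at $t_2+d$ to get $R(t)\geq \liminf_{t\to\infty}R+\big(\frac{R_0}{24}\mu-\varepsilon'-2E_1-E_2-E_3\big)$ for all $t\geq t_2+d$, contradicting the definition of the $\liminf$. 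Your closing step (``Lemma \ref{L6.4} plus Lemma \ref{L6.3} force $R$ at non-increasing instants to converge to a single value, incompatible with recurring boosts'') is not justified: Lemma \ref{L6.4} gives monotonicity of $\mathcal{M}(L^+_{\frac{\pi}{3}}(t))$ only where $\dot R<K\mu$, and under your contradiction hypothesis the set $\{\dot R\geq K\mu\}$ recurs indefinitely; there $\mathcal{M}$ may decrease, and the only available Lipschitz bound for $\mathcal{M}$ (Lemma \ref{L6.2}) grows like $e^{Kt}$, so the cumulative loss over infinitely many excursions is uncontrolled and $\mathcal{M}$ need not converge. Even granting convergence of $\mathcal{M}$, Lemma \ref{L6.3} pins $R$ at non-increasing instants only to a window of width $2E_1+E_2$, not to a point, so you would additionally need the quantitative comparison $\frac{R_0}{24}\mu-E_3>2E_1+E_2$ from $({\mathcal H}3)$, a reason why the pre-boost value $R(T_n)$ is near that window, and a separate treatment of the case where $\dot R>0$ on an entire ray. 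None of this appears in your sketch; the paper's re-anchoring of part (i) after a single boost replaces it entirely.
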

\begin{proof}
(i)~First, assume $t_{0}=0.$ Then, our hypotheses guarantee that the
assumptions of Proposition \ref{P6.1} are satisfied. 
In the course of the proof of Proposition \ref{P6.1} in Appendix D, we have already shown that 
\[
R(t)\geq R(t_{0})-2 E_{1}-E_{2},  \quad \mbox{in $\mathcal{T}(t_0)$}. \]
where

\[
\mathcal{T}(t_0) := \bigg\{ t \in [t_{0}, \infty)~:~\mathcal{M}(L^+_{\frac{\pi}{3}}(t^{*}))\geq\frac{1}{2}(R(t_{0})+1)-E_{1}\hspace{1em}\forall t^{*}\in [t_{0},t] \cap {\mathcal N}(t_0)\bigg\},
\]
and we also showed $\mathcal{T}=[t_{0},\infty).$ Thus, we have the desired estimate for $t_0 = 0$, and it follows from $({\mathcal H}1)$ and $({\mathcal H}3)$ that 
\begin{equation} \label{N-Pa}
 R(t) > R_0 -2 E_{1}-E_{2}>\frac{R_0}{2}>0,  \quad t \in [0, \infty). 
\end{equation} 
For the case $t_{0}>0,$  note that by the above inequality, the assumptions $({\mathcal H}1)$, $({\mathcal H}2)$ and $({\mathcal H}3)$, one can apply for Proposition \ref{P6.1} for $t_{0}>0.$ Thus, the desired result follows by the same argument. \newline

\noindent (ii)~By definition of the order parameter,  $R$ is uniformly bounded, and 
\[ 0 \leq \liminf_{t\rightarrow\infty}R(t) \leq 1.
\]
Suppose that we have
\begin{equation} \label{H-18}
\limsup_{t\rightarrow\infty}\dot{R}>K\mu. 
\end{equation}
Let $\varepsilon^{\prime}>0$ be sufficiently small so that
\begin{equation}\label{con}
\frac{R_0}{24}\mu-\varepsilon^{\prime}-2E_{1}-E_{2}-E_{3}>0.
\end{equation}
Such $\varepsilon^{\prime}$ exists by $({\mathcal H}3)$. By the result in (i) we have
\begin{equation}\label{starlo0}
\liminf_{t\rightarrow\infty}R(t) \geq R_0 -2E_{1}-E_{2}. 
\end{equation}
On the other hand, by definition of $\liminf_{t\rightarrow\infty}R(t)$, there exists $t_{0}\geq0,$ such that
\begin{equation}\label{starlo}
R(t) \geq \liminf_{t\rightarrow\infty}R(t)-\varepsilon^{\prime}, \quad t \in  [t_{0},\infty).
\end{equation}
Thanks to \eqref{H-18}, boundedness of $R$ and continuity of ${\dot R}$ in Lemma \ref{L6.2}, there exists $t_{1}, t_2 \geq t_{0}$ such that
\[
\dot{R}(t_{1})>K\mu \quad \mbox{and} \quad \dot{R}(t_{2})=K\mu. 
\]
Thus, it follows from Lemma \ref{L6.5}, \eqref{starlo}, \eqref{starlo0} and \eqref{con} that there exists $d>0$ such
that 
\begin{align*}
R(t_{2}+d)&\geq R(t_{2})+\frac{R_0}{24}\mu-E_{3} \\
&\geq \liminf_{t\rightarrow\infty}R(t) +\frac{R_0}{24} \mu -\varepsilon^{\prime}-E_{3}\\
&\geq R_0+\frac{R_0}{24}\mu-\varepsilon^{\prime}-2E_{1}-E_{2}-E_{3} \\
&\geq R_0.
\end{align*}
Then, by the third inequality as above and \eqref{N-Pa}, we have
\[ R(t)\geq \liminf_{t\rightarrow\infty}R(t) +\frac{R_0}{24}\mu-\varepsilon^{\prime}-2E_{1}-E_{2}-E_{3},  \quad \mbox{in $[t_{2}+d,\infty)$}. \]
We use \eqref{con} to see that the above expression
contradicts the definition of $\displaystyle R_* = \liminf_{t \to \infty} R$. This yields
the desired result.
\end{proof}

\vspace{0.5cm}

We next show that the $L^{2}$ norm of $f$ in an interval
of length $\frac{\pi}{3},$ centered at $-\phi(t),$ decays exponentially after some time. This is  analogous to the phenomenon in   \eqref{NN-5}.
\begin{corollary}\label{C6.2} 
Suppose that assumptions $({\mathcal H}1)$ - $({\mathcal H}3)$ hold. Then, there exists $T\geq0$ such that   
\begin{align}
\begin{aligned} \label{decay2}
& \iint_{L^-_{\frac{\pi}{3}}(t) \times \mathbb R} |f|^2 ~ d\theta d\omega\leq  e^{-\frac{KR(0)}{4}(t-T)} \iint_{L^-_{\frac{\pi}{3}}(T) \times \mathbb R} |f(T)|^2 ~ d\theta d\omega , \quad t \geq T, \\
& \mathcal{M}\big(L^-_{\frac{\pi}{3}}(t)\big)\leq \Big(\frac{\pi}{3}\Big)^{\frac{1}{2}}  e^{-\frac{KR(0)}{8}(t-T)} \sqrt{\iint_{L^-_{\frac{\pi}{3}}(T) \times \mathbb R}f^{2}(\theta,\omega,T) ~ d\theta d\omega}.
\end{aligned}
\end{align}
\begin{proof} We postpone its proof in Appendix E.
\end{proof}
\end{corollary}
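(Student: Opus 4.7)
The plan is to mimic Step~B in the proof of Theorem~\ref{T3.1} (i.e.\ the Lyapunov decay \eqref{NN-5} around the antipodal arc), but now integrated against $\omega$ and with the required smallness of $\dot\phi - \omega$ replacing the fact $\dot\phi \to 0$ that was available in the identical--oscillator case. First I would invoke Corollary~\ref{C6.1}: under $({\mathcal H}1)$--$({\mathcal H}3)$ we already have $R(t) \geq R_0 - 2E_1 - E_2 > R_0/2$ for every $t\geq 0$, together with $\limsup_{t\to\infty}\dot R(t) \leq K\mu$. Pick $T \geq 0$ so large that $\dot R(t) \leq K\mu$ (up to negligible slack) for every $t\geq T$, and introduce the Lyapunov functional
$$
\Psi(t) := \iint_{L^-_{\frac{\pi}{3}}(t)\times \mathbb R} f^2(\theta,\omega,t)\,d\theta\,d\omega.
$$

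Next, I would differentiate $\Psi$, keeping track of the moving endpoints $\phi(t)+\tfrac{5\pi}{6}$ and $\phi(t)+\tfrac{7\pi}{6}$, and apply the K--S equation $\partial_t f = -\partial_\theta[(\omega - KR\sin(\theta-\phi))f]$ together with the algebraic identity $2f\,\partial_\theta[gf] = g' f^2 + \partial_\theta[g f^2]$ used in \eqref{D-5}. Using $\sin\tfrac{5\pi}{6} = -\sin\tfrac{7\pi}{6} = \tfrac12$, this would yield
\begin{align*}
\frac{d\Psi}{dt} &= \int_{-M}^{M}\bigl(\dot\phi - \omega - \tfrac{KR}{2}\bigr)\, f^2\bigl(\phi+\tfrac{7\pi}{6},\omega,t\bigr)\,d\omega \\
&\quad -\int_{-M}^{M}\bigl(\dot\phi - \omega + \tfrac{KR}{2}\bigr)\, f^2\bigl(\phi+\tfrac{5\pi}{6},\omega,t\bigr)\,d\omega \\
&\quad + KR\iint_{L^-_{\frac{\pi}{3}}(t)\times \mathbb R} \cos(\theta-\phi)\, f^2\,d\theta\,d\omega.
\end{align*}

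The main obstacle will be to force both boundary integrals to be non-positive for every $|\omega|\leq M$; this reduces to the pointwise condition $|\dot\phi - \omega| \leq KR/2$, equivalently $|\dot\phi| + M \leq KR/2$. The Cauchy--Schwarz bound derived in \eqref{H-16},
$$
|\dot\phi|\leq \frac{M}{R} + \sqrt{\frac{K}{R}}\sqrt{M+\dot R},
$$
combined with $R \geq R_0/2$, $\dot R(t) \leq K\mu$ for $t\geq T$, and the largeness of $K$ (respectively smallness of $\mu$) built into $({\mathcal H}2)$, which is essentially the hypothesis of Lemma~\ref{L6.4}, will deliver $|\dot\phi| \leq KR/2 - M$ on $[T,\infty)$, so that both boundary terms drop.

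Finally, since $\theta-\phi \in (5\pi/6, 7\pi/6)$ on $L^-_{\frac{\pi}{3}}(t)$ we have $\cos(\theta-\phi)\leq -\sqrt{3}/2$, and using $R \geq R_0/2$ the bulk term is bounded by $-KR\tfrac{\sqrt{3}}{2}\Psi \leq -\tfrac{KR_0\sqrt{3}}{4}\Psi \leq -\tfrac{KR(0)}{4}\Psi$. Thus $\dot\Psi \leq -\tfrac{KR(0)}{4}\Psi$ on $[T,\infty)$, and Gronwall produces the first claimed inequality. The mass estimate then follows by Cauchy--Schwarz in $(\theta,\omega)$: $\mathcal M(L^-_{\frac{\pi}{3}}(t)) \leq \sqrt{|L^-_{\frac{\pi}{3}}|\cdot 2M}\cdot \sqrt{\Psi(t)}$, absorbing the $\omega$-width $2M$ into the constant and taking the square root of the exponential, which halves the decay rate to $KR(0)/8$.
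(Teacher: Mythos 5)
Your proposal is correct and is essentially the paper's own argument in Appendix \ref{App-E}: the paper runs the same boundary/bulk decomposition of the moving-interval Lyapunov functional (per fixed $\omega$, using the conditional density $\varrho$ with $\Gamma^-_{\frac{\pi}{3},\omega}$ and integrating against $g^2$ at the end, which is just a repackaging of your $\Psi$), kills the boundary terms via the bound \eqref{H-16} with $R\geq R_0/2$, the slack-$\varepsilon$ version of $\dot R\leq K\mu$ supplied by Corollary \ref{C6.1}, and the largeness condition in $({\mathcal H}2)$, estimates the bulk term by $\cos(\theta-\phi)\leq-\tfrac{\sqrt 3}{2}$, and concludes by Gronwall and Cauchy--Schwarz. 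The only cosmetic difference is in the mass estimate, where your joint Cauchy--Schwarz in $(\theta,\omega)$ produces an extra factor $\sqrt{2M}$ relative to the stated constant $(\pi/3)^{1/2}$; this does not affect the decay rate or the substance of the corollary.
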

Before we present the last proposition showing that
$R$ will remain above $\frac{2}{3}$ after some time, we present a preparatory result.

\begin{lemma} \label{L6.6}
Suppose that assumptions $({\mathcal H}1)$ - $({\mathcal H}3)$ hold. Then, the following estimate holds.

\[
\frac{2R(t)-\sqrt{3} + 2\sqrt{3} {\mathcal M}(L^-_{\frac{\pi}{3}}(t))}{2-\sqrt{3}}\leq {\mathcal M}(L^+_{\frac{\pi}{3}}(t)) \leq \frac{2R(t) + \sqrt{3} + {\mathcal M}(L^-_{\frac{\pi}{3}}(t))(2-\sqrt{3})}{2\sqrt{3}}, \quad t \in [0, \infty).
\]
\end{lemma}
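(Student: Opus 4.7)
The plan is to obtain both inequalities directly from the definition of $R$ by splitting the angular integral over three disjoint regions and bounding $\cos(\theta-\phi)$ pointwise on each. I will write
\[
R(t)=\iint_{L^+_{\pi/3}(t)\times\mathbb R}\cos(\theta-\phi)\,f\,d\theta\,d\omega+\iint_{L^-_{\pi/3}(t)\times\mathbb R}\cos(\theta-\phi)\,f\,d\theta\,d\omega+\iint_{S(t)\times\mathbb R}\cos(\theta-\phi)\,f\,d\theta\,d\omega,
\]
where $S(t):=\mathbb T\setminus(L^+_{\pi/3}(t)\cup L^-_{\pi/3}(t))$. Since $L^+_{\pi/3}(t)$ is the interval centered at $\phi(t)$ of half-width $\pi/6$ and $L^-_{\pi/3}(t)$ is the analogous interval centered at $\phi(t)+\pi$, the pointwise bounds
\[
\frac{\sqrt 3}{2}\le\cos(\theta-\phi)\le 1\ \text{on}\ L^+_{\pi/3},\quad -1\le\cos(\theta-\phi)\le -\frac{\sqrt 3}{2}\ \text{on}\ L^-_{\pi/3},\quad |\cos(\theta-\phi)|\le\frac{\sqrt 3}{2}\ \text{on}\ S
\]
are immediate, and the total mass identity from Lemma \ref{L2.1} gives $\iint_{S\times\mathbb R} f\,d\theta\,d\omega=1-\mathcal M(L^+_{\pi/3})-\mathcal M(L^-_{\pi/3})$.

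For the \emph{left} (lower) inequality on $\mathcal M(L^+_{\pi/3})$, I will upper-bound $R$ by using $\cos(\theta-\phi)\le 1$ on $L^+_{\pi/3}$, $\cos(\theta-\phi)\le -\sqrt 3/2$ on $L^-_{\pi/3}$, and $\cos(\theta-\phi)\le \sqrt 3/2$ on $S$. This yields
\[
R\le \mathcal M(L^+_{\pi/3})-\tfrac{\sqrt 3}{2}\mathcal M(L^-_{\pi/3})+\tfrac{\sqrt 3}{2}\bigl(1-\mathcal M(L^+_{\pi/3})-\mathcal M(L^-_{\pi/3})\bigr),
\]
and grouping coefficients and solving for $\mathcal M(L^+_{\pi/3})$ (using $1-\sqrt 3/2=(2-\sqrt 3)/2>0$) gives exactly the asserted lower bound. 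For the \emph{right} (upper) inequality, I will symmetrically lower-bound $R$ using $\cos(\theta-\phi)\ge\sqrt 3/2$ on $L^+_{\pi/3}$, $\cos(\theta-\phi)\ge -1$ on $L^-_{\pi/3}$, and $\cos(\theta-\phi)\ge -\sqrt 3/2$ on $S$, which gives
\[
R\ge\sqrt 3\,\mathcal M(L^+_{\pi/3})-\tfrac{2-\sqrt 3}{2}\mathcal M(L^-_{\pi/3})-\tfrac{\sqrt 3}{2},
\]
and solving for $\mathcal M(L^+_{\pi/3})$ yields the stated upper bound.

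There is essentially no obstacle: the estimate is a purely geometric consequence of the definition $R=\int\cos(\theta-\phi)\,\rho$ together with the partition of $\mathbb T$ and the fact that the oscillator density has total mass one. Notice that the hypotheses $(\mathcal H1)$--$(\mathcal H3)$ are not actually used in the inequality itself; they only enter through ensuring that $\phi(t)$ is well defined (i.e.\ $R(t)>0$) on $[0,\infty)$, which is guaranteed by Corollary \ref{C6.1}. Accordingly, the argument reduces to the two one-line computations above and a rearrangement, with no appeal to the dynamics of $f$ or to the lemmata controlling $\dot R$, $\dot\phi$.
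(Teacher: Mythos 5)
Your proof is correct and takes essentially the same route as the paper: decompose $R(t)=\int\cos(\theta-\phi)\rho\,d\theta$ over $L^+_{\pi/3}$, $L^-_{\pi/3}$ and their complement, apply the pointwise bounds $\cos(\theta-\phi)\in[\tfrac{\sqrt3}{2},1]$, $[-1,-\tfrac{\sqrt3}{2}]$, $[-\tfrac{\sqrt3}{2},\tfrac{\sqrt3}{2}]$ on the three regions together with unit total mass, and rearrange; your two intermediate bounds on $R$ coincide with the paper's. Your remark that $(\mathcal H1)$--$(\mathcal H3)$ enter only through guaranteeing $R(t)>0$ (so that $\phi(t)$ is well defined, via Corollary \ref{C6.1}) is also accurate.
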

\begin{proof}
Note that
\begin{align}\label{par}
\begin{aligned}
R(t) &=\iint_{\mathbb T \times \mathbb R} \langle e^{{\mathrm i}\theta},e^{{\mathrm i}\phi}\rangle f ~ d\theta d\omega \\
&=\iint_{\big(\mathbb{T}\backslash (L^+_{\frac{\pi}{3}}(t)\cup L^-_{\frac{\pi}{3}}(t))\big)\times \mathbb R}
\langle e^{{\mathrm i}\theta},e^{{\mathrm i}\phi}\rangle f~ d\theta d\omega+\iint_{L^+_{\frac{\pi}{3}}(t) \times \mathbb R}
\langle e^{{\mathrm i}\theta},e^{{\mathrm i}\phi}\rangle f~ d\theta d\omega \\
&+
\iint_{L^-_{\frac{\pi}{3}}(t) \times \mathbb R}\langle e^{{\mathrm i}\theta},
e^{{\mathrm i}\phi}\rangle f~ d\theta d\omega. \\
\end{aligned}
\end{align}

\vspace{0.5cm}

\noindent $\bullet$ (Lower bound estimate): We use similar arguments in Lemma \ref{L6.2} to see that 
\begin{align*}
R(t) &\geq-\sin(\frac{\pi}{3})(1-{\mathcal M}(L^+_{\frac{\pi}{3}}(t)) - {\mathcal M}(L^-_{\frac{\pi}{3}}(t))+
\sin\frac{\pi}{3} {\mathcal M}(L^+_{\frac{\pi}{3}}(t))- {\mathcal M}(L^-_{\frac{\pi}{3}}(t)) \\
&=\sqrt{3}  {\mathcal M}(L^+_{\frac{\pi}{3}}(t))  - \frac{\sqrt{3}}{2} -  {\mathcal M}(L^-_{\frac{\pi}{3}}(t)) \Big (1-\frac{\sqrt{3}}{2} 
\Big ).
\end{align*}

\vspace{0.2cm}

\noindent $\bullet$ (Upper bound estimate):  We use again \eqref{par} to obtain
\begin{align*}
R(t) &\leq\sin\frac{\pi}{3}(1-{\mathcal M}(L^+_{\frac{\pi}{3}}(t))- {\mathcal M}(L^-_{\frac{\pi}{3}}(t))+ {\mathcal M}(L^+_{\frac{\pi}{3}}(t))-\sin\frac{\pi}{3} 
{\mathcal M}(L^-_{\frac{\pi}{3}}(t)) \\
&=  \frac{\sqrt{3}}{2} +    {\mathcal M}(L^+_{\frac{\pi}{3}}(t)) \Big (1-\frac{\sqrt{3}}{2} \Big) - \sqrt{3}{\mathcal M}(L^-_{\frac{\pi}{3}}(t)).
\end{align*}  
This yields the desired inequalities.
\end{proof}

\vspace{0.5cm}

Before we state our proposition, we introduce several quantities: for every $T\geq0$ and  $\eta>0$, we define $\beta_{T,\eta}~:[T, \infty) \to \mathbb R$ as a solution of the inhomogeneous Riccati ODE:
\begin{align}
\begin{aligned} \label{RIC}
& \dot{\beta}_{T,\eta}= \frac{K}{4\sqrt{3}}\bigg(-\beta_{T,\eta}^{2}+\frac{\sqrt{3}}{2} \beta_{T,\eta}- \frac{4\sqrt{3}M}{K} -\eta \bigg), \quad t > T, \\
& \beta_{T,\eta}(T)=R(T),
\end{aligned}
\end{align}
and we set the solutions of the following quadratic equation as $r_{\pm}(\eta)$:
\[
x^{2} -\frac{\sqrt{3}}{2}  ~  x + 2\frac{4 \sqrt{3}M}{K} + \eta=0.
\]
More precisely, we have
\[
r_{-}(\eta) := \frac{\sqrt{3}}{4}-\frac{1}{2}\sqrt{\frac{3}{4} - \frac{16\sqrt{3}M}{K} -4\eta}, \qquad 
r_{+}(\eta) := \frac{\sqrt{3}}{4} + \frac{1}{2}\sqrt{\frac{3}{4} - \frac{16\sqrt{3}M}{K} -4\eta}.
\]
Note that if  $\beta_{T,\eta}(T)> r_-(\eta),$ then we have
\begin{equation}\label{bb}
\lim_{t\rightarrow\infty}\beta_{T,\eta}(t)= r_+(\eta).
\end{equation}

\begin{proposition}\label{P6.2} 
Suppose that the assumptions $({\mathcal H}1)$ - $({\mathcal H}4)$ hold. Then, we have
\begin{equation} \label{R-L}
\liminf_{t\rightarrow\infty}R(t)\geq r_+(0) > \kappa > \frac{2}{3}.
\end{equation}
\end{proposition}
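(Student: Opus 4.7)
The plan is to derive a differential inequality for $R(t)$ that, after some transient time, majorizes the Riccati ODE \eqref{RIC} up to arbitrarily small error, and then invoke an ODE comparison principle to trap $R$ above the attracting equilibrium $r_+(\eta)$. I begin by extracting two consequences of the preceding corollaries under $(\mathcal H1)$--$(\mathcal H3)$: Corollary \ref{C6.1} gives $R(t) > R_0/2$ for all $t \geq 0$, and applying Corollary \ref{C6.2} with $\underline R = R_0/2$ yields the exponential decay $\mathcal L(t) := \mathcal M(L^-_{\pi/3}(t)) \to 0$.

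Next I produce a pointwise lower bound on the dissipation. On $\mathbb T \setminus (L^+_{\pi/3}(t) \cup L^-_{\pi/3}(t))$ one has $|\sin(\theta-\phi)| \geq 1/2$, so
\[
\int_{\mathbb T} \sin^2(\theta-\phi)\rho\, d\theta \;\geq\; \tfrac{1}{4}\bigl(1 - \mathcal M(L^+_{\pi/3}(t)) - \mathcal L(t)\bigr).
\]
Using the upper bound in Lemma \ref{L6.6} to eliminate $\mathcal M(L^+_{\pi/3})$ in favor of $R$ and $\mathcal L$, a short calculation gives
\[
\int_{\mathbb T} \sin^2(\theta-\phi)\rho\, d\theta \;\geq\; \frac{\sqrt{3}-2R(t)-(2+\sqrt{3})\mathcal L(t)}{8\sqrt{3}}.
\]
Plugging this into Lemma \ref{L5.2}(ii) together with $|\omega| \leq M$, I obtain
\[
\dot R \;\geq\; -M + \frac{KR}{8\sqrt{3}}\bigl(\sqrt 3 - 2R - (2+\sqrt 3)\mathcal L(t)\bigr) \;=\; \frac{K}{4\sqrt{3}}\Bigl(-R^2 + \tfrac{\sqrt{3}}{2}R - \tfrac{4\sqrt{3}M}{K} - \eta(t)\Bigr),
\]
where $\eta(t) := \tfrac{(2+\sqrt{3})R(t)\mathcal L(t)}{2}$. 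Since $R \leq 1$ and $\mathcal L(t) \to 0$, we have $\eta(t) \to 0$.

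For the comparison step, fix any $\eta > 0$ and pick $T \geq 0$ so large that $\eta(t) \leq \eta$ throughout $[T,\infty)$. On this interval, the right-hand side majorizes $F_\eta(R) := \tfrac{K}{4\sqrt{3}}\bigl(-R^2+\tfrac{\sqrt 3}{2}R - \tfrac{4\sqrt 3 M}{K} - \eta\bigr)$, and $\beta_{T,\eta}$ solves $\dot\beta = F_\eta(\beta)$ with $\beta_{T,\eta}(T) = R(T)$. The standard scalar ODE comparison principle then gives $R(t) \geq \beta_{T,\eta}(t)$ for $t \geq T$. Invoking \eqref{bb}, as long as $R(T) > r_-(\eta)$ we have $\beta_{T,\eta}(t) \to r_+(\eta)$; hypothesis $(\mathcal H2)$ (via $K \gg 1$) pushes $r_-(0)$ below $R_0/2$, so $r_-(\eta) < R(T)$ for sufficiently small $\eta$. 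Hence $\liminf_{t\to\infty} R(t) \geq r_+(\eta)$ for every such $\eta$, and letting $\eta \to 0^+$ together with the continuity of $\eta \mapsto r_+(\eta)$ yields $\liminf_{t\to\infty} R(t) \geq r_+(0)$. The strict chain $r_+(0) > \kappa > 2/3$ is exactly $(\mathcal H4)$.

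The main obstacle I anticipate is not any individual step but the algebraic bookkeeping: the coefficients $-1/(4\sqrt 3)$ in front of $R^2$ and $+1/8$ in front of $R$ must fall out cleanly after combining Lemma \ref{L5.2}(ii) with the geometric bounds of Lemma \ref{L6.6}, so that the resulting inequality has exactly the same quadratic structure as \eqref{RIC} and the fixed points $r_\pm(\eta)$ appear with the prescribed constants. A secondary subtlety lies in verifying uniformly in small $\eta$ that $R(T) > r_-(\eta)$---the comparison would be vacuous otherwise, as $\beta_{T,\eta}$ would decrease to $-\infty$; this is precisely where the quantitative largeness condition on $K$ built into $(\mathcal H2)$ and $(\mathcal H4)$ (ensuring $r_-(0) \approx 8M/K$ is small) is used.
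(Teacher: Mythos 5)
Your proposal is correct and follows essentially the same route as the paper: the same dissipation bound $\int\sin^2(\theta-\phi)\rho\,d\theta\geq\frac14\bigl(1-\mathcal M(L^+_{\pi/3})-\mathcal M(L^-_{\pi/3})\bigr)$ combined with Lemma \ref{L6.6}, Corollaries \ref{C6.1}--\ref{C6.2} to make the error term $\eta(t)$ small after a finite time, and comparison with the Riccati solution $\beta_{T,\eta}$ (using $R(T)>R_0/2>r_-(\eta)$ from $(\mathcal H2)$) followed by $\eta\to0^+$, with $(\mathcal H4)$ giving $r_+(0)>\kappa>2/3$. The only differences are cosmetic: you choose $T$ abstractly from $\eta(t)\to0$ where the paper fixes an explicit waiting time $d$ from the exponential decay rate, and you keep the factor $R$ in $\eta(t)$ where the paper bounds $R\leq1$ first.
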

\begin{proof}
Let $\varepsilon_{0}$ be a small positive constant satisfying
\begin{equation}\label{lcritical}
\frac{R_0}{2}> r_-(\varepsilon_0).
\end{equation}
Note that the assumption $({\mathcal H}2)$ on $K$ implies
\[ K > \max \frac{64\sqrt{3}M}{3-(\sqrt{3} - 2R_0)^2} \quad  \Rightarrow  \quad \frac{R_0}{2}> r_-(0). \]
Thus, the continuity of $r_-(\cdot)$ with respect to its argument implies the existence of $\varepsilon_0$.  We set  $\eta$ and $d$  be positive numbers satisfying 
\begin{equation} \label{H-19}
 \eta <\varepsilon_{0}, \qquad 
d>\frac{8}{KR_0}\log \left[ \frac{1}{\eta} \Big(1+\frac{\sqrt{3}}{2} \Big) \Big(\frac{\pi}{3}\Big)^{\frac{1}{2}}\sqrt{\iint_{L^-_{\frac{\pi}{3}}(T)\times \mathbb R}f^{2}(\theta,\omega,T) ~ d\theta d\omega} \right]. 
\end{equation}
Here $T$ is the one given by Corollary \ref{C6.2}.\\

By Lemma \ref{L5.2}, we have 
\begin{align*}
\dot{R} &= -\iint_{\mathbb T \times \mathbb R}\sin(\theta-\phi)\omega 
f(\theta,\omega,t) ~ d\theta d\omega+KR \iint_{\mathbb T \times \mathbb R} \sin^{2}(\theta-\phi)
f(\theta,\omega,t) ~ d\theta d\omega \cr
&\geq -M+ \frac{KR}{4} \Big( 1 - {\mathcal M}(L^+_{\frac{\pi}{3}})- {\mathcal M}(L^-_{\frac{\pi}{3}}) \Big).
\end{align*}
Then, we use Lemma \ref{L6.6} to get 
\begin{align*}
{\dot R}&\geq -M + \frac{KR}{4} \Big[  1-\frac{2R + \sqrt{3} + {\mathcal M}(L^-_{\frac{\pi}{3}}) (2-\sqrt{3})}{2\sqrt{3}} -  {\mathcal M}(L^-_{\frac{\pi}{3}}) \Big] \cr
&= -M + \frac{KR}{8} - \frac{KR^2}{4\sqrt{3}} - \frac{KR}{4} \Big (  \frac{2 + \sqrt{3}}{2\sqrt{3}} \Big) {\mathcal M}(L^-_{\frac{\pi}{3}}) \cr
&= \frac{K}{4\sqrt{3}} \Big[  -R^2 + \frac{\sqrt{3}}{2} R - \frac{4\sqrt{3} M}{K} - R \Big(1 +  \frac{\sqrt{3}}{2} \Big)   {\mathcal M}(L^-_{\frac{\pi}{3}})   \Big] \cr
&\geq \frac{K}{4\sqrt{3}} \Big[  -R^2 + \frac{\sqrt{3}}{2} R - \frac{4\sqrt{3} M}{K} - \Big(1 +  \frac{\sqrt{3}}{2} \Big)   {\mathcal M}(L^-_{\frac{\pi}{3}})   \Big].
\end{align*}
Hence, it follows from \eqref{decay2} and \eqref{H-19} that we have 
\begin{equation}\label{beta3}
\dot{R}\geq  \frac{K}{4\sqrt{3}} \Big( -R^2 + \frac{\sqrt{3}}{2} R - \frac{4\sqrt{3} M}{K} -\eta \Big) \hspace{1em} {\rm{in}} \hspace{1em} [T+d,\infty). 
\end{equation}
Moreover, thanks to Corollary \ref{C6.1},
we also see that 
\[R(t) > \frac{R_0}{2}, \qquad \forall~t > 0. \]
We  use \eqref{lcritical} to obtain
\[
 R(T+d)> r_-(\eta). \]
Thus, \eqref{RIC} and \eqref{beta3} yield
\[
 R\geq \beta_{T+d,\eta} \hspace{1em} {\rm{in}} \hspace{1em} [T+d,\infty).
\]
Since
\[
R(T + d) \geq \beta_{T+d,\eta}(T+d)> r_-(\eta),
\]
we get the desired result from \eqref{bb} and the fact that 
$\eta$ was arbitrary small.
\end{proof} 
\begin{remark}
The lower bound for $\liminf_{t \to \infty} R(t)$ in \eqref{R-L} will be improved in Theorem \ref{T3.3} so that $\displaystyle \liminf_{t \to \infty} R(t) \to 1$, as $K \to \infty$.
\end{remark}

\vspace{0.5cm}

\subsection{Proof of Theorem \ref{T3.3}} \label{sec:6.3}
In this subsection, we  will provide the proof for Theorem \ref{T3.3}. For this purpose, for $\kappa, \varepsilon$ and $\delta$ in $(0,1)$ and $t\geq0$, we define the sets:
\begin{align*}
 Y_{\kappa,\varepsilon}(t)&:=\bigg\{ \theta\in\mathbb{T} ~ : ~ \cos(\theta-\phi(t))>\sqrt{1-\varepsilon{}_{\kappa}^{2}}-\varepsilon \bigg\}, \cr
\tilde{Y}_{\delta}(t) &:=\bigg\{\theta\in\mathbb{T} ~ : ~ \cos(\theta-\phi(t))\leq-\delta \bigg\},
\end{align*}
where  for $\kappa \in (0,1)$, $\varepsilon_\kappa$ and $T_\kappa$ are  positive constants satisfying the following relations, respectively:
\[ \varepsilon_{\kappa} :=\frac{\kappa +1}{\kappa^{2}}\frac{M}{K}+\frac{(1-\kappa)}{\kappa} <1 \quad \mbox{and} \quad R > \kappa \quad \mbox{in}~~[T_\kappa, \infty). \]

\begin{lemma}\label{L6.7} 
Suppose that for some positive constants $\kappa$ and  $\delta$ in $(0,1),$ the order parameter $R$ satisfies
\[
 R> \underline{R} \quad {\rm{in}}~~
 [0,\infty), \quad 
 R>\kappa \quad {\rm{in}}~~ 
 [T_{\kappa},\infty), \quad 
\delta<\sqrt{1-\varepsilon_{\kappa}^{2}},
\]
and we also assume that $f_0$ is bounded and $\text{\rm supp}~f_0(\cdot, \omega) \subset [-M, M]$. Then we have
\[
\lim_{t\rightarrow0}||f(t) {\mathbbm 1}_{\tilde{Y}_{\delta}(t)}||_{L^{\infty}(\mathbb{T}\times[-M,M])}=0, 
\]
where the positive constant $\varepsilon_{\kappa}$ is defined in $({\mathcal H}4)$. 
\end{lemma}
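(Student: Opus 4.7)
The strategy is to study the flow along characteristics and show that the antipodal region $\tilde{Y}_\delta(t)$ is a forward-trapping set (no characteristic can enter it after time $T_\kappa$), from which exponential decay of $f$ along trapped characteristics yields the claim. Throughout, I read the hypothesis on $t$ as $t\to\infty$.

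First, I would set up the characteristics $t \mapsto \theta(t;\theta_0,\omega_0)$ solving $\dot\theta = \omega_0 - KR(t)\sin(\theta-\phi(t))$, and recall the transport identity already derived in the proof of Lemma \ref{L6.1}:
\[
\frac{d}{dt} f(\theta(t),\omega_0,t) = KR(t)\cos(\theta(t)-\phi(t))\, f(\theta(t),\omega_0,t).
\]
Introducing the relative phase $\psi(t) := \theta(t)-\phi(t)$, the fixed set $\{\psi : \cos\psi \leq -\delta\}$ corresponds to $\tilde{Y}_\delta(t)$, and
\[
\dot\psi = \big(\omega_0 - \dot\phi(t)\big) - KR(t)\sin\psi.
\]
By Lemma \ref{L5.3}(i), in the regime $t \geq T_\kappa$ (where $R > \kappa$),
\[
\frac{|\omega_0 - \dot\phi(t)|}{KR(t)} \leq \frac{M}{KR} + \frac{M}{KR^{2}} + \frac{1-R}{R} \leq \frac{M(\kappa+1)}{K\kappa^{2}} + \frac{1-\kappa}{\kappa} = \varepsilon_\kappa.
\]

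Next, I would check that the boundary $\partial\tilde Y_\delta = \{\psi : \cos\psi = -\delta\}$ is strictly outward for $t \geq T_\kappa$. On this boundary, $|\sin\psi| = \sqrt{1-\delta^2}$, and the hypothesis $\delta < \sqrt{1-\varepsilon_\kappa^2}$ is equivalent to $\varepsilon_\kappa < \sqrt{1-\delta^2}$. Hence on the ``left'' boundary $\psi = \pi - \arccos\delta$ (where $\sin\psi = \sqrt{1-\delta^2} > 0$),
\[
\dot\psi \leq |\omega_0 - \dot\phi| - KR\sqrt{1-\delta^2} < KR(\varepsilon_\kappa - \sqrt{1-\delta^2}) < 0,
\]
and the analogous argument gives $\dot\psi > 0$ on the right boundary $\psi = \pi+\arccos\delta$. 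Either way, $\psi$ leaves $\tilde Y_\delta$ through every boundary point, so no characteristic can enter $\tilde Y_\delta$ at any time $\geq T_\kappa$. Consequently, if $\theta(t)\in \tilde Y_\delta(t)$ for some $t > T_\kappa$, then $\theta(s)\in \tilde Y_\delta(s)$ for every $s \in [T_\kappa, t]$.

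Finally, on this trapped interval $\cos(\theta(s)-\phi(s)) \leq -\delta$ and $R(s) > \kappa$, so the transport identity gives
\[
\frac{d}{ds}\log f(\theta(s),\omega_0,s) \leq -K\kappa\delta, \qquad s \in [T_\kappa, t].
\]
Combined with the a priori bound $\|f(T_\kappa)\|_{L^\infty} \leq \|f_0\|_{L^\infty} e^{KT_\kappa}$ from Lemma \ref{L6.1}, integration yields
\[
\|f(t)\mathbbm{1}_{\tilde Y_\delta(t)}\|_{L^\infty(\mathbb{T}\times[-M,M])} \leq \|f_0\|_{L^\infty}\, e^{KT_\kappa}\, e^{-K\kappa\delta\,(t-T_\kappa)} \xrightarrow[t\to\infty]{} 0,
\]
which is the desired conclusion. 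The main obstacle in the plan is the boundary-sign analysis of Step 3: it requires precisely the quantitative linkage between $\varepsilon_\kappa$ (which encodes the uniform control on $|\omega_0 - \dot\phi|/(KR)$ from Lemma \ref{L5.3}) and the width $\arccos\delta$ of the antipodal sector, expressed by the hypothesis $\delta < \sqrt{1-\varepsilon_\kappa^2}$. Once this strict inequality is in hand, the geometric trapping argument and the exponential-decay bookkeeping are straightforward.
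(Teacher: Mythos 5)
Your proposal is correct, and its overall architecture coincides with the paper's: trap the backward characteristic in the antipodal sector on $[T_\kappa,t^*]$, then apply the transport identity $\tfrac{d}{dt}f(\theta(t),\omega,t)=KR\cos(\theta(t)-\phi(t))f$ together with $R>\kappa$, $\cos(\theta-\phi)\le-\delta$ and the a priori bound $\|f(T_\kappa)\|_{L^\infty}\le\|f_0\|_{L^\infty}e^{KT_\kappa}$ from Lemma \ref{L6.1} to get the decay rate $e^{-K\kappa\delta(t-T_\kappa)}$; your reading of the statement as $t\to\infty$ is the intended one. The only real difference is how the trapping is justified: the paper invokes Corollary \ref{CG1}, which rests on the barrier-ODE comparison of Appendix \ref{App-F} (Lemmas \ref{LG1}--\ref{LG2}), whereas you argue directly that on the boundary $\cos\psi=-\delta$ of the sector the relative-phase field satisfies $\dot\psi<0$ on the side with $\sin\psi>0$ and $\dot\psi>0$ on the other side, so the complement of $\tilde Y_\delta$ is forward invariant. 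Both arguments hinge on exactly the same quantitative input, namely $|\omega-\dot\phi|\le KR\,\varepsilon_\kappa$ for $t\ge T_\kappa$ (Lemma \ref{L5.3} plus $R>\kappa$) and the strict inequality $\varepsilon_\kappa<\sqrt{1-\delta^2}$ equivalent to $\delta<\sqrt{1-\varepsilon_\kappa^2}$; your invariant-region version is more elementary and self-contained for this lemma, while the paper's barrier construction additionally yields the quantitative transit-time estimate $D(\varepsilon,\kappa)$ of Corollary \ref{CG2}, which is needed later for Proposition \ref{P6.3} but not here. Two cosmetic points: make the non-entry claim rigorous by the standard first-crossing argument for $h(s)=\cos\psi(s)+\delta$ (at a first touching time $\dot h>0$, a contradiction), which your sign computation already supplies since $\sin\psi\neq0$ on the boundary; and phrase the decay step through the linear ODE for $f$ along characteristics (as in the paper) rather than through $\log f$, to avoid implicitly assuming $f>0$.
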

\begin{proof}
By Lemma \ref{L6.1}, we have
\begin{equation}\label{h}
||f(T_{\kappa})||_{L^{\infty}(\mathbb{T}\times[-M,M])}\leq ||f_0||_{L^{\infty}(\mathbb T \times [-M, M])} e^{KT_{\kappa}}.
\end{equation}
Choose constants $\theta^{*},$ $\omega^{*},$ and $t^{*}$ satisfying  
\[
\omega^{*}\in[-M,M] \quad \mbox{and} \quad 
\cos(\theta^{*}-\phi(t^{*}))\leq-\delta.
\]
By Corollary \ref{CG1} in the appendix, we know that the characteristic $(\theta(t), \omega^*)$ through $\theta^{*}$
and $\omega^{*}$ at $t^*$ satisfies
\[
\cos(\theta(t)-\phi(t))\leq-\delta, \quad \forall~t \in [T_{\kappa},t^{*}). 
\]
Thus, proceeding as in \eqref{f}, we get
\begin{align*}
\frac{d}{dt}f(\theta(t),\omega^*,t) &= KR(t)\cos(\theta(t)-\phi(t))f(\theta(t), \omega^*,t) \cr
&\leq -K\kappa\delta f(\theta(t),\omega^*,t), \qquad  t \in [T_{\kappa},t^{*}).
\end{align*}
We use \eqref{h} and Gronwall's inequality to obtain
\[
f(\theta^{*},\omega^{*},t^{*})=f(\theta(t^{*}),\omega(t^{*}),t^{*})\leq ||f_0||_{L^{\infty}(\mathbb T \times [-M, M])} e^{KT_{\kappa}}e^{-K\kappa\delta(t^{*}-T_{\kappa})}
\]
Since $(\theta^{*},\omega^{*})$ was an arbitrary point in $\tilde{Y}_{\delta}(t^{*})\times[-M,M],$
we have
\[
||f(t^{*}){\mathbbm 1}_{\tilde{Y}_{\delta}(t^{*})}||_{L^{\infty}(\mathbb T \times[-M,M])}\leq  ||f_0||_{L^{\infty}(\mathbb{T}\times[-M,M])} e^{-K\kappa\delta t^{*}}e^{K(1+\kappa\delta)T_{\kappa}}.
\]
By letting $t^{*}\rightarrow\infty,$ we obtain the desired result.
\end{proof}

\begin{proposition}\label{P6.3} 
Suppose that the following conditions hold.
\begin{enumerate}
\item
The order parameter satisfies
\[
 R> \underline{R} \hspace{1em}  {\rm{in}} \hspace{1em} 
 [0,\infty), \quad 
 R>\kappa  \hspace{1em} {\rm{in}} \hspace{1em} 
 [T_{\kappa},\infty) \quad  \mbox{and} \quad 
 \varepsilon_{\kappa}<1,
\]
for some positive constants $\kappa$ and $\varepsilon_{\kappa}$.

\vspace{0.2cm}

\item
$f_0$ is bounded and $\mbox{supp}~f_0(\cdot, \omega) \subset [-M, M]$.
\end{enumerate}

\vspace{0.5cm}

Then, we have
\[
\lim_{t\rightarrow0}||f(t){\mathbbm 1}_{\mathbb{T}\backslash Y_{\kappa,\varepsilon}(t)}||_{L^{\infty}(\mathbb{T}\times[-M,M])}=0, \quad \forall~~\varepsilon \in (0,\sqrt{1-\varepsilon_{\kappa}^{2}}).
\]
\end{proposition}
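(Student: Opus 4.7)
The strategy is to reduce the claim to Lemma \ref{L6.7} by a characteristic argument. Partition the complement $\mathbb{T}\setminus Y_{\kappa,\varepsilon}(t)$ into $\tilde Y_\delta(t)$, where Lemma \ref{L6.7} already supplies uniform decay, and an intermediate \emph{transit region}
\[
Z(t):=\{\theta\in\mathbb{T}:-\delta<\cos(\theta-\phi(t))\leq\sqrt{1-\varepsilon_\kappa^2}-\varepsilon\},
\]
across which any characteristic must move with a definite sign in a uniformly bounded time. The plan is to transport the decay from $\tilde Y_\delta$ to $Z$ via the growth bound $\frac{d}{dt}f\leq Kf$ along characteristics, established in the proof of Lemma \ref{L6.1}.

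In the rotating frame $\psi=\theta-\phi(t)$, the characteristic ODE reads $\dot\psi=(\omega-\dot\phi(t))-KR(t)\sin\psi$. Combining $|\omega|\leq M$, Lemma \ref{L5.3}(i), and the hypothesis $R(t)\geq\kappa$ for $t\geq T_\kappa$, an elementary computation yields $|\omega-\dot\phi|/(KR)\leq\varepsilon_\kappa$; this is exactly the quantity appearing in the definition of $\varepsilon_\kappa$ in $({\mathcal H}4)$. Fix any $\delta\in(0,\sqrt{1-\varepsilon_\kappa^2})$. For $\theta\in Z(t)$, we have $\cos^2\psi<1-\varepsilon_\kappa^2$, hence $|\sin\psi|>\varepsilon_\kappa$, and by examining the endpoints of $Z$ one obtains a uniform gap $|\sin\psi|\geq\varepsilon_\kappa+c_\varepsilon$ with $c_\varepsilon:=\min\{\sqrt{1-(\sqrt{1-\varepsilon_\kappa^2}-\varepsilon)^2},\sqrt{1-\delta^2}\}-\varepsilon_\kappa>0$. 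Consequently on the upper half of $Z$ (where $\sin\psi>0$),
\[
\dot\psi\leq KR\varepsilon_\kappa-KR(\varepsilon_\kappa+c_\varepsilon)=-KRc_\varepsilon\leq-K\kappa c_\varepsilon<0,
\]
and symmetrically $\dot\psi\geq K\kappa c_\varepsilon$ on the lower half. These same inequalities at the boundary $\cos\psi=\sqrt{1-\varepsilon_\kappa^2}-\varepsilon$ force the characteristic flow to cross inward, so $Y_{\kappa,\varepsilon}(t)$ is forward-invariant; meanwhile, the angular traversal time across $Z$ is at most $S:=\pi/(K\kappa c_\varepsilon)$.

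Now fix $t^*>T_\kappa+S$, $\omega^*\in[-M,M]$, and $\theta^*\in\mathbb{T}\setminus Y_{\kappa,\varepsilon}(t^*)$. If $\theta^*\in\tilde Y_\delta(t^*)$, Lemma \ref{L6.7} gives $f(\theta^*,\omega^*,t^*)\to 0$ uniformly. Otherwise $\theta^*\in Z(t^*)$; by forward-invariance of $Y_{\kappa,\varepsilon}$ the backward characteristic cannot have been in $Y_{\kappa,\varepsilon}$ at any earlier time in $[T_\kappa,t^*]$, and by the transit-time bound it must exit $Z$ backward into $\tilde Y_\delta$ at some $t_1\in[t^*-S,t^*]$. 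Integrating $\frac{d}{dt}f\leq Kf$ from $t_1$ to $t^*$ gives
\[
f(\theta^*,\omega^*,t^*)\leq e^{KS}f(\theta(t_1),\omega^*,t_1)\leq e^{KS}\bigl\|f(t_1)\mathbbm{1}_{\tilde Y_\delta(t_1)}\bigr\|_{L^\infty(\mathbb{T}\times[-M,M])},
\]
which tends to $0$ as $t^*\to\infty$ by Lemma \ref{L6.7}. Taking the supremum over $(\theta^*,\omega^*)$ yields the claim. The crux of the argument is the geometric verification that $|\sin\psi|$ is bounded away from $\varepsilon_\kappa$ on $Z(t)$ by a positive gap $c_\varepsilon$ depending only on $\varepsilon_\kappa$, $\delta$, and $\varepsilon$; this single inequality simultaneously delivers the forward-invariance of $Y_{\kappa,\varepsilon}$ and the uniform upper bound on the transit time through $Z$, and is the only place where the structural constraint $\varepsilon<\sqrt{1-\varepsilon_\kappa^2}$ is used in an essential way.
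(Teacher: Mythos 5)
Your proposal is correct, and its skeleton is the same as the paper's: decay on $\tilde Y_{\delta}(t)$ from Lemma \ref{L6.7}, a backward-characteristic argument showing that any point of $\mathbb{T}\setminus Y_{\kappa,\varepsilon}(t^*)$ was in $\tilde Y_{\delta}$ a uniformly bounded time earlier, and the growth bound $\frac{d}{dt}f\le Kf$ along characteristics (from the proof of Lemma \ref{L6.1}) to transport the smallness forward. Where you differ is in how the bounded transit time is obtained. The paper delegates this to Appendix \ref{App-F}: the inequality $\dot P> K\kappa(\sqrt{1-P^2}-\varepsilon_\kappa)\sqrt{1-P^2}$ for $P(t)=\cos(\theta(t)-\phi(t))$ (Lemma \ref{LG1}, which is exactly your estimate $|\omega-\dot\phi|\le KR\,\varepsilon_\kappa$, obtained from Lemma \ref{L5.3} and monotonicity in $R\ge\kappa$) is compared with the barrier ODE $\dot p=F_\kappa(p)$, and Corollary \ref{CG2} yields the transit bound $D(\varepsilon,\kappa)$, after which $\delta$ is chosen with $\delta<\sqrt{1-\varepsilon_\kappa^2}-\varepsilon$ so that the barrier value $-\sqrt{1-\varepsilon_\kappa^2}+\varepsilon$ lands inside $\tilde Y_\delta$. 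You instead work directly in the rotating frame, observe that on the transit region $Z(t)$ one has the uniform gap $|\sin\psi|\ge\varepsilon_\kappa+c_\varepsilon$, hence $|\dot\psi|\ge K\kappa c_\varepsilon$ with the sign pointing (forward) into $Y_{\kappa,\varepsilon}$, which gives both forward invariance of $Y_{\kappa,\varepsilon}$ and the transit bound $S=\pi/(K\kappa c_\varepsilon)$; this avoids the barrier machinery at the cost of a constant depending on $\delta$ and $\varepsilon$ rather than the paper's $\delta$-free $D(\varepsilon,\kappa)$ (immaterial for the conclusion), and it only needs $\delta<\sqrt{1-\varepsilon_\kappa^2}$ since your $Z$ exits directly into $\tilde Y_\delta$. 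Two small remarks: the forward-invariance step is actually redundant, since within each connected half of $Z$ (where $\sin\psi$ cannot vanish) the backward motion of $\cos\psi$ is monotone decreasing, so the backward trajectory can only leave $Z$ through $\{\cos\psi\le-\delta\}$; and you should state explicitly that the argument is run only for $t^*\ge T_\kappa+S$ so that the estimate $R\ge\kappa$ is available along the whole backward segment — with that, the proof is complete.
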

\begin{proof}
Let $\varepsilon$ and $\delta$  be positive constants such that 
\[
\varepsilon<\sqrt{1-\varepsilon_{\kappa}^{2}} \qquad \mbox{and} \qquad
\delta<\sqrt{1-\varepsilon_{\kappa}^{2}}-\varepsilon.
\]
By Lemma \ref{L6.7}, for any $\varepsilon^{\prime}>0,$ there
exists $T_{\varepsilon^{\prime}}$ such that
\begin{equation} \label{ep}
 R(t)> \kappa \quad \mbox{and} \quad  \|f_{t}{\mathbbm 1}_{\tilde{Y}_{\delta}(t)} \|_{L^{\infty}(\mathbb{T}\times[-M,M])}<\varepsilon^{\prime}, \quad t \in [T_{\varepsilon^{\prime}},\infty).
\end{equation}
For given $\varepsilon$ and $\kappa$, we define
\begin{equation} \label{FF}
 F_{\kappa}(q) := \kappa K \Big(\sqrt{1-q^{2}}-\varepsilon_{\kappa} \Big)\sqrt{1-q^{2}}, \quad q \in [-1, 1], \quad
 D(\varepsilon,\kappa) :=\frac{2\big(\sqrt{1-\varepsilon_{\kappa}^{2}}-\varepsilon\big)}{F_{\kappa}\bigg(\sqrt{1-\varepsilon_{\kappa}^{2}}-\varepsilon\bigg)}. 
 \end{equation}

Let $\theta^{*}$, $\omega^{*},$ and $t^{*}$ have the property that
\[ \omega^{*}\in [-M,M], \qquad t^{*}\geq T_{\varepsilon^{\prime}}+ D(\varepsilon,\kappa), \quad \cos(\theta^{*}-\phi(t^{*}))\leq\sqrt{1-\varepsilon_{\kappa}^{2}}-\varepsilon.
\]
Let $(\theta(t), \omega^*, t)$ be the forward characteristic through $(\theta^*, \omega^*,t^*)$ at time $t^*$. By Corollary \ref{CG1}, we have
\[
\cos(\theta(t^{*}-d)-\phi(t^{*}-d))\leq-\sqrt{1-\varepsilon_{\kappa}^{2}}+\varepsilon \leq-\delta, \quad \mbox{where} \quad d< D(\varepsilon,\kappa). \]
Proceeding as in \eqref{f}, we get
\begin{align*}
\frac{d}{dt}f(\theta(t),\omega^*,t) &= KR(t)\cos(\theta(t)-\phi(t))f(\theta(t),\omega^*,t) \cr
&\leq K f(\theta(t), \omega^*,t), \quad t \in [T_\kappa, t^*).
\end{align*}
By Gronwall's lemma, we obtain
\[
f(\theta^{*},\omega^{*},t^{*}) =f(\theta(t^{*}),\omega(t^{*}),t^{*}) \leq e^{Kd}f(\theta(t^{*}-d),\omega(t^{*}-d),t^{*}) \leq e^{KD(\varepsilon,\kappa)}\varepsilon^{\prime},
\]
where in the last line we have used \eqref{ep} and the fact that $\theta(t^{*}-d)$ contained in 
$\tilde{Y}_{\delta}(t^{*}-d).$ \newline

Since $(\theta^{*},\omega^{*})$ was an arbitraty point in $\mathbb{T}\backslash Y_{\kappa,\varepsilon}(t^{*})$,
where 
\[
Y_{\kappa,\varepsilon}(t^{*}):=\big\{ \theta\in\mathbb{T} ~ : ~ \cos(\theta-\phi(t^*))>\sqrt{1-
\varepsilon_{\kappa}^{2}}-\varepsilon^{\prime} \big\},
\]
we conclude
\[
||f(t^{*}){\mathbbm 1}_{\mathbb{T\backslash} Y_{\kappa,\varepsilon}(t^{*})}||_{L^{\infty}(\mathbb{T}\times [-M,M])}\leq e^{KD(\varepsilon,\kappa)}\varepsilon^{\prime}.
\]
Additionally, since $t^{*}$ was an arbitrary point in $[T_{\varepsilon^{\prime}}+D(\varepsilon,\kappa),\infty),$ we get
\[
||f(t){\mathbbm 1}_{\mathbb{T\backslash} Y_{\kappa,\varepsilon}(t)}||_{L^{\infty}(\mathbb{T}\times [-M,M])}\leq e^{KD(\varepsilon,\kappa)}\varepsilon^{\prime},
\hspace{1em} \forall t\in[T_{\varepsilon^{\prime}}+D(\varepsilon,\kappa),\infty).
\]
Since $\varepsilon^{\prime}$ is an arbitrary positive number, we have the desired estimate:
\[
\lim_{t\rightarrow\infty}||f(t){\mathbbm 1}_{\mathbb{T\backslash}Y_{\kappa,\varepsilon}(t)}||_{L^{\infty}
(\mathbb{T}\times [-M,M])}=0.
\]
\end{proof}

\vspace{0.5cm}

We are now ready to provide the proof of Theorem \ref{T3.3}. Suppose that the assumptions $({\mathcal H}1)$ to $({\mathcal H}4)$ hold, and we assume
\begin{equation}\label{kaa2}
 \frac{4}{15}\bigg(\sqrt{1-\frac{\sqrt{2}}{2}}-\frac{1}{2}\bigg)>\frac{M}{K}  \quad \mbox{or equivalently} \quad K > \frac{15M}{2(\sqrt{4-2\sqrt{2}}-1)}.
\end{equation}
Then, Theorem \ref{T3.3} follows once we prove the following claims:
\begin{enumerate}
\item
\[ \liminf_{t\rightarrow\infty}R(t)\geq R_\infty :=1+\frac{M}{K}-\sqrt{\frac{M^{2}}{K^{2}}+4\frac{M}{K}}. \]
\item
There exists a time dependent interval $L_{\infty},$ centered at $\phi(t),$ with constant  width 
\[ 2 ~ \arccos\bigg(\sqrt{1-\bigg[\frac{M}{K}\frac{(1+ R_\infty)}{R_\infty^{2}}+\frac{1-R_\infty}{R_\infty}\bigg]^{2}}\bigg)+\varepsilon,\]
such that
\[
 \lim_{t\rightarrow\infty}||f(t){\mathbbm 1}_{\mathbb{T}\backslash L_{\infty}(t)}||_{L^{\infty}(\mathbb{T\times}[-M,M])}=0, \quad \mbox{for every $\varepsilon$ in $(0,1).$}
\]
As $K\rightarrow \infty$, $R_\infty$ tends to 1 and
the width of $L_{\infty}$ can be made arbitrarily small. 
\end{enumerate}

\vspace{1cm}

\noindent {\it Proof of claim:} Note that the second item (2) follows from the item (1) by apply Proposition \ref{P6.3}. To prove the item (1), by the assumptions $({\mathcal H}1)$ - $({\mathcal H}3)$ and Corollary \ref{C6.1}, we have 
\[  ||f_0||_{L^{\infty}(\mathbb{T}\times[-M,M])} < \infty \quad \mbox{and} \quad R(t)>\frac{R_0}{2}\hspace{1em}\forall t \in [0,\infty).  \]
Second, Proposition \ref{P6.2} and $({\mathcal H}4)$ yield
\[
R > \frac{2}{3}, \quad \mbox{in $[T,\infty)$~~for some $T\geq0$}.
\]
Now, we improve this lower bound. Let ${\bar \varepsilon}_{0}>0$ be sufficiently small so that
\begin{equation}\label{econ3}
 \frac{4}{15}\bigg(\sqrt{1-\frac{\sqrt{2}}{2}-{\bar \varepsilon}_{0}}-\frac{1}{2}\bigg)>\frac{M}{K}.
\end{equation}
Such ${\bar \varepsilon}_{0}$ exists by assumption \eqref{kaa2}. Moreover, let $\eta$ be 
an arbitrary positive constant in $(0,1)$. By Proposition \ref{P6.3} and the assumption $({\mathcal H}4)$,
\[
 \lim_{t\rightarrow0}||f(t){\mathbbm 1}_{\mathbb{T}\backslash Y_{\frac{2}{3},\eta}(t)}||_{L^{\infty}(\mathbb{T}\times[-M,M])}=0.
\]
From this, we also see 
\begin{equation}\label{F-1}
\lim_{t \to \infty} \mathcal{M}\bigg(\bigg\{\theta\in\mathbb{T} ~ : ~ \cos(\theta-\phi(t))\leq\sqrt{1-\bigg(\frac{2/3+1}{(2/3)^{2}}\frac{M}{K}+\frac{1-2/3}{2/3}\bigg)^{2}}-\eta \bigg\} \bigg) = 0,
\end{equation}
or equivalently, 
\[
\lim_{t \to \infty} \mathcal{M} \bigg(\bigg\{\theta\in\mathbb{T}:~ \cos(\theta-\phi(t))\leq\sqrt{1-\bigg(\frac{15}{4}\frac{M}{K}+\frac{1}{2}\bigg)^{2}}-\eta \bigg\} = 0.
\]
Then, using \eqref{econ3} in the previous limit, we deduce
\[
 \lim_{t \to \infty} \mathcal{M}(A(t)) = 0,
\]
where
\begin{equation}\label{F-2}
 A(t) :=\bigg\{\theta\in\mathbb{T}:~ \cos(\theta-\phi(t))\leq\sqrt{\frac{\sqrt{2}}{2}+ {\bar \varepsilon}_{0}}-\eta \bigg\}.
\end{equation}
Since
\begin{align}\label{rfinal}
\begin{aligned}
R(t) & =\iint_{(\mathbb{T}\backslash A(t)) \times \mathbb R}\langle e^{{\mathrm i}\phi(t)},e^{{\mathrm i}\theta}\rangle f ~ d\theta d\omega+ \iint_{A(t) \times \mathbb R}\langle e^{{\mathrm i}\phi(t)},e^{{\mathrm i}\theta}\rangle f ~ d\theta d\omega\\
 & \geq \Big(1-\mathcal{M}(A(t)) \Big)\bigg(\sqrt{\frac{\sqrt{2}}{2}+{\bar \varepsilon}_{0}}-\eta \bigg)-\mathcal{M}(A(t)),
\end{aligned}
\end{align}
and $\eta$ can be made arbitrarely small, we obtain
\begin{equation}\label{r1}
R_*  := \liminf_{t\rightarrow\infty} R(t) \geq\sqrt{\frac{\sqrt{2}}{2}+{\bar \varepsilon}_{0}}. 
\end{equation}
Then, there exists $T_{0}$ such that 
\[
 R(t) \geq \sqrt{\frac{\sqrt{2}}{2}} \hspace{1em} \forall t\in[T_{0},\infty). 
\]
We next claim:
\[ R_* \geq1+\frac{M}{K}-\sqrt{\frac{M^{2}}{K^{2}}+4\frac{M}{K}} =: \alpha_-(M, K).
\]
Suppose not, i.e.,
\[
R_* <1+\frac{M}{K}-\sqrt{\frac{M^{2}}{K^{2}}+4\frac{M}{K}}.
\]
Then, by \eqref{r1} we have
\begin{equation}\label{kbound}
R_* \in\bigg(\sqrt{\frac{\sqrt{2}}{2}},~1+\frac{M}{K}-\sqrt{\frac{M^{2}}{K^{2}}+4\frac{M}{K}}\bigg). 
\end{equation}
Since the roots of the polynomial
\[
 Q(x)=x^{2}-2\bigg(1+\frac{M}{K}\bigg)x+1-2\frac{M}{K},
\]
are
\[
\alpha_-(K, M) = 1+\frac{M}{K}-\sqrt{\frac{M^{2}}{K^{2}}+4\frac{M}{K}},  \qquad 
\alpha_+(K, M) :=1+\frac{M}{K}+\sqrt{\frac{M^{2}}{K^{2}}+4\frac{M}{K}}, 
\]
we have
\[
 Q(x)>0 \hspace{1em} {\rm{in}} \hspace{1em} ( -\infty, \alpha_-(K, M) ).
\]
It follows from \eqref{kbound} that 
\[
 0< R_*^{2}-2(1+\frac{M}{K}) R_*+1-2\frac{M}{K}, \quad \mbox{or equivalently} \quad 
 0<\frac{1}{2} R_*^{2}- \Big(1+\frac{M}{K} \Big) R_*+\frac{1}{2}-\frac{M}{K}.
\]
This yields 
\begin{equation}\label{ineq1}
\frac{1}{2}(R_*^{2}-1)<-R_*(1-R_*)-\frac{M}{K}(1+R_*). 
\end{equation}
By \eqref{kaa2} and \eqref{kbound} we have
\[
0<R_*(1-R_*)+\frac{M}{K}(1+R_*)<\frac{1}{4}+2\frac{M}{K}<1. 
\]
We again use \eqref{ineq1} to see
\begin{align*}
\bigg(\sqrt{\frac{\sqrt{2}}{2}}\bigg)^{4}\big(R_*^{2}-1)&=\frac{1}{2}(R_*^{2}-1) <-R_*(1-R_*)-\frac{M}{K}(1+R_*)\\ 
&<-\bigg[R_*(1-R_*)+\frac{M}{K}(1+R_*)\bigg]^{2}.
\end{align*}
By the above expression and  \eqref{kbound},
\[
R_*^{4}\big(R_*^{2}-1)<\bigg(\sqrt{\frac{\sqrt{2}}{2}}\bigg)^{4}\big(R_*^{2}-1)<-\bigg[ R_*(1-R_*)+\frac{M}{K}(1+R_*)\bigg]^{2}. 
\]
Hence, we have
\begin{equation}\label{rec1}
R_*<\sqrt{1-\bigg[\frac{M}{K}\frac{(1+R_*)}{R_*^{2}}+\frac{1-R_*}{R_*}\bigg]^{2}}. 
\end{equation}
We set 
\[
R_{*,\varepsilon^{\prime}} := R_*-\varepsilon^{\prime}.
\]
By construction, there exists $T_{\varepsilon^{\prime}}$ such that
\[
 R(t) \geq R_{*,\varepsilon^{\prime}} \hspace{1em} {\rm{in}} \hspace{1em} [T_{\varepsilon^{\prime}},\infty).
\]
Moreover, it follows from Proposition \ref{P6.3} that  we have
\[
\lim_{t \to \infty} \mathcal{M} \big(\mathbb{T}\backslash Y_{R_{*,\varepsilon^{\prime}},\varepsilon}(t)\big) = 0,  \quad \mbox{for any $\varepsilon>0$}.
\]
Then, by the same arguments as in \eqref{rfinal}, we get
\[
R(t)\geq\big(1-\mathcal{M}(\mathbb{T}\backslash Y_{R_{*,\varepsilon^{\prime}},\varepsilon}(t)\big)
\bigg[\sqrt{1-\bigg[\frac{M}{K}\frac{(1+R_{*,\varepsilon^{\prime}})}{R_{*, \varepsilon^{\prime}}^{2}}+\frac{1-R_{*,\varepsilon^{\prime}}}{R_{*,\varepsilon^{\prime}}}\bigg]^{2}}-\varepsilon\bigg]-\mathcal{M}(\mathbb{T}\backslash Y_{R_{*,\varepsilon^{\prime}},\varepsilon}(t)).\]
Thus, we have
\[ R_* \geq\sqrt{1-\bigg[\frac{M}{K}\frac{(1+R_{*,\varepsilon^{\prime}})}{R_{*,\varepsilon^{\prime}}^{2}}+\frac{1-R_{*,\varepsilon^{\prime}}}{R_{*,\varepsilon^{\prime}}}\bigg]^{2}}-\varepsilon. 
\]
Since $\varepsilon$ and $\varepsilon^{\prime}$ can be made arbitrarily small, using inequality \eqref{rec1} and the above expression we get
\[
R_* \geq\sqrt{1-\bigg[\frac{M}{K}\frac{(1+R_*)}{R_*^{2}}+
\frac{1-R_*}{R_*}\bigg]^{2}}> R_*, 
\]
which yields a contradiction. Therefore, we have
\[
R_* \geq1+\frac{M}{K}-\sqrt{\frac{M^{2}}{R_*^{2}}+4\frac{K}{M}}. 
\]
This completes the proof of Theorem \ref{T3.3}.

\begin{remark}
We now briefly discuss how the lower bound on $\kappa >\frac{2}{3}$ in $(\mathcal H3)$ was determined. In the above proof, we used the property
\[
A(t) \subset \bigg\{\theta\in\mathbb T: \cos(\theta - \phi(t)) \leq \sqrt{1 - \Big(\frac{\kappa + 1}{\kappa^2}\frac{M}{K} + \frac{1-\kappa}{\kappa}\Big)^2} - \eta \bigg\}
\]
in \eqref{F-1} and \eqref{F-2}, which is equivalent to
\begin{equation}\label{F-3}
1 - \Big(\frac{\kappa + 1}{\kappa^2}\frac{M}{K} + \frac{1-\kappa}{\kappa}\Big)^2 > \frac{\sqrt{2}}{2} + \bar\varepsilon_0.
\end{equation}
Thus, if the following inequality holds,
\begin{equation}\label{F-4}
1 - \Big(\frac{1-\kappa}{\kappa}\Big)^2 > \frac{\sqrt{2}}{2},
\end{equation}
\eqref{F-3} is satisfied for small $\bar\varepsilon_0$ and sufficiently large $K$(depending on $\bar\varepsilon_0$).
Since the inequality \eqref{F-4} is equivalent to
\[
\sqrt{2} - \sqrt{2-\sqrt{2}} < \kappa < \sqrt{2} + \sqrt{2-\sqrt{2}},
\]
and $\sqrt{2} - \sqrt{2-\sqrt{2}} \approx 0.6488$, we choose $\kappa > \frac{2}{3}$ for the simplicity.
\end{remark}
\section{Conclusion} \label{sec:7}
\setcounter{equation}{0}
In this paper, we have presented several results on the asymptotic dynamics of the Kuramoto-Sakaguchi equation which is  obtained from the Kuramoto model in the mean-field limit. For a large ensemble of Kuramoto oscillators, it is very expensive to study the dynamics of the oscillators directly via the Kuramoto model. So, from the beginning of the study on Kuramoto oscillators, the corresponding mean-field model, namely the Kuramoto-Sakaguchi equation has been widely used in the physics literature for the phase transition phenomena of large ensembles of Kuramoto oscillators. For example, Kuramoto himself employed a self-consistent theory based on the linearized Kuramoto-Sakaguchi equation, to derive a critical coupling strength for the phase transition from disordered states to partially ordered states (see \cite{A-B}). However, existence of steady states and chimera states, as well as their nonlinear stability are still far from complete understanding. In this long paper, we have studied phase concentration in a large coupling regime, for a large ensemble of oscillators. First, in the identical natural frequency case, we showed that mass of the ensemble concentrates exponentially fast at the average phase. In particular, the mass on each interval containing the average phase is nondecreasing over time, whereas the mass outside the interval decays to zero asymptotically. This illustrates the formation of a point cluster for the large ensemble of Kuramoto oscillators, which is a stable solution. It is interesting to note that, on the other hand, the Kuramoto model allows the unstable bi-polar state as an asymptotic pattern. Second, for the non-identical natural frequencies, i.e., the general case, we showed that the phases of a large ensemble of Kuramoto oscillators will aggregate inside a small interval around the average phase as the coupling strength increases. This is a similar feature as in the finite-dimensional Kuramoto model. Our third result is a quantitative lower bound for the amplitude order parameter. From a series of technical lemmata, we obtain an asymptotic formula for the amplitude order parameter in a large coupling strength regime, which also shows that a point cluster pattern arises as the coupling strength becomes sufficiently large. There are still lots of issues to be resolved on the large-time dynamics of the Kuramoto-Sakaguchi equation. To name a few, we mention three outstanding problems. First, we have not yet shown the existence of stationary solutions for the Kuramoto-Sakaguchi equation. Thus, our present results can be a first foot step toward this direction. Second, our estimates on the ensemble of Kuramoto oscillators with distributed natural frequencies are strongly relying on the large coupling strength. In particular, we have not optimized the size of coupling strength. Thus, one interesting question is to find the critical coupling strength for the phase transition from the partially ordered state to the fully ordered state (complete synchronization). Third, it will be also interesting to investigate the intermediate regime where the coupling strength is not too small nor too large, especially, regarding existence and stability of partially synchronized states. These issues will be addressed in future work.

\newpage

\appendix 

\section{Otto calculus} \label{App-A}
\setcounter{equation}{0}
In this section, we review the Otto calculus dealing with gradient flows on the Wasserstein space, and explain how the K-S equation can be regarded as a gradient flow on the Wasserstein space. 

\subsection{The K-S equation as a  gradient flow} We now formulate the gradient flow for the potential $V_k$ via the Otto calculus, and see that it coincides with the K-S equation \eqref{KM-K-I}, equivalently \eqref{D-0-0} for the identical oscillators case. This is a rather standard procedure (see \cite{Vi}, for example). For this, we first recall the Otto calculus introduced in \cite{Otto}, which gives a formal Riemannian metric on the space of absolutely continuous probability measures  $\mathbb P_{ac}(\mathbb M)$ on a Riemannian manifold ${\mathbb M}$.  \newline

Consider two curves $\rho^1, \rho^2: (-\varepsilon, \varepsilon) \to  \mathbb P_{ac}({\mathbb M})$ with  the common value $\rho$ at $t=0$, i.e  $\rho^1(0)=\rho^2(0) =\rho$.  Assume that they are differentiable and the Riemmanian product between the time derivatives  $\dot \rho^1 (0), \dot \rho^2(0)$ is given by 
\begin{align*}
 \langle \dot \rho^1 (0), \dot \rho^2(0)\rangle_{W_2} = \int_{\mathbb M} \langle \nabla \varphi^1\big|_{t=0} , \nabla \varphi^2 \big|_{t=0} \rangle \rho \,   ~  ~  d \mbox{vol}, 
\end{align*}
where  the bracket $\langle \cdot, \cdot \rangle$ is the Riemannian product on the underlying space of $\mathbb M$, and the functions $\varphi^1$ and  $\varphi^2$ are determined by solving the continuity equation:
\begin{align*}
 \frac{\partial }{\partial t} \rho ^i + {\rm div} (\rho^i \nabla \varphi^i ) =0, \quad i=1,2. 
\end{align*}
With respect to this metric $\langle \cdot, \cdot \rangle_{W_2}$, the gradient of a given functional $F: {\mathbb P}_{ac}({\mathbb M}) \to \mathbb R,$ can be considered as a vector field, denoted by ${\rm grad}_\rho F$ on $\mathbb M$, such that for a one-parameter differentiable family $\tau \mapsto g_\tau \in {\mathbb P}_{ac}({\mathbb M})$ with $g_0 = \rho$, satisfies
\begin{align*}
 \frac{d}{dt}\bigg|_{\tau=0} F (g_{\tau})  =\int_{\mathbb M} \langle \nabla \varphi\big|_{\tau=0}, {\rm grad}_\rho F \rangle \rho \, ~  d \mbox{vol},
\end{align*}
where the vector field $\nabla \varphi$ solves the continuity equation:
\begin{align}\label{eq: g}
 \frac{\partial g_t}{\partial t} + {\rm div} (g_t \nabla \varphi) = 0. 
\end{align}
Then, the gradient flow of $F$ is a one-parameter family $t \mapsto \rho_t \in {\mathbb P}_{ac}({\mathbb M})$ satisfying 
\begin{align}\label{eq: grad flow}
 \frac{\partial}{\partial t} \rho_t + {\rm div} \left(\rho_t (-{\rm grad}_{\rho}F) \right) = 0. 
\end{align}
The equation \eqref{eq: grad flow} can be written as a weak form:
\[
\frac{d}{dt}\bigg|_{t=0}\int_{\mathbb M}\zeta \rho_{t} ~ d\theta=- \int_{\mathbb M}  \langle \nabla\zeta ,  {\rm grad}_{\rho} F \rangle \, \rho_t d \theta, \quad \forall \zeta \in C_c^\infty({\mathbb M}).
\]
We now verify that the equation \eqref{D-0-0} is the gradient flow in the above sense, of the potential $V_k$ from \eqref{eq:V}. 
In our case the underlying Riemannian manifold ${\mathbb M}$ is $\mathbb T$, with the metric $ ~ d\theta^2$.
 Given $\rho$ in $\mathbb{P}(\mathbb{T}),$ recall the notation for $J$ specified in \eqref{eq:J f}. Then, we have
\begin{align}\label{eq: grad V}
{\rm grad}_{\rho}V_k:=- K \nabla (\sigma\cdot J). 
\end{align}
where the inner product is the Euclidean one in $\mathbb T \subset \mathbb R^2$, and $\nabla$ is the gradient of the Riemannian manifold $\mathbb T$; of course, $\nabla =\frac{d}{ ~ d\theta}$, but we keep the notation $\nabla$ to be more consistent with the general formulation.  
In order to see \eqref{eq: grad V},   we note that for each one-parameter differentiable family, $t \mapsto g_t \in {\mathbb P}_{ac}({\mathbb M})$  satisfying \eqref{eq: g} with $g_0 =\rho$, the derivative of $V_k$ given in \eqref{eq:V} is 

\begin{align*}
\frac{d}{dt}\bigg|_{t=0}V_k(g_{t}) &=& - K  J \cdot \frac{d}{dt}  \bigg|_{t=0}\int_\mathbb T \sigma g_{t} ~ d\theta 
=-K\frac{d}{dt}\bigg|_{t=0}\int_\mathbb T (\sigma\cdot J)g_{t} ~ d\theta \cr &=& - \int_\mathbb T  \langle \nabla\varphi, K\nabla(\sigma\cdot J)\rangle \rho  ~ d\theta. 
\end{align*}
This yields \eqref{eq: grad V}.  On the other hand, note that since $J =Re^{i\phi},$
\begin{equation} \label{NEW-1}
\sigma(\theta)\cdot J =R\cos(\theta-\phi),
\end{equation}
Therefore, 
\[
{\rm grad}_\rho V_k = -K  \nabla (\sigma(\theta)\cdot J)=K R\sin(\theta-\phi). 
\]
We substitute this into \eqref{eq: grad flow} to get the gradient flow of $V_k$ as the 
 one-parameter family $t \in [0, \varepsilon) \mapsto \rho_t \in \mathbb P(\mathbb T)$
satisfying 
\[
\partial_{t}\rho= \partial_\theta \left(\rho K R \sin (\theta-\phi) \right),
\]
which is the same as \eqref{D-0-0}, verifying the gradient flow structure. 
Moreover, this immediately implies  
\begin{align*}
 \frac{d}{dt}\bigg|_{t=0}V_k(\rho_t) & = - (KR)^2 \int_\mathbb T  \sin(\theta-\phi)^2 \rho  ~ d\theta. 
\end{align*}
Of course, this expression can be directly obtained from the formula of $V_k$ \eqref{eq: V simple} and the definitions of the order parameters $R$ and $\phi$. 

Additionally, using the Riemannian inner product on $(\mathbb{P}(\mathbb{T}),W_{2}),$
we have that the metric slope is given by
\[
 \int_{\mathbb{T}} K|\nabla(\sigma\cdot J)|^2 \rho ~  
 d\theta=  KR^2 \int_{\mathbb{T}}  \sin^{2}(\theta-\phi) \rho 
  ~  d\theta.
\]
\subsection{The Hessian of the potential $V_k$} In this subsection, we explicitly compute the Hessian of the potential $V_k$ via the Otto calculus. By direct calculations, the Hessian of $V_k$ is given by 
\begin{align}\label{eq: Hess}
\mbox{\ensuremath{\langle}Hess}_{\rho}\nabla\varphi,\nabla\varphi\rangle=-K\int_{\mathbb T} \nabla\varphi D^{2}(\sigma\cdot J)\nabla\varphi \rho(\theta)
 ~ d\theta
-K\bigg|\int_{\mathbb T}\nabla\varphi\rho ~ d\theta\bigg|^{2}.
\end{align}
Using the Jensen inequality, this expression can be bounded from below
by 
\[
-K\int_{\mathbb T}\nabla\varphi\big[D^{2}(\sigma\cdot J)+1\big]\nabla\varphi \rho(\theta) ~ d\theta.
\]
Consequently, the functional $V_k(\rho)$ is $\lambda$-convex with $\lambda=-2K,$ as a functional on the formal Riemannian space $\mathbb P (\mathbb T)$.  In order to obtain \eqref{eq: Hess}, we proceed as follows. Let, 
$t \mapsto g_t \in \mathbb P (\mathbb T),$ be the  differentiable one-parameter family, with $g_0=\rho$, associated with $\varphi : \mathbb T \times \mathbb R \to \mathbb R$. Moreover, suppose 

\begin{align}\label{eq: geodesic}
\begin{dcases}
 \frac{d}{dt}\int_{\mathbb T}\zeta g_{t} ~ d\theta
  =\int_{\mathbb T} \langle \nabla\zeta, \nabla\varphi \rangle g_t ~ d\theta, \quad \forall \zeta \in C^\infty(\mathbb T),   & \text{ } \\
\partial_{t}\varphi+\frac{\mid\nabla\varphi\mid^{2}}{2}=0.     & \text{}.
\end{dcases}
\end{align}
This family is a Riemannian geodesic in the formal Riemannian manifold $\mathbb P (\mathbb T),$ in the sense of the Otto calculus. 
Then, we compute 
\[
\frac{d^{2}}{dt^{2}}\bigg|_{t=0}V_k(g_t) 
 =\frac{d^{2}}{dt^{2}}\bigg|_{t=0}\bigg[K-\frac{K}{2} |J|^{2}\bigg] =-K\frac{d}{dt}\bigg|_{t=0}\int_{\mathbb T} \langle \nabla\varphi, \nabla(\sigma\cdot J)\rangle g_{t} ~ d\theta . 
\]
Here, the last expression can be computed as 
\begin{align*}
 & K\int_{\mathbb T} \big\langle \nabla \left(\frac{1}{2}\mid \nabla\varphi\mid^{2}\right),  \nabla(\sigma\cdot J)\big\rangle \rho  ~ d\theta  - K \int_{\mathbb T} \big\langle \nabla\varphi ,  \nabla\left[ \sigma \cdot \frac{d}{dt}\bigg|_{t=0}  J \right]\big\rangle  \rho   ~ d\theta \\
 &\hspace{1cm} -K\int_{\mathbb T} \bigg\langle \nabla \left\langle \nabla \varphi , \nabla\big(\sigma \cdot J \big) \right\rangle, \nabla \varphi \bigg\rangle \rho  ~ d\theta\\
 &\hspace{1cm} =: {\mathcal I}_{11}  + {\mathcal I}_{12} + {\mathcal I}_{13}.
\end{align*}
 where we used the system \eqref{eq: geodesic} in the first integral ${\mathcal I}_{11}$.\\
 
\noindent $\bullet$ (Estimate of ${\mathcal I}_{11} + {\mathcal I}_{13}$): We apply the identity
\begin{align*}
-\nabla \varphi \nabla^2 \varphi \nabla\Psi+\langle \nabla\varphi, \nabla[\langle \nabla\varphi, \nabla \Psi\rangle]\rangle=\nabla \varphi D^{2}\Psi \nabla \varphi.
\end{align*}
to obtain
\begin{equation} \label{NE-1}
 {\mathcal I}_{11} + {\mathcal I}_{13} =  -K\int_{\mathbb T}  \nabla \varphi [D^2\big(\sigma \cdot J\big)] \nabla \varphi  \rho  ~ d\theta. 
\end{equation}
\noindent $\bullet$ (Estimate of ${\mathcal I}_{12}$): In order to simplify ${\mathcal I}_{12}$ we use the identity
\begin{align*}
e\cdot\frac{d}{dt}\bigg|_{t=0}J &=\frac{d}{dt}\bigg|_{t=0}e\cdot\int_{\mathbb T}\sigma g ~ d\theta=\int_{\mathbb T}(e\cdot\sigma)\partial_{t}g_{t}(\theta,0) ~ d\theta\\
 &=\int_{\mathbb T}\langle\nabla(e\cdot\sigma),\nabla\varphi\rangle\rho ~ d\theta =e\cdot\int_{\mathbb T}\nabla\varphi\rho ~ d\theta,
\end{align*}
 which holds for any $e$ in $\mathbb{R}^{2}$. In such an identity, we have used 
the fact that for any vector $v$ in $\mathbb{R}^{2},$
  $\nabla (v\cdot\sigma)$
  is the orthogonal projection of $v$
  in the orthogonal subspace to $\sigma(\theta).$\\
Consequently, by the same reason, ${\mathcal I}_{12}$ can be computed as 
\begin{equation} \label{NE-2}
{\mathcal I}_{12} = -K\int_{\mathbb T}\langle\nabla\varphi,\nabla\bigg[\sigma\cdot\int_{\mathbb T}\nabla\varphi\rho ~ d\theta_* \bigg] \rangle\rho ~ d\theta = -K\bigg|\int_{\mathbb T}\nabla\varphi\rho ~ d\theta\bigg|^{2}.
\end{equation}

Finally, we combine \eqref{NE-1} and \eqref{NE-2} to obtain \eqref{eq: Hess}.

\vspace{1cm}

\section{Proof of Lemma \ref{L6.2}} \label{App-B}
\setcounter{equation}{0}
Below, we study the Lipschitz continuity of $R, {\dot R}$ and $\mathcal{M}(L^+_{\gamma}(t))$. \newline
 
\subsection{Lipschitz continuity of $R$} It follows from Lemma \ref{L5.2} and the facts
\[ f = 0 \quad \mbox{for}~ \omega \not \in [-M, M], \qquad R \leq 1 \]
that  ${\dot R}$ is uniformly bounded:
\begin{equation} \label{R-1}
|{\dot R}| \leq M \iint_{\mathbb T \times [-M, M]}  f(\theta, \omega, t) ~ d\theta d\omega + K R  \int_\mathbb T \rho(\theta, t) ~ d\theta \leq M + K.
\end{equation}
This yields the Lipschitz continuity of $R$. 

\subsection{Lipschitz continuity of ${\dot R}$}:  Note that the a priori condition $\min_{0 \leq t \leq T} R(t) \geq \underline{R} > 0$ and the continuity of $R$ yield that there exists a positive constant $\eta$ such that 
\begin{equation}\label{R-2}
R(t)>\frac{\underline{R}}{2},   \quad t \in [0,T+\eta).
\end{equation}
We next show that ${\dot R}$ is Lipschitz continuous on $[0, T+ \eta)$. For this, it suffices to show that ${\ddot R}$ is uniformly bounded. Since 
\begin{equation} \label{R-2-1}
 \frac{d^2}{dt^2} R^2 = 2 ({\dot R})^2 + 2 R {\ddot R}, 
\end{equation} 
once we can show that $\frac{d^2}{dt^2} R^2$ is bounded, then, it follows from \eqref{R-1} and \eqref{R-2} that ${\ddot R}$ is bounded.  \newline

\noindent $\bullet$ Step A (uniform boundedness of  $\frac{d^2}{dt^2}R^2$): We set 
 \[
J(t) :=\iint_{\mathbb T \times \mathbb R} e^{{\mathrm i}\theta^{*}}f(\theta^{*},\omega,t) ~ d\theta^{*} d\omega, \qquad \sigma(\theta) :=e^{{\mathrm i}\theta}.
 \]
Then, we have
\[ \sigma(\theta^*) \cdot \sigma(\theta) =\cos(\theta-\theta^{*}), \quad  \sigma(\theta) \cdot J(t) =\iint_{\mathbb T \times \mathbb R} \cos(\theta-\theta^{*})f ~ d\theta^{*} d\omega.
\]
This yields
\begin{equation} \label{R-3}
\partial_{\theta}(\sigma(\theta)\cdot J)=-\iint_{\mathbb T \times \mathbb R} \sin(\theta-\theta^{*})f ~ d\theta^{*} d\omega. 
\end{equation}
Note that the K-S equation \eqref{K-S} can be written on $\mathbb{T\times R}$ as
\begin{equation}\label{R-3-1}
\partial_{t}f+\mbox{div}_{\theta}(\omega \sigma^{\perp} f)+K\mbox{div}_{\theta}(\partial_{\theta}(\sigma\cdot J) f)=0. 
\end{equation}
Where ${\rm{div}}_{\theta}$ and $\partial_{\theta}$
denote the divergence operator and gradient operator
on $\mathbb{T}\subset \mathbb{R}^{2},$ endowed with angle metric, and for each $\sigma$ in $\mathbb T$, $\sigma^{\perp}$ denotes the vector obtained by rotating $\sigma$ by $\frac{\pi}{2}$ radians counterclockwise. We will denote the Laplacian on $\mathbb{T}$ by $\partial_{\theta \theta}$. We use \eqref{R-3}, \eqref{R-3-1} and  
  \[
   R^{2}=\iint_{\mathbb T \times \mathbb R} (\sigma\cdot J) f(\theta, \omega, t) ~ d\theta d\omega,
  \]
to obtain
\begin{equation}\label{rto2}
\frac{dR^{2}}{dt} =2 \Big[ \iint_{\mathbb T \times \mathbb R} \partial_{\theta}(\sigma\cdot J)\cdot \omega \sigma^{\perp} f ~ d\theta d\omega+ K  \iint_{\mathbb T \times \mathbb R} (\partial_{\theta}(\sigma\cdot J))^{2}f ~ d\theta d\omega \Big]. 
\end{equation}
Then, we claim:  
\begin{align}
\begin{aligned} \label{Z}
\frac{d^2}{dt^2} R^2 &=4K^{2}\iint_{\mathbb T \times \mathbb R}\partial_{\theta}(\sigma\cdot J)\partial_{\theta\theta}(\sigma\cdot J)\partial_{\theta}(\sigma\cdot J) ~ f ~ d\theta d\omega \\
&+4K\iint_{\mathbb T \times \mathbb R}\omega \sigma^{\perp} \cdot (\partial_{\theta\theta}\sigma\cdot J)\partial_{\theta}(\sigma\cdot J)f ~ d\theta d\omega+4K^{2}\bigg|\iint_{\mathbb T \times \mathbb R}\partial_{\theta}(\sigma\cdot J)f ~ d\theta d\omega\bigg|^{2} \\
&+4K\iint_{\mathbb T \times \mathbb R}\partial_{\theta}(\sigma\cdot J)f ~ d\theta d \omega \cdot \iint_{\mathbb T \times \mathbb R}\omega \sigma^{\perp} f ~ d\theta d\omega \\
&+2K\iint_{\mathbb T \times \mathbb R}\partial_{\theta}(\sigma\cdot J)\partial_{\theta\theta}(\sigma\cdot J)\omega \sigma^{\perp} f ~ d\theta d\omega \\
&+2\iint_{\mathbb T \times \mathbb R}\omega \sigma^{\perp} \partial_{\theta\theta}(\sigma\cdot J)\omega\sigma^{\perp} f ~ d\theta d\omega \\
&+2K\iint_{\mathbb T \times \mathbb R}\omega \sigma^{\perp}f ~ d\theta d\omega\cdot\iint_{\mathbb T \times \mathbb R}\partial_{\theta}(\sigma\cdot J)f ~ d\theta d\omega \\
&+2\bigg|\iint_{\mathbb T \times \mathbb R}\omega \sigma^{\perp} f ~ d\theta d\omega \bigg|^{2}.
\end{aligned}
\end{align}

\noindent {\it Proof of claim \eqref{Z}}: It follows from \eqref{rto2} that we have
\begin{equation} \label{Z-1}
\frac{d^2}{dt^2} R^2=2\frac{d}{dt}\bigg[\iint_{\mathbb T \times \mathbb R}\partial_{\theta}(\sigma\cdot J)\omega \sigma^{\perp} f ~ d\theta d\omega+K\iint_{\mathbb T \times \mathbb R}(\partial_{\theta}(\sigma\cdot J))^{2}f ~ d\theta d\omega\bigg].
\end{equation}
$\bullet$ (The second integral in the R.H.S. of \eqref{Z-1}): By direct calculations, we have
\begin{align}
\begin{aligned} \label{Z-2}
& \frac{d}{dt} \iint_{\mathbb T \times \mathbb R}|\partial_{\theta}(\sigma\cdot J)|^{2}f ~ d\theta d\omega \\
& \hspace{1cm} =K\iint_{\mathbb T \times \mathbb R}\partial_{\theta}|\partial_{\theta}(\sigma\cdot J)|^{2}\partial_{\theta}(\sigma\cdot J)f ~ d\theta d\omega+\iint_{\mathbb T \times \mathbb R}\omega\sigma^{\perp}\partial_{\theta}|\partial_{\theta}(\sigma\cdot J)|^{2}f ~ d\theta d\omega\\
 & \hspace{1cm}+2\iint_{\mathbb T \times \mathbb R}\partial_{\theta}(\sigma\cdot J)\partial_{\theta}(\sigma\cdot\partial_{t}J)f ~ d\theta d\omega\\
 & \hspace{1cm} =2K\iint_{\mathbb T \times \mathbb R}\partial_{\theta}(\sigma\cdot J)\partial_{\theta\theta}(\sigma\cdot J)\partial_{\theta}(\sigma\cdot J)f ~ d\theta d\omega \\
 & \hspace{1cm} +2\iint_{\mathbb T \times \mathbb R}\omega \sigma^{\perp} \partial_{\theta\theta}(\sigma\cdot J)\partial_{\theta}(\sigma\cdot J)f ~ d\theta d\omega\\
 & \hspace{1cm}+2K\iint_{\mathbb T \times \mathbb R}\partial_{\theta}(\sigma\cdot J)\partial_{\theta}\Big[\sigma\cdot\big(\iint_{\mathbb T \times \mathbb R}\partial_{\theta}(\sigma\cdot J)fd\theta d\omega\big)\Big]f ~ d\theta d\omega\\
 & \hspace{1cm}+2\iint_{\mathbb T \times \mathbb R}\partial_{\theta}(\sigma\cdot J)\partial_{\theta}\Big[\sigma\cdot\big(\iint_{\mathbb T \times \mathbb R}\omega \sigma^{\perp} f ~ d\theta d\omega)\Big]f ~ d\theta d\omega,
\end{aligned}
\end{align} 
where in the second equality, we used the identity:
\begin{align*}
e\cdot \partial_{t} J&=\frac{d}{dt}e\cdot\iint_{\mathbb T \times \mathbb R}\sigma f ~ d\theta d\omega=\iint_{\mathbb T \times \mathbb R}(e\cdot\sigma)\partial_{t}f ~ d\theta d\omega\\
&=K\iint_{\mathbb T \times \mathbb R}\partial_{\theta}(e\cdot\sigma)\partial_{\theta}(\sigma\cdot J)f ~ d\theta d\omega+\iint_{\mathbb T \times \mathbb R}\omega \sigma^{\perp} \partial_{\theta}(e\cdot\sigma)f ~ d\theta d\omega\\ 
&=Ke\cdot\iint_{\mathbb T \times \mathbb R}\partial_{\theta}(\sigma\cdot J)f ~ d\theta d\omega+e\cdot\iint_{\mathbb T \times \mathbb R}\omega\sigma^{\perp} f ~ d\theta d\omega,
\end{align*}
which holds for any $e$ in $\mathbb{R}^{2}$.
In the above identity, we have used 
the fact that for any vector $v$ in $\mathbb{R}^{2},$
  $\partial_{\theta}(v\cdot\sigma)$
  is the orthogonal projection of $v$
  in the orthogonal subspace to $\sigma(\theta),$
  and the 
fact  that, by definition, $\omega\sigma^\perp(\theta)$
  is contained in the same subspace as well. By similar arguments, we also get
 \begin{align}
 \begin{aligned} \label{dz1}
& \frac{d}{dt}\iint_{\mathbb T \times \mathbb R}(\partial_{\theta}
 (\sigma\cdot J))^{2}f ~ d\theta 
 d\omega \\
 & \hspace{1cm} =2K\iint_{\mathbb T \times \mathbb R}\partial_{\theta}(\sigma
 \cdot J_{f})\partial_{\theta\theta}
 (\sigma\cdot J)\partial_{\theta}
 (\sigma\cdot J)f ~ d\theta d\omega
  \\
  & \hspace{1cm} +2\iint_{\mathbb T \times \mathbb R}\omega \sigma^{\perp} \partial_{\theta\theta}
 (\sigma\cdot J)\partial_{\theta}(\sigma
 \cdot J)f ~ d\theta d\omega
 \\
 & \hspace{1cm} +2K\bigg|\iint_{\mathbb T \times \mathbb R}\partial_{\theta}
 (\sigma\cdot J)f ~ d\theta d\omega\bigg|^{2}
 +2\iint_{\mathbb T \times \mathbb R}\partial_{\theta}(\sigma\cdot J)f ~  d\theta d\omega\cdot \iint_{\mathbb T \times \mathbb R}\omega \sigma^{\perp} f ~ d\theta d\omega. 
 \end{aligned}
 \end{align}
 
\bigskip 
 
\noindent $\bullet$ (The first integral in the R.H.S. of \eqref{Z-1}): By direct calculation, we have
\begin{align}
\begin{aligned}\label{dz2}
& \frac{d}{dt} \iint_{\mathbb T \times \mathbb R}\partial_{\theta}(\sigma\cdot J)\omega f ~ d\theta d\omega \\
& \hspace{1cm} =K\iint_{\mathbb T \times \mathbb R}\partial_{\theta}(\sigma\cdot J)\partial_{\theta}[\partial_{\theta}(\sigma\cdot J) \cdot \omega \sigma^{\perp} ]f ~ d\theta d\omega\\
& \hspace{1cm}+\iint_{\mathbb T \times \mathbb R}\omega \sigma^{\perp} \cdot \partial_{\theta}[\partial_{\theta}(\sigma\cdot J)\omega \sigma^{\perp}]f ~ d\theta d\omega+\iint_{\mathbb T \times \mathbb R}\partial_{\theta}(\sigma\cdot\partial_{t}J) \cdot \omega \sigma^{\perp} f ~ d\theta d\omega\\
& \hspace{1cm} =K\iint_{\mathbb T \times \mathbb R}\partial_{\theta}(\sigma\cdot J)\partial_{\theta\theta}(\sigma\cdot J)\omega \sigma^{\perp} f ~ d\theta d\omega+\iint_{\mathbb T \times \mathbb R}\omega \sigma^{\perp} \cdot \partial_{\theta\theta}(\sigma\cdot J)\omega \sigma^{\perp} f ~ d\theta d\omega\\
&\hspace{1cm}+K\iint_{\mathbb T \times \mathbb R}\partial_{\theta}[\sigma\cdot\iint_{\mathbb T \times \mathbb R}\partial_{\theta}(\sigma\cdot J)f ~ d\theta d\omega]\omega \sigma^{\perp} f ~ d\theta d\omega \\
& \hspace{1cm} +\iint_{\mathbb T \times \mathbb R}\partial_{\theta}[\sigma \cdot\iint_{\mathbb T \times \mathbb R}\omega \sigma^{\perp} f ~ d\theta d\omega]\omega \sigma^{\perp} f ~ d\theta d\omega\\
& \hspace{1cm} =K\iint_{\mathbb T \times \mathbb R}\partial_{\theta}(\sigma  \cdot J)\partial_{\theta\theta}(\sigma\cdot J)\omega \sigma^{\perp}  f ~ d\theta d\omega\\
& \hspace{1cm}+\iint_{\mathbb T \times \mathbb R}\omega \sigma^{\perp} \cdot \partial_{\theta\theta}(\sigma\cdot J)\omega \sigma^{\perp} f ~ d\theta d\omega \\
& \hspace{1cm} +K\iint_{\mathbb T \times \mathbb R}\partial_{\theta}(\sigma\cdot J)f ~ d\theta d\omega\cdot\iint_{\mathbb T \times \mathbb R}\omega \sigma^{\perp} f ~ d\theta d\omega +\bigg|\iint_{\mathbb T \times \mathbb R}\omega \sigma^{\perp} f ~ d\theta d\omega\bigg|^{2},
\end{aligned}
\end{align}
where in the second and third equalities above, we have used the same
tools we used to obtain \eqref{dz1}. Finally, in \eqref{Z-1}, we combine \eqref{dz1} and \eqref{dz2} to prove the claim. \newline

\noindent $\bullet$ Step B (uniform boundedness of  ${\ddot R}$): It follows from the relation \eqref{Z} that 
\begin{equation} \label{R-4}
\Big|  \frac{d^2}{dt^2} R^2 \Big| \leq4(K^{2}+KM)+2KM+2M^{2},
\end{equation}
Then, we use the relations \eqref{R-2} and \eqref{R-2-1} to see that for $t \in [0, T+ \eta)$,
\begin{align*}
\underline{R} |{\ddot R}| &\leq 2 |R {\ddot R}| \leq \Big|  \frac{d^2}{dt^2} R^2 \Big| + 2 |{\dot R}|^2 \cr
&\leq 4(K^{2}+KM)+2KM+2M^{2} + 2(M + K)^2.
\end{align*}
This yields the desired bound estimate for ${\ddot R}$:
\[  |{\ddot R}| \leq \frac{1}{  \underline{R}} \Big[  4(K^{2}+KM)+2KM+2M^{2} + 2(M + K)^2 \Big], \quad t \in [0, T+ \eta). \]
and implies the Lipschitz continuity of ${\dot R}$ on $[0, T+ \eta)$. 

\subsection{Lipschitz continuity of $\mathcal{M}(L^+_{\gamma}(t))$} It follows from \eqref{dL} that we have
\[
\frac{d}{dt}\mathcal{M}(L^+_{\gamma}(t)) = KR \cos\gamma\int_{-M}^M\big[B_{-,\omega}(t)+B_{+,\omega}(t)\big] ~ d\omega + \int_{-M}^M\big(\dot{\phi}(t)-\omega\big)\big[B_{+,\omega}(t)-B_{-,\omega}(t)\big] ~ d\omega.
\]
Hence, we use \eqref{dot phi r}, $R \leq 1$, Lemma \ref{L5.3}, \eqref{R-2} and 
\[ \int_{-M}^{M} \big[B_{-,\omega}(t)+B_{+,\omega}(t)\big] ~ d\omega  \leq 2M \|f_0\|_{L^{\infty}(\mathbb T \times [-M, M])} e^{K(T+\eta)}   \]
to obtain
\begin{align*}
\bigg|\frac{d}{dt}
\mathcal{M}(L^+_{\gamma}(t))\bigg|
&\leq(K R +|\dot{\phi}|+M)\int_{-M}^M\big[B_{-,\omega}(t)+B_{+,\omega}(t)\big] ~ d\omega\\
&\leq 2M \bigg[K+\frac{2M}{\underline{R}}+K\bigg(1-\frac{\underline{R}}{2}\bigg)+M \bigg] \|f_0\|_{L^{\infty}(\mathbb T \times [-M, M])} e^{K(T+\eta)},\\
\end{align*}
on $[0,T+\eta)$. This concludes the proof of Lemma \ref{L6.2}.

\vspace{1cm}

\section{Proof of Lemma \ref{L6.5}} \label{App-C}
\setcounter{equation}{0}
Suppose that $f_0, R$ and $K$ satisfy
\begin{equation} \label{N-K}
\|f_0 \|_{L^{\infty}(\mathbb T \times [-M, M])} < \infty, \quad  \min_{0 \leq t \leq T} R(t) > \underline{R}, \quad \dot{R}(T)=K \mu \quad \mbox{and} \quad K^2 \mu > \frac{M^2}{2\underline R^2} - \frac{3M^2}{4\underline R}
\end{equation}
for some $T \in (0, \infty)$ and some positive constants $\underline{R},$ and $\mu.$ Then, we claim: there exist positive constants $d:=d(K,M,\underline{R},\mu)$ and $E_{3}:=E_{3}(K,M,\underline{R},\mu)$ satisfying 
\[ \dot{R}>0\hspace{1em} {\rm{in}} \hspace{1em} [T,T+d), \qquad 
R(T+d)-R(T)\geq\frac{1}{12}\underline{R}\mu-E_{3}.\] 
Note that Lemma \ref{L5.2} yields
\[ {\dot R} = -\iint_{\mathbb T \times \mathbb R} \sin(\theta - \phi) \omega  f(\theta, \omega, t)   ~ d\theta d\omega + K R  \int_\mathbb T \sin^2 (\theta - \phi)   \rho(\theta, t)  ~ d\theta. \]

We define a function
$S ~ : ~ 
[0,\infty)\rightarrow\mathbb{R}$
given by 
\begin{equation} \label{lin-0}
S(t)=KR(t)\iint_{\mathbb T \times \mathbb R} \sin^{2}(\theta-\phi)f(\theta,\omega,t) ~ d\theta d\omega.
\end{equation}
We remark that for identical oscillator case, this coincides with metric slope. \newline

\noindent $\bullet$ Step A (Derivation of differential inequalities for $R$): For  any $t\geq T$ satisfying $R(t)\geq \underline{R}$, we claim:
\begin{equation}\label{lin}
-\frac{M^{2}}{2K\underline{R}}+\frac{1}{2}S(t)\leq\dot{R}(t)\leq\frac{3}{2}S(t)+\frac{M^{2}}{2\underline{R}K}.
\end{equation}
{\it Proof of claim \eqref{lin}}:  Suppose that 
\[ R(t)\geq \underline{R}. \]
Then, it follows from Lemma \ref{L5.2} that we have  
\begin{align}
\begin{aligned} \label{lin-1}
\dot{R}(t) & =-\iint_{\mathbb T \times \mathbb R}\sin(\theta-\phi)\omega f(\theta,\omega,t) ~ d\theta d\omega \\
&+KR(t) \iint_{\mathbb T \times \mathbb R} \sin^{2}(\theta-\phi)f(\theta,\omega,t) ~ d\theta d\omega.
\end{aligned}
\end{align}
For the first term in \eqref{lin-1}, we use Young's inequality:
\[ |ab|\leq\frac{a^{2}}{2\varepsilon}+\frac{b^{2}}{2}\varepsilon, \quad \mbox{with}~~\varepsilon=KR \]
and the relations $|\omega| \leq M, \quad R(t)\geq \underline{R}$ to see 
\begin{align}
\begin{aligned} \label{lin-2}
& \Big|-\iint_{\mathbb T \times \mathbb R}\sin(\theta-\phi)\omega f(\theta,\omega,t) ~ d\theta d\omega \Big| \\
& \hspace{0.5cm} \leq \frac{1}{2KR(t)}\iint_{\mathbb T \times \mathbb R}\omega^{2}f ~ d\theta d\omega + \frac{KR(t)}{2}\int_{\mathbb T \times \mathbb R} \sin^{2}(\theta-\phi)f(\theta,\omega,t) ~ d\theta d\omega \leq \frac{M^2}{2K\underline{R}} + \frac{1}{2}S(t).
\end{aligned}
\end{align}
We combine \eqref{lin-1} and \eqref{lin-2} to verify the claim \eqref{lin}.  \newline

Note that the relations  \eqref{lin-0} and \eqref{NEW-1} imply
\begin{align}
\begin{aligned} \label{lin-3}
R(t)S(t) &= KR^2(t)\iint_{\mathbb T \times \mathbb R} \sin^{2}(\theta-\phi)f(\theta,\omega,t) ~ d\theta d\omega \\
   &= K\iint_{\mathbb T \times \mathbb R} (-R\sin(\theta-\phi))^2 f(\theta,\omega,t) ~ d\theta d\omega \\
   &= K\iint_{\mathbb T \times \mathbb R}|\partial_{\theta}(\sigma\cdot J)|^{2}f ~ d\theta d\omega. 
\end{aligned}
\end{align}
Then, we use \eqref{lin-3} and \eqref{dz1} to obtain
\begin{align}\begin{aligned}\label{rs}
\frac{d}{dt}(R(t)S(t))&=2K^{2}\iint_{\mathbb T \times \mathbb R}\partial_{\theta}(\sigma\cdot J)\partial_{\theta\theta}(\sigma\cdot J)\partial_{\theta}(\sigma\cdot J)f ~ d\theta d\omega\\
 & +2K\iint_{\mathbb T \times \mathbb R}\omega \sigma^{\perp} \partial_{\theta\theta}(\sigma\cdot J)\partial_{\theta}(\sigma\cdot J)f ~ d\theta d\omega\\
 &+2K^{2}\bigg|\iint_{\mathbb T \times \mathbb R}\partial_{\theta}(\sigma\cdot J)f ~ d\theta d\omega\bigg|^{2}+2K\iint_{\mathbb T \times \mathbb R}\partial_{\theta}(\sigma\cdot J)f d\theta d\omega \iint_{\mathbb T \times \mathbb R}\omega \sigma^{\perp} f ~ d\theta d\omega.
\end{aligned}
\end{align}
By Young's inequality, we get
\begin{align*}
\frac{d}{dt}(R(t)S(t)) & \geq-2K^2 \iint_{\mathbb T \times \mathbb R} |\partial_{\theta}(\sigma\cdot J) f|^{2}f ~ d\theta d\omega-K^2 \iint_{\mathbb T \times \mathbb R} |\partial_{\theta}(\sigma\cdot J)f|^{2}f ~ d\theta d\omega \\
 &-\iint_{\mathbb T \times \mathbb R}\omega^{2}f ~ d\theta d\omega -K^2 \iint_{\mathbb T \times \mathbb R} |\partial_{\theta}(\sigma\cdot J)|^{2}f ~ d\theta d\omega-\iint_{\mathbb T \times \mathbb R}\omega^{2}f ~ d\theta d\omega\\
 & \geq-4KR(t)S(t)-2M^{2}.
\end{align*}
Then, Grownwall's lemma yields
\[
R(t)S(t)\geq\bigg(R(T)S(T)+\frac{M^{2}}{2K}\bigg)e^{-4K(t-T)}-\frac{M^{2}}{2K}. 
\]
Since $S\geq0$ and $R\leq1$, we also obtain
\begin{equation}\label{sbound}
S(t) \geq R(t) S(t) \geq \bigg(R(T)S(T)+\frac{M^{2}}{2K}\bigg)e^{-4K(t-T)}-\frac{M^{2}}{2K}. 
\end{equation}

\vspace{0.5cm}

\noindent $\bullet$ Step B (Lower bound of $R$): We next claim: for some $d > 0$, 
\begin{equation} \label{lin-4}
R\geq R(T), \quad \mbox{in}~~[T,T+d). 
\end{equation}
For the proof of claim \eqref{lin-4}, we first define a positive constant $d$ by the following implicit relation:
\begin{equation}\label{ddef}
\bigg[\frac{1}{3}\underline{R}\dot{R}(T)- \bigg(\frac{M^{2}}{6\underline{R}K}-\frac{M^{2}}{4K}\bigg)\bigg]e^{-4Kd}-\frac{1}{4}\frac{M^{2}}{K}-\frac{M^{2}}{2\underline{R}K}=0.
\end{equation}
The unique existence of such $d$ is guaranteed by the condition \eqref{N-K}. We introduce a set ${\mathcal T}$ and its supremum as follows.
\[  \mathcal{T} :=\Big\{ 
t\in[T,T+d) ~ 
: ~ R(t^{*})\geq R(T)\hspace{2mm}\forall t^{*}\in [T,t] \Big\}, \quad \tau :=\sup\mathcal{T}. \]
Since $T \in {\mathcal T}$, the set ${\mathcal T}$ is non-empty and $\tau$ is well defined. To prove a claim \eqref{lin-4}, it suffices to show
\[ \tau \geq T + d. \]
Suppose not, i.e. $\tau<T+d$.  By the continuity of $R$ which is guaranteed by Lemma \ref{L6.1} and definition of $\tau$, we have
\[
R(\tau)=R(T), \quad  \dot{R}(\tau)\leq0.
\] 
On the other hand, definition of $\tau$ allows us to use inequality \eqref{lin} 
in \eqref{sbound}, for every $t$ in the interval $[T,\tau]$. By doing so, we obtain  
\begin{align*}
S(t)&\geq\bigg(\frac{2}{3}R(T)\dot{R}(T) -R(T)\frac{M^{2}}{3\underline{R}K}+\frac{M^{2}}{2K}\bigg)e^{-4K(t-T)}-\frac{M^{2}}{2K}\\
&\geq\bigg(\frac{2}{3}R(T)\dot{R}(T)-\frac{M^{2}}{3\underline{R}K}+\frac{M^{2}}{2K}\bigg)e^{-4K(t-T)}-\frac{M^{2}}{2K},  \quad \mbox{in $[T,\tau)$},    
\end{align*}
where we have used the fact that $R\leq1.$ Hence, another application of \eqref{lin} yields
\begin{align*}
\dot{R}(\tau) & \geq\bigg[\frac{1}{3}\underline{R}\dot{R}(T)- \bigg(\frac{M^{2}}{6\underline{R}K}-\frac{M^{2}}{4K}\bigg)\bigg]e^{-4K(\tau-T)}-\frac{1}{4}\frac{M^{2}}{K}-\frac{M^{2}}{2\underline{R}K}.\\
 & >\bigg[\frac{1}{3}\underline{R}\dot{R}(T)- \bigg(\frac{M^{2}}{6\underline{R}K}-\frac{M^{2}}{4K}\bigg)\bigg]e^{-4Kd}-\frac{1}{4}\frac{M^{2}}{K}-\frac{M^{2}}{2\underline{R}K}\\
 & =0, 
\end{align*}
In the second inequality, we have used the condition on $K$ in \eqref{N-K}, the assumption
$\tau<T+d,$
and the strict 
monotonicty of the exponential function.
Thus, we reach a contradiction. Hence, we 
conclude $\tau=T+d.$
By the previous argument, we have
\begin{equation}\label{drl}
\dot{R}(t)\geq\bigg[\frac{1}{3}\underline{R}\dot{R}(T)- \bigg(\frac{M^{2}}{6\underline{R}K}-\frac{M^{2}}{4K}\bigg)\bigg]e^{-4K(t-T)}-\frac{1}{4}\frac{M^{2}}{K}-\frac{M^{2}}{2\underline{R}K}, \quad \mbox{in $[T,T+d).$}
\end{equation}
On the other hand, we use definition of $d$ to see
\[
d=\frac{1}{4K}\log\bigg[\frac{1}{3}\underline{R}\dot{R}(T)-\bigg(\frac{M^{2}}{6\underline{R}K}-\frac{M^{2}}{4K}\bigg)\bigg]-\frac{1}{4K}\log\bigg(\frac{1}{4}\frac{M^{2}}{K}+\frac{M^{2}}{2\underline{R}K}
 \bigg).\]
For notational simplicity, we set
\[ a :=\frac{1}{3}\underline{R}\dot{R}(T)-\bigg(\frac{M^{2}}{6\underline{R}K}-\frac{M^{2}}{4K}\bigg) \quad \mbox{and} \quad b:=\frac{1}{4}\frac{M^{2}}{K}+\frac{M^{2}}{2\underline{R}K}.\]
It follows from $\eqref{drl}$ that we have
\begin{align*}
R(T+d)-R(d) &=\int_{T}^{T+d}\frac{d}{dt}R(t) ~ dt\geq\int_{T}^{T+d}[ae^{-4K(t-T)}-b] ~ dt,\\
&=[-\frac{a}{4K}e^{-4K(t-T)}-bt]\bigg|_{t=T}^{t=T+d}\\ 
&=\frac{a}{4K}\bigg(1-e^{-4Kd}\bigg)-bd.
\end{align*}
Then, by \eqref{ddef}, the assumption that $\dot{R}(T)=K\mu$ and definition of $a$ and $b$, we have the desired result.
\begin{align*}
&R (T+d)-R(d) \\
&\hspace{1.5cm} \geq\bigg[\frac{1}{12}\underline{R}\frac{\dot{R}(T)}{K}-\frac{1}{4K}\bigg(\frac{M^{2}}{6\underline{R}K}-\frac{M^{2}}{4K}\bigg)\bigg] \bigg[1-\frac{\frac{M^{2}}{4K}+\frac{M^{2}}{2\underline{R}K}}{\frac{1}{3}\underline{R}\dot{R}(T)-\bigg(\frac{M^{2}}{6\underline{R}K}-\frac{M^{2}}{4K}\bigg)}\bigg]\\
&\hspace{1.5cm} -\bigg(\frac{M^{2}}{4K}+\frac{M^{2}}{2\underline{R}K}\bigg)\bigg[\frac{1}{4K}\log\frac{\frac{1}{3}\underline{R}\dot{R}(T)-\bigg(\frac{M^{2}}{6\underline{R}K}-\frac{M^{2}}{4K}\bigg)}{\frac{M^{2}}{4K}+\frac{M^{2}}{2\underline{R}K}}\bigg]\\
& \hspace{1.5cm}\geq\frac{1}{12}\underline{R}\mu-\frac{1}{12}\underline{R}\mu\frac{\frac{M^{2}}{4K}+\frac{M^{2}}{2\underline{R}K}}{\frac{1}{3}\underline{R}K\mu-\bigg(\frac{M^{2}}{6\underline{R}K}-\frac{M^{2}}{4K}\bigg)} \\
& \hspace{1.5cm}-\frac{1}{4K}\bigg(\frac{M^{2}}{6\underline{R}K}-\frac{M^{2}}{4K}\bigg)\bigg[1-\frac{\frac{M^{2}}{4K}+\frac{M^{2}}{2\underline{R}K}}{\frac{1}{3}\underline{R}K\mu- \bigg(\frac{M^{2}}{6\underline{R}K}-\frac{M^{2}}{4K}\bigg)}\bigg]\\
 &\hspace{1.5cm}-\bigg(\frac{M^{2}}{4K}+\frac{M^{2}}{2\underline{R}K}\bigg)\bigg[\frac{1}{4K}\log\frac{\frac{1}{3}\underline{R}K\mu-\bigg(\frac{M^{2}}{6\underline{R}K}-\frac{M^{2}}{4K}\bigg)}{\frac{M^{2}}{4K}+\frac{M^{2}}{2\underline{R}K}}\bigg].
\end{align*}

\bigskip

\section{Proof of Proposition \ref{P6.1}} \label{App-D}
\setcounter{equation}{0}
Suppose that the assumptions $({\mathcal H}1)$ - $({\mathcal H}3)$ hold, and assume that there exists $t_{0}\geq0$ and $\underbar{R}>0$ such that 
\[
 R(t_{0})\geq R_0, \quad  \inf_{0 \leq t \leq t_0} R(t) > \underbar{R}.
\]
Let $t\geq t_{0}$ be an instant satisfying $\dot{R}(t)\leq0$. Then for such $t$, in which $R$ is in non-increasing mode,  we claim:
\begin{equation}\label{E-NN}
\mathcal{M}(L^+_{\frac{\pi}{3}}(t))\geq\frac{1}{2}(R(t_{0})+1)-E_{1}.
\end{equation}
For the proof of claim, we consider a set ${\mathcal N}(t_0)$ consisting of non-increasing moments of $R$ after $t_0$:
\[  \mathcal{N}(t_0) := \bigg\{ t\geq t_{0} ~ : ~ \dot{R}(t)\leq0\bigg\},\]
and the set $\mathcal{T}(t_0)$:
\[
\mathcal{T}(t_0) := \bigg\{ s \in [t_{0}, \infty)~:~\eqref{E-NN} ~\ \mbox{holds} \hspace{1em}\forall t\in [t_{0},s] \cap {\mathcal N}(t_0)\bigg\}.
\]
We set 
\[ T^*(t_0):=\sup \mathcal{T}(t_0). \]
Notice that $[t_0, T^*(t_0)) \subset \mathcal T(t_0)$ and it suffices now to prove $T^*(t_0)=\infty$.
Since the proof is rather long, we split its proof into several steps. \newline

\noindent $\bullet$ Step A (the set $\mathcal{T}(t_0)$ is not empty): \newline

\noindent If ${\dot R}(t_0) > 0$,  the defining relation of the set ${\mathcal T}(t_0)$ holds trivially. Thus, $t_0 \in {\mathcal T}(t_0)$. \newline 
If ${\dot R}(t_0) \leq 0$, then it follows from Lemma \ref{L6.3} that \eqref{E-NN} holds for $t=t_0$.

Thus, $t_{0}\in\mathcal{T}(t_0)$. In any case, the set ${\mathcal T}(t_0)$ is not empty. Thus, its supremum exists and lies in the set $[t_0, \infty]$. \newline

\noindent $\bullet$ Step B (the supremum $T^*(t_0) = \infty$): Suppose not, i.e.,
\[ T^*(t_0) < \infty. \]

\noindent $\diamond$ Step B.1: We want to show
\begin{equation}\label{rlo5}
R(t) \geq R(t_{0})-2E_{1}-E_{2}, \quad t \in [t_{0},T^*(t_0)),
\end{equation}
where $E_{1}$ and $E_{2}$ were defined in assumption $({\mathcal H}3)$. To see this, first note that
\[
R(t_0) \geq R(t_0) -2E_1 - E_2
\]
from $(\mathcal H 3)$. Second, $R$ and $\dot R$ are Lipschitz continuous due to the Lemma \ref{L6.2}. Thus, for each $t \in [t_0, T^*(t_0))$ such that $\dot R(t)\leq 0$, from the definition of $T^*(t_0)$, we have \eqref{E-NN}, which implies from Lemma \ref{L6.3}
\[
R(t) \geq R(t_0) -2E_2 - E_1.
\]
This means the quantity $R(t_0) - 2E_2 - E_1$ is a lower bound for $R$ in $[t_0, T^*(t_0))$. This shows the claim.

\vspace{0.5cm}

\noindent $\diamond$ Step B.2: we claim:
\begin{equation} \label{Extra-1}
 \dot{R}(T^*(t_0))=0. 
\end{equation} 
This property comes directly form the continuity of $\dot R$ and $\mathcal M(L^+_\frac{\pi}{3}(t))$. \newline

$\clubsuit$~~Case A: Suppose $\dot R(T^*(t_0)) > 0$. Then there exists a time interval $(T^*(t_0)-\eta, T^*(t_0)+\eta)$ such that
\[
\dot R(t)>0 \quad \mbox{for} \quad t\in (T^*(t_0) - \eta, T^*(t_0) + \eta),
\]
which contradicts to definition of $T^*(t_0) = \sup\mathcal T$. \newline

$\clubsuit$~~Case B: Suppose $\dot R(T^*(t_0)) < 0$. In this case, we have
\[
\dot R(t) < 0 \quad \mbox{for}\quad t \in (T^*(t_0)-\eta, T^*(t_0)+\eta).
\]
By Lemma \ref{L6.4}, we have 
\[ \frac{d}{dt}\mathcal M(L^+_\frac{\pi}{3}(t)) \geq 0 \quad \mbox{for}~~t \in (T^*(t_0) - \eta, T^*(t_0)+\eta). \]
Here we used Step B.1 to satisfy the condition $R(t)\geq \underline R$.
This gives
\[
\mathcal M(L^+_\frac{\pi}{3}(t)) \geq\mathcal M(L^+_\frac{\pi}{3}(T^*(t_0))) \geq \frac{1}{2}(R(t_0) +1) -E_1  \quad \mbox{for}\quad T^*(t_0) \leq t < T^*(t_0) + \eta,
\]
which also contradicts to definition of $T^*(t_0)$. Thus, we obtain the desired result \eqref{Extra-1}.

\vspace{0.5cm}

\noindent $\diamond$ Step B.3: In this part, we want to show that the mass in the interval $L^+_{\frac{\pi}{3}}$ at $T^*(t_0)$ satisfies   
\[
\mathcal M(L^+_{\frac{\pi}{3}}(T^*(t_0))) \geq\frac{1}{2}(R(t_{0})+1)-E_{1}. 
\] 
Notice that from Step B.2 and Lemma \ref{L6.3}, we have
\[
\mathcal M(L^+_{\frac{\pi}{3}}(T^*(t_0))) \geq \frac{1}{2}\Big(R(T^*(t_0)) + 1\Big) - E_1
\]
thus it suffices to show $R(T^*(t_0)) \geq R(t_0)$. Now, consider the two cases:\newline

$\clubsuit$~~Case A ($\mathcal N(t_0) \cap [t_{0},T^*(t_0))=\emptyset$): In this case, since we have $\dot R(t) \geq 0$ for $t\in [t_0, T^*(t_0)]$, we get
\[ R(T^*(t_0)) \geq R(t_0).  \]

$\clubsuit$~~Case B ($\mathcal N(t_0) \cap [t_0, T^*(t_0)) \neq \emptyset$): We define $t_s := \sup \Big( \mathcal N(t_0) \cap [t_0, T^*(t_0)) \Big)$. 
\begin{itemize}
\item Suppose there exists a sequence $\{t_k\} \subset \mathcal N(t_0) \cap [t_0, T^*(t_0))$ such that $t_k\uparrow T^*(t_0)$ and $\dot R(t_k) \leq 0$ for all $k\in \mathbb N$, i.e., $t_s = T^*(t_0)$. Then, we have
\[
\mathcal M(L^+_\frac{\pi}{3}(t_k)) \geq \frac{1}{2}(R(t_0) +1) - E_1 \quad \mbox{for} \quad k \in \mathbb N.
\]
Thus, by the continuity of $\mathcal M(L^+_\frac{\pi}{3}(t))$, we obtain
\[
\mathcal M(L^+_\frac{\pi}{3}(T^*(t_0))) \geq \frac{1}{2}(R(t_0) +1) - E_1.
\]
\item For the case of $t_s < T^*(t_0)$, we have 
\[ \dot R(t) >0 \quad \mbox{for}~~ t \in (t_s, T^*(t_0)). \]
Lemma \ref{L6.3} and $\dot R(t_s)\leq 0$ imply
\[
\mathcal M(L^+_\frac{\pi}{3}(t_s)) \geq \frac{1}{2}(R(t_0) +1) - E_1.
\]
We now investigate the mass $\mathcal M(L^+_\frac{\pi}{3}(t))$ for $t\in(t_s, T^*(t_0)]$:
\begin{enumerate}
\item If $\dot R(t) < K\mu$ for all $t \in (t_s, T^*(t_0)]$, by Lemma \ref{L6.4}, the mass is non-decreasing, i.e., $\frac{d}{dt} \mathcal M(L^+_\frac{\pi}{3}(t)) \geq 0$ for $t \in (t_s, T^*(t_0)$. Thus, we attain
\[
\mathcal M(L^+_\frac{\pi}{3}(T^*(t_0))) \geq\mathcal M(L^+_\frac{\pi}{3}(t_s)) \geq \frac{1}{2}(R(t_0)+1) - E_1.
\]
\item Suppose $\dot R(t) \geq K\mu$ for some $t \in (t_s, T^*(t_0)]$. Since $\dot R(t_s)\leq 0$ and the continuity of $\dot R$, there exists $t_c$ such that
\[
\dot R(t) < K\mu \quad \mbox{for} \quad t\in (t_s, t_c) \quad \mbox{and} \quad \dot R(t_c) = K\mu.
\]
By Lemma \ref{L6.5}, there exists a positive constant $d$ such that
\[ \dot R(t) > 0 \quad \mbox{for} \quad t\in [t_c, t_c+d) \quad \mbox{and} \quad R(t_c+d) - R(t_c) \geq \frac{R_0}{24}\mu - E_3. \]
Since we have $\dot R(T^*(t_0)) = 0$ from Step B.2 and the contiunity of $\dot R$, we attain $t_c +d < T^*(t_0).$ By the definition of $t_s$, we have $\dot R(t)>0$ for $t\in (t_s, T^*(t_0))$, which implies
\[
R(T^*(t_0)) - R(t_s) \geq R(t_c + d) - R(t_c) \geq \frac{R_0}{24}\mu - E_3
\]
By Step B.1 and $(\mathcal H3)$, we obtain
\begin{align*}
R(T^*(t_0)) &\geq R(t_s) + \frac{R_0}{24}\mu - E_3\\
&\geq R(t_0) + \frac{R_0}{24}\mu - E_3 - E_2 - 2E_1\\
&\geq R(t_0).
\end{align*}
We again use Lemma \ref{L6.3} for the result of Step B.2 to get
\[
\mathcal M(L^+_\frac{\pi}{3}(T^*(t_0))) \geq \frac{1}{2}(R(T^*(t_0)) + 1) - E_1 \geq \frac{1}{2}(R(t_0) + 1) - E_1, 
\]
which conclude Step B.3.
\end{enumerate}
\end{itemize}

\noindent $\diamond$ Step B.4: Finally, we show $T^*(t_0) = \infty$.
Since $\dot R(T^*(t_0)) = 0$ from Step B.2 and the continuity of $\dot R$, there is a small time interval such that
\[
\dot R(t) < K\mu \quad \mbox{for} \quad t\in (T^*(t_0) - \eta, T^*(t_0) + \eta),
\]
which implies
\[
\frac{d}{dt} \mathcal M(L^+_\frac{\pi}{3} (t)) \geq 0 \quad \mbox{for} \quad t\in(T^*(t_0) - \eta, T^*(t_0) + \eta)
\]
by Lemma \ref{L6.4} where we use Step B.3 to satisfy the condition $R(t)\geq \underline R$. Thanks to the result of Step B.3, we have
\begin{align*}
\mathcal M(L^+_\frac{\pi}{3} (t)) &\geq \mathcal M(L^+_\frac{\pi}{3} (T^*(t_0)))\\
&\geq \frac{1}{2}(R(t_0)+1) - E_1 \quad \mbox{for} \quad T^*(t_0) \leq t < T^*(t_0) + \eta,
\end{align*}
which contradicts to the definition of $T^*(t_0)$. Therefore, we conclude that $T^*(t_0) = \infty$.

\vspace{1cm}

\section{Proof of Corollary \ref{C6.2}} \label{App-E}
\setcounter{equation}{0}

We next show that the $L^{2}$ norm of $\varrho$ in an interval
of length $\frac{\pi}{3},$ centered at $-\phi(t),$ decays exponentially after some finite time. For this, we define for each $t \geq 0$ and $\omega$ in $[-M,M]$,  a functional
\[
\Gamma^-_{\frac{\pi}{3},\omega}(t):=\int_{L^-_{\frac{\pi}{3}}(t)}|\varrho(\theta,\omega, t)|^{2} ~ d\theta.
\] 
By Corollary \ref{C6.1} and $({\mathcal H}2)$, we have 
\[ \inf_{0 \leq t < \infty} R(t) \geq \frac{R_0}{2}. \]
Let $\varepsilon>0$ be sufficiently small so that
\begin{equation}\label{con2}
\frac{2M}{KR_0}+\frac{4}{K}\frac{M}{R_0^{2}}+\frac{2\sqrt{2}}{R_0 \sqrt{R_0}}\sqrt{\frac{M}{K}+\mu+\varepsilon}-\frac{1}{2}<0. 
\end{equation}
The existence of such $\varepsilon$ is guaranteed by the assumption $({\mathcal H}2)$.  We set boundary values:
\[
\tilde{B}_{-,\omega}(t) :=\varrho \Big(\phi(t)+\frac{\pi}{2}+\frac{\pi}{3}, \omega, t \Big),  \qquad 
\tilde{B}_{+,\omega}(t) := \varrho \Big(\phi(t)+\frac{3\pi}{2}-\frac{\pi}{3}, \omega, t \big). 
\]
By the same argument as in Section \ref{sec:5.3.2}, we have
\begin{align}
\begin{aligned}\label{T-0-0}
\frac{d}{dt} \Gamma^-_{\frac{\pi}{3},\omega}(t) & =\dot{\phi}(t)\big(\tilde{B}_{+,\omega}(t)\big)^{2}-\dot{\phi}(t)\big(\tilde{B}_{-,\omega}(t)\big)^{2}+2\int_{L^-_{\frac{\pi}{3}}(t)} \varrho \partial_{t} \varrho ~ d\theta\\
 & =:\dot{\phi}(t)\Big[\big(\tilde{B}_{+,\omega}(t)\big)^{2}-\big(\tilde{B}_{-,\omega}(t)\big)^{2}\Big]+ {\mathcal I}_4,
\end{aligned}
\end{align}
where 
\begin{align}
\begin{aligned}\label{T-0}
\mathcal{I}_4(t) & =-2\int_{L^-_{\frac{\pi}{3}}(t)} \varrho \partial_{\theta}\Big[\varrho \big(\omega-KR\sin(\theta-\phi)\big)\Big] ~ d\theta\\
 & =-2\int_{L^-_{\frac{\pi}{3}}(t)} \Big[ (\varrho \partial_{\theta} \varrho)\big(\omega-KR \sin(\theta-\phi)\big)- KR \varrho^{2} \cos(\theta-\phi)  \Big] ~ d\theta\\
 & =-\int_{L^-_{\frac{\pi}{3}}(t)} (\partial_{\theta} \varrho^{2} )\big(\omega-KR \sin(\theta-\phi)\big) ~ d\theta +2 KR
  \int_{L^-_{\frac{\pi}{3}}(t)}\varrho^2 \cos(\theta-\phi) ~ d\theta\\
 & =:\mathcal{I}_{41}(t)+\mathcal{I}_{42}(t).
\end{aligned}
\end{align}
Below, we estimate the terms ${\mathcal I}_{4i},~i=1,2$ separately. \newline

\noindent $\bullet$ (Estimate of ${\mathcal I}_{41}$): By integration by parts, we have
\begin{align} \label{T-1}
\begin{aligned}\mathcal{I}_{41}(t) & =-\Big[\big(\tilde{B}_{+,\omega}(t)\big)^{2}\big(\omega-KR \sin(\frac{3\pi}{2}-\frac{\pi}{3})\big)
-\big(\tilde{B}_{-,\omega}(t)\big)^{2}\big(\omega-KR \sin(\frac{\pi}{2}+\frac{\pi}{3})\big)\Big]\\
 & \quad-KR\int_{L^-_{\frac{\pi}{3}}(t)} \varrho(\theta,\omega, t)^{2}\cos(\theta-\phi) ~ d\theta\\
 & =:\mathcal{I}_{411}(t)+\mathcal{I}_{412}(t).
\end{aligned}
\end{align}
By rearranging the terms in ${\mathcal I}_{411}$, we obtain
\begin{equation} \label{T-2}
 {\mathcal I}_{411} = -\omega\Big[\big(\tilde{B}_{+,\omega}(t)\big)^{2}-\big(\tilde{B}_{-,\omega}(t)\big)^{2}\Big]- \frac{KR}{2}\Big[\big(\tilde{B}_{+,\omega}(t)\big)^{2}+\big(\tilde{B}_{-,\omega}(t)\big)^{2}\Big]. 
\end{equation} 
We also combine the terms ${\mathcal I}_{42}$ and ${\mathcal I}_{412}$ and use 
\[  \cos(\theta-\phi) \leq -\sin\frac{\pi}{3} = -\frac{\sqrt{3}}{2} \quad \mbox{on}~~L^-_{\frac{\pi}{3}}(t),     \]
to obtain
\begin{align}
\begin{aligned}  \label{T-3}
\mathcal{I}_{42}(t)+\mathcal{I}_{412}(t)  &=   KR \int_{L^-_{\frac{\pi}{3}}(t)} \varrho(\theta, \omega, t)^2 \cos(\theta-\phi) ~ d\theta \\
 &\leq-\frac{KR \sqrt{3}}{2} \Gamma^-_{\frac{\pi}{3},\omega}(t).
\end{aligned}
\end{align}

Finally, in \eqref{T-0-0} we combine \eqref{T-0}, \eqref{T-1}, \eqref{T-2} and \eqref{T-3} to obtain
\begin{align*}
\frac{d}{dt} \Gamma^-_{\frac{\pi}{3},\omega}(t) & \leq(\dot{\phi}(t)-\omega)\Big[\big(\tilde{B}_{+,\omega}(t)\big)^{2}-\big(\tilde{B}_{-,\omega}(t)\big)^{2}\Big]\\
 &- \frac{KR}{2} \Big[\big(\tilde{B}_{+,\omega}(t)\big)^{2}+\big(\tilde{B}_{-,\omega}(t)\big)^{2}\Big]- \frac{KR \sqrt{3}}{2} \Gamma^-_{\frac{\pi}{3},\omega}(t) \\
 & \leq\big(-\frac{KR}{2} +|\dot{\phi}(t)-\omega|\big)\Big[\big(\tilde{B}_{+,\omega}(t)\big)^{2}+\big(\tilde{B}_{-,\omega}(t)\big)^{2}\Big] \\
 &-\frac{KR \sqrt{3}}{2}  \Gamma^-_{\frac{\pi}{3},\omega}(t).
\end{align*}
By Corollary \ref{C6.1}, there exists $T\geq0$ such that
\[ \dot{R}\leq K\mu+K\varepsilon,  \quad \mbox{in $[T,\infty)$}. \]
Using similar arguments as in Lemma \ref{L6.2}, we have
\[
|\dot{\phi|} < \frac{2M}{R_0}+\sqrt{\frac{2K}{R_0}}\sqrt{M+\dot{R}} \leq  \frac{2M}{R_0}+\sqrt{\frac{2K}{R_0}}\sqrt{M+ K(\mu + \varepsilon)},
\]
where we used $({\mathcal H}3)$ and Corollary \ref{C6.1} to see
\[ R(t) > \frac{R_0}{2}, \quad t \in [0,\infty),  \]
Thus, by assumption, for any $t \geq T$ we have
\begin{align*}
\frac{d}{dt} \Gamma^-_{\frac{\pi}{3},\omega}(t)&\leq\bigg(\frac{2M}{R_0}+\sqrt{\frac{2K}{R_0}}\sqrt{M+K\mu+K\varepsilon}+M-K\frac{R_0}{4}\bigg)\\
&\times\Big[\big(\tilde{B}_{+,\omega}(t)\big)^{2}+\big(\tilde{B}_{-,\omega}(t)\big)^{2}\Big] -\frac{1}{4}KR_0 \Gamma^-_{\frac{\pi}{3},\omega}(t)\\
&=\frac{KR_0}{2}\bigg(\frac{2M}{KR_0}+\frac{4}{K}\frac{M}{R_0^{2}}+\frac{2\sqrt{2}}{R_0 \sqrt{R_0}}\sqrt{\frac{M}{K}+\mu+\varepsilon}-\frac{1}{2}\bigg)\\
 &\times\Big[\big(\tilde{B}_{+,\omega}(t)\big)^{2}+\big(\tilde{B}_{-,\omega}(t)\big)^{2}\Big] -\frac{1}{4}KR_0 \Gamma^-_{\frac{\pi}{3},\omega}(t) \\
 &\leq -\frac{1}{4}KR_0 \Gamma^-_{\frac{\pi}{3},\omega}(t),
\end{align*}
where we used \eqref{con2}. Then, Gronwall's lemma yields the desired exponential decay:
\[ \Gamma^-_{\frac{\pi}{3},\omega}(t)\leq e^{-\frac{KR_0}{4}(t-T)} \Gamma^-_{\frac{\pi}{3},\omega}(T),  \quad t \in [T, \infty).
\]
On the other hand for $t \in [T, \infty)$ we have
\begin{align*}
& \iint_{L^-_{\frac{\pi}{3}}(t) \times \mathbb R}|f|^2  ~ d\theta d\omega  \\
& \hspace{1cm} =\int_{\mathbb R} g^{2}(\omega)\int_{L^-_{\frac{\pi}{3}}(t)} |\varrho|^{2} ~ d\theta d\omega  =\int_{\mathbb R} g^{2}(\omega)\Gamma^-_{\frac{\pi}{3},\omega}(t) ~ d\omega\\
& \hspace{1cm} \leq e^{-\frac{KR(0)}{4}(t-T)}\int_{\mathbb R} g^{2}(\omega) \Gamma^-_{\frac{\pi}{3},\omega}(T) ~ d\omega =e^{-\frac{KR(0)}{4}(t-T)}\iint_{L^-_{\frac{\pi}{3}}(T)\times \mathbb R} |f(T)|^2  ~ d\theta d\omega.
\end{align*}
Thus, we obtain the estimate \eqref{decay2}. The second estimate in \eqref{decay2}
is a consequence of the first inequality in \eqref{decay2} and Cauchy-Schwarz inequality.

\vspace{1cm}

\section{Dynamics of the Kuramoto-Sakaguchi vector field}  \label{App-F}
\setcounter{equation}{0}
In this section, we study analytical properties of integral curves for the Kuramoto-Sakaguchi vector field ${\mathcal X}$ defined by
\begin{equation} \label{KV}
{\mathcal X}(\theta,\omega,t) := \Big(\omega-KR(t)\sin(\theta-\phi(t)),0,1 \Big). 
\end{equation}
Before we study several properties of the integral curves associated with \eqref{KV}, we briefly discuss well-posedness of an autonomous ODE. In the sequel, we assume that $T_{\kappa}$ is a positive constant satisfying
\begin{equation}\label{rlower}
 R>\kappa\hspace{1em}\mbox{in\hspace{1em}}[T_{\kappa},\infty)
 \hspace{1em} {\rm{and}} \hspace{1em} R> {\underline R} \quad {\rm{in}} \hspace{1em} [0,\infty),
\end{equation}
for some positive constants $\underline R$ and $\kappa.$ \newline

It follows from Lemma \ref{L5.3} and \ref{L6.2} that ${\mathcal X}$ is Lipschitz in the given domain. Recall from $({\mathcal H}4)$ just before Proposition \ref{P6.1}  that
\begin{equation} \label{ek}
  \varepsilon_{\kappa}=\frac{\kappa+1}{\kappa^{2}}\frac{M}{K}+\frac{(1-\kappa)}{\kappa} < 1.
\end{equation}
Under the assumptions $({\mathcal H}1) - ({\mathcal H}4)$, by Proposition \ref{P6.2}, there exists $\kappa$ satisfying \eqref{ek} and \eqref{rlower} for some $T_{\kappa}.$

We study properties of the integral curves for \eqref{KV} which have been used in the proof of Theorem \ref{T3.3}. \newline

For a given $(\theta^{*},\omega^{*})$ in $\mathbb{T}\times[-M,M]$ and $t^{*}$
  in $[T_{\kappa},\infty),$ let $(\theta(t) := (\theta(t;t^*, \theta^*, \omega^*),  \omega(t):= \omega(t;t^*, \theta^*, \omega^*))$ be a characteristic curve of \eqref{K-S}, i.e., it is a solution to the Cauchy problem for the following ODE:
\begin{equation}\label{a}
\begin{dcases}
{\dot \theta}(t) =\omega(t)-KR \sin(\theta(t)-\phi(t)), \quad {\dot \omega}(t)=0, \quad t > T_{\kappa}, \\
 (\theta(t^{*}), \omega(t^*))=(\theta^{*}, \omega^{*}).
\end{dcases}
\end{equation}
Since the vector field ${\mathcal X}$ is Lipschitz and $\mathbb{T}\times[-M,M]$ is compact, by Cauchy Lipschitz theorem, characteristics $(\theta(t), \omega(t))$ exists globally and is unique. Below, we will study how the inner product between $e^{{\mathrm i} \theta(t)}$ and $e^{{\mathrm i}\phi(t)}$ can be controlled from above by the solution of an 
 autonomous first order ODE.  For this, we first define a {\it barrier}.
 
\begin{definition}
\emph{(Barrier)}
For  $(t^*, p^*)$ satisfying 
\[ t^{*}\in[T_{\kappa},\infty) \quad \mbox{and} \quad -\sqrt{1-\varepsilon_{\kappa}^{2}} \leq p^* \leq \sqrt{1-\varepsilon_{\kappa}^{2}}, \]
the map $p$:
 \[
p ~ : ~ [T_{\kappa},t^{*}]\rightarrow(-1,1),
\] 
is said to be a barrier through $p^{*}$ at $t^{*}$ if it satisfies 
\begin{equation} \label{Ode2}
p(t^{*})=p^{*}, \quad \mbox{and} \quad  \dot{p}= \kappa K \big(\sqrt{1-p^{2}}-\varepsilon_{\kappa} \big)\sqrt{1-p^{2}}.
\end{equation}
\end{definition}

Since the right-hand side of \eqref{Ode2} is not Lipscthiz at $|p| = 1$, uniqueness is not clear a priori. However, it can be shown that there exists a unique such map $p = p(t).$ 

\begin{lemma}\label{LG2} Suppose that $\mbox{supp}~g \subset [-M, M]$, and~ $p^{*}, t^{*}$ are positive constants satisfying
\begin{equation}\label{b}
-\sqrt{1-\varepsilon_{\kappa}^{2}} \leq p^{*} \leq \sqrt{1-\varepsilon_{\kappa}^{2}}, \qquad 
 t^{*}\geq T_{\kappa}.
\end{equation}
Then, the barrier $p = p(t)$ through $p^{*}$
at $t^{*}$ is unique and satisfies
\[ -\sqrt{1-\varepsilon_{\kappa}^{2}}\leq p(t)\leq\sqrt{1-\varepsilon_{\kappa}^{2}}, \quad \mbox{for all} ~t. \]
\end{lemma}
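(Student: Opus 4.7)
The plan is to read off everything from the scalar drift
\[ f(p) := \kappa K\bigl(\sqrt{1-p^{2}}-\varepsilon_{\kappa}\bigr)\sqrt{1-p^{2}}, \]
which is analytic on the open interval $(-1,1)$ and vanishes inside this interval exactly at $p=\pm\sqrt{1-\varepsilon_{\kappa}^{2}}$. Since $\varepsilon_{\kappa}<1$ by hypothesis, these two zeros lie strictly inside $(-1,1)$, and one checks immediately that $f(p)>0$ on the open interval $\bigl(-\sqrt{1-\varepsilon_{\kappa}^{2}},\,\sqrt{1-\varepsilon_{\kappa}^{2}}\bigr)$ while $f(p)<0$ on $\bigl(\sqrt{1-\varepsilon_{\kappa}^{2}},1\bigr)\cup\bigl(-1,-\sqrt{1-\varepsilon_{\kappa}^{2}}\bigr)$. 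Thus the two constant functions $p(t)\equiv\pm\sqrt{1-\varepsilon_{\kappa}^{2}}$ are solutions of the barrier ODE.

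Next I would invoke Picard--Lindel\"of. The function $f$ is $C^{1}$ on $(-1,1)$, hence locally Lipschitz there; in particular it is Lipschitz on any compact subinterval of $(-1,1)$. Starting from the terminal condition $p(t^{*})=p^{*}$ with $p^{*}\in\bigl[-\sqrt{1-\varepsilon_{\kappa}^{2}},\,\sqrt{1-\varepsilon_{\kappa}^{2}}\bigr]$, a local backward-in-time solution exists. By the uniqueness half of Picard--Lindel\"of applied at each of the equilibria $\pm\sqrt{1-\varepsilon_{\kappa}^{2}}$, no trajectory can cross either constant solution. This comparison confines $p(t)$ to the compact set $\bigl[-\sqrt{1-\varepsilon_{\kappa}^{2}},\,\sqrt{1-\varepsilon_{\kappa}^{2}}\bigr]\subset(-1,1)$ for all times in $[T_{\kappa},t^{*}]$, which is exactly the bound claimed in the lemma.

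Having the a priori confinement in a compact set where $f$ is globally Lipschitz, the standard continuation argument shows that the maximal backward solution extends to all of $[T_{\kappa},t^{*}]$ (in fact to all of $\mathbb{R}$) and is unique on that interval. The endpoint cases $p^{*}=\pm\sqrt{1-\varepsilon_{\kappa}^{2}}$ deserve a brief separate note: the constant function is clearly one solution, and since $f$ is $C^{1}$ in a neighbourhood of the equilibrium, Picard--Lindel\"of forces it to be the only one.

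The only place where something could go wrong is the non-Lipschitz behaviour of $f$ at $p=\pm1$, but under our assumption $\varepsilon_{\kappa}<1$ the confining equilibria $\pm\sqrt{1-\varepsilon_{\kappa}^{2}}$ sit strictly inside $(-1,1)$ and cut the trajectory off well before it can approach those bad points. So the main (and essentially only) step is the comparison-with-equilibrium argument that yields the a priori $L^{\infty}$ bound; uniqueness is then immediate from the resulting Lipschitz estimate.
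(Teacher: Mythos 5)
Your proof is correct and follows essentially the same route as the paper: both arguments rest on the two constant solutions $p\equiv\pm\sqrt{1-\varepsilon_{\kappa}^{2}}$ together with Cauchy--Lipschitz uniqueness to prevent any trajectory from crossing them, which yields the confinement and then uniqueness and existence on all of $[T_{\kappa},t^{*}]$. The only cosmetic difference is that the paper truncates $F_{\kappa}$ to a compactly supported Lipschitz field coinciding with it on $[-\sqrt{1-\varepsilon_{\kappa}^{2}},\sqrt{1-\varepsilon_{\kappa}^{2}}]$ before invoking Cauchy--Lipschitz, whereas you work directly with the locally Lipschitz field on $(-1,1)$ and combine confinement with a continuation argument; both are sound.
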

\begin{proof}
Let $\varepsilon>0$ be a positive constant satisfying
\[ \sqrt{1-\varepsilon_{\kappa}^{2}}+\varepsilon<1. \]
Recall $F_\kappa$ defined in \eqref{FF}:
\[F_\kappa (q) := \kappa K (\sqrt{1-q^{2}}-\varepsilon_{\kappa} )\sqrt{1-q^{2}}, \quad q \in [-1, 1]. \]
Choose a Lipschitz function $\tilde{F}_\kappa$ compactly supported on $(-1,1)$ that coincides with $F_\kappa$ in $[-\sqrt{1-\varepsilon^2_\kappa}, 
\sqrt{1-\varepsilon^2_\kappa}]$.
Since $\tilde{F}_\kappa$ has compact support, it follows from Cauchy-Lipschitz
theorem that the equation 
\[
\dot{q}=\tilde{F}_\kappa (q),
\]
has a unique solution. Moreover, since
\[
q_{1}(t)=-\sqrt{1-\varepsilon_{\kappa}^{2}} \quad \mbox{and} \quad q_{2}(t)=\sqrt{1-\varepsilon_{\kappa}^{2}},
\]
are solutions,  uniqueness of ODE implies that any solution satisfying
\[-\sqrt{1-\varepsilon_{\kappa}^{2}}\leq q(t^*)\leq\sqrt{1-\varepsilon_{\kappa}^{2}},\]
satisfies 
\[
-\sqrt{1-\varepsilon_{\kappa}^{2}}\leq q(t)\leq\sqrt{1-\varepsilon_{\kappa}^{2}}, \quad \mbox{for all} ~ t.\]
This complete the proof.
\end{proof}
Next we study a quantitative growth estimate to be used in Corollary \ref{CG2}. 
\begin{lemma}\label{LG3}
Let $p^{*},$ $\varepsilon,$ and $t^{*}$ be positive constants satisfying
\begin{equation} \label{c}
0 < \varepsilon<\sqrt{1-\varepsilon_{\kappa}^{2}}, \qquad 
p^{*}=\sqrt{1-\varepsilon_{\kappa}^{2}}-\varepsilon, \qquad D(\varepsilon,\kappa) <t^{*}-T_{\kappa}.
\end{equation}
Then, there exists a unique constant $d<D(\varepsilon, \kappa)$ such that the barrier $p$ through $p^{*}$ at $t^{*}$ satisfies
\[
p(t^{*}-d)=-\sqrt{1-\varepsilon_{\kappa}^{2}}+\varepsilon,
\]
Here, $D(\varepsilon, \kappa)= \frac{2(\sqrt{1-\varepsilon_\kappa^2}-\varepsilon)}{F_\kappa(\sqrt{1-\varepsilon_\kappa^2}-\varepsilon)}$ as given in \eqref{FF}
\end{lemma}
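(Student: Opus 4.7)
The plan is to reduce the problem to a separation-of-variables computation for the barrier ODE $\dot{p} = F_{\kappa}(p)$, and then exploit the geometric shape of $F_{\kappa}$ on the interval $[-p^*,p^*]$. First, observe that on $(-\sqrt{1-\varepsilon_{\kappa}^{2}},\sqrt{1-\varepsilon_{\kappa}^{2}})$ one has $F_{\kappa}(p)>0$, so every barrier is strictly increasing on its domain. Combined with the invariance statement in Lemma \ref{LG2}, the backward trajectory starting at $p(t^*)=p^*$ decreases monotonically and remains in $(-\sqrt{1-\varepsilon_{\kappa}^{2}},\sqrt{1-\varepsilon_{\kappa}^{2}})$. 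Strict monotonicity immediately gives uniqueness of any $d$ with $p(t^*-d)=-p^*$.

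Next, separation of variables along the backward trajectory yields
\[
d = \int_{-p^*}^{p^*}\frac{dp}{F_{\kappa}(p)},
\]
whenever such a $d$ exists. To compare this with $D(\varepsilon,\kappa)=\frac{2p^*}{F_{\kappa}(p^*)}$, I would analyze $F_{\kappa}$ via the substitution $q=\sqrt{1-p^{2}}$: one has $F_{\kappa}(p)=\kappa K(q-\varepsilon_{\kappa})q$, which shows that $F_{\kappa}$ is even in $p$, strictly positive on $(-\sqrt{1-\varepsilon_{\kappa}^{2}},\sqrt{1-\varepsilon_{\kappa}^{2}})$, and strictly decreasing in $|p|$ on that interval (since both factors $q-\varepsilon_{\kappa}$ and $q$ are positive and decreasing in $|p|$). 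In particular $F_{\kappa}(p)\geq F_{\kappa}(p^*)$ for $|p|\leq p^*$, with strict inequality on a set of positive measure.

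Substituting this pointwise comparison into the integral representation gives
\[
d = \int_{-p^*}^{p^*}\frac{dp}{F_{\kappa}(p)} < \int_{-p^*}^{p^*}\frac{dp}{F_{\kappa}(p^*)} = \frac{2p^*}{F_{\kappa}(p^*)} = D(\varepsilon,\kappa),
\]
which is the desired strict bound. Existence of $d$ then follows by a continuity/intermediate value argument: running the ODE backward from $p^*$, the trajectory is defined as long as it stays inside the open interval $(-\sqrt{1-\varepsilon_{\kappa}^{2}},\sqrt{1-\varepsilon_{\kappa}^{2}})$, and the above estimate shows the hitting time of $-p^*$ is strictly less than $D(\varepsilon,\kappa)$, which by hypothesis \eqref{c} is less than $t^*-T_{\kappa}$, so the barrier is well-defined on the whole interval $[t^*-d,t^*]\subset[T_{\kappa},t^*]$.

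The only slightly delicate point is justifying the separation-of-variables formula near the endpoints: since $F_{\kappa}$ vanishes to first order at $\pm\sqrt{1-\varepsilon_{\kappa}^{2}}$, the integral $\int dp/F_{\kappa}(p)$ would diverge there, but the strict inequality $p^*<\sqrt{1-\varepsilon_{\kappa}^{2}}$ (equivalently $\varepsilon>0$) keeps the integrand bounded on $[-p^*,p^*]$, so the integral is finite and the analysis above goes through without issue.
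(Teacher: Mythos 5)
Your proposal is correct and follows essentially the same route as the paper: both arguments rest on the facts that $F_{\kappa}>0$ (hence the barrier is strictly monotone, giving uniqueness) and that $F_{\kappa}(p)\geq F_{\kappa}(p^*)$ for $|p|\leq p^*$, which bounds the transit time across $[-p^*,p^*]$ by $D(\varepsilon,\kappa)$; the paper integrates $\dot p\geq F_{\kappa}(p^*)$ in time while you separate variables, which is the same comparison. The only cosmetic improvement would be to state the existence step first (from the uniform lower bound $\dot p\geq F_{\kappa}(p^*)>0$ on $[-p^*,p^*]$ together with $D(\varepsilon,\kappa)<t^*-T_{\kappa}$ and Lemma \ref{LG2}), since your integral formula for $d$ presupposes that the hitting time exists.
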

\begin{proof}
It follows from Lemma \ref{LG2} that we have 
\[ -\sqrt{1-\varepsilon_{\kappa}{}^{2}}<p(t)<\sqrt{1-\varepsilon_{\kappa}{}^{2}}, \quad \mbox{for all $t$}. \]
Since
\[
\dot{p}=F_{\kappa}(p)\geq F_{\kappa}\big(\sqrt{1-\varepsilon_{\kappa}^{2}}-\varepsilon\big)>0,
\]
whenever $p$ is contained in the interval
\[
\bigg[-\sqrt{1-\varepsilon_{\kappa}^{2}}+\varepsilon,\sqrt{1-\varepsilon_{\kappa}^{2}}-\varepsilon\bigg],
\]
there exist a unique $d$ such that
\begin{equation}\label{d4}
p(t^{*}-d)=-\sqrt{1-\varepsilon_{\kappa}^{2}}+\varepsilon.
\end{equation}
Then, we have
\[
p(t^{*})-p(t^{*}-d) =\int_{t^{*}-d}^{t*}\dot{p} ~ dt  \geq\int_{t^{*}-d}^{t*}F_{\kappa}\big(\sqrt{1-\varepsilon_{\kappa}^{2}}-\varepsilon\big)dt =F_\kappa \big(\sqrt{1-\varepsilon_{\kappa}^{2}}-\varepsilon\big)d.
\]
This yields 
\[
d<\frac{p(t^{*})-p(t^{*}-d)}{F_{\kappa}\big(\sqrt{1-\varepsilon_{\kappa}^{2}}-\varepsilon\big)},
\]
and the desired estimate follows from \eqref{c} and \eqref{d4}.
\end{proof} 

 \begin{lemma}\label{LG1} Let $t^{*},\theta^{*},\omega^{*}$
and $p^{*}$ be constants satisfying 
\[ p*\in\big[-\sqrt{1-\varepsilon_{\kappa}^{2}},\sqrt{1-\varepsilon_{\kappa}^{2}}\big], \quad 
 \omega^*\in[-M,M], \quad t^{*}\geq T_{\kappa} \quad 
\mbox{and} \quad 
\cos(\theta^{*}-\phi(t^{*}))\leq p^{*}.
\]
Then, the characteristics $(\theta(t), \omega(t))$ through $(\theta^{*},\omega^{*})$
at $t^{*}$ satisfies
\[ \cos(\theta(t)-\phi(t))\leq p(t)
\hspace{1em}\forall t \in [T_{\kappa},t^{*}],
\]
where  $p$ is the barrier through $p^{*}$ at $t^{*}.$
\end{lemma}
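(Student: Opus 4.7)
The plan is first to derive the pointwise differential inequality $\dot h\geq F_\kappa(h)$ for $h(t):=\cos(\theta(t)-\phi(t))$, and then to compare $h$ with the barrier $p$ by a backward first-crossing argument. Differentiating $h$ along the characteristic \eqref{a} gives
\[
\dot h(t)=KR(t)(1-h(t)^2)-\sin(\theta(t)-\phi(t))\bigl(\omega-\dot\phi(t)\bigr).
\]
The second term is bounded in absolute value by $\sqrt{1-h^2}\,|\omega-\dot\phi|$; combining $|\omega|\leq M$ with the bound $|\dot\phi|\leq M/R+K(1-R)$ from Lemma \ref{L5.3}(i) and the hypothesis $R(t)\geq\kappa$ on $[T_\kappa,\infty)$ from \eqref{rlower} yields $|\omega-\dot\phi|\leq M+M/\kappa+K(1-\kappa)=K\kappa\,\varepsilon_\kappa$. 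Using also $R\geq\kappa$ in the first term produces $\dot h\geq K\kappa\sqrt{1-h^2}\bigl(\sqrt{1-h^2}-\varepsilon_\kappa\bigr)=F_\kappa(h)$ throughout $[T_\kappa,t^*]$. This is the core algebraic step: the constant $\varepsilon_\kappa$ in the barrier ODE \eqref{Ode2} is designed exactly so that this inequality holds.

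Next, I would run a first-crossing comparison, slightly perturbed to circumvent the non-Lipschitz behaviour of $F_\kappa$ at $\pm 1$. For each small $\eta>0$, let $p^\eta$ solve $\dot p^\eta=F_\kappa(p^\eta)-\eta$ with $p^\eta(t^*)=p^*$. By Lemma \ref{LG2} and standard continuous dependence on parameters, for $\eta$ sufficiently small $p^\eta$ exists on $[T_\kappa,t^*]$, remains in a compact subinterval of $(-1,1)$, and $p^\eta\to p$ uniformly as $\eta\downarrow 0$. Suppose for contradiction that $h(t_1)>p^\eta(t_1)$ for some $t_1\in[T_\kappa,t^*)$. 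Since $h(t^*)\leq p^*=p^\eta(t^*)$, continuity permits the definition $t_0:=\inf\{t\in(t_1,t^*]:h(t)\leq p^\eta(t)\}$, so that $h(t_0)=p^\eta(t_0)$ and $h>p^\eta$ on $[t_1,t_0)$. The left-derivative test at $t_0$ forces $\dot h(t_0)\leq\dot p^\eta(t_0)$, while the differential inequality from the first step combined with the perturbed ODE yields
\[
\dot h(t_0)\geq F_\kappa(h(t_0))=F_\kappa(p^\eta(t_0))=\dot p^\eta(t_0)+\eta>\dot p^\eta(t_0),
\]
a contradiction. Hence $h\leq p^\eta$ on $[T_\kappa,t^*]$ for every admissible $\eta$, and sending $\eta\downarrow 0$ delivers $h\leq p$, as required.

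The main obstacle is precisely that $F_\kappa$ is not Lipschitz at $\pm 1$ (the derivative $F_\kappa'$ blows up at the endpoints), so a direct Gronwall comparison between $h$ and $p$ is unavailable: the bound from the first step only produces $\dot h(t_0)\geq\dot p(t_0)$ at a hypothetical crossing point, which collapses to equality and fails to close the contradiction. The $\eta$-perturbation is exactly the standard device to create the strict gap $\dot h(t_0)>\dot p^\eta(t_0)$, and the fact that $p$ stays in the compact interval $[-\sqrt{1-\varepsilon_\kappa^2},\sqrt{1-\varepsilon_\kappa^2}]$ (Lemma \ref{LG2}) ensures that the perturbed barrier is globally defined on $[T_\kappa,t^*]$ and converges back to $p$ in the limit.
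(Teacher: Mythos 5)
Your proposal is correct and follows essentially the same route as the paper: you derive the pointwise inequality $\dot P \geq F_\kappa(P)$ for $P(t)=\cos(\theta(t)-\phi(t))$ from \eqref{a}, Lemma \ref{L5.3} and $R\geq\kappa$ (with the same identity $M+\tfrac{M}{\kappa}+K(1-\kappa)=K\kappa\varepsilon_\kappa$), and then compare backward in time with the barrier ODE \eqref{Ode2}. The only difference is technical: the paper exploits the strict inequality $\dot P > F_\kappa(P)$ coming from $R>\kappa$ in \eqref{rlower} (which is what it needs at a crossing point, where $P=p$ lies in the interior interval guaranteed by Lemma \ref{LG2}), whereas you obtain the same conclusion from the non-strict inequality via the standard $\eta$-perturbed barrier and a limiting argument, which is a valid and slightly more robust way to close the comparison.
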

 \begin{proof}
 Let $P ~ : ~ [T_{\kappa},t^{*}]\rightarrow[-1,1]$ be defined by the following relation:
 \[ P(t) :=\cos(\theta(t)-\phi(t)). \]
It follows from \eqref{a} and Lemma \ref{L5.2} that for $[T_{\kappa},t^{*}]$, 
\begin{align*}
{\dot P}(t) &=-\big(\dot{\theta}(t) -
\dot{\phi}(t) \big)\sin(\theta(t) -\phi(t))\\
&=-\big(\omega(t)-KR(t) \sin(\theta(t)-\phi(t))-\dot{\phi}(t) \big)\sin(\theta(t)-\phi(t))\\
&=KR(t) \sin^{2}(\theta(t)-\phi(t))+
[\dot{\phi}(t)-\omega(t)]\sin(\theta(t)-\phi(t))\\
&=KR(t)-KR(t)\cos^{2}(\theta(t)-\phi(t))+
[\dot{\phi}(t)- \omega(t)]\sin(\theta(t)-\phi(t))\\
&\geq KR(t) \big(1-\cos^{2}(\theta(t)-
\phi(t))\big)-(|\dot{\phi}(t)|+M)
\sqrt{1-\cos^{2}(\theta(t)-\phi(t))}\\
&>K\kappa \big(1-|P(t)|^{2} \big)-\bigg[\bigg(1+\frac{1}{\kappa}\bigg)M+K(1-\kappa)
\bigg]\sqrt{1-|P(t)|^{2}},
\end{align*}
where we used \eqref{dot phi r} and \eqref{rlower} in the last line. Thus, we have
\begin{equation}\label{Ode3}
\dot{P}>K\kappa (\sqrt{(1-P^{2})}-\varepsilon_{\kappa} )\sqrt{1-P^{2}}.
\end{equation}
Thus, since $P(t^{*})\leq p^{*},$ the desired result follows from \eqref{Ode2}, \eqref{Ode3} and Lemma \ref{LG2}.
\end{proof}
As a direct application of Lemma \ref{LG1}, we have the following two corollaries.

\vspace{0.5cm}

\begin{corollary}\label{CG1}
Let $\delta,$ $\theta^{*}, \omega^{*},$ and $t^*$ be positive constants such that
\[ 0<\delta<\sqrt{1-\varepsilon_{\kappa}^2}, \quad 
 \omega^*\in[-M,M], \quad t^{*}\geq T_{\kappa} \quad \mbox{and} \quad 
 \cos(\theta^{*}-\phi(t^{*})) \leq -\delta.
\]
Then, the characteristics $(\theta(t), \omega(t))$ through $(\theta^{*},\omega^{*})$ at $t^{*}$ satisfies
\[
 \cos (\theta(t)-\phi(t))\leq -\delta, \quad \forall t \in [T_{\kappa},t^{*}].
\]
\end{corollary}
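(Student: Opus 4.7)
The plan is to derive this corollary directly from Lemma \ref{LG1} by choosing the initial condition of the barrier appropriately and then exploiting the monotonicity of the barrier ODE. Set $p^{*} := -\delta$. The assumption $0<\delta<\sqrt{1-\varepsilon_{\kappa}^{2}}$ guarantees $p^{*}\in[-\sqrt{1-\varepsilon_{\kappa}^{2}},\sqrt{1-\varepsilon_{\kappa}^{2}}]$, and the given hypothesis $\cos(\theta^{*}-\phi(t^{*}))\leq -\delta=p^{*}$ is exactly the bound required to invoke Lemma \ref{LG1}. Applying that lemma produces a barrier $p:[T_{\kappa},t^{*}]\to(-1,1)$, unique by Lemma \ref{LG2}, with $p(t^{*})=-\delta$ and
\[
\cos(\theta(t)-\phi(t))\leq p(t) \qquad \forall\, t\in[T_{\kappa},t^{*}].
\]

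The remaining task is therefore to show that $p(t)\leq -\delta$ on the entire interval $[T_{\kappa},t^{*}]$. For this, I would read off monotonicity from the defining ODE
\[
\dot{p}=\kappa K\bigl(\sqrt{1-p^{2}}-\varepsilon_{\kappa}\bigr)\sqrt{1-p^{2}}.
\]
By Lemma \ref{LG2}, $p(t)$ remains inside $[-\sqrt{1-\varepsilon_{\kappa}^{2}},\sqrt{1-\varepsilon_{\kappa}^{2}}]$, so $\sqrt{1-p(t)^{2}}\geq \varepsilon_{\kappa}$ with equality only at the endpoints. Since $p(t^{*})=-\delta$ lies strictly in the interior of that interval, a standard continuation argument rules out $p$ ever touching the endpoints on $[T_{\kappa},t^{*}]$, so $\dot{p}>0$ throughout. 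Consequently $p$ is strictly increasing, and running time backward from $t^{*}$ yields $p(t)\leq p(t^{*})=-\delta$ for every $t\in[T_{\kappa},t^{*}]$. Combining this with the comparison above gives $\cos(\theta(t)-\phi(t))\leq -\delta$, which is the claim.

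There is essentially no main obstacle: the whole content has been absorbed into Lemmas \ref{LG1} and \ref{LG2}, and the only thing left is the elementary monotonicity of $p$. The one point to state carefully is that the comparison in Lemma \ref{LG1} was proved using the strict inequality $\dot{P}> F_{\kappa}(P)$ in \eqref{Ode3}, so the barrier $p$ with equality at $t^{*}$ does indeed dominate $P(t)=\cos(\theta(t)-\phi(t))$ on the whole backward interval; this is precisely the content of Lemma \ref{LG1} and needs no additional argument here.
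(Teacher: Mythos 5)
Your proposal is correct and follows essentially the same route as the paper: invoke Lemma \ref{LG1} with the barrier through $p^{*}=-\delta$ at $t^{*}$, then use that the barrier is monotone (nondecreasing, by Lemma \ref{LG2} and the sign of the right-hand side of \eqref{Ode2}) to conclude $p(t)\leq p(t^{*})=-\delta$ on $[T_{\kappa},t^{*}]$. Your extra argument for strict monotonicity is fine but unnecessary; nondecreasing already suffices, which is all the paper uses.
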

\begin{proof}
Let $p$ be the barrier through $-\delta$ at $t^{*}.$ By Lemma \ref{LG2} and \eqref{Ode2}, we have that $p$ is nondecreasing, and since
\[
 \cos (\theta(t^{*})-\phi(t^{*}))\leq  p(t^{*}) = -\delta,
\]
the desired result follows from Lemma \ref{LG1} using  $-\delta=-p^{*}$.
\end{proof}

\vspace{0.5cm}

\begin{corollary}\label{CG2} 
Let $\varepsilon,$ $\theta^{*}, \omega^{*},$ and $t^*$ be positive constants satisfying
\begin{align*}
 & 0<\varepsilon<\sqrt{1-\varepsilon_{\kappa}^2}, \quad  D(\varepsilon,\kappa) <t^{*}-T_{\kappa}, \quad  \omega^*\in[-M,M], \cr
&  \mbox{and} \quad 
 \cos(\theta^{*}-\phi(t^{*})) \leq \sqrt{1-\varepsilon_{\kappa}^2}-\varepsilon.
\end{align*}
Then, there exists a positive constant $d$ satisfying
\[ d < D(\varepsilon, \kappa) \quad \mbox{and} \quad \cos (\theta(t^{*}-d)-\phi(t^{*}-d))\leq -\sqrt{1-\varepsilon_{\kappa}^2}+\varepsilon.
\]
where $D(\varepsilon, \kappa)$ is a positive constant defined in \eqref{FF}, and $(\theta(t), \omega(t))$ is the characteristics passing through $(\theta^{*},\omega^{*})$ at $t^{*}$.
\end{corollary}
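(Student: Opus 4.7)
The plan is to derive Corollary \ref{CG2} as a direct combination of Lemma \ref{LG1} (which controls characteristics by barriers from above) and Lemma \ref{LG3} (which provides a sharp time bound for barriers to traverse from $\sqrt{1-\varepsilon_\kappa^2}-\varepsilon$ down to $-\sqrt{1-\varepsilon_\kappa^2}+\varepsilon$). The key observation is that the upper bound hypothesis $\cos(\theta^*-\phi(t^*)) \leq \sqrt{1-\varepsilon_\kappa^2}-\varepsilon$ is exactly the initial data of a barrier at $t^*$, and since barriers are monotonically increasing (as $\dot p = F_\kappa(p) > 0$ on the relevant interval), running time \emph{backward} from $t^*$ makes $p$ decrease, eventually reaching $-\sqrt{1-\varepsilon_\kappa^2}+\varepsilon$ within time $D(\varepsilon,\kappa)$.

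First, I would set $p^* := \sqrt{1-\varepsilon_\kappa^2}-\varepsilon$, which by hypothesis lies in $[-\sqrt{1-\varepsilon_\kappa^2},\sqrt{1-\varepsilon_\kappa^2}]$, and consider the unique barrier $p : [T_\kappa, t^*] \to (-1,1)$ through $p^*$ at $t^*$, whose existence and boundedness are guaranteed by Lemma \ref{LG2}. The hypothesis $\cos(\theta^*-\phi(t^*)) \leq p^* = p(t^*)$ together with $t^* \geq T_\kappa$ (which follows from $D(\varepsilon,\kappa) < t^* - T_\kappa$) puts us exactly in the setting of Lemma \ref{LG1}, yielding
\[
\cos(\theta(t) - \phi(t)) \leq p(t), \qquad \forall\, t \in [T_\kappa, t^*].
\]

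Next, since $p^* = \sqrt{1-\varepsilon_\kappa^2}-\varepsilon$ and $D(\varepsilon,\kappa) < t^* - T_\kappa$, the hypotheses of Lemma \ref{LG3} are satisfied, which produces a unique $d < D(\varepsilon,\kappa)$ such that
\[
p(t^* - d) = -\sqrt{1-\varepsilon_\kappa^2} + \varepsilon.
\]
In particular, $t^* - d > t^* - D(\varepsilon,\kappa) > T_\kappa$, so the pointwise barrier inequality from Lemma \ref{LG1} is valid at $t^*-d$. Evaluating it there gives
\[
\cos(\theta(t^*-d) - \phi(t^*-d)) \leq p(t^*-d) = -\sqrt{1-\varepsilon_\kappa^2} + \varepsilon,
\]
which is exactly the claimed conclusion.

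There is essentially no main obstacle here beyond bookkeeping of the hypotheses: all the analytical content (the differential inequality $\dot P \geq F_\kappa(P)$ satisfied by $\cos(\theta-\phi)$ along characteristics, the one-sided comparison with barriers, the quantitative traversal time $D(\varepsilon,\kappa)$) has been established in the preceding three lemmas. The corollary is the natural packaging of these ingredients in the form convenient for the proof of Proposition \ref{P6.3}, where one needs a definite time window over which the characteristic is pushed into the region $\{\cos(\theta-\phi) \leq -\sqrt{1-\varepsilon_\kappa^2}+\varepsilon\} \subset \widetilde{Y}_\delta$ so that Lemma \ref{L6.7} then yields exponential decay along it.
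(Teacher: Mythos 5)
Your proposal is correct and follows exactly the paper's own route: take the barrier through $\sqrt{1-\varepsilon_\kappa^2}-\varepsilon$ at $t^*$, invoke Lemma \ref{LG1} for the comparison $\cos(\theta(t)-\phi(t))\leq p(t)$ on $[T_\kappa,t^*]$, and then use the traversal time $d<D(\varepsilon,\kappa)$ from Lemma \ref{LG3} (relation \eqref{d4}) to evaluate at $t^*-d$. The extra bookkeeping you supply (that $t^*\geq T_\kappa$ and $t^*-d>T_\kappa$) is accurate and only makes explicit what the paper leaves implicit.
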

\begin{proof}
Let $p$ be the barrier through $\sqrt{1-\varepsilon_{\kappa}^2}-\varepsilon$ at $t^{*}.$ By Lemma \ref{LG1}, we have
\[
 \cos (\theta(t)-\phi(t))\leq p(t), \qquad t \in [T_{\kappa},t^{*}],
\]
and the desired result follows by \eqref{d4}.
\end{proof}

\vspace{1cm}

\section*{Acknowledgement}
The work of S.-Y. Ha is partially supported by a National Research Foundation of Korea Grant (2014R1A2A2A05002096) 
funded by the Korea government, and  the work of J. Park was supported by NRF(National Research Foundation of Korea) Grant funded by the Korean Government(NRF-2014-Fostering Core Leaders of the Future Basic Science Program)
The work of Y.-H.Kim is partially supported by Natural Sciences and Engineering Research Council of Canada Discovery Grants 371642-09 and 2014-05448 as well as the  Alfred P. Sloan Research Fellowship 2012--2016.
Part of this research has been done while the authors were participating in the fall Semester 2015 in Analysis at \'Ecole Normale Sup\'erior de Lyon (ENS-Lyon), France. We are grateful for the hospitality of ENS-Lyon and Prof. Albert Fathi.

\end{document}